\documentclass[reqno, a4paper, 11pt]{amsart}
\usepackage{amsmath, amssymb, amsthm}
\usepackage[hmargin=3cm, vmargin=3cm]{geometry}
\usepackage[usenames]{color}
\usepackage[varg]{txfonts}

\newtheorem{thm}{Theorem}[section]
\newtheorem{cor}[thm]{Corollary}
\newtheorem{lem}[thm]{Lemma}
\newtheorem{prop}[thm]{Proposition}

\theoremstyle{definition}

\theoremstyle{remark}
\newtheorem{rem}[thm]{Remark}
\numberwithin{equation}{section}

\allowdisplaybreaks

\newcommand{\ma}{\mathcal{A}}
\newcommand{\mg}{\mathcal{G}}
\newcommand{\mj}{\mathcal{J}}
\newcommand{\mk}{\mathcal{K}}
\newcommand{\mm}{\mathcal{M}}
\newcommand{\map}{\mathcal{P}}
\newcommand{\mq}{\mathcal{Q}}
\newcommand{\mr}{\mathbb{R}}

\newcommand{\tu}{\tilde{u}}

\newcommand{\ep}{\epsilon}
\newcommand{\pa}{\partial}
\newcommand{\bk}{\boldsymbol\kappa}
\newcommand{\bl}{\boldsymbol\lambda}

\renewcommand{\(}{\left(}
\renewcommand{\)}{\right)}

\begin{document}

\title[Qualitative properties of multi-bubble solutions]{Qualitative properties of multi-bubble solutions for nonlinear elliptic equations involving critical exponents}

\author{Woocheol Choi}
\address[Woocheol Choi]{Department of Mathematical Sciences, Seoul National University, 1 Gwanak-ro, Gwanak-gu, Seoul 151-747, Republic of Korea}
\email{chwc1987@math.snu.ac.kr}

\author{Seunghyeok Kim}
\address[Seunghyeok Kim]{Departamento de Matem\'{a}tica, Pontificia Universidad Cat\'{o}lica de Chile, Avenida Vicu\~{n}a Mackenna 4860, Santiago, Chile}
\email{shkim0401@gmail.com}

\author{Ki-Ahm Lee}
\address[Ki-Ahm Lee]{Department of Mathematical Sciences, Seoul National University, 1 Gwanak-ro, Gwanak-gu, Seoul 151-747, Republic of Korea \& Center for Mathematical Challenges, Korea Institute for Advanced Study, Seoul,130-722, Republic of Korea}
\email{kiahm@math.snu.ac.kr}

\date{\today}
\subjclass[2010]{Primary: 35B40, Secondary: 35B33, 35B40, 35J15}
\keywords{Asymptotic behavior of solutions, critical exponents, linearized problem, multi-bubble solutions}

\begin{abstract}
The objective of this paper is to obtain qualitative characteristics of multi-bubble solutions to the Lane-Emden-Fowler equations with slightly subcritical exponents given any dimension $n \ge 3$.
By examining the linearized problem at each $m$-bubble solution, we provide a number of estimates on the first $(n+2)m$-eigenvalues and their corresponding eigenfunctions.
Specifically, we present a new proof of the classical theorem due to Bahri-Li-Rey (1995) \cite{BLR} which states that if $n \ge 4$, then the Morse index of a multi-bubble solution is governed by a certain symmetric matrix whose component consists of a combination of Green's function, the Robin function, and their first and second derivatives.
Our proof also allows us to handle the intricate case $n = 3$.
\end{abstract}

\maketitle

\section{Introduction}
In this paper, we perform a qualitative analysis on the problem
\begin{equation}\label{eq-main-e}\tag{$1.1_{\ep}$}
{\setlength\arraycolsep{2pt}
\left\{\begin{array}{rll}
-\Delta u &= u^{p-\ep} &\text{ in } \Omega,\\
u &> 0 &\text{ in } \Omega,\\
u &= 0 &\text{ on } \pa \Omega,
\end{array}\right.}
\end{equation}
where $\Omega$ is a bounded domain contained in $\mr^n$ $(n \ge 3$), $p = (n+2)/(n-2)$, and $\ep >0$ is a small parameter.
When $\ep > 0$, the compactness of the Sobolev embedding $H^1_0(\Omega) \hookrightarrow L^{p+1-\ep}(\Omega)$ allows one to find its extremal function, hence a positive least energy solution $\bar{u}_{\ep}$ for \eqref{eq-main-e}.
However this does not hold anymore if $\ep = 0$ and in fact existence of solutions strongly depends on topological or geometric properties of the domain in this case (see for instance \cite{BC} and \cite{D}).
If $\ep = 0$ and $\Omega$ is star-shaped, then the supremum of $\bar{u}_{\ep}$ should diverge to $\infty$ as $\ep \to 0$ since an application of the Poho\v{z}aev identity \cite{P} gives nonexistence of a nontrivial solution for \eqref{eq-main-e}.
In the work of Brezis and Peletier \cite{BP}, they deduced the precise asymptotic behavior of $\bar{u}_{\ep}$ when the domain $\Omega$ is the unit ball,
and this result was extended to general domains by Han \cite{H} and Rey \cite{R}, in which they independently proved that $\bar{u}_{\ep}$ blows-up at the unique point that is a critical point of the Robin function of the domain.
Later, Grossi and Pacella \cite{GP} investigated the related eigenvalue problem, obtaining estimates for its first $(n+2)$-eigenvalues, asymptotic behavior of the corresponding eigenvectors and the Morse index of $\bar{u}_{\ep}$.

Let $\{\ep_k\}_{k=1}^{\infty}$ be a sequence of small positive numbers such that $\ep_k \to 0$ as $k \to \infty$ and $\{u_{\ep_k}\}_{k=1}^{\infty}$ a bounded sequence in $H_0^1(\Omega)$ of solutions for \eqref{eq-main-e} with $\ep = \ep_k$, which blow-up at $m \in \mathbb{N}$ points $\{x_{10}, \cdots, x_{m0}\} \subset \overline{\Omega}^m$.
Then by the work of Struwe \cite{S} on the representation of Palais-Smale sequences for \eqref{eq-main-e},
which employed the concentration-compactness principle \cite{Lio}, it can be written as
\setcounter{equation}{1}
\begin{equation}\label{eq-asym}
u_{\ep_k} = \sum_{i=1}^m \alpha_{ik} PU_{\lambda_{ik}\ep_k^{\alpha_0}, x_{ik}} + R_k
\end{equation}
after extracting a subsequence if necessary.
Here $\alpha_0 = 1/(n-2)$, $\alpha_{ik} \to 1$, $\lambda_{ik} \to \lambda_{i0} > 0$ and $x_{ik} \to x_{i0}$ as $k \to \infty$,
$U_{\lambda, x_0}$ is the bubble with the concentration rate $\lambda > 0$ and the center $x_0 \in \mr^n$
\begin{equation}\label{eq-U}
U_{\lambda, x_0} (x) = \beta_n \({\lambda \over \lambda^2 + |x-x_0|^2}\)^{n-2 \over 2} \quad \text{for } x \in \mr^n \quad \text{where } \beta_n = (n(n-2))^{n-2 \over 4},
\end{equation}
the function $PU_{\lambda, x_0}$ is a projected bubble in $H_0^1(\Omega)$, namely, a solution of
\[\Delta PU_{\lambda, x_0} = \Delta U_{\lambda, x_0} \quad \text{in } \Omega, \quad PU_{\lambda, x_0} = 0 \quad \text{on } \pa\Omega\]
and $R_k$ is a remainder term whose $H_0^1(\Omega)$-norm converges to 0 as $k \to \infty$.
Moreover, according to Bahri, Li and Rey \cite{BLR},
if we denote by $G = G(x,y)$ ($x,y \in \Omega$) the Green's function of $-\Delta$ with Dirichlet boundary condition satisfying
\[-\Delta G(\cdot, y) = \delta_y \text{ in } \Omega \quad \text{and} \quad G(\cdot, y) = 0 \text{ on } \pa\Omega,\]
by $H(x,y)$ its regular part, i.e.,
\begin{equation}\label{eq-H}
H(x,y) = {\gamma_n \over |x-y|^{n-2}}-G(x,y) \quad \text{where } \gamma_n = {1 \over (n-2)\left|S^{n-1}\right|},
\end{equation}
and by $\tau$ the Robin function $\tau(x) = H(x,x)$,
then the blow-up rates and the concentration points $(\lambda_{10}, \cdots, \lambda_{m0}, x_{10}, \cdots, x_{m0}) \in (0,\infty)^m \times \Omega^m$ can be characterized as a critical point of the function
\begin{equation}\label{eq-upsilon}
\Upsilon_m(\lambda_1,\cdots, \lambda_m, x_1,\cdots x_m)
= c_1\Bigg( \sum_{i=1}^m \tau(x_i) \lambda_i^{n-2} - \sum_{\substack{i, j=1 \\ i \ne j}}^m G(x_i, x_j) (\lambda_i\lambda_j)^{n-2 \over 2}\Bigg) - c_2\log (\lambda_1 \cdots \lambda_m)
\end{equation}
in general, provided that $n \ge 4$. Here
\begin{equation}\label{eq-upsilon-2}
c_1 = \(\int_{\mr^n} U_{1,0}^p\)^2 \quad \text{and} \quad c_2 = {(n-2)^2 \over 4n}\int_{\mr^n} U_{1,0}^{p+1}.
\end{equation}
Conversely, by applying the Lyapunov-Schmidt reduction method, Musso and Pistoia \cite{MP} proved that if $n \ge 3$ and $(\lambda_{10}, \cdots, \lambda_{m0}, x_{10}, \cdots, x_{m0}) \in (0,\infty)^m \times \Omega^m$ is a $C^1$-stable critical point of $H$ in the sense of Y. Li \cite{L},
then there is a multi-bubbling solution of \eqref{eq-main-e} having the form \eqref{eq-asym} which blows-up at each point $x_{i0}$ with the rate of the concentration $\lambda_{i0}$ ($i = 1, \cdots, m)$.
This extends the existence result also achieved in paper \cite{BLR}, where the authors used the gradient flow of critical points at infinity to get solutions.

Our interest lies on the derivation of certain asymptotic behaviors of solutions $\{u_{\ep}\}_{\ep}$ to \eqref{eq-main-e} satisfying \eqref{eq-asym} when $\ep$ converges to 0.
(Precisely speaking, sequences of parameters $\ep_k$, $\alpha_{ik}$, $\lambda_{ik}$ and $x_{ik}$ in \eqref{eq-asym} should be substituted by $\ep$, $\alpha_{i\ep}$, $\lambda_{i\ep}$ and $x_{i\ep}$, respectively,
such that $\alpha_{i\ep} \to 1$, $\lambda_{i\ep} \to \lambda_{i0}$ and $x_{i\ep} \to x_{i0}$ as $\ep \to 0$.
Hereafter, such a substitution is always assumed.)
It will be done by examining the associated eigenvalue problem
\begin{equation}\label{eq-lin}
{\setlength\arraycolsep{2pt}
\left\{\begin{array}{rll}
-\Delta v &= \mu (p-\ep) u_{\ep}^{p-1-\ep} v &\text{ in } \Omega,\\
v &= 0 &\text{ on } \pa \Omega.
\end{array}\right.}
\end{equation}
We let $\mu_{\ell \ep}$ be the $\ell$-th eigenvalue of \eqref{eq-lin} provided that the sequence of eigenvalues is arranged in nondecreasing order permitting duplication,
and $v_{\ell \ep}$ the corresponding $L^{\infty}(\Omega)$-normalized eigenfunction (namely, $\|v_{\ell \ep}\|_{L^{\infty}(\Omega)} = 1$).

The main aim of this paper is to provide a detailed description on the asymptotic behavior of  $(\mu_{\ell \ep}, v_{\ell \ep})$ for $1 \le \ell \le (n+2)m$.

Firstly, we concentrate on behavior of the first $m$-eigenvalues and eigenvectors.
Given $i,\ \ell \in \mathbb{N}$, $1 \le i \le m$, let $\tilde{v}_{\ell i \ep}$ be a dilation of $v_{\ell \ep}$ defined as
\begin{equation}\label{eq-v-exp}
\tilde{v}_{\ell i \ep}(x) = v_{\ell \ep}\(x_{i \ep} + \lambda_{i \ep}\ep^{\alpha_0} x\) \quad \text{for each } x \in \Omega_{i \ep} := \(\Omega - x_{i \ep}\)/(\lambda_{i \ep}\ep^{\alpha_0}).
\end{equation}

\begin{thm}\label{thm-eigen-zero}
Let $\ep > 0$ be a small parameter,
$\{u_{\ep}\}_{\ep}$ a family of solutions for \eqref{eq-main-e} of the form \eqref{eq-asym},
$\mu_{\ell \ep}$ the $\ell$-th eigenvalue of problem \eqref{eq-lin} for some $1 \le \ell \le m$.
Denote also as $\rho^1_{\ell}$ the $\ell$-th eigenvalue of the symmetric matrix $\ma_1 = \(\ma^1_{ij}\)_{1 \le i,j \le m}$ given by
\begin{equation}\label{eq-mtx-M_0}
\ma^1_{ij} = \begin{cases}
- \(\lambda_{i0}\lambda_{j0}\)^{n-2 \over 2} G\(x_{i0},x_{j0}\) &\text{if } i \ne j,\\
- C_0 + \lambda_{i0}^{n-2} \tau(x_{i0})  &\text{if } i = j,
\end{cases}
\quad \text{where } C_0 = c_2/(c_1(n-2)) > 0.
\end{equation}
Then we have
\begin{equation}\label{eq-eigen-zero-0}
\mu_{\ell \ep} = {n-2 \over n+2} + b_1 \ep + o(\ep) \quad \text{where } b_1 = \({n-2 \over n+2}\)^2 + {(n-2)^3c_1 \over 4n(n+2)c_2}\rho^1_{\ell}
\end{equation}
as $\ep \to 0$.
Moreover, there exists a nonzero column vector
\[\mathbf{c}_{\ell} = \(\lambda_{10}^{n-2 \over 2} c_{\ell 1}, \cdots, \lambda_{m0}^{n-2 \over 2}c_{\ell m}\)^T \in \mr^m\]
such that
for each $i \in \{1, \cdots, m\}$ the function $\tilde{v}_{\ell i \ep}$ converges to $c_{\ell i}U_{1,0}$ weakly in $H^1(\mr^n)$.
This $\mathbf{c}_{\ell}$ becomes an eigenvector corresponding to the eigenvalue $\rho^1_{\ell}$ of $\ma_1$,
and it holds that $\mathbf{c}_{\ell_1}^T \cdot \mathbf{c}_{\ell_2}^T = 0$ for $1 \le \ell_1 \ne \ell_2 \le m$.
\end{thm}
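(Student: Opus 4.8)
The plan is to analyze the eigenvalue problem \eqref{eq-lin} for the first $m$ eigenvalues by exploiting the fact that, near each blow-up point $x_{i0}$, the potential $(p-\ep)u_\ep^{p-1-\ep}$ concentrates like $(p-\ep)(\alpha_{i\ep}PU_{\lambda_{i\ep}\ep^{\alpha_0},x_{i\ep}})^{p-1-\ep}$, which after the rescaling \eqref{eq-v-exp} converges to $p\,U_{1,0}^{p-1}$. The linearized operator $-\Delta - p\,U_{1,0}^{p-1}$ on $\mathbb{R}^n$ has kernel spanned by $\partial_\lambda U_{1,0}$ and $\partial_{x_j}U_{1,0}$ ($j=1,\dots,n$); since the first eigenfunction of that operator lies below the kernel but the relevant quadratic form on the space of bubbles is governed by $U_{1,0}$ itself, the first $m$ eigenfunctions $v_{\ell\ep}$, after dilation, must converge weakly to linear combinations $c_{\ell i}U_{1,0}$. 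First I would establish tightness: using that $\mu_{\ell\ep}\to (n-2)/(n+2)$ (which is $1/p$, the eigenvalue of $-\Delta - p\,U_{1,0}^{p-1}$ associated with the scaling-invariance direction $U_{1,0}$, because $-\Delta U_{1,0}=U_{1,0}^p=pU_{1,0}^{p-1}\cdot (U_{1,0}/p)$), one shows $v_{\ell\ep}$ has $H^1_0(\Omega)$ energy concentrating at the $m$ points, so each $\tilde v_{\ell i\ep}$ has a weak $H^1(\mathbb{R}^n)$ limit that is a bounded solution of the limiting linearized equation, hence of the form $c_{\ell i}U_{1,0}+\sum_j d_{\ell ij}\partial_{x_j}U_{1,0}+e_{\ell i}\partial_\lambda U_{1,0}$; the derivative components are killed by testing against suitable combinations of $PZ$'s (the projected derivatives of the bubbles) and using the orthogonality conditions that accompany the decomposition \eqref{eq-asym}.

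The next block is the expansion of $\mu_{\ell\ep}$ to order $\ep$. Write $\mu_{\ell\ep}=\frac{n-2}{n+2}+b_{\ell}\ep+o(\ep)$ and insert the ansatz $v_{\ell\ep}=\sum_i \alpha_{i\ep}\,c_{\ell i}\,PZ^0_{i\ep}+(\text{lower order})$, where $Z^0_{\lambda,x}$ is the $U_{1,0}$-type profile (the scaling direction, which for the critical exponent is essentially $U$ itself suitably normalized), and project the equation onto $PZ^0_{j\ep}$. The leading term reproduces $\frac{n-2}{n+2}$; the $O(\ep)$ term has two contributions: one from the explicit $\ep$ in $(p-\ep)u_\ep^{p-1-\ep}$, which after integrating $U_{1,0}^{p-1}\log U_{1,0}$ against $U_{1,0}^2$ produces the $\big(\tfrac{n-2}{n+2}\big)^2$ piece and the normalizing constants, and one from the interaction/boundary corrections $PU-U$, which is where $H$, $G$ and $\tau$ enter. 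Carrying out the standard bubble-interaction estimates (of the type $\int U_{\lambda_i,x_i}^p U_{\lambda_j,x_j}\sim G(x_i,x_j)(\lambda_i\lambda_j)^{(n-2)/2}$ and $PU_{\lambda,x}=U_{\lambda,x}-\gamma_n\lambda^{(n-2)/2}H(x,\cdot)+o(\cdot)$) turns the projected system into the eigenvalue equation $\mathcal A_1\mathbf c_\ell=\rho^1_\ell\mathbf c_\ell$ up to the affine rescaling in \eqref{eq-eigen-zero-0}; matching constants with $c_1,c_2$ gives $b_1$ and $C_0$. The orthogonality $\mathbf c_{\ell_1}^T\cdot\mathbf c_{\ell_2}^T=0$ for $\ell_1\ne\ell_2$ follows from the $L^2\big((p-\ep)u_\ep^{p-1-\ep}\big)$-orthogonality of $v_{\ell_1\ep}$ and $v_{\ell_2\ep}$: writing that inner product in rescaled variables, the cross terms between different bubbles are $o(1)$ while the diagonal terms give $\big(\int_{\mathbb{R}^n}pU_{1,0}^{p-1}U_{1,0}^2\big)\sum_i \lambda_{i0}^{n-2}c_{\ell_1 i}c_{\ell_2 i}+o(1)$, forcing $\mathbf c_{\ell_1}^T\cdot\mathbf c_{\ell_2}=0$ in the weighted inner product — which, with the factor $\lambda_{i0}^{(n-2)/2}$ built into $\mathbf c_\ell$, is exactly $\mathbf c_{\ell_1}^T\cdot\mathbf c_{\ell_2}^T=0$; the same computation shows $\mathbf c_\ell\ne 0$ since $\|v_{\ell\ep}\|_{L^\infty}=1$ forces at least one $\tilde v_{\ell i\ep}$ not to vanish.

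The main obstacle, I expect, is controlling the remainder in the decomposition of $v_{\ell\ep}$ sharply enough in the $\ep$-expansion — i.e. showing that the orthogonal complement of $\mathrm{span}\{PZ^0_{i\ep}\}$ contributes only $o(\ep)$ to $\mu_{\ell\ep}$. This requires a coercivity (nondegeneracy) estimate for the operator $-\Delta - (p-\ep)u_\ep^{p-1-\ep}$ restricted to the subspace $H^1_0$-orthogonal (or rather, orthogonal with respect to the concentrating weight) to all $2m$ functions $PZ^0_{i\ep}$ together with the genuine kernel directions; such an estimate is delicate because $\mu_{\ell\ep}$ sits at a fixed distance below the cluster of eigenvalues near $1$ that correspond to the translation/dilation directions, and one must quantify that gap uniformly in $\ep$. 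Once that coercivity is in hand, the remainder is controlled by the size of the ``error'' $(-\Delta - (p-\ep)u_\ep^{p-1-\ep})(\sum_i c_{\ell i}PZ^0_{i\ep}) - \mu_{\ell\ep}(p-\ep)u_\ep^{p-1-\ep}(\cdots)$, which is $O(\ep)$ in the appropriate dual norm plus $O\big(\sum_{i\ne j}\varepsilon_{ij}\big)$ interaction terms, and everything closes by a fixed-point/implicit-function argument in $(\mathbf c_\ell,\mu_{\ell\ep})$. The dimension-three subtlety flagged in the abstract does not bite here because $U_{1,0}\notin L^2(\mathbb{R}^n)$ issues are handled by the projection weight $(p-\ep)u_\ep^{p-1-\ep}$, not by $L^2(\mathbb{R}^n)$; the $n=3$ difficulties enter only in the higher eigenvalue analysis $\ell>m$.
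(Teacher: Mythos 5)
Your proposal takes a genuinely different route from the paper. You outline a Lyapunov--Schmidt-type strategy: decompose $v_{\ell\ep}$ as a combination of projected bubble profiles $PZ^0_{i\ep}$ plus a remainder, project the eigenvalue equation onto these profiles, and close via a fixed-point argument backed by a uniform coercivity estimate on the orthogonal complement. The paper instead uses a Green's-identity / Poho\v{z}aev-type argument: it multiplies \eqref{eq-main-e} by $v_{\ell\ep}$ and \eqref{eq-lin} by $u_\ep$, subtracts, integrates over $B^n(x_{i\ep},r)$, and obtains the boundary-integral identity \eqref{eq-eigen-zero-1}, into which it feeds the explicit Green's-function expansions $\ep^{-1/2}u_\ep \to C_2\sum_j\lambda_j^{(n-2)/2}G(\cdot,x_j)$ and $\ep^{-1}v_{\ell\ep} \to$ (an analogous sum), together with the criticality condition $\nabla_\lambda\Upsilon_m = 0$, to read off the matrix identity $\ma_1\mathbf{c}_\ell = \rho^1_\ell\mathbf{c}_\ell$ directly. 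The advantage of the paper's route is precisely that it sidesteps the need for any spectral-gap or coercivity estimate: all the $\ep$-dependence is extracted from one-dimensional boundary integrals evaluated via the mean value property for harmonic functions, so the remainder never needs to be controlled to $o(\ep)$ precision in a function-space norm.

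This brings out the genuine gap in your plan: the uniform coercivity estimate for $-\Delta - (p-\ep)u_\ep^{p-1-\ep}$ restricted to the complement of the bubble-profile span, which you explicitly identify as ``the main obstacle'' and do not establish. Without it, the claimed $o(\ep)$ bound on the remainder in the ansatz $v_{\ell\ep}=\sum_i\alpha_{i\ep}c_{\ell i}PZ^0_{i\ep}+(\text{lower order})$ is not justified, and hence the projected eigenvalue system cannot be shown to reproduce $\mu_{\ell\ep}$ to order $\ep$. This estimate is especially delicate here because $\mu_{\ell\ep}\to 1/p$, so the eigenvalue of interest sits at the \emph{bottom} of the linearized operator's spectrum, below the cluster of near-kernel directions near $1$, and one must control the projection away from both the $m$ copies of $U$ and the $(n+1)m$ translation/dilation directions simultaneously, uniformly in $\ep$. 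This is plausible but is real work, comparable in difficulty to the theorem itself, and your proposal does not carry it out.

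A secondary inaccuracy: you write that the derivative components $\pa_{x_j}U_{1,0}$, $\pa_\lambda U_{1,0}$ in the weak limit of $\tilde v_{\ell i\ep}$ are ``killed by testing against suitable combinations of $PZ$'s and using the orthogonality conditions that accompany the decomposition \eqref{eq-asym}.'' But \eqref{eq-asym} is a Struwe-type decomposition, which carries no orthogonality constraints. The actual mechanism, used in the paper, is spectral: once one knows $\mu_\ell = p^{-1}$ (from the upper bound $\mu_{m\ep}\le p^{-1}+o(1)$ via Courant--Fischer with test space $\mathrm{span}\{u_{\ep,i}\}$, plus the nonvanishing of some $\tilde v_{\ell i_0\ep}$ from Lemma \ref{lem-weak-nonzero}), the limit $W$ of $\tilde v_{\ell i\ep}$ solves $-\Delta W = U_{1,0}^{p-1}W$ in $D^{1,2}(\mr^n)$, and the solution space of that equation is exactly $\mathrm{span}\{U_{1,0}\}$; the translation/dilation directions solve a different equation (the one with coefficient $p$ rather than $1$). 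Your orthogonality argument for $\mathbf{c}_{\ell_1}^T\cdot\mathbf{c}_{\ell_2}^T=0$, on the other hand, is correct and is essentially the same weighted-$L^2$ computation as the paper's \eqref{eq-eigen-zero-4}.
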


Next, we study the next $mn$-eigenvalues and corresponding eigenvectors.
The first theorem for these eigenpairs concerns with asymptotic behaviors of the eigenvectors.
Let us define a symmetric $m \times m$ matrix $\mm_1 = \(m^1_{ij}\)_{1 \le i, j \le m}$ by
\begin{equation}\label{eq-mtx-M_1}
m^1_{ij} = \begin{cases}
-G\(x_{i0},x_{j0}\) &\text{if } i \ne j,\\
C_0 \lambda_{i0}^{-(n-2)} + \tau\(x_{i0}\) &\text{if } i = j.
\end{cases}
\end{equation}
By Lemma \ref{lem-pos} below, it can be checked that $\mm_1$ is positive definite and in particular invertible.
We denote its inverse by $\(m_1^{ij}\)_{1 \le i, j \le m}$.
\begin{thm}\label{thm-eigen-first}
Assume that $m+1 \le \ell \le (n+1) m$.
Then, for each $i \in \{1,\cdots,m\}$, there exists a vector
$(d_{\ell,i,1}, \cdots, d_{\ell,i,n}) \in \mr^n$, which is nonzero for some $i$, such that
\begin{equation}\label{eq-eigenf-first-1}
\tilde{v}_{\ell i \ep} \to - \sum_{k=1}^n d_{\ell,i,k} \frac{\pa U_{1,0}}{\pa x_k} \quad \text{in } C^1_{\text{loc}}(\mr^n)
\end{equation}
and
\begin{equation}\label{eq-eigenf-first-2}
\begin{aligned}
\ep^{-{n-1 \over n-2}} v_{\ell \ep}(x)
&\to C_1 \left[\sum_{i=1}^m\sum_{j=1}^m\sum_{k=1}^n m_1^{ij} \(-{1 \over 2} \lambda_{j0}^{n-1}d_{\ell,j,k}{\pa \tau \over \pa x_{k0}}(x_{j0})
+ \sum_{l \ne j} \lambda_{l0}^{n-1}d_{\ell,l,k}{\pa G \over \pa y_k} \(x_{j0},x_{l0}\)\) G(x,x_{i0}) \right. \\
&\qquad \left. + \sum_{i=1}^m\sum_{k=1}^n \lambda_{i0}^{n-1}d_{\ell,i,k}{\pa G \over \pa y_k} \(x,x_{i0}\) \right]
\end{aligned}
\end{equation}
in $C^1\(\Omega \setminus \{x_{10}, \cdots, x_{m0}\}\)$ as $\ep \to 0$.
Here $C_1 = \beta_n^p \({n+2 \over n}\) \int_{\mr^n} {|x|^2 \over (1+|x|^2)^{(n+4)/2}} dx > 0$.
\end{thm}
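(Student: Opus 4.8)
The plan is to run the standard "inner/outer" analysis on the eigenfunction equation \eqref{eq-lin}, restricted to the eigenvalue range $m+1 \le \ell \le (n+1)m$ where, by Theorem~\ref{thm-eigen-zero} (and the trivial fact that the $\mu_{\ell\ep}$ are nondecreasing), the eigenvalues sit at the next level above $(n-2)/(n+2)$, namely $\mu_{\ell\ep} \to (n-2)/(n+2)$ but with a different leading behaviour matched to the \emph{translation} kernel rather than the \emph{dilation} kernel of the bubble. First I would establish the inner limit: rescale $v_{\ell\ep}$ around each concentration point $x_{i\ep}$ via \eqref{eq-v-exp}, use the $L^\infty$-normalization together with elliptic estimates to extract a $C^1_{\text{loc}}$ limit $\tilde{v}_{\ell i\ep} \to V_i$ solving the linearized equation $-\Delta V_i = p U_{1,0}^{p-1} V_i$ on $\mr^n$ in the $H^1(\mr^n)$ class (the factor $\mu(p-\ep) \to 1$ after using $\mu_{\ell\ep} \to (n-2)/(n+2)$ and $p-\ep \to p$, noting the coefficient $(n-2)/(n+2)\cdot (n+2)/(n-2) = 1$). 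By the classical nondegeneracy classification of the linearized operator at the bubble, $V_i$ is a linear combination of $\partial U_{1,0}/\partial x_k$ and $\partial U_{1,0}/\partial \lambda$; the point here is to rule out the $\lambda$-component, which follows because the $\lambda$-direction is already "used up" by the first $m$ eigenvalues (orthogonality of eigenfunctions, or the a priori estimate $\mu_{\ell\ep} - (n-2)/(n+2) \gg \ep$ for $\ell > m$ forced by Theorem~\ref{thm-eigen-zero} combined with an energy/quadratic-form argument showing the dilation directions contribute at order $\ep$). This yields \eqref{eq-eigenf-first-1} with $d_{\ell,i,k}$ the coefficients, not all zero.

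Next I would derive the outer limit \eqref{eq-eigenf-first-2}. Away from the concentration points, $u_\ep^{p-1-\ep}$ is small, so $v_{\ell\ep}$ is approximately harmonic and is reconstructed from its behaviour near the bubbles via the Green representation formula
\[
v_{\ell\ep}(x) = \mu_{\ell\ep}(p-\ep) \int_\Omega G(x,y)\, u_\ep(y)^{p-1-\ep}\, v_{\ell\ep}(y)\, dy.
\]
Splitting the integral into neighbourhoods of the $x_{i\ep}$ and inserting the inner expansion $v_{\ell\ep} \approx -\sum_k d_{\ell,i,k}\,\partial_{x_k} U_{\lambda_{i\ep}\ep^{\alpha_0}, x_{i\ep}}$ near $x_{i\ep}$, one computes the leading contribution of each bubble integral. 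The key identity is that $\int_{\mr^n} U_{1,0}^{p-1}\, \partial_{x_k} U_{1,0}\,(\text{test})$ produces, after the change of variables and a Taylor expansion of $G(x,\cdot)$ around $x_{i\ep}$, a term proportional to $\lambda_{i0}^{n-1} \ep^{(n-1)/(n-2)}\, \partial_{y_k} G(x,x_{i0})$ — this fixes both the power $\ep^{(n-1)/(n-2)}$ (hence the normalization $\ep^{-(n-1)/(n-2)}$ in the statement) and the constant $C_1$, which is exactly $\beta_n^p ((n+2)/n)\int |x|^2 (1+|x|^2)^{-(n+4)/2}\,dx$ after evaluating $\int_{\mr^n} U_{1,0}^{p-1} x_k \partial_{x_k} U_{1,0}\,dx$ against the relevant profile. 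This gives the "free" part $\sum_{i,k}\lambda_{i0}^{n-1} d_{\ell,i,k}\,\partial_{y_k}G(x,x_{i0})$.

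The remaining — and most delicate — step is to identify the "reflected" part, i.e. the first bracketed sum in \eqref{eq-eigenf-first-2} involving $m_1^{ij}$. This comes from the requirement that the outer solution, when expanded back \emph{near} each $x_{j\ep}$, must match the inner solution not only at leading order but at the next order: the regular part $H$ and the cross-bubble Green terms generate a bounded harmonic correction near $x_{j\ep}$, and consistency with the inner equation forces a linear system whose matrix is precisely $\mm_1$ from \eqref{eq-mtx-M_1} — the diagonal entry $C_0\lambda_{i0}^{-(n-2)} + \tau(x_{i0})$ arising from combining the $C_0$ term (traceable to the constant $C_0$ in Theorem~\ref{thm-eigen-zero}, i.e. the $\log\lambda$ term in $\Upsilon_m$) with the Robin function from the self-interaction, and the off-diagonal $-G(x_{i0},x_{j0})$ from bubble-bubble interaction. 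Inverting $\mm_1$ (legitimate by the positivity asserted via Lemma~\ref{lem-pos}) yields the coefficients $\sum_j m_1^{ij}(\cdots)$ multiplying $G(x,x_{i0})$. I expect the main obstacle to be making this matching rigorous at the correct order in $\ep$: one must control the error term $R_\ep$ in \eqref{eq-asym} and the difference $PU_{\lambda,x} - U_{\lambda,x} + H(\cdot,x)$-type quantities uniformly, and carefully track which terms are $O(\ep)$, $O(\ep^{(n-1)/(n-2)})$, $O(\ep^{2\alpha_0})$, etc., since for $n=3,4$ these orderings are close and the naive expansion can mix them; a Lyapunov–Schmidt-type decomposition of $v_{\ell\ep}$ into its projections onto $\text{span}\{\partial_{x_k}PU_{\lambda_{i\ep}\ep^{\alpha_0},x_{i\ep}}\}$ and an orthogonal remainder, with the remainder estimated by the invertibility of the linearized operator on the orthogonal complement, is the cleanest way to close the argument.
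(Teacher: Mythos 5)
Your high-level outline (inner classification via nondegeneracy, outer reconstruction via the Green representation, then identification of the $\mm_1$ system from matching) is the right scaffolding, but several of the load-bearing steps are wrong or missing, so the argument as written does not close.

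First, the stated eigenvalue limit is incorrect. You assert $\mu_{\ell\ep}\to (n-2)/(n+2)$ for $m+1\le\ell\le(n+1)m$, but that limit holds only for $1\le\ell\le m$ (Theorem~\ref{thm-eigen-zero}). For the range of this theorem one has $\mu_{\ell\ep}\to 1$; this is nontrivial and is the content of Proposition~\ref{prop-apriori} and Corollary~\ref{cor-apriori} in the paper, obtained by a variational test on the span of the cut-off functions $\{u_{\ep,i}, \psi_{\ep,i,k}\}$ giving $\mu_{((n+1)m)\ep}\le 1+O(\ep^{n/(n-2)})$. Your arithmetic $\tfrac{n-2}{n+2}\cdot\tfrac{n+2}{n-2}=1$ followed by the claim that the limit solves $-\Delta V = pU_{1,0}^{p-1}V$ is internally inconsistent: with $\mu_\ell=1/p$ the limit equation would be $-\Delta V = U_{1,0}^{p-1}V$, whose $H^1$ solutions are multiples of $U_{1,0}$, not elements of the translation/dilation kernel.

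Second, your mechanism for ruling out the $\partial U_{1,0}/\partial\lambda$ component of the inner limit does not work. The first $m$ eigenfunctions rescale to $c_{\ell i}U_{1,0}$, i.e.\ to the bubble itself, not to $\partial U_{1,0}/\partial\lambda$; the dilation modes belong to the \emph{last} block $\ell=(n+1)m+1,\dots,(n+2)m$. Orthogonality to $\{v_{1\ep},\dots,v_{m\ep}\}$ therefore eliminates the $U_{1,0}$-component of the limit, not the dilation component, so the "$\lambda$-direction is already used up" argument fails. What the paper actually does (Proposition~\ref{prop-weak-conv}) is delicate: pair the eigenfunction with the auxiliary solution $w_{i\ep}=(x-x_{i\ep})\cdot\nabla u_\ep + 2u_\ep/(p-1-\ep)$ of the linearized equation, use Green's identity on $\partial B^n(x_{i\ep},r)$, and compute the boundary integrals via Lemma~\ref{lem-ijl} and the criticality of $\Upsilon_m$. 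A nonzero dilation vector would then force $\mu_{\ell\ep}-1 = c\,\ep+o(\ep)$ with $c>0$ (positivity coming from the non-negative definiteness of $\mm_2$, Lemma~\ref{lem-pos}), contradicting the a priori bound $\mu_{\ell\ep}\le 1+O(\ep^{n/(n-2)})$ since $n/(n-2)>1$. This is the essential idea you would need to supply.

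Third, for the coefficients $m_1^{ij}$ in the outer expansion you gesture at a matched-asymptotics/Lyapunov–Schmidt closure, but the paper's proof instead pins down $\kappa_{i1}=\int_{B(x_{i\ep},r)}u_\ep^{p-1-\ep}v_{\ell\ep}$ through a \emph{bilinear Poho\v{z}aev identity} \eqref{eq-poho} applied with $f=u_\ep$, $g=v_{\ell\ep}$ and with the harmonic parts $g_{i\ep},h_{i\ep}$ (Lemma~\ref{lem-est-k_1i}). This produces the linear system $\mm_1(\kappa_{11},\dots,\kappa_{m1})^T=\cdots$ exactly, at the order $\ep^{(n-1)/(n-2)}$, without needing an implicit-function/fixed-point closure. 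In short: the skeleton is right but the limit of $\mu_{\ell\ep}$, the exclusion of the $\lambda$-mode, and the algebra identifying $\mm_1$ all require the Poho\v{z}aev-type identities that are absent from your proposal.
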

If $\mathbf{d}_{\ell} \in \mr^{mn}$ denotes a nonzero vector defined by
\begin{equation}\label{eq-eigenv-first-2}
\mathbf{d}_{\ell} = \(\lambda_{10}^{n-2 \over 2} d_{\ell,1,1}, \cdots, \lambda_{10}^{n-2 \over 2} d_{\ell,1,n}, \lambda_{20}^{n-2 \over 2} d_{\ell,2,1},
\cdots, \lambda_{(m-1)0}^{n-2 \over 2} d_{\ell,m-1,n}, \lambda_{m0}^{n-2 \over 2} d_{\ell,m,1}, \cdots, \lambda_{m0}^{n-2 \over 2} d_{\ell,m,n}\)^T,
\end{equation}
then we can give a further description on it.
Our next theorem is devoted to this fact as well as a quite precise estimate of the eigenvalues.
Set an $m \times mn$ matrix $\map = (\map_{it})_{1 \le i \le m, 1 \le t \le mn}$ and a symmetric $mn \times mn$ matrix $\mq = (\mq_{st})_{1 \le s, t \le mn}$ as follows.
\begin{equation}\label{eq-mtx-P}
\map_{i, (j-1)n+k} = \begin{cases}
\lambda_{j0}^{n \over 2} \dfrac{\pa G}{\pa y_k}(x_{i0},x_{j0}) = \lambda_{j0}^{n \over 2} \dfrac{\pa G}{\pa x_k}(x_{j0},x_{i0}) &\text{if } i \ne j,\\
- \lambda_{i0}^{n \over 2} \dfrac{1}{2} \dfrac{\pa \tau}{\pa x_k}(x_{i0}) &\text{if } i = j,
\end{cases}
\end{equation}
for $i,\ j \in \{1, \cdots, m\}$ and $k \in \{1, \cdots, n\}$, and
\begin{equation}\label{eq-mtx-Q}
\mq_{(i-1)n+k,(j-1)n+q} = \begin{cases}
\(\lambda_{i0}\lambda_{j0}\)^{n \over 2} \dfrac{\pa^2 G}{\pa x_k \pa y_q}\(x_{i0}, x_{j0}\) &\text{if } i \ne j, \\
-\dfrac{\lambda_{i0}^n}{2} \dfrac{\pa^2\tau}{\pa x_k \pa x_q}\(x_{i0}\)
+ \lambda_{i0}^{n+2 \over 2} \sum\limits_{l \ne i} \lambda_{l0}^{n-2 \over 2} \dfrac{\pa^2 G}{\pa x_k \pa x_q}\(x_{i0}, x_{l0}\) &\text{if } i = j,
\end{cases}
\end{equation}
for $i,\ j \in \{1, \cdots, m\}$ and $k,\ q \in \{1, \cdots, n\}$.

\begin{thm}\label{thm-eigen-first-2}
Let $\ma_2$ be an $mn \times mn$ symmetric matrix
\[\ma_2 = \map^T\mm_1^{-1}\map + \mq.\]
Then as $\ep \to 0$ we have
\begin{equation}\label{eq-eigenv-first}
\mu_{\ell \ep} = 1 - c_0 \rho^2_{\ell} \ep^{n \over n-2} + o\(\ep^{n \over n-2}\)
\end{equation}
for some $c_0 > 0$ (whose value is computed in \eqref{eq-eigen-cha-0}) where $\rho^2_{\ell}$ is the $(\ell-m)$-th eigenvalue of the matrix $\ma_2$.
Furthermore the vector $\mathbf{d}_{\ell} \in \mr^{mn}$ is an eigenvector corresponding to the eigenvalue $\rho^2_{\ell}$ of $\ma_2$, which satisfies $\mathbf{d}_{\ell_1}^T \cdot \mathbf{d}_{\ell_2}^T = 0$ for $m+1 \le \ell_1 \ne \ell_2 \le (n+1)m$.
\end{thm}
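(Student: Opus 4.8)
We test the eigenvalue equation \eqref{eq-lin} with $(v,\mu)=(v_{\ell\ep},\mu_{\ell\ep})$ against the projected translation modes and a handful of auxiliary modes, reduce the problem to a finite linear system, and read off $\ma_2$. Write $U_{i\ep}:=U_{\lambda_{i\ep}\ep^{\alpha_0},x_{i\ep}}$, $PU_{i\ep}:=PU_{\lambda_{i\ep}\ep^{\alpha_0},x_{i\ep}}$, and let $Z_{iq}:=\pa U_{i\ep}/\pa (x_{i\ep})_q$ and $PZ_{iq}:=\pa PU_{i\ep}/\pa (x_{i\ep})_q$ be the translation modes and their $H^1_0(\Omega)$-projections, and $\Psi_{i\ep}$, $P\Psi_{i\ep}$ the dilation modes and their projections $(1\le i\le m,\ 1\le q\le n)$. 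Since $\pa/\pa(x_{i\ep})_q$ commutes with $\Delta$ and $-\Delta U_{i\ep}=U_{i\ep}^p$, one has $-\Delta PZ_{iq}=-\Delta Z_{iq}=pU_{i\ep}^{p-1}Z_{iq}$; testing \eqref{eq-lin} against $PZ_{iq}$ and integrating by parts thus yields the $mn$ scalar identities
\begin{equation}\label{eq-plan-proj}
p\int_{\Omega}U_{i\ep}^{p-1}Z_{iq}\,v_{\ell\ep}\,dx=\mu_{\ell\ep}(p-\ep)\int_{\Omega}u_{\ep}^{p-1-\ep}v_{\ell\ep}\,PZ_{iq}\,dx,
\end{equation}
while testing against $PU_{i\ep}$ and $P\Psi_{i\ep}$ gives companion identities. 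The plan is to expand every integral appearing here using, on one side, the convergences supplied by Theorem~\ref{thm-eigen-first} --- both the local limit $\tilde v_{\ell i\ep}\to-\sum_q d_{\ell,i,q}\,\pa U_{1,0}/\pa x_q$ in $C^1_{\mathrm{loc}}(\mr^n)$ and the global limit \eqref{eq-eigenf-first-2} --- and, on the other side, the standard expansions $u_{\ep}=\sum_i\alpha_{i\ep}PU_{i\ep}+R_{\ep}$ (with $\|R_{\ep}\|$ small), $PU_{i\ep}=U_{i\ep}-\varphi_{i\ep}$ with $\varphi_{i\ep}=\beta_n\gamma_n^{-1}(\lambda_{i\ep}\ep^{\alpha_0})^{(n-2)/2}H(\cdot,x_{i\ep})+(\text{h.o.t.})$, $PZ_{iq}=Z_{iq}-\pa\varphi_{i\ep}/\pa(x_{i\ep})_q$, and the far-field estimate $PU_{j\ep}=\beta_n\gamma_n^{-1}(\lambda_{j\ep}\ep^{\alpha_0})^{(n-2)/2}G(\cdot,x_{j\ep})+(\text{h.o.t.})$ away from $x_{j\ep}$.

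Rewriting the $C^1_{\mathrm{loc}}$-limit in the $Z$-normalization shows that near $x_{i\ep}$ the principal part of $v_{\ell\ep}$ is $\sum_q d_{\ell,i,q}(\lambda_{i\ep}\ep^{\alpha_0})^{n/2}Z_{iq}$, so the left side of \eqref{eq-plan-proj} contributes the diagonal quantity $d_{\ell,i,q}(\lambda_{i\ep}\ep^{\alpha_0})^{n/2}\,p\int U_{i\ep}^{p-1}Z_{iq}^2$ (the off-diagonal parity integrals $\int U_{i\ep}^{p-1}Z_{iq}Z_{ik}$, $k\ne q$, vanishing) plus corrections, and the same quantity times $\mu_{\ell\ep}$ appears from the principal part of the right side. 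One then expands $u_\ep^{p-1-\ep}$ near $x_{i\ep}$ around $U_{i\ep}^{p-1}$, producing the scalar term $\ep\,U_{i\ep}^{p-1}\log U_{i\ep}$ and the term $(p-1-\ep)U_{i\ep}^{p-2}\big((\alpha_{i\ep}-1)U_{i\ep}-\alpha_{i\ep}\varphi_{i\ep}+\sum_{j\ne i}\alpha_{j\ep}PU_{j\ep}\big)$, inserts $PZ_{iq}=Z_{iq}-\pa\varphi_{i\ep}/\pa(x_{i\ep})_q$, and Taylor-expands the smooth factors $\varphi_{i\ep}$, $\pa\varphi_{i\ep}/\pa(x_{i\ep})_q$ and $PU_{j\ep}$ $(j\ne i)$ near $x_{i\ep}$ to second order. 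The resulting integrals are computed through the parity/scaling relations
\[\int_{\mr^n}U_{1,0}^{p-1}(\pa_q U_{1,0})=0,\qquad \int_{\mr^n}U_{1,0}^{p-1}(\pa_q U_{1,0})\,x_k=-\tfrac1p\delta_{qk}\!\int_{\mr^n}\!U_{1,0}^p,\qquad \int_{\mr^n}U_{1,0}^{p-1}(\pa_q U_{1,0})(\pa_k U_{1,0})=\delta_{qk}\,c_1',\]
together with $\int_{\mr^n}U_{1,0}^{p-1}(\pa_q U_{1,0})(x\cdot\nabla U_{1,0}+\tfrac{n-2}{2}U_{1,0})=0$ (the orthogonality of translation and dilation modes, which decouples them at this order) and the integration-by-parts identity $(p-1)\int U_{i\ep}^{p-2}(\pa_kU_{i\ep})(\pa_qU_{i\ep})\,g=-\int U_{i\ep}^{p-1}(\pa_k\pa_qU_{i\ep})\,g-\int U_{i\ep}^{p-1}(\pa_qU_{i\ep})(\pa_kg)$, used repeatedly to treat the borderline integrals $\int U_{i\ep}^{p-2}(\pa_kU_{i\ep})(\pa_qU_{i\ep})$; here $c_1'>0$. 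Out of this bookkeeping three structures emerge. First, the companion identities obtained by testing against $PU_{j\ep}$ reduce at the leading order to a linear system $\mm_1\,\mathbf a=-(\text{const})\,\map\,\mathbf d_{\ell}$ for the vector $\mathbf a$ of $PU$-components of $v_{\ell\ep}$, with $\mm_1,\map$ as in \eqref{eq-mtx-M_1}, \eqref{eq-mtx-P}; since $\mm_1$ is positive definite (Lemma~\ref{lem-pos}), $\mathbf a=-(\text{const})\,\mm_1^{-1}\map\,\mathbf d_{\ell}$ --- consistently, the coefficient of $G(\cdot,x_{i0})$ in \eqref{eq-eigenf-first-2} is, up to $C_1$, precisely $(\mm_1^{-1}\map\,\mathbf d_{\ell})_i$ --- and substituting this back into \eqref{eq-plan-proj} through the $\sum_{j\ne i}\alpha_{j\ep}PU_{j\ep}$-term contributes $(\map^T\mm_1^{-1}\map\,\mathbf d_{\ell})_{(i-1)n+q}$. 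Secondly, the surviving second-order Taylor contributions of $\varphi_{i\ep}$, $\pa\varphi_{i\ep}/\pa(x_{i\ep})_q$ and of the $PU_{j\ep}$'s assemble into $(\mq\,\mathbf d_{\ell})_{(i-1)n+q}$ with $\mq$ as in \eqref{eq-mtx-Q}. Thirdly --- and this is where the numerous \emph{intermediate-order} contributions (the $\ep\,U_{i\ep}^{p-1}\log U_{i\ep}$ term, the first-order Taylor terms, the $(\alpha_{i\ep}-1)$- and inner-correction terms, the zeroth-order Taylor pieces of the $U_{i\ep}^{p-2}\varphi_{i\ep}$- and $U_{i\ep}^{p-2}PU_{j\ep}$-terms, all of size between $\ep$ and $\ep^{n/(n-2)}$) must be shown to cancel --- one uses the equations satisfied by the concentration parameters $(\lambda_{i\ep},x_{i\ep},\alpha_{i\ep})$ established in the previous sections: their $x_i$-component says precisely that $\lambda_{i0}^{n-2}\pa\tau(x_{i0})/\pa x_q=2\sum_{j\ne i}(\lambda_{i0}\lambda_{j0})^{(n-2)/2}\pa G(x_{i0},x_{j0})/\pa x_q$ (the $x_i$-stationarity of $\Upsilon_m$ in \eqref{eq-upsilon}), their $\lambda_i$-component fixes the diagonal $O(\ep)$ balance, and the $\alpha_i$-component (equivalently, the equation for $u_{\ep}$) annihilates the remaining $O(\ep)$ diagonal terms; after all of these cancellations only the $\ep^{n/(n-2)}$-order contribution survives.

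Collecting everything, the $(i,q)$-identity \eqref{eq-plan-proj}, after dividing by the common normalizing factor, takes the form
\[\big((1-\mu_{\ell\ep})\,\kappa+o(\ep^{n/(n-2)})\big)\,\lambda_{i0}^{(n-2)/2}d_{\ell,i,q}=\kappa'\,\ep^{n/(n-2)}\,(\ma_2\,\mathbf d_{\ell})_{(i-1)n+q}+o(\ep^{n/(n-2)}),\]
with explicit positive constants $\kappa,\kappa'$ whose ratio $\kappa'/\kappa$ is $c_0$. Since $\mathbf d_{\ell}\ne0$, dividing by $\ep^{n/(n-2)}$ and passing to a subsequence yields $(1-\mu_{\ell\ep})\ep^{-n/(n-2)}\to c_0\rho$ with $\ma_2\,\mathbf d_{\ell}=\rho\,\mathbf d_{\ell}$, which is \eqref{eq-eigenv-first} with $\rho=\rho^2_{\ell}$. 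That $\rho^2_{\ell}$ is the $(\ell-m)$-th eigenvalue of $\ma_2$ follows by matching orderings: there are exactly $mn$ indices in the range $m+1\le\ell\le(n+1)m$, $\mu_{\ell\ep}$ is nondecreasing in $\ell$ (so $1-\mu_{\ell\ep}$ is nonincreasing and the limiting eigenvalues come out in the appropriate monotone order, consistently with $c_0>0$), and the min--max characterization of $\mu_{\ell\ep}$ on the $(n+2)m$-dimensional model space $\mathrm{span}\{PU_{i\ep},PZ_{iq},P\Psi_{i\ep}\}$ passes to the limit and reproduces the min--max characterization of the eigenvalues of $\ma_2$. For the orthogonality, the eigenfunctions satisfy $\int_{\Omega}u_{\ep}^{p-1-\ep}v_{\ell_1\ep}v_{\ell_2\ep}\,dx=0$ for $\ell_1\ne\ell_2$; this integral concentrates at the bubbles, and evaluating it with the $C^1_{\mathrm{loc}}$-limits and $\int U_{1,0}^{p-1}(\pa_qU_{1,0})(\pa_kU_{1,0})=\delta_{qk}\,c_1'$ gives (up to a further common positive constant) $c_1'\sum_{i=1}^m\lambda_{i0}^{n-2}\sum_{q=1}^n d_{\ell_1,i,q}d_{\ell_2,i,q}+o(1)=c_1'\,\mathbf d_{\ell_1}^T\!\cdot\mathbf d_{\ell_2}+o(1)$, whence $\mathbf d_{\ell_1}^T\!\cdot\mathbf d_{\ell_2}=0$; in case of repeated eigenvalues the $\mathbf d_{\ell}$ within one eigenspace may be chosen mutually orthogonal, which is consistent since $\{\mathbf d_{\ell}:m+1\le\ell\le(n+1)m\}$ spans $\mr^{mn}$.

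The main obstacle is precisely the precision demanded by these expansions. Because $n/(n-2)\ge1$ --- with value $3$ at $n=3$, $2$ at $n=4$, and tending to $1$ only as $n\to\infty$ --- every ingredient (the regular parts $\varphi_{i\ep}$ and their derivatives, the interaction terms $PU_{j\ep}$, the remainder $R_{\ep}$, the inner correction of $v_{\ell\ep}$, and the parameters $\alpha_{i\ep},\lambda_{i\ep}$) must be carried to an order that, for small $n$, lies well past the naive leading term and involves $\ep\log(1/\ep)$ corrections; the heart of the argument is to organize the many intermediate-order terms so that they cancel --- via the integration-by-parts identity above, the stationarity of $\Upsilon_m$, and the equation for $u_{\ep}$ --- leaving only the $\ma_2$-contribution at order $\ep^{n/(n-2)}$. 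The second delicate point is a uniform-in-$\ep$ a priori estimate showing that the part of $v_{\ell\ep}$ orthogonal (in the $u_{\ep}^{p-1-\ep}$-weighted inner product) to the model space is $o$ of the leading coefficients at this fine scale --- a quantitative invertibility of the linearized operator on that orthogonal complement --- without which the projected identities would not control $\mu_{\ell\ep}$ and $\mathbf d_{\ell}$; it is here, as the paper stresses, that the case $n=3$ requires the most care.
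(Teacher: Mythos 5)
Your plan tests the linearized equation against the projected translation modes $PZ_{iq}$ (and companions) and then expands the resulting bulk integrals, which is the Lyapunov--Schmidt route. The paper proceeds quite differently: it exploits that $\partial u_\ep/\partial x_k$ is an \emph{exact} solution of the linearized equation $-\Delta(\partial_k u_\ep)=(p-\ep)u_\ep^{p-1-\ep}\partial_k u_\ep$ (not just an approximate one like $Z_{iq}$), so multiplying by $v_{\ell\ep}$, integrating over the small ball $B^n(x_{i\ep},r)$ and using Green's identity yields the exact relation \eqref{eq-apriori-c} (here applied for general $i,k$), whose right-hand side carries the factor $(\mu_{\ell\ep}-1)$ \emph{with no error term at all}. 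Since $r$ is fixed, the boundary integral on $\partial B^n(x_{i\ep},r)$ lies in the far field, where the refined asymptotics $\ep^{-1/2}u_\ep\to C_2\,\mg(x)\bl$ (Lemma~\ref{lem-u-asym}) and $\ep^{-(n-1)/(n-2)}v_{\ell\ep}\to C_1(\mg\mm_1^{-1}\map+\widetilde\mg)\mathbf d_\ell$ (Proposition~\ref{prop-opt-conv}) are available in $C^1$; the boundary integral is then evaluated exactly by Lemma~\ref{lem-jjl}, whose entries produce $\map^T\mm_1^{-1}\map+\mq=\ma_2$ after using the stationarity of $\Upsilon_m$. In other words, the paper's choice of test function turns the entire ``bookkeeping nightmare'' you correctly identify---the intermediate-order cancellations between $u_\ep^{p-1-\ep}$ and $U_{i\ep}^{p-1}$, the $\ep\log\ep$ corrections for small $n$, the inner correction to $v_{\ell\ep}$---into a single far-field surface integral computation, so none of those terms ever appears.

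Beyond being a different route, your proposal as written has genuine gaps that prevent it from being a proof. You flag them yourself: (i) showing that all contributions at orders between $\ep$ and $\ep^{n/(n-2)}$ cancel (especially the $U^{p-1}\log U$ term and the various $U^{p-2}\varphi_{i\ep}$, $U^{p-2}PU_{j\ep}$ terms) is asserted via ``the stationarity of $\Upsilon_m$ and the equation for $u_\ep$'' but not actually carried out, and for $n=3,4$ these cancellations sit several orders below the leading term, so they are far from automatic; (ii) the ``quantitative invertibility of the linearized operator on the orthogonal complement'' is precisely the kind of uniform-in-$\ep$ a~priori estimate that the paper does \emph{not} need to prove (it instead uses the pointwise decay $|\tilde v_{\ell i\ep}|\le C(1+|x|)^{-(n-2-\zeta)}$ from Lemma~\ref{lem-w-bound}, obtained by the moving-sphere/Harnack machinery of Section~2, to justify dominated convergence in the concentration integrals). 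Without carrying out (i) and supplying (ii) in a form adapted to $n=3$, the coefficient you extract is not actually identified with $\ma_2$, and the main claim of the theorem is not established. Your identification of $\rho_\ell^2$ as the $(\ell-m)$-th eigenvalue via eigenvalue ordering and the orthogonality $\mathbf d_{\ell_1}^T\cdot\mathbf d_{\ell_2}=0$ via the weighted orthogonality $\int u_\ep^{p-1-\ep}v_{\ell_1\ep}v_{\ell_2\ep}=0$ is, on the other hand, the same argument as the paper's (cf.\ \eqref{eq-eigen-zero-4}) and is fine.
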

\begin{rem}
If the number of blow-up points is $m = 1$, then $\map = 0$ and so the matrix $\ma_2$ is reduced to ${1 \over 2}\lambda_{10}^n D^2\tau(x_{10})$, the Hessian of the Robin function up to a constant multiple,
which is consistent with the result of \cite{GP}.
Note that our Robin function has the opposite sign of that in \cite{GP}, so the sign of the coefficient for $\ep^{n \over n-2}$ in \eqref{eq-eigenv-first} is negative in our case.
See also Remark \ref{rem-est-k_1i}.
\end{rem}

Lastly, the $\ell$-th eigenpair for $(n+1)m+1 \le \ell \le (n+2)m$ can be examined.
Let $\ma_3 = \(\ma^3_{ij}\)_{1 \le i, j \le m}$ be a symmetric matrix whose components are given by
\begin{equation}\label{eq-mtx-A_3}
\ma_{ij}^3 = \begin{cases}
-\(\lambda_{i0}\lambda_{j0}\)^{n-2 \over 2} G\(x_{i0},x_{j0}\) &\text{if } i \ne j,\\
C_0 + \lambda_{i0}^{n-2} \tau(x_{i0}) &\text{if } i = j.
\end{cases}\end{equation}
\begin{thm}\label{thm-eigen-second}
For each $(n+1)m + 1 \le \ell \le (n+2)m$, let $\rho_{\ell}^3$ be the $\(\ell - m(n+1)\)$-th eigenvalue of $\ma_{ij}^3$, which will be shown be positive.
Then there exist a nonzero vector
\begin{equation}\label{eq-hde}
\hat{\mathbf{d}}_{\ell} = \(\lambda_{10}^{n-2 \over 2}d_{\ell,1}, \cdots, \lambda_{m0}^{n-2 \over 2}d_{\ell,m}\)^T \in \mr^m
\end{equation}
and a positive number $c_1$ such that
\[\tilde{v}_{\ell i \ep} \rightharpoonup d_{\ell,i} \(\frac{\pa U_{1,0}}{\pa \lambda}\) \quad \text{weakly in } H^1(\mr^n)\]
and
\[\mu_{\ell \ep} = 1 + c_1 \rho^3_{\ell} \ep + o (\ep) \quad \text{as } \ep \to 0.\]
Furthermore, $\hat{\mathbf{d}}_{\ell}$ is a corresponding eigenvector to $\rho^3_{\ell}$,
and it holds that $\hat{\mathbf{d}}_{\ell_1}^T \cdot \hat{\mathbf{d}}_{\ell_2}^T = 0$ for $(n+1)(m+1) \le \ell_1 \ne \ell_2 \le (n+2)m$.
\end{thm}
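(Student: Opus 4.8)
The plan is to run the same scheme as for Theorems~\ref{thm-eigen-zero}--\ref{thm-eigen-first-2}, now picking out the dilation direction $\frac{\pa U_{1,0}}{\pa\lambda}$ in the kernel of the limiting linearized operator $L_0w=-\Delta w-pU_{1,0}^{p-1}w$ on $\mr^n$, whose kernel is spanned by $\frac{\pa U_{1,0}}{\pa x_1},\dots,\frac{\pa U_{1,0}}{\pa x_n},\frac{\pa U_{1,0}}{\pa\lambda}$. As in the previous proofs, the preliminary spectral analysis shows that exactly $(n+1)m$ eigenvalues $\mu_{\ell\ep}$ converge to $1$; those with $m+1\le\ell\le(n+1)m$ are the translation ones and, by Theorem~\ref{thm-eigen-first-2}, approach $1$ only at the rate $\ep^{n/(n-2)}$, so the remaining $m$, with $(n+1)m+1\le\ell\le(n+2)m$, must be the dilation ones. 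First I would settle the coarse limit of the eigenfunctions: for such $\ell$ one has $\mu_{\ell\ep}\to1$, and after the rescaling \eqref{eq-v-exp} and standard elliptic estimates, a subsequence of $\tilde v_{\ell i\ep}$ converges in $C^1_{\mathrm{loc}}(\mr^n)$ to some $w_i\in\ker L_0$, say $w_i=\sum_{k=1}^n a_{ik}\frac{\pa U_{1,0}}{\pa x_k}+d_{\ell,i}\frac{\pa U_{1,0}}{\pa\lambda}$. Since $v_{\ell\ep}$ is orthogonal in $L^2\!\left(\Omega;(p-\ep)u_\ep^{p-1-\ep}\,dx\right)$ to each $v_{\ell'\ep}$ with $\ell'\le(n+1)m$, and those eigenfunctions have rescaled limits spanning (bubble by bubble) $U_{1,0}$ and the $\frac{\pa U_{1,0}}{\pa x_k}$, letting $\ep\to0$ in these relations (the integrals concentrate on the bubbles) forces $a_{ik}=0$; moreover the Green representation $v_{\ell\ep}(x)=\mu_{\ell\ep}(p-\ep)\int_\Omega G(x,y)u_\ep^{p-1-\ep}(y)v_{\ell\ep}(y)\,dy$ shows $v_{\ell\ep}\to0$ uniformly away from the $x_{i0}$ and that $\|v_{\ell\ep}\|_{L^\infty}$ is attained near a bubble, so the $d_{\ell,i}$ do not all vanish. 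Hence $\tilde v_{\ell i\ep}\rightharpoonup d_{\ell,i}\frac{\pa U_{1,0}}{\pa\lambda}$ with $\hat{\mathbf d}_\ell\ne0$.

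Next I would extract the finite-dimensional eigenvalue equation. Decompose $v_{\ell\ep}=\sum_{i=1}^m e_{i\ep}\frac{\pa}{\pa\lambda}PU_{\lambda_{i\ep}\ep^{\alpha_0},x_{i\ep}}+w_{\ell\ep}$, with $w_{\ell\ep}$ $H_0^1$-orthogonal to all $PU_{\lambda_{i\ep}\ep^{\alpha_0},x_{i\ep}}$ and to their $\lambda$- and $x$-derivatives; the uniform non-degeneracy of the linearized operator on this complement makes the contribution of $w_{\ell\ep}$ to the reduced system negligible at the order we need. Testing \eqref{eq-lin} against $\frac{\pa}{\pa\lambda}PU_{\lambda_{j\ep}\ep^{\alpha_0},x_{j\ep}}$ and using $-\Delta\frac{\pa}{\pa\lambda}PU_{\lambda,x_0}=pU_{\lambda,x_0}^{p-1}\frac{\pa}{\pa\lambda}U_{\lambda,x_0}$ yields, for $j=1,\dots,m$,
\[
\sum_{i=1}^m e_{i\ep}\left(p\int U_j^{p-1}\frac{\pa U_j}{\pa\lambda}\frac{\pa PU_i}{\pa\lambda}-\mu_{\ell\ep}(p-\ep)\int u_\ep^{p-1-\ep}\frac{\pa PU_j}{\pa\lambda}\frac{\pa PU_i}{\pa\lambda}\right)=o(\ep)\max_i|e_{i\ep}|,
\]
where $U_j=U_{\lambda_{j\ep}\ep^{\alpha_0},x_{j\ep}}$. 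Writing $\mu_{\ell\ep}=1+\nu_\ep$ and splitting the bracket into the difference of the two integrals plus $-\nu_\ep(p-\ep)\int u_\ep^{p-1-\ep}\frac{\pa PU_j}{\pa\lambda}\frac{\pa PU_i}{\pa\lambda}$, the latter is $\nu_\ep$ times a positive diagonal matrix up to lower order, while the former must be expanded to first order in $\ep$. Its $O(\ep)$ part has three sources: the subcritical weight $u_\ep^{-\ep}=1-\ep\log u_\ep+\cdots$, which upon rescaling produces the constant $C_0$ through integrals like $\int U_{1,0}^{p-1}\!\left(\frac{\pa U_{1,0}}{\pa\lambda}\right)^2\log U_{1,0}\,dy$ (the $\ep\log\ep$ term it formally carries cancelling against the one in the known expansions of $\alpha_{i\ep}$ and $\lambda_{i\ep}$); the difference $PU_i-U_i$, giving $\tau(x_{i0})$ on the diagonal; and the remaining bubbles inside $u_\ep$, giving $-G(x_{i0},x_{j0})$ off the diagonal. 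Collecting the terms, the difference of integrals equals $\ep$ times a positive constant times $\left(\lambda_{i0}^{-(n-2)/2}\ma^3_{ij}\lambda_{j0}^{-(n-2)/2}\right)_{i,j}$ up to $o(\ep)$, with $\ma^3$ as in \eqref{eq-mtx-A_3}.

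Dividing by $\ep$, letting $\ep\to0$, and passing to suitably normalized coefficients $\hat d_{\ell,i}$ (so as to symmetrize), the system collapses to $\ma_3\hat{\mathbf d}_\ell=\rho^3_\ell\hat{\mathbf d}_\ell$ together with $\nu_\ep/\ep\to c_1\rho^3_\ell$, where $c_1>0$ is the ratio of the two integral constants; this gives $\mu_{\ell\ep}=1+c_1\rho^3_\ell\ep+o(\ep)$ and exhibits $\hat{\mathbf d}_\ell$ as an eigenvector of $\ma_3$ for $\rho^3_\ell$. Since $\ma_3=\mathrm{diag}(\lambda_{i0}^{(n-2)/2})\,\mm_1\,\mathrm{diag}(\lambda_{i0}^{(n-2)/2})$ with $\mm_1$ positive definite by Lemma~\ref{lem-pos}, $\ma_3$ is positive definite, so all $\rho^3_\ell>0$; together with the monotonicity of $\ell\mapsto\mu_{\ell\ep}$, $c_1>0$, and the $\ep^{n/(n-2)}$ rate of the translation block, this places these eigenvalues strictly above $1$ — confirming that they are indeed the last $m$ among the first $(n+2)m$ — and identifies $\rho^3_\ell$ as the $(\ell-m(n+1))$-th eigenvalue of $\ma_3$. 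Finally, for $\ell_1\ne\ell_2$ in the range, $\int_\Omega u_\ep^{p-1-\ep}v_{\ell_1\ep}v_{\ell_2\ep}=0$; expanding through the decomposition, the off-diagonal bubble interactions are of lower order and the diagonal ones reproduce, after the same normalization, a positive multiple of $\hat{\mathbf d}_{\ell_1}^T\cdot\hat{\mathbf d}_{\ell_2}$, so $\hat{\mathbf d}_{\ell_1}^T\cdot\hat{\mathbf d}_{\ell_2}=0$ (and for a repeated eigenvalue this fixes the choice of the $\hat{\mathbf d}$'s inside the eigenspace).

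The principal difficulty is the first-order expansion of $p\int U_j^{p-1}\frac{\pa U_j}{\pa\lambda}\frac{\pa PU_i}{\pa\lambda}-(p-\ep)\int u_\ep^{p-1-\ep}\frac{\pa PU_j}{\pa\lambda}\frac{\pa PU_i}{\pa\lambda}$ with a genuine $o(\ep)$ remainder: one must track the mutually cancelling $\ep\log\ep$ contributions (from $u_\ep^{-\ep}$ and from the coefficients $\alpha_{i\ep},\lambda_{i\ep}$), the precise size of $PU_{\lambda_{i\ep}\ep^{\alpha_0},x_{i\ep}}-U_{\lambda_{i\ep}\ep^{\alpha_0},x_{i\ep}}$ together with its $\lambda$-derivative (which encode the Robin and Green data), and the cross-bubble terms; this is exactly where the analysis becomes delicate for $n=3$, the Robin- and interaction-corrections then being comparable in size to several of the remainder terms. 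The companion point — the uniform invertibility of the linearized operator on the orthogonal complement of the approximate kernel — is by now routine, but it too must be handled with the $n=3$ care already required elsewhere in the paper.
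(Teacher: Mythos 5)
Your overall plan reproduces the paper's Steps~1--2 essentially verbatim (Lemma~\ref{lem-apriori}: $\mu_{\ell\ep}\to1$ via the variational characterization with the augmented test space including $\psi_{\ep,i,n+1}$; Lemma~\ref{lem-weak-conv-2}: the translation component of the rescaled limit vanishes, using orthogonality to $v_{\ell'\ep}$ for $\ell'\le(n+1)m$ together with the fact, from Proposition~\ref{prop-eigen-cha}, that $\{\mathbf{d}_{m+1},\dots,\mathbf{d}_{(n+1)m}\}$ already spans $\mr^{mn}$). Where you diverge genuinely is the derivation of the matrix eigenvalue equation.

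The paper does \emph{not} do a Lyapunov--Schmidt decomposition $v_{\ell\ep}=\sum_i e_{i\ep}\,\partial_\lambda PU_i+w_{\ell\ep}$ nor tests \eqref{eq-lin} against $\partial_\lambda PU_j$. Instead it recycles the local Poho\v{z}aev/Green identity of Proposition~\ref{prop-weak-conv}: test \eqref{eq-main-e} and \eqref{eq-lin} against each other on $B^n(x_{i\ep},r)$ using the dilation vector field $w_{i\ep}=(x-x_{i\ep})\cdot\nabla u_\ep+\frac{2u_\ep}{p-1-\ep}$ (see \eqref{eq-w-v}), then pass to the limit using the $C^1$-away-from-blow-up expansions $\ep^{-1/2}u_\ep\to C_2\sum\lambda_j^{(n-2)/2}G(\cdot,x_j)$ and $\ep^{-1}v_{\ell\ep}\to C_3\sum d_{\ell,j}\lambda_j^{n-2}G(\cdot,x_j)$ and the boundary-integral computation of Lemma~\ref{lem-ijl}; the criticality of $\Upsilon_m$ is inserted once to produce the $C_0$ term. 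This yields \eqref{eq-weak-conv-4}, and positivity of $\rho^3_\ell$ follows from \eqref{eq-weak-conv-3} and Lemma~\ref{lem-pos} (the factorization $\ma_3=\mathrm{diag}(\lambda_i^{(n-2)/2})\,\mm_1\,\mathrm{diag}(\lambda_i^{(n-2)/2})$ you note is also fine and equivalent).

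Your route (decompose, test against $\partial_\lambda PU_j$, and track the $O(\ep)$ part of the bilinear form including $u_\ep^{-\ep}$, $PU_i-U_i$, and cross-bubble corrections) is in principle viable and is in the spirit of Takahashi~\cite{T}, but it opens several technical fronts that the paper avoids entirely: you need the uniform invertibility of the linearized operator on the orthogonal complement (never established in this paper), a quantitative bound on $w_{\ell\ep}$ good enough that its contribution is $o(\ep)\max|e_{i\ep}|$, and the precise $\ep\log\ep$ cancellation between the weight expansion and the parameter expansions $\alpha_{i\ep},\lambda_{i\ep}$ --- none of which you prove. The local Poho\v{z}aev route trades all of this for the decay estimate \eqref{eq-LZ} (Proposition~\ref{prop-LZ}) and the pointwise expansions away from the blow-up points, which are already in hand from Section~\ref{sec-pre}. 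So your argument is a legitimate alternative in outline, but as written it leaves substantially more to be verified than the paper's proof, and the heart of the two proofs (how the matrix $\ma_3$ emerges) is genuinely different: surface integral of $w_{i\ep}$-against-$v_{\ell\ep}$ on small spheres in the paper, versus a reduced bilinear-form expansion in your proposal.
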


\noindent As a result, we obtain the following corollary.
\begin{cor}
Let $\text{ind}(u_{\ep})$ and $\text{ind}_0(u_{\ep})$ be the morse index and the augmented Morse index of the solution $u_{\ep}$ to \eqref{eq-main-e}, respectively.
Also for the matrix $\ma_2$ in Theorem \ref{thm-eigen-first-2},
$\text{ind}(-\ma_2)$ and $\text{ind}_0(-\ma_2)$ are similarly understood. Then
\[m \le m + \text{ind}(-\ma_2) \le \text{ind}(u_{\ep}) \le \text{ind}_0(u_{\ep}) \le m + \text{ind}_0(-\ma_2) \le (n+1)m\]
for sufficiently small $\ep > 0$.
Therefore if $\ma_2$ is nondegenerate, then so is $u_{\ep}$ and
\[\text{ind}(u_{\ep}) = m + \text{ind}(-\ma_2) \in [m, (n+1)m].\]
\end{cor}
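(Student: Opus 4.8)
The plan is to reduce the statement to a count of eigenvalues of~\eqref{eq-lin} that lie below, at, or above $1$, and then to read that count off Theorems~\ref{thm-eigen-zero}, \ref{thm-eigen-first-2} and~\ref{thm-eigen-second}. First I would record the standard fact that the Morse index and the augmented Morse index of $u_\ep$ are governed by the quadratic form $Q_\ep(v)=\int_\Omega|\nabla v|^2\,dx-(p-\ep)\int_\Omega u_\ep^{p-1-\ep}v^2\,dx$, the second variation at $u_\ep$ of the energy functional associated with~\eqref{eq-main-e}. Since the eigenfunctions $\{v_{\ell\ep}\}_\ell$ of~\eqref{eq-lin} form an orthogonal basis of $H^1_0(\Omega)$ for both the Dirichlet form and the weighted form $(v,w)\mapsto(p-\ep)\int_\Omega u_\ep^{p-1-\ep}vw\,dx$, and $Q_\ep(v_{\ell\ep})=\bigl(1-\mu_{\ell\ep}^{-1}\bigr)\int_\Omega|\nabla v_{\ell\ep}|^2\,dx$ with $\mu_{\ell\ep}>0$, the maximal dimension of a subspace of $H^1_0(\Omega)$ on which $Q_\ep$ is negative definite (resp.\ negative semidefinite) equals $\#\{\ell:\mu_{\ell\ep}<1\}$ (resp.\ $\#\{\ell:\mu_{\ell\ep}\le1\}$). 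Thus $\text{ind}(u_\ep)=\#\{\ell:\mu_{\ell\ep}<1\}$ and $\text{ind}_0(u_\ep)=\#\{\ell:\mu_{\ell\ep}\le1\}$.

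Next I would evaluate these counts over the four blocks $1\le\ell\le m$, $m<\ell\le(n+1)m$, $(n+1)m<\ell\le(n+2)m$ and $\ell>(n+2)m$, for $\ep>0$ small. By Theorem~\ref{thm-eigen-zero}, $\mu_{\ell\ep}\to(n-2)/(n+2)<1$ on the first block, which therefore contributes $m$ to both indices. By Theorem~\ref{thm-eigen-second}, on the third block $\mu_{\ell\ep}=1+c_1\rho^3_\ell\ep+o(\ep)$ with $c_1>0$ and $\rho^3_\ell>0$, so $\mu_{\ell\ep}>1$ and this block contributes nothing. By Theorem~\ref{thm-eigen-first-2}, on the second block $\mu_{\ell\ep}=1-c_0\rho^2_\ell\ep^{n/(n-2)}+o\bigl(\ep^{n/(n-2)}\bigr)$ with $c_0>0$, so when $\rho^2_\ell\ne0$ the sign of $\mu_{\ell\ep}-1$ is that of $-\rho^2_\ell$; hence this block has at least $\#\{\ell:\rho^2_\ell>0\}=\text{ind}(-\ma_2)$ indices with $\mu_{\ell\ep}<1$ and at most $\#\{\ell:\rho^2_\ell\ge0\}=\text{ind}_0(-\ma_2)$ with $\mu_{\ell\ep}\le1$. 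Granting the spectral gap $\liminf_{\ep\to0}\mu_{(n+2)m+1,\ep}>1$ discussed below---so that the fourth block contributes nothing---we obtain $\text{ind}(u_\ep)\ge m+\text{ind}(-\ma_2)$ and $\text{ind}_0(u_\ep)\le m+\text{ind}_0(-\ma_2)$. As $\ma_2$ is an $mn\times mn$ matrix we have $0\le\text{ind}(-\ma_2)$ and $\text{ind}_0(-\ma_2)\le mn$, which together with the trivial $\text{ind}(u_\ep)\le\text{ind}_0(u_\ep)$ gives the whole chain $m\le m+\text{ind}(-\ma_2)\le\text{ind}(u_\ep)\le\text{ind}_0(u_\ep)\le m+\text{ind}_0(-\ma_2)\le(n+1)m$.

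If in addition $\ma_2$ is nondegenerate, then $\text{ind}_0(-\ma_2)=\text{ind}(-\ma_2)$, so the chain collapses and $\text{ind}(u_\ep)=m+\text{ind}(-\ma_2)\in[m,(n+1)m]$; moreover every $\rho^2_\ell$ is nonzero, so the block-by-block analysis above shows $\mu_{\ell\ep}\ne1$ for all $\ell$ when $\ep$ is small, i.e.\ $1$ is not an eigenvalue of~\eqref{eq-lin} and $u_\ep$ is a nondegenerate solution of~\eqref{eq-main-e}.

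The one input not already contained in the stated theorems, and the crux of the argument, is the spectral gap $\liminf_{\ep\to0}\mu_{(n+2)m+1,\ep}>1$. I would establish it by a blow-up argument paralleling the proofs of Theorems~\ref{thm-eigen-zero}--\ref{thm-eigen-second}: if $\mu_{(n+2)m+1,\ep}\le1$ along a subsequence, then, since the weight $u_\ep^{p-1-\ep}$ is negligible away from the concentration points, the $L^\infty$-normalized eigenfunction $v_{(n+2)m+1,\ep}$ must carry its mass near the bubbles, and rescaling around one of them as in~\eqref{eq-v-exp} produces a nonzero limit $w\in D^{1,2}(\mr^n)$ solving $-\Delta w=\bar\mu\,p\,U_{1,0}^{p-1}w$ with $\bar\mu=\lim\mu_{(n+2)m+1,\ep}\le1$. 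The classification of the spectrum of $-\Delta-p\,U_{1,0}^{p-1}$ on $D^{1,2}(\mr^n)$ at or below $1$ then confines $w$, at every bubble, to the $(n+2)$-dimensional space spanned by $U_{1,0}$, $\pa_{x_1}U_{1,0},\dots,\pa_{x_n}U_{1,0}$ and $\pa_\lambda U_{1,0}$. But Theorems~\ref{thm-eigen-zero}, \ref{thm-eigen-first-2} and~\ref{thm-eigen-second}, together with the orthogonality relations satisfied by $\mathbf{c}_\ell$, $\mathbf{d}_\ell$ and $\hat{\mathbf{d}}_\ell$, show that the rescaled limits of $v_{1\ep},\dots,v_{(n+2)m,\ep}$ already span this $m(n+2)$-dimensional space; the weighted orthogonality of $v_{(n+2)m+1,\ep}$ to all of them then forces $w\equiv0$, a contradiction. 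Turning this dimension count into a rigorous proof---in particular, controlling $v_{(n+2)m+1,\ep}$ globally and not merely near the bubbles---is the delicate part, while the remainder of the argument is bookkeeping.
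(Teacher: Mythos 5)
Your proposal is essentially the right argument, and it is the argument the paper is implicitly invoking (the corollary is stated with no separate proof precisely because it is intended to be read off Theorems~\ref{thm-eigen-zero}, \ref{thm-eigen-first-2} and~\ref{thm-eigen-second}). The Morse-index count $\text{ind}(u_\ep)=\#\{\ell:\mu_{\ell\ep}<1\}$, $\text{ind}_0(u_\ep)=\#\{\ell:\mu_{\ell\ep}\le1\}$, and the block-by-block comparison with the eigenvalues $\rho^1_\ell$, $\rho^2_\ell$, $\rho^3_\ell$ are exactly what is needed and are carried out correctly.

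The one real misstep is that you have invented a ``crux'' where none exists. You assert that the absence of eigenvalues $\le 1$ beyond $\ell=(n+2)m$ is ``not already contained in the stated theorems'' and requires a separate blow-up analysis proving $\liminf_{\ep\to0}\mu_{(n+2)m+1,\ep}>1$. Neither the $\liminf$ statement nor the blow-up argument is needed. The eigenvalues of~\eqref{eq-lin} are arranged in nondecreasing order, so for every $\ell>(n+1)m$ one has $\mu_{\ell\ep}\ge\mu_{(n+1)m+1,\ep}$, and Theorem~\ref{thm-eigen-second} (applied to $\ell=(n+1)m+1$) already gives $\mu_{(n+1)m+1,\ep}=1+c_1\rho^3_{(n+1)m+1}\ep+o(\ep)>1$ for all small $\ep$, since $c_1>0$ and $\rho^3_{(n+1)m+1}>0$. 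That alone kills the third and fourth blocks simultaneously; one does not need to know anything about the asymptotic behavior of $\mu_{(n+2)m+1,\ep}$ beyond the monotonicity of the eigenvalue sequence, and in fact the $\liminf$ you propose to prove is a strictly stronger statement than what the corollary uses. The remainder of your proof, including the nondegeneracy conclusion when $\ma_2$ is nondegenerate, is correct as written.
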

\begin{rem}
By the discussion before, our results hold for solutions found by Musso and Pistoia in \cite{MP}.
Moreover, if $\ep_k \to 0$ as $k \to \infty$, any $H_0^1(\Omega)$-bounded sequence $\{u_{\ep_k}\}_{k=1}^{\infty}$ of solutions for \eqref{eq-main-e} with $\ep = \ep_k$
has a subsequence to which our work can be applied.
\end{rem}

\noindent This extends the work of Bahri-Li-Rey \cite{BLR} where the validity of the above corollary was obtained for $n \ge 4$.
Besides Theorems \ref{thm-eigen-zero}, \ref{thm-eigen-first}, \ref{thm-eigen-first-2} and \ref{thm-eigen-second} provide sharp asymptotic behaviors of the eigenpairs $(\mu_{\ell\ep}, v_{\ell\ep})$ as $\ep \to 0$ which were not dealt with in \cite{BLR}.
In this article we compute each component of the matrix $\ma_2$ explicitly, which turns out to be complicated.
Instead doing in this way, the authors of \cite{BLR} gave an alternative neat description.

Our proof is based on the work of Grossi and Pacella \cite{GP} which studied qualitative behaviors of single blow-up solutions of \eqref{eq-main-e},
but requires a further inspection on the interaction between different bubbles here.
In particular we have to control the decay of solutions $u_{\ep}$ and eigenfunctions $v_{\ell \ep}$ near each blow-up point in a careful way.
In order to get the sharp decay of $u_{\ep}$, we will utilize the method of moving spheres which has been used on equations from conformal geometry and related areas. (See for example \cite{CL, CC, LZh, Pa}.)
Furthermore we shall make use of the Moser-Harnack type estimate and an iterative comparison argument to find an almost sharp decay of $v_{\ell \ep}$.

\medskip
Before starting the proof of our main theorems, we would like to mention about related results obtained for the Gelfand problem
\[{\setlength\arraycolsep{2pt}
\left\{\begin{array}{rll}
-\Delta u &= \lambda e^u &\text{ in } \Omega,\\
u &= 0 &\text{ on } \pa \Omega,
\end{array}\right.}\]
where $\Omega$ is a bounded smooth domain in $\mr^2$ and $\lambda > 0$ is a small parameter.
In \cite{GG}, given $u_{\lambda}$ an one-bubble solution satisfying $\lambda \int_{\Omega}e^{u_{\lambda}} \to 8\pi$ as $\lambda \to 0$,
Gladiali and Grossi obtained the asymptotic behavior of the eigenvalues $\mu$ for the problem
\[{\setlength\arraycolsep{2pt}
\left\{\begin{array}{rll}
-\Delta v &= \lambda \mu e^{u_{\lambda}}v &\text{ in } \Omega,\\
v &= 0 &\text{ on } \pa \Omega,
\end{array}\right.}\]
and the Morse index of $u_{\lambda}$ as a by-product.
Recently, such a type of results has been generalized to solutions with multiple blow-up points in \cite{GGOS},
and further qualitative properties of the first $m$ eigenfunctions has been described in \cite{GGO} when $m$ designates the number of blow-up points.

Also, we believe that there should be analogue to our main results for solutions of the Brezis-Nirenberg problem \cite{BN}
\[{\setlength\arraycolsep{2pt}
\left\{\begin{array}{rll}
-\Delta u &= u^p + \ep u &\text{ in } \Omega,\\
u &> 0 &\text{ in } \Omega,\\
u &= 0 &\text{ on } \pa \Omega,
\end{array}\right.}\]
where $\Omega$ is a bounded smooth domain of $\mr^n$ ($n \ge 5$), if asymptotic forms of the solutions are written as
\[u_{\ep} = \sum_{i=1}^m PU_{\lambda_{i \ep}\ep^{{1}/{(n-4)}}, x_{i \ep}} + R_{\ep}\]
for $\lambda_{i\ep} \to \lambda_{i0} > 0$, $x_{i \ep} \to x_{i0}$ and $R_\ep \to 0$ in $H^1_0(\Omega)$ as $\ep \to 0$.
This type of solutions was obtained by Musso and Pistoia \cite{MP}, while Takahashi \cite{T} analyzed the linear problem of one-bubble solutions.

\medskip
The structure of this paper can be described in the following way.
In Section \ref{sec-pre}, we gather all preliminary results necessary to deduce our main theorems.
This section in particular includes estimates of the decay of the solutions $u_{\ep}$ or the eigenfunctions $v_{\ell \ep}$ outside of the concentration points $\{x_{10}, \cdots, x_{m0}\}$.
In Section \ref{sec-3}, we prove Theorem \ref{thm-eigen-zero} which deals with the first $m$-eigenvalues and eigenfunctions of problem \eqref{eq-lin}.
A priori bounds for the first $(n+1)m$-eigenvalues and the limit behavior \eqref{eq-eigenf-first-1} of expanded eigenfunction $\tilde{v}_{\ell i \ep}$ are found in Section \ref{sec-4}.
Based on these results, we compute an asymptotic expansion \eqref{eq-eigenf-first-2} of the $\ell$-th eigenvectors ($\ell = m+1, \cdots, (n+1)m$)
and that of its corresponding eigenvalues \eqref{eq-eigenv-first} in Sections \ref{sec-5} and \ref{sec-6} respectively.
The description of the vector $\mathbf{d}_{\ell}$ is also obtained as a byproduct during the derivation of \eqref{eq-eigenv-first}.
Section \ref{sec-7} is devoted to study the next $m$-eigenpairs, i.e., the $\ell$-th eigenvalues and eigenfunctions ($\ell = (n+1)m+1, \cdots, (n+2)m$).
Finally, we present the proof of Proposition \ref{prop-LZ} in Appendix \ref{sec-mov-sphere}, which is conducted with the moving sphere method.

\bigskip
\noindent \textbf{Notations.}

\medskip \noindent - Big-O notation and little-o notation are used to describe the limit behavior of a certain quantity as $\ep \to 0$.

\medskip \noindent - $B^n(x,r)$ is the $n$-dimensional open ball whose center is located at $x$ and radius is $r$.
Also, $S^{n-1}$ is the $(n-1)$-dimensional unit sphere and $\left|S^{n-1}\right|$ is its surface area.

\medskip \noindent - $C > 0$ is a generic constant which may vary from line to line, while numbers with subscripts such as $c_0$ or $C_1$ have positive fixed values.

\medskip \noindent - $D^{1,2}(\mr^n) = \left\{u \in L^{\frac{2n}{n-2}}(\mr^n) \mid \int_{\mr^n} |\nabla u|^2 < \infty\right\}$.

\medskip \noindent - For any number $c \in \mr$, $c = c_+ - c_-$ where $c_+, c_- \ge 0$ are the positive or negative part of $c$, respectively.

\medskip \noindent - For any vector $\mathbf{v}$, its transpose is denoted as $\mathbf{v}^T$.

\section{Preliminaries} \label{sec-pre}
In this section, we collect some results necessary for our analysis.
For the rest of the paper, we write $x_1, \cdots, x_m$ to denote the concentration points, dropping out the subscript 0.
The same omission also applies to the concentrate rates $\lambda_1, \cdots, \lambda_m$.

\begin{lem}\label{lem-pos}
If we set a matrix $\mm_2 = \(m^2_{ij}\)_{1\le i, j \le m}$ by
\begin{equation}\label{eq-mtx-M_2}
m^2_{ij} = \begin{cases}
-G(x_i,x_j) &\text{if } i \ne j,\\
\tau(x_i) &\text{if } i = j,
\end{cases}\end{equation}
then it is a non-negative definite matrix.
\end{lem}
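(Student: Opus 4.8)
The statement to prove is that the matrix $\mm_2 = (m^2_{ij})$ with $m^2_{ij} = -G(x_i,x_j)$ for $i \ne j$ and $m^2_{ii} = \tau(x_i) = H(x_i,x_i)$ is non-negative definite. The key idea is that $\mm_2$ is a Gram-type matrix built from the regular part $H$ of the Green's function, and $H$ is (the kernel of) a positive semidefinite bilinear form because $-\Delta$ on $H^1_0(\Omega)$ is positive and $H$ arises as the "defect" between the full Green kernel and the singular Newtonian kernel. Concretely, for a vector $\xi = (\xi_1,\dots,\xi_m) \in \mr^m$ I want to show $\xi^T \mm_2 \xi = \sum_{i,j} m^2_{ij}\xi_i\xi_j \ge 0$.

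First I would record the quadratic form explicitly:
\[
\xi^T \mm_2 \xi = \sum_{i=1}^m H(x_i,x_i)\xi_i^2 - \sum_{\substack{i,j=1\\ i\ne j}}^m G(x_i,x_j)\xi_i\xi_j.
\]
Since $G(x,y) = \gamma_n|x-y|^{-(n-2)} - H(x,y)$, substituting into the off-diagonal terms gives
\[
\xi^T \mm_2 \xi = \Bigg[\sum_{i,j=1}^m H(x_i,x_j)\xi_i\xi_j\Bigg] - \gamma_n\sum_{\substack{i,j=1\\ i\ne j}}^m \frac{\xi_i\xi_j}{|x_i-x_j|^{n-2}},
\]
where I have used $H(x_i,x_i)=\tau(x_i)$ to absorb the diagonal. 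So it suffices to show that the first bracket dominates the (possibly signed) second sum. The natural way is to interpret both terms through a regularization: for small $\delta>0$ consider the mollified measures $\mu^\delta = \sum_i \xi_i \varphi_\delta(\cdot - x_i)$ with $\varphi_\delta$ a standard approximate identity, and the Newtonian potential energy
\[
I_\delta := \iint_{\Omega\times\Omega} G(x,y)\, d\mu^\delta(x)\, d\mu^\delta(y) = \int_\Omega |\nabla w_\delta|^2 \ge 0,
\]
where $w_\delta$ solves $-\Delta w_\delta = \mu^\delta$ in $\Omega$, $w_\delta = 0$ on $\pa\Omega$; this is non-negative because it equals the $H^1_0$-norm squared of $w_\delta$. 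Writing $G = \gamma_n|x-y|^{-(n-2)} - H(x,y)$ in $I_\delta$ and letting $\delta \to 0$: the $H$-part converges to $\sum_{i,j}H(x_i,x_j)\xi_i\xi_j$ since $H$ is continuous (indeed smooth) on $\Omega\times\Omega$; the singular part $\gamma_n\iint |x-y|^{-(n-2)}d\mu^\delta d\mu^\delta$ splits into the off-diagonal terms $i\ne j$, which converge to $\gamma_n\sum_{i\ne j}\xi_i\xi_j|x_i-x_j|^{-(n-2)}$, plus the diagonal self-energy terms $i=j$, which are $\gamma_n\xi_i^2 \iint|x-y|^{-(n-2)}\varphi_\delta(x-x_i)\varphi_\delta(y-x_i)\,dx\,dy \to +\infty$ as $\delta\to 0$ (the Newtonian self-energy of a point mass diverges). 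Hence
\[
0 \le I_\delta = \gamma_n\sum_{i=1}^m \xi_i^2 c_\delta \;+\; \gamma_n\sum_{\substack{i\ne j}}\frac{\xi_i\xi_j}{|x_i-x_j|^{n-2}} \;-\; \sum_{i,j}H(x_i,x_j)\xi_i\xi_j + o(1),
\]
with $c_\delta \to +\infty$. This by itself only gives a one-sided bound in the wrong direction, so the regularization must be organized more carefully.

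The clean fix — and the step I expect to be the main obstacle — is to realize $\mm_2$ directly as a Gram matrix without diverging self-energies. The correct device: for each $i$ let $h_i \in H^1(\Omega)$ be the harmonic extension solving $\Delta h_i = 0$ in $\Omega$, $h_i = \gamma_n|x - x_i|^{-(n-2)}$ on $\pa\Omega$; then $H(x,x_i) = h_i(x)$ for $x \in \Omega$ (both are harmonic in $x$ with the same boundary data, since $G(\cdot,x_i)=0$ on $\pa\Omega$). Consequently
\[
\sum_{i,j=1}^m H(x_i,x_j)\xi_i\xi_j = \sum_{i,j} h_i(x_j)\xi_i\xi_j,
\]
and I claim this equals $\int_\Omega \big|\nabla\big(\sum_i \xi_i h_i\big)\big|^2 \,/\, (\text{boundary pairing})$ — more precisely, since each $h_i$ is harmonic, $\sum_{i,j}\xi_i\xi_j\int_\Omega \nabla h_i\cdot\nabla h_j = \sum_{i,j}\xi_i\xi_j \int_{\pa\Omega} h_i \,\pa_\nu h_j$, which is not obviously the same as $\sum h_i(x_j)\xi_i\xi_j$. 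So instead I would argue via the Green representation: for a fixed configuration, the matrix with entries $\int_\Omega \nabla h_i \cdot \nabla h_j$ is manifestly non-negative definite (it is the Gram matrix of the vectors $\nabla h_i \in L^2(\Omega;\mr^n)$), and a standard computation using $-\Delta G(\cdot,x_j) = \delta_{x_j}$ together with $h_i = \gamma_n|\cdot-x_i|^{-(n-2)} - G(\cdot,x_i)$ identifies $\int_\Omega \nabla h_i\cdot\nabla h_j$ with $H(x_i,x_j)$ up to the harmless self-energy constant that cancels against the off-diagonal Newtonian terms exactly as in the regularization above — so the divergent pieces are absent once one works with $h_i$ rather than the full potentials. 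Carrying this identification out rigorously (interchanging limits/integrations near the singularities, justifying the boundary integrations by parts) is the technical heart; once done, $\mm_2 = \big(\int_\Omega \nabla h_i\cdot\nabla h_j\big)_{i,j}$ and the non-negativity is immediate. I would also note the standard alternative: $\mm_2$ is the Schur-type limit of the positive definite finite matrices $\big(G_\rho(x_i,x_j)\big)$ obtained by replacing point masses with uniform measures on spheres of radius $\rho$ and subtracting the common diagonal self-energy $\gamma_n\rho^{-(n-2)}/(n-2)$; letting $\rho\to 0$ yields $\mm_2$ as a limit of non-negative definite matrices, hence non-negative definite. Either route closes the proof; I would present the harmonic-extension Gram-matrix version as the main argument since it is the most transparent.
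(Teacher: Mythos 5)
There is a genuine gap, and in fact the route you propose cannot work: as a statement about \emph{arbitrary} distinct points $x_1,\dots,x_m\in\Omega$, the lemma is false, so no argument built only from the potential-theoretic structure of $G$ and $H$ (as yours is) can prove it. For a concrete counterexample take $\Omega=B^3(0,1)\subset\mr^3$, $m=2$, $x_1=(a,0,0)$, $x_2=(-a,0,0)$: here $\tau(x_i)=\gamma_3(1-a^2)^{-1}$ stays bounded as $a\to 0^+$, while $G(x_1,x_2)=\gamma_3\big((2a)^{-1}-(a^2+1)^{-1}\big)\to\infty$, so $\det\mm_2=\tau(x_1)\tau(x_2)-G(x_1,x_2)^2<0$ for small $a$ and $\mm_2$ has a negative eigenvalue. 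Your single-layer/harmonic-extension considerations do reach the weaker, true statement $\sum_{i,j}H(x_i,x_j)\xi_i\xi_j\ge 0$ --- the correct energy identity is $\sum_{i,j}H(x_i,x_j)\xi_i\xi_j=\int_\Omega|\nabla\Phi|^2+\int_{\mr^n\setminus\Omega}|\nabla v|^2$ with $\Phi=\sum_i\xi_iH(\cdot,x_i)$ and $v=\sum_i\xi_i\gamma_n|\cdot-x_i|^{2-n}$, not $\int_\Omega|\nabla\Phi|^2$ alone, as you rightly doubt --- but this does not control the subtracted Newtonian cross-terms, and indeed cannot. Your ``Schur-type limit'' alternative fails for the same reason: subtracting the divergent diagonal self-energy is not a PSD-preserving operation, so $\mm_2$ is not exhibited as a limit of nonnegative-definite matrices.

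The missing ingredient is the constraint that $(\lambda_1,\dots,\lambda_m,x_1,\dots,x_m)$ is a critical configuration of $\Upsilon_m$ in \eqref{eq-upsilon}, which the paper assumes and uses throughout (e.g.\ explicitly in the proof of Theorem~\ref{thm-eigen-zero}). Setting $\partial\Upsilon_m/\partial\lambda_i=0$ gives, for every $i$,
\[
\tau(x_i)\lambda_i^{\frac{n-2}{2}}-\sum_{j\ne i}G(x_i,x_j)\lambda_j^{\frac{n-2}{2}}=C_0\,\lambda_i^{-\frac{n-2}{2}}>0,
\]
i.e.\ $(\mm_2\bl)_i>0$ for the entrywise-positive vector $\bl=(\lambda_1^{(n-2)/2},\dots,\lambda_m^{(n-2)/2})^T$. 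Since $\mm_2$ is symmetric with nonpositive off-diagonal entries $-G(x_i,x_j)\le 0$, this is exactly the hypothesis of the standard criterion for symmetric M-matrices: writing $\xi_i=\lambda_i^{(n-2)/2}\eta_i$ and using $\eta_i\eta_j\le\tfrac12(\eta_i^2+\eta_j^2)$ termwise on the nonpositive off-diagonal contributions together with the symmetry of $\mm_2$,
\[
\xi^T\mm_2\xi\;\ge\;\sum_{i=1}^m\eta_i^2\,\lambda_i^{\frac{n-2}{2}}(\mm_2\bl)_i\;\ge\;0.
\]
This, rather than potential-theoretic positivity of the kernel $H$, is the content underlying the reference to Appendix~A of \cite{BLR}. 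Any correct proof must invoke the criticality constraint; an argument that ignores it cannot close, as the counterexample shows.
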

\begin{proof}
See Appendix A of Bahri, Li and Rey \cite{BLR}.
\end{proof}

Fix any $i \in \{1, \cdots, m\}$ and decompose $u_{\ep}$ in the following way.
\begin{equation}\label{eq-u-exp-2}
u_{\ep} = U_{\lambda_{i \ep}\ep^{\alpha_0}, x_{i \ep}} + \(PU_{\lambda_{i \ep}\ep^{\alpha_0}, x_{i \ep}} - U_{\lambda_{i \ep}\ep^{\alpha_0}, x_{i \ep}}\) + (\alpha_{i \ep} - 1) PU_{\lambda_{i \ep}\ep^{\alpha_0}, x_{i \ep}} +
\sum_{j \ne i} \alpha_{j \ep} PU_{\lambda_{i \ep}\ep^{\alpha_0}, x_{i \ep}} + R_{\ep}.
\end{equation}
Then we rescale it to define
\begin{equation}\label{eq-u-exp}
\tu_{i \ep}(x) = (\lambda_{i \ep}\ep^{\alpha_0})^{\sigma_{\ep}} u_{\ep}\(x_{i \ep} + \lambda_{i \ep}\ep^{\alpha_0} x\) \quad \text{where } \sigma_{\ep} = {2 \over p-1-\ep} = {n-2 \over 2 - (n-2)\ep/2}.
\end{equation}
It immediately follows that $\{\tu_{i \ep}\}_{\ep}$ is a family of positive $C^2$-functions defined in $B^n\(0, \ep^{-\alpha_0}r_0\)$ for some $r_0 > 0$ small enough (determined in the next lemma),
which are solutions of $-\Delta u = u^{p-\ep}$.
Moreover it has the following property.
\begin{lem}\label{lem-u-dil}
The sequence $\{\tu_{i \ep}\}_{\ep}$ satisfies $\|\tu_{i \ep}\|_{L^{\infty}\(B^n\(0, \ep^{-\alpha_0}r_0\)\)} \le c$ for some small $r_0 > 0$ and converges to $U_{1,0}$ weakly in $H^1(\mr^n)$ as $\ep \to 0$.
\end{lem}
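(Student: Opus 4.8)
The plan is to view $\tu_{i\ep}$ as a positive solution of the slightly subcritical equation $-\Delta u=u^{p-\ep}$ on the expanding ball $B_{i\ep}:=B^n(0,\ep^{-\alpha_0}r_0)$ (as already noted after \eqref{eq-u-exp}, with $\tu_{i\ep}$ the restriction of the rescaled solution on $(\Omega-x_{i\ep})/(\lambda_{i\ep}\ep^{\alpha_0})$, which vanishes on the rescaled boundary) and to exploit the decomposition \eqref{eq-u-exp-2} after rescaling. Set $\mu_{i\ep}=\lambda_{i\ep}\ep^{\alpha_0}$ and $t_\ep:=\sigma_\ep-\tfrac{n-2}{2}=\tfrac{2\ep}{(p-1)(p-1-\ep)}=O(\ep)$. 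The computation rests on two elementary scaling identities: $(\mu_{i\ep})^{\sigma_\ep}U_{\mu_{i\ep},x_{i\ep}}(x_{i\ep}+\mu_{i\ep}x)=\mu_{i\ep}^{t_\ep}\,U_{1,0}(x)$ and $\|(\mu_{i\ep})^{\sigma_\ep}w(x_{i\ep}+\mu_{i\ep}\,\cdot\,)\|_{D^{1,2}(\mr^n)}^2=\mu_{i\ep}^{2t_\ep}\|w\|_{D^{1,2}}^2$. Since $\lambda_{i\ep}\to\lambda_{i0}\in(0,\infty)$ and $\alpha_0 t_\ep\log\ep=O(\ep\log\ep)\to0$, one has $\mu_{i\ep}^{t_\ep}\to1$, so this rescaling is asymptotically energy-preserving and maps the $i$-th bubble essentially onto $U_{1,0}$. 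The parameter $r_0>0$ is fixed here, small enough that $B^n(x_{i0},2r_0)\subset\Omega$ and that $B^n(x_{i\ep},\lambda_{i\ep}r_0)$ keeps distance $\ge d>0$ from every $x_{j\ep}$, $j\neq i$, for all small $\ep$.

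First I would prove that $\tu_{i\ep}\to U_{1,0}$ strongly in $D^{1,2}(\mr^n)$ by rescaling the five terms of \eqref{eq-u-exp-2} one at a time. The $i$-th bubble becomes $\mu_{i\ep}^{t_\ep}U_{1,0}\to U_{1,0}$; the projection error $PU_{\mu_{i\ep},x_{i\ep}}-U_{\mu_{i\ep},x_{i\ep}}=-\varphi_{\mu_{i\ep},x_{i\ep}}$ is harmonic with $0\le\varphi_{\mu_{i\ep},x_{i\ep}}\le C\mu_{i\ep}^{(n-2)/2}$, so $\|\varphi_{\mu_{i\ep},x_{i\ep}}\|_{D^{1,2}}=o(1)$ and the same holds after rescaling; the factor $\alpha_{i\ep}-1\to0$ kills the leftover rescaled $PU_{\mu_{i\ep},x_{i\ep}}$; and the rescaled remainder has $D^{1,2}$-norm $\mu_{i\ep}^{t_\ep}\|R_\ep\|_{H^1_0(\Omega)}=o(1)$. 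For the cross terms $j\neq i$ the choice of $r_0$ is used: $\int_{B^n(x_{i\ep},\lambda_{i\ep}r_0)}|\nabla PU_{\mu_{j\ep},x_{j\ep}}|^2$ is a far-field tail of a bubble of height $O(\mu_{j\ep}^{(n-2)/2})$ centered at distance $\ge d$, hence $O(\mu_{j\ep}^{n-2})=o(1)$, so the corresponding rescaled piece of $\tu_{i\ep}$ is $o(1)$ in $D^{1,2}$. Summing gives the strong convergence, and uniqueness of the limit removes the passage to a subsequence.

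The main step is the uniform $L^\infty$-bound on $B_{i\ep}$, obtained by combining this strong convergence with an $\ep$-regularity (Moser iteration) argument. Given $\delta>0$, pick a fixed $R$ with $\|\nabla U_{1,0}\|_{L^2(\mr^n\setminus B^n(0,R))}<\delta$; then strong $D^{1,2}$-convergence gives $\|\tu_{i\ep}\|_{L^{2^*}(\mr^n\setminus B^n(0,R))}\le C\delta$ for small $\ep$. On any ball $B^n(y,2)$ with $R+2\le|y|\le\ep^{-\alpha_0}r_0$ — which for small $\ep$ lies inside the region where $\tu_{i\ep}$ solves the equation, since $B^n(x_{i0},2r_0)\subset\Omega$ — I would write $-\Delta\tu_{i\ep}=\bigl(\tu_{i\ep}^{\,p-1-\ep}\bigr)\tu_{i\ep}$, bound the potential in $L^{n/2}(B^n(y,2))$ by $C\|\tu_{i\ep}\|_{L^{2^*}(B^n(y,2))}^{\,p-1-\ep}\le(C\delta)^{p-1-\ep}$ via H\"older (legitimate since $(p-1-\ep)n/2<2^*$), and observe that this drops below the Moser threshold once $\delta$ is small, uniformly in $\ep$ (the threshold and the resulting constant depend only on $n$ and an upper bound for the exponent, here $p$). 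Moser iteration then yields $\|\tu_{i\ep}\|_{L^\infty(B^n(y,1))}\le C\|\tu_{i\ep}\|_{L^{2^*}(B^n(y,2))}\le C\delta$, hence $\tu_{i\ep}\le C\delta$ on $B_{i\ep}\setminus B^n(0,R+3)$; inside the fixed ball $B^n(0,R+3)$ a Brezis--Kato bootstrap (again with $\ep$-uniform constants, since $p-\ep\le p$ and $\|\tu_{i\ep}\|_{L^{2^*}}$ is bounded) gives $\|\tu_{i\ep}\|_{L^\infty(B^n(0,R+3))}\le C(R)$. Combining the two regions produces $\|\tu_{i\ep}\|_{L^\infty(B_{i\ep})}\le c$ for all small $\ep$, which together with the $D^{1,2}$-convergence yields the asserted weak $H^1(\mr^n)$-convergence to $U_{1,0}$ (on every fixed ball it is weak $H^1$-convergence, with limit $U_{1,0}$ by the strong statement).

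I expect the uniform $L^\infty$-bound to be the main obstacle: it cannot come from a single interior elliptic estimate and needs both the strong (not merely weak) $D^{1,2}$-convergence — to make the Sobolev norm of $\tu_{i\ep}$ uniformly small over the whole far region at once — and the non-degeneracy of the $\ep$-regularity/Moser constants as $p-\ep$ tends to the critical exponent. The separation of the concentration points, encoded in the choice of $r_0$, is what makes the strong convergence (hence the whole argument) go through, by turning the inter-bubble interactions into negligible tails.
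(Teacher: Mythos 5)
Your overall strategy works and is close in spirit to the paper's, but one statement as written is false and should be replaced by what your estimates actually deliver. You claim $\tu_{i\ep}\to U_{1,0}$ \emph{strongly} in $D^{1,2}(\mr^n)$. When $m\ge 2$ this cannot hold: in the rescaled coordinates the other bubbles $\widetilde{PU}_{j}$, $j\ne i$, live at distance $\sim\ep^{-\alpha_0}$ from the origin but carry $D^{1,2}$-energy of order $\|U_{1,0}\|_{D^{1,2}}$ there, so $\|\tu_{i\ep}-U_{1,0}\|_{D^{1,2}(\mr^n)}$ stays bounded away from zero. Notice that your own computation for the cross terms only bounds $\int_{B^n(x_{i\ep},\lambda_{i\ep}r_0)}|\nabla PU_{\mu_{j\ep},x_{j\ep}}|^2=O(\ep)$, which after rescaling gives smallness of $\widetilde{PU}_j$ in $D^{1,2}\bigl(B^n(0,\ep^{-\alpha_0}r_0)\bigr)$ only — not on all of $\mr^n$. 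So what you actually prove is strong convergence on the expanding ball $B^n(0,\ep^{-\alpha_0}r_0)$, together with the uniform bound $\|\tu_{i\ep}\|_{H^1}\le C$; and that is enough both for the asserted weak $H^1(\mr^n)$-convergence (test functions are eventually supported in the expanding ball) and for your Moser argument (which only needs $\|\tu_{i\ep}\|_{L^{2^*}(B^n(0,\ep^{-\alpha_0}r_0)\setminus B^n(0,R))}$ small, with perhaps $r_0$ shrunk slightly so that $B^n(y,2)$ stays inside the domain where the equation holds). Once this is corrected the argument is sound, and it is a legitimate variant of the paper's: the paper derives the weak convergence directly by testing the rescaled decomposition against compactly supported functions and then runs Moser iteration on small balls $B^n(x,r)$ using the global $L^{2^*}$-bound plus non-concentration; you instead quantify the convergence more explicitly (strong on the expanding ball), which makes the $L^{2^*}$-norm of $\tu_{i\ep}$ uniformly small on the whole far region $\{R\le|x|\le\ep^{-\alpha_0}r_0\}$ at once, so that the Moser iteration can be carried out on balls of fixed size. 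This buys a cleaner uniformity in $\ep$ and $x$ for the $L^\infty$ bound, at the cost of having to track the rescaled energies of each piece of the decomposition a bit more carefully; the paper's route is more economical but leans on the reader to supply the non-concentration argument.
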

\begin{proof}
For fixed $i$, let us denote $\tilde{f}(x) = (\lambda_{i \ep}\ep^{\alpha_0})^{\sigma_{\ep}} f\(x_{i \ep} + \lambda_{i \ep}\ep^{\alpha_0} x\)$ for $x \in \Omega_{i \ep} = \(\Omega - x_{i \ep}\)/(\lambda_{i \ep}\ep^{\alpha_0})$.
Set also $U_j = U_{\lambda_{j \ep}\ep^{\alpha_0}, x_{j \ep}}$ for all $j \in \{1, \cdots, m\}$.
Then $\|\tilde{f}\|_{H^1(\Omega_{i\ep})} = (1+o(1))\|f\|_{H^1(\Omega)}$ and
\begin{equation}\label{eq-u-dil}
\tu_{i \ep} - U_{1,0} = \sum_{j \ne i} \alpha_{j \ep} \widetilde{PU}_j + \(\widetilde{PU}_i - \widetilde{U}_i\) + (\alpha_{i \ep} - 1) \widetilde{PU}_i + \widetilde{R}_{\ep} \quad \text{in } \Omega_{i\ep}
\end{equation}
by \eqref{eq-u-exp-2}.
Observe that $0 \le PU_i \le U_i$ in $\Omega$ and
\[PU_{\lambda, x_0}(x) = U_{\lambda, x_0}(x) - C_2\lambda^{n-2 \over 2} H(x, x_0) + o\(\lambda^{n-2 \over 2}\) \quad \text{in } C^0\(\overline{\Omega}\), \quad C_2 := \int_{\mr^n} U_{1,0}^p > 0\]
holds for any small $\lambda > 0$ and $x_0 \in \Omega$ away from the boundary. Thus
\[\|PU_i - U_i\|_{H^1(\Omega)}^2 = -\int_{\Omega} U_i^p(PU_i-U_i) - \int_{\Omega} U_i^{p+1} + \int_{\Omega} |\nabla U_i|^2 = o(1)\]
and
\[\|PU_i\|_{H^1(\Omega)}^2 = \int_{\Omega}U_i^p PU_i \le \int_{\mr^n} U_{1,0}^{p+1}\]
so that the last three terms in the right-hand side of \eqref{eq-u-dil} go to 0 strongly in $H^1_0(\Omega_{i \ep}) \subset H^1(\mr^n)$.
On the other hand, we have
\[\left| \int_{\text{supp}(\varphi)} \nabla \widetilde{PU}_j \cdot \nabla \varphi \right| \le \|\varphi\|_{L^{\infty}(\Omega)} \int_{\text{supp}(\varphi)} \widetilde{U}_j^{p-\ep} \to 0\]
as $\ep \to 0$ for any test function $\varphi \in C^{\infty}_c(\mr^n)$.  Therefore $\tu_{i\ep} \rightharpoonup U_{1,0}$ weakly in $H^1(\mr^n)$.

We now attempt to attain a priori $L^{\infty}$-estimate for $\{\tu_{i \ep}\}_{\ep}$.
Firstly we fix a sufficiently small $r_0$.
In fact, $r_0 = {1 \over 2}\min\left\{|x_i-x_j|: i, j = 1, \cdots, m \text{ and } i \ne j\right\} > 0$ would suffice.
Then for any number $\eta > 0$, one can find $r > 0$ small such that $\left\|\tu_{i \ep}^{p-1-\ep}\right\|_{L^{n \over 2}(B^n(x,r))} \le \eta$ is valid for any $|x| \le \ep^{-\alpha_0}r_0$ provided $\ep > 0$ sufficiently small.
Hence the Moser iteration technique applies as in \cite[Lemma 6]{H}, deducing
\[\|\tu_{i \ep}\|_{L^{(p+1){n \over n-2}}(B^n(x,r/2))} \le {C \over r} \|\tu_{i \ep}\|_{L^{p+1}(B^n(x,r))} \le {C \over r} \|\tu_{i \ep}\|_{H^1(\Omega_{i \ep})}\]
where the rightmost value is uniformly bounded in $\ep > 0$.
Also it is notable that $C > 0$ is independent of $x$, $r$ or $\tu_{i\ep}$.
As a result, we observe from the elliptic regularity \cite[Lemma 7]{H} that
\[|u(x)| \le \|u\|_{L^{\infty}(B^n(x,r/4))} \le C\|u\|_{L^{p+1}(B^n(x,r/2))}\]
where $C > 0$ depends only on $r$ and the supreme of $\left\{\|\tu_{i \ep}\|_{L^{(p+1){n \over n-2}}(B^n(x,r/2))}\right\}_{\ep}$.
This completes the proof.
\end{proof}
\noindent This lemma will be used in a crucial way to deduce a local uniform estimate near each blow-up point $x_1, \cdots, x_m$ of $u_{\ep}$.
\begin{prop}\label{prop-LZ}
There exist numbers $C>0$ and small $\delta_0 \in (0, r_0)$ independent of $\ep > 0$ such that
\begin{equation}\label{eq-LZ}
\tu_{i \ep} (x) \le CU_{1,0}(x) \quad \text{for all } x \in B^n\(0, \ep^{-\alpha_0}\delta_0\)
\end{equation}
for all sufficiently small $\ep > 0$.
\end{prop}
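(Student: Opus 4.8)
The plan is to run the method of moving spheres for the rescaled solution $\tu_{i\ep}$, centered at the origin (i.e.\ at the blow-up point $x_{i\ep}$), as in the a priori analysis of conformally invariant equations \cite{CL, CC, LZh, Pa}; the point requiring extra care is that the domain $B^n(0,\ep^{-\alpha_0}r_0)$ of $\tu_{i\ep}$ expands as $\ep\to0$, so the classical scheme must be fed the uniform $L^\infty$-bound of Lemma \ref{lem-u-dil} and bootstrapped along the family of spheres. The region $\{|x|\le R\}$ with $R$ fixed is immediate: Lemma \ref{lem-u-dil} makes $\{\tu_{i\ep}\}$ locally uniformly bounded, so interior $W^{2,q}$- and Schauder estimates for $-\Delta\tu_{i\ep}=\tu_{i\ep}^{p-\ep}$ promote the weak $H^1$-convergence of that lemma to $\tu_{i\ep}\to U_{1,0}$ in $C^2_{\mathrm{loc}}(\mr^n)$, whence $\tu_{i\ep}\le 2U_{1,0}$ on $B^n(0,R)$ for small $\ep$; it then remains to find $R,\delta_0,C>0$, independent of $\ep$, with $\tu_{i\ep}(x)\le C|x|^{2-n}$ for $R\le|x|\le\ep^{-\alpha_0}\delta_0$.

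Write $\tu_{i\ep}^{\lambda}(x)=(\lambda/|x|)^{n-2}\,\tu_{i\ep}(\lambda^2 x/|x|^2)$. Conformality of the Laplacian together with $-\Delta\tu_{i\ep}=\tu_{i\ep}^{p-\ep}$ gives $-\Delta\tu_{i\ep}^{\lambda}=(\lambda/|x|)^{(n-2)\ep}(\tu_{i\ep}^{\lambda})^{p-\ep}$, so the sole deviation from exact conformal invariance is the weight $(\lambda/|x|)^{(n-2)\ep}$; on the scales in play — $|x|$ and the modulus $\lambda^2/|x|$ of the reflected point both of order $\ep^{-\alpha_0}$, and $\lambda\gtrsim1$ — this weight is $1+o(1)$ and will be carried through perturbatively. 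I also record a crude lower bound: $\tu_{i\ep}>0$ is superharmonic on $\Omega_{i\ep}$, vanishes on $\pa\Omega_{i\ep}$ and is $\ge c_1>0$ on $\{|x|=1\}$ by the preceding step, so comparison with the harmonic function carrying these data on the annulus $\{1\le|x|\le\ep^{-\alpha_0}d_i/\lambda_{i0}\}$, $d_i:=\mathrm{dist}(x_{i0},\pa\Omega)$ — which contains $B^n(0,\ep^{-\alpha_0}\delta_0)$ once $\delta_0<d_i/\lambda_{i0}$ — yields a fixed $c_2>0$ with $\tu_{i\ep}(x)\ge c_2|x|^{2-n}$ for $1\le|x|\le\ep^{-\alpha_0}\delta_0$.

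Now the moving spheres, run with \emph{decreasing} radius. For the model bubble one has $(U_{1,0})^{\lambda}<U_{1,0}$ on $\{|x|<\lambda\}$ and $\pa_r(U_{1,0}-(U_{1,0})^{\lambda})<0$ on $\{|x|=\lambda\}$ for every $\lambda>1$; fix $\lambda_1>1$. Starting from $\lambda=\ep^{-\alpha_0}r_0$, where the relevant region $A_\lambda:=\{\lambda^2\ep^{\alpha_0}/r_0\le|x|<\lambda\}$ is empty, one decreases $\lambda$ to $\lambda_1$ keeping $\tu_{i\ep}^{\lambda}\le\tu_{i\ep}$ on $A_\lambda$: on $\{|x|=\lambda\}$ equality holds and the normal derivative has the right sign (here $\lambda\ge\lambda_1>1$ is used, via Step 1 and the bubble computation); on $\{|x|=\lambda^2\ep^{\alpha_0}/r_0\}$ the reflected point lies on $\{|x|=\ep^{-\alpha_0}r_0\}$, and the inequality there is ensured by the values of $\tu_{i\ep}$ at the outer scale — already controlled, at every stage $\mu\ge\lambda$ reached so far, by $\tu_{i\ep}(z)\le(\sup_{B^n(0,\mu)}\tu_{i\ep})(\mu/|z|)^{n-2}$ for $\mu<|z|\le\ep^{-\alpha_0}r_0$ — together with the lower bound of Step 2 near $\{|x|=\lambda^2\ep^{\alpha_0}/r_0\}$; an interior touching point in $A_\lambda$ is forbidden by the strong maximum principle, which applies because the potential in the linear equation for $\tu_{i\ep}-\tu_{i\ep}^{\lambda}$ need only be locally bounded (not positive) and the weight contributes merely an $o(1)$ error, while on the fixed part $\{|x|\le2\lambda_1\}$ the $C^2_{\mathrm{loc}}$-convergence gives strict positivity. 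Consequently $\tu_{i\ep}^{\lambda_1}\le\tu_{i\ep}$ on $\{\lambda_1^2\ep^{\alpha_0}/r_0\le|x|<\lambda_1\}$ for $\ep$ small, and evaluating this along the ray through $y$ with $\lambda_1\le|y|\le\ep^{-\alpha_0}\delta_0$ (at $x=\lambda_1^2y/|y|^2$) yields
\[\tu_{i\ep}(y)\le\Big(\frac{\lambda_1}{|y|}\Big)^{n-2}\tu_{i\ep}\Big(\frac{\lambda_1^2y}{|y|^2}\Big)\le\Big(\sup_{B^n(0,\lambda_1)}\tu_{i\ep}\Big)\lambda_1^{n-2}|y|^{2-n}\le C|y|^{2-n},\]
the supremum being finite and uniform by Step 1; with the first paragraph this gives $\tu_{i\ep}\le CU_{1,0}$ on $B^n(0,\ep^{-\alpha_0}\delta_0)$ for any $\delta_0<r_0$ small.

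The main obstacle is precisely the interplay between the expanding domain and the moving spheres: propagating the comparison down to the fixed radius $\lambda_1$ forces one to control $\tu_{i\ep}$ at the outer scale $|x|\sim\ep^{-\alpha_0}$ uniformly in $\ep$, which is where the prescribed separation $r_0$ of the blow-up points — hence the interaction with the other bubbles, and the fact that only one bubble's worth of mass sits inside $B^n(0,\ep^{-\alpha_0}r_0)$ — enters the argument; a secondary point is keeping the perturbative error from the weight $(\lambda/|x|)^{(n-2)\ep}$ from accumulating over the continuum of radii $\lambda\in[\lambda_1,\ep^{-\alpha_0}r_0]$.
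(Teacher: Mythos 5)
Your central step — running the moving spheres on the \emph{interior} region $A_\lambda=\{\lambda^2\ep^{\alpha_0}/r_0\le|x|<\lambda\}$ and invoking the strong maximum principle there — has a sign problem that the ``$o(1)$ weight'' comment does not resolve. Writing $w^\lambda=\tu_{i\ep}-\tu_{i\ep}^{\lambda}$ and using the conformal identity $-\Delta\tu_{i\ep}^{\lambda}=(\lambda/|x|)^{(n-2)\ep}(\tu_{i\ep}^{\lambda})^{p-\ep}$, one gets
\[
-\Delta w^{\lambda}=\xi_{\ep}\,w^{\lambda}-E,\qquad
E=\Big[\big(\lambda/|x|\big)^{(n-2)\ep}-1\Big]\big(\tu_{i\ep}^{\lambda}\big)^{p-\ep}.
\]
On $A_\lambda$ one has $|x|<\lambda$, hence $(\lambda/|x|)^{(n-2)\ep}>1$ and $E\ge 0$; thus $-\Delta w^{\lambda}-\xi_{\ep}w^{\lambda}=-E\le 0$, i.e.\ $w^{\lambda}$ is a \emph{sub}solution, not a supersolution. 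The strong maximum/Hopf argument you invoke to rule out interior touching requires the opposite inequality, and being an $o(1)$ perturbation does not help: at a touching point $w^{\lambda}\approx 0$, so there is nothing positive against which to absorb $E$. The sign you need is exactly the one the paper exploits by working on the \emph{exterior} $\Sigma_\lambda=\{\lambda<|x|<r_k\}$, where $(\lambda/|x|)^{(n-2)\ep}\le 1$ makes $w^{\lambda}$ a supersolution (this is display \eqref{eq-a-w}). Equivalently, via Kelvin inversion your interior inequality $\tu_{i\ep}^{\lambda}\le\tu_{i\ep}$ on $A_\lambda$ is the same as $\tu_{i\ep}\le\tu_{i\ep}^{\lambda}$ on the exterior, which is the \emph{wrong} direction for the weight and cannot be pushed through the annulus.

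This is not just a technicality: the paper does not obtain the pointwise upper bound directly from moving spheres. It runs the sphere comparison in the good direction only to bound $\min_{|y|=r}\tu_{i\ep}(y)$ by $(1+\zeta_2)U_{1,0}(r)$ (Lemma A.1, a contradiction argument with $\lambda$ moving from $\lambda_1<1$ up to $\lambda_2>1$), and then separately upgrades the spherical minimum bound to a pointwise bound via the argument of \cite[Lemma 2.4]{LZ} (Lemma A.2), which is a Green's-representation/Harnack-type step, not a sphere reflection. To repair your proposal you would either have to switch to the exterior region and the opposite inequality — which changes the conclusion you extract — or supply the missing Harnack-type upgrade; as written, the conclusion $\tu_{i\ep}^{\lambda_1}\le\tu_{i\ep}$ on $A_{\lambda_1}$ is not established.

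Two smaller points worth flagging: (i) the bootstrap bound $\tu_{i\ep}(z)\le(\sup_{B^n(0,\mu)}\tu_{i\ep})(\mu/|z|)^{n-2}$ is invoked at every stage to control the inner boundary, but $\sup_{B^n(0,\mu)}\tu_{i\ep}$ is uniform only once you already know the decay on $B^n(0,\mu)\setminus B^n(0,R)$ — you should make this induction explicit rather than implicit. (ii) For $|x|$ near the inner boundary of $A_\lambda$, the reflected point $\lambda^2x/|x|^2$ sits at scale $\ep^{-\alpha_0}$, outside any fixed compact set, so the $C^2_{\mathrm{loc}}$-convergence of $\tu_{i\ep}$ to $U_{1,0}$ does not control $\tu_{i\ep}^{\lambda}$ there, contrary to what ``on the fixed part $\{|x|\le 2\lambda_1\}$'' suggests.
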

\noindent A closely related result to Proposition \ref{prop-LZ} appeared in \cite{LZ} as an intermediate step to deduce the compactness property of the Yamabe equation,
the problem proposed by Schoen who also gave the positive answer for conformally flat manifolds (see \cite{Sch}).
Even though the proof of this proposition, based on the moving sphere method, can be achieved by adapting the argument presented in \cite{LZ} with a minor modification,
we provide it in Appendix \ref{sec-mov-sphere} to promote clear understanding of the reader.

\bigskip

From the next lemma to Lemma \ref{lem-u-asym}, we study the behavior of solutions $u_{\ep}$ of \eqref{eq-main-e} outside the blow-up points $\{x_1, \cdots, x_m\}$.
For the sake of notational convenience, we set
\begin{equation}\label{eq-A_r}
A_r = \Omega \setminus \cup_{i=1}^m B^n(x_i, r) \quad \text{for any } r > 0.
\end{equation}
\begin{lem}\label{lem-u-zero}
Suppose that $\{u_{\ep}\}_{\ep}$ is a family of solutions for \eqref{eq-main-e} satisfying the asymptotic behavior \eqref{eq-asym}.
Then for any $r > 0$, we have $u_{\ep}(x) = o(1)$ uniformly for $x \in A_r$.
\end{lem}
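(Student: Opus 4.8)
The plan is to exploit the $H_0^1$-boundedness of $\{u_\ep\}_\ep$ together with the fact that, away from the concentration points, the projected bubbles $PU_{\lambda_{i\ep}\ep^{\alpha_0},x_{i\ep}}$ carry no mass in $L^{p+1}$, and then bootstrap via elliptic regularity. First I would fix $r>0$; by shrinking $r$ we may assume $r<r_0$ and that the balls $B^n(x_i,2r)$ are pairwise disjoint and compactly contained in $\Omega$ for $\ep$ small. The starting point is a Moser-iteration/elliptic-regularity bound of exactly the type used in the proof of Lemma \ref{lem-u-dil}: for any ball $B^n(x,\rho)\subset\Omega$ with $\|u_\ep^{p-1-\ep}\|_{L^{n/2}(B^n(x,\rho))}$ small, one gets $\|u_\ep\|_{L^\infty(B^n(x,\rho/4))}\le C\|u_\ep\|_{L^{p+1}(B^n(x,\rho/2))}$, with $C$ independent of $\ep$. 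So it suffices to show $\|u_\ep\|_{L^{p+1}(A_{r/2})}\to 0$, where $A_{r/2}=\Omega\setminus\bigcup_i B^n(x_i,r/2)$, and that the $L^{n/2}$-smallness condition holds uniformly on such balls.

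For the $L^{p+1}$-decay I would use the representation \eqref{eq-asym}: $u_\ep=\sum_{i=1}^m\alpha_{i\ep}PU_{\lambda_{i\ep}\ep^{\alpha_0},x_{i\ep}}+R_\ep$ with $\|R_\ep\|_{H_0^1}\to 0$, hence $\|R_\ep\|_{L^{p+1}(\Omega)}\to 0$ by Sobolev. Since $0\le PU_{\lambda,x_0}\le U_{\lambda,x_0}$, on $A_{r/2}$ each projected bubble is dominated by $U_{\lambda_{i\ep}\ep^{\alpha_0},x_{i\ep}}(x)=\beta_n(\lambda_{i\ep}\ep^{\alpha_0})^{(n-2)/2}(\lambda_{i\ep}^2\ep^{2\alpha_0}+|x-x_{i\ep}|^2)^{-(n-2)/2}$, which for $|x-x_{i\ep}|\ge r/4$ is bounded by $C(\lambda_{i\ep}\ep^{\alpha_0})^{(n-2)/2}=C\ep^{(n-2)\alpha_0/2}\to 0$ pointwise, uniformly on $A_{r/2}$; integrating over the bounded domain $\Omega$ gives $\|PU_{\lambda_{i\ep}\ep^{\alpha_0},x_{i\ep}}\|_{L^{p+1}(A_{r/2})}\to 0$. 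Combining, $\|u_\ep\|_{L^{p+1}(A_{r/2})}\to 0$. The same two observations also verify the $L^{n/2}$-smallness hypothesis: on a ball $B^n(x,\rho)$ with $x\in A_r$ and $\rho$ small enough that $B^n(x,\rho)\subset A_{r/2}$, we have $\|u_\ep^{p-1-\ep}\|_{L^{n/2}(B^n(x,\rho))}\le \|u_\ep\|_{L^{p+1}(B^n(x,\rho))}^{p-1-\ep}\,|B^n(x,\rho)|^{\theta}$ (Hölder, since $(p-1)\cdot\frac n2=p+1$ when $\ep=0$ and $\ep$ is a harmless small perturbation), which is uniformly small; alternatively, one may simply note $u_\ep\to 0$ in $L^{p+1}(A_{r/2})$ and use absolute continuity of the integral with a fixed small radius. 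Feeding this into the elliptic estimate on balls centered at points of $A_r$ yields $\|u_\ep\|_{L^\infty(A_r)}\to 0$.

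The main technical point — and the only place requiring care — is making the constant in the Moser/elliptic estimate genuinely independent of $\ep$ and of the center $x$, and handling the $\ep$-dependence of the exponent $p-\ep$; but this is precisely the mechanism already set up in the proof of Lemma \ref{lem-u-dil} (invoking \cite[Lemmas 6--7]{H}), so here one only needs to transplant it to balls sitting in the fixed region $A_{r/2}$ rather than in the rescaled domain. A minor bookkeeping issue is that $x_{i\ep}\to x_{i0}$, so for $\ep$ small the balls $B^n(x_{i\ep},r/2)$ are contained in $B^n(x_{i0},r)$; thus working with $A_r$ (built from the limiting points $x_{i0}$, as in \eqref{eq-A_r}) and the slightly larger excised balls is consistent. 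No genuinely new difficulty arises beyond the uniformity already established.
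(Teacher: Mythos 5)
Your proposal is correct and follows essentially the same route as the paper: decompose $u_\ep$ via \eqref{eq-asym}, observe that the projected bubbles and remainder carry $o(1)$ mass in $L^{p+1}$ (equivalently, that $a_\ep = u_\ep^{p-1-\ep}$ has $o(1)$ $L^{n/2}$-norm) on $A_{r/2}$, and then run Moser iteration plus elliptic regularity to upgrade to an $L^\infty$ bound. The only cosmetic difference is that you track the decay of $\|u_\ep\|_{L^{p+1}(A_{r/2})}$ directly and feed it into a uniform estimate $\|u_\ep\|_{L^\infty}\le C\|u_\ep\|_{L^{p+1}}$, whereas the paper phrases the same control in terms of $\|a_\ep\|_{L^{n/2}(A_{r/4})}=o(1)$ and then boosts to $\|a_\ep\|_{L^q}=o(1)$ for $q>n/2$; since $a_\ep^{n/2}$ is essentially $u_\ep^{p+1}$, these are interchangeable bookkeepings of one and the same argument.
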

\begin{proof}
Let $a_{\ep} = u_{\ep}^{p-1-\ep}$ so that $-\Delta u_{\ep} = a_{\ep}u_{\ep}$ in $\Omega$.
Then we see from \eqref{eq-asym} that
\begin{align*}
\|a_{\ep}\|_{L^{n \over 2}(A_{r/4})}
& \le C \(\sum_{i=1}^m \left\|PU_{\lambda_{i \ep}\ep^{\alpha_0},x_{i \ep}}^{p-1-\ep}\right\|_{L^{n \over 2}(A_{r/4})} + \|R_{\ep}\|_{L^{p+1-\ep {n \over 2}}(A_{r/4})}^{p-1-\ep}\)\\
& \le C \(\sum_{i=1}^m \left\|U_{\lambda_{i \ep}\ep^{\alpha_0},x_{i \ep}}^{p-1-\ep}\right\|_{L^{n \over 2}\(\mr^n \setminus B^n(x_i,{r/4})\)} + \|R_{\ep}\|_{H^1(\Omega)}^{p-1-\ep}\) = o(1).
\end{align*}
Therefore we can proceed the Morse iteration argument as in the proof of \cite[Lemma 6]{H} to get $\|a_{\ep}\|_{L^q(A_{r/2})} = o(1)$ for some $q > n/2$, and then the standard elliptic regularity result (see \cite[Lemma 7]{H}) implies $\|u_{\ep}\|_{L^{\infty}(A_r)} = o(1)$.
\end{proof}

\noindent We can improve this result by combining the kernel expression of $u_{\ep}$ and Proposition \ref{prop-LZ}.
\begin{lem}\label{lem-u-bound}
Fix $r > 0$ small. Then there holds
\begin{equation}\label{eq-u-bound}
u_{\ep}(x) = O\(\sqrt{\ep}\)
\end{equation}
uniformly for $x \in A_r$.
\end{lem}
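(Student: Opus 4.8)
The idea is to represent $u_{\ep}$ via the Green's function and then exploit the pointwise bound on $\tu_{i\ep}$ from Proposition \ref{prop-LZ} together with the smallness of $u_{\ep}$ on $A_r$ from Lemma \ref{lem-u-zero}. Since $-\Delta u_{\ep} = u_{\ep}^{p-\ep}$ in $\Omega$ with $u_{\ep} = 0$ on $\pa\Omega$, we have for every $x \in \Omega$
\[
u_{\ep}(x) = \int_{\Omega} G(x,y)\, u_{\ep}(y)^{p-\ep}\, dy.
\]
Fix $r > 0$ small and let $x \in A_r$. Split the integral into the region $\cup_{i=1}^m B^n(x_i, \delta_0)$ near the blow-up points (with $\delta_0$ as in Proposition \ref{prop-LZ}, taken $\le r/2$) and the complement $A_{\delta_0}$. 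On $A_{\delta_0}$ the kernel $G(x, \cdot)$ stays bounded away from its singularity since $|x - x_i| \ge r$, and by Lemma \ref{lem-u-zero} one has $\|u_{\ep}\|_{L^\infty(A_{\delta_0})} = o(1)$; hence this contribution is $o(1) \cdot \int_{A_{\delta_0}} u_{\ep}^{p-1-\ep}$, and the remaining integral is uniformly bounded because $\|u_{\ep}\|_{H^1_0(\Omega)}$ is bounded (it controls the $L^{p+1}$ norm, which in turn dominates $L^{p-1-\ep+1}$ on a bounded domain up to a negligible loss from $\ep$). Actually one should be slightly more careful: writing the $A_{\delta_0}$-term as $\int_{A_{\delta_0}} G(x,y) u_{\ep}(y)^{p-\ep}\,dy$ and using $u_{\ep}^{p-\ep} = u_{\ep}^{p-1-\ep}\cdot u_{\ep}$, one gets a bound of the form $o(1)$ directly from the uniform decay, so this region is harmless.

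The main contribution comes from the balls $B^n(x_i, \delta_0)$. There, by the change of variables $y = x_i + \lambda_{i\ep}\ep^{\alpha_0} z$ and Proposition \ref{prop-LZ},
\[
\int_{B^n(x_i,\delta_0)} G(x,y)\, u_{\ep}(y)^{p-\ep}\, dy
= (\lambda_{i\ep}\ep^{\alpha_0})^n \int_{B^n(0,\ep^{-\alpha_0}\delta_0)} G\(x, x_i + \lambda_{i\ep}\ep^{\alpha_0} z\)\, (\lambda_{i\ep}\ep^{\alpha_0})^{-p\sigma_\ep + \ep\sigma_\ep}\,\tu_{i\ep}(z)^{p-\ep}\, dz,
\]
and since $x \in A_r$ while $y$ ranges over $B^n(x_i,\delta_0) \subset B^n(x_i, r/2)$, the kernel satisfies $G(x,y) \le C$ uniformly; hence the integral is bounded by $C (\lambda_{i\ep}\ep^{\alpha_0})^{n - (p-\ep)\sigma_\ep} \int_{\mr^n} U_{1,0}(z)^{p-\ep}\, dz$ (using $\tu_{i\ep} \le C U_{1,0}$ and that $U_{1,0}^{p-\ep}$ is integrable on $\mr^n$ for small $\ep$, with a bound uniform in $\ep$). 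A direct computation of the exponent, using $\sigma_\ep = (n-2)/(2 - (n-2)\ep/2)$ and $\alpha_0 = 1/(n-2)$, gives $n - (p-\ep)\sigma_\ep = \tfrac{n-2}{2} + O(\ep)$ times... more precisely one finds the power of $\ep$ to be $\alpha_0 \cdot \tfrac{n-2}{2} + O(\ep) = \tfrac12 + O(\ep)$, which yields the claimed $O(\sqrt{\ep})$. Combining with the $o(1) = o(\sqrt\ep)$... — here caution is needed, since $o(1)$ from Lemma \ref{lem-u-zero} is not a priori $O(\sqrt\ep)$; one should instead bound the $A_{\delta_0}$-term also by the kernel representation restricted there, controlling $u_{\ep}^{p-\ep}$ on $A_{\delta_0}$ by a bootstrap, or observe that outside the balls $u_{\ep}$ itself is being estimated, so one argues by a continuity/iteration: the estimate $u_{\ep} = O(\sqrt\ep)$ on $A_r$ follows once one knows the near-bubble contribution is $O(\sqrt\ep)$ and the far contribution is controlled by $\sup_{A_{r'}} u_{\ep}$ for a slightly smaller $r'$ times a summable kernel factor, which is absorbed.

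The step I expect to be the main obstacle is making the far-field contribution rigorous: Lemma \ref{lem-u-zero} only gives $o(1)$, not $O(\sqrt\ep)$, on $A_r$, so one cannot simply plug it into the Green representation and expect $O(\sqrt\ep)$. The clean fix is to run the kernel estimate on nested annular regions $A_{r} \subset A_{r/2} \subset \cdots$ and note that the contribution to $u_{\ep}(x)$, $x \in A_r$, coming from integration over $A_{r/2}$ is $\le C \|u_{\ep}\|_{L^\infty(A_{r/4})}^{p-1-\ep}\cdot \|u_{\ep}\|_{L^1(A_{r/4})}$, hence of the form $o(1)\cdot\|u_{\ep}\|_{L^\infty(A_{r/4})}$ plus the bubble term; the $o(1)$ factor lets one absorb this term and conclude $\|u_{\ep}\|_{L^\infty(A_r)} \le C\sqrt\ep + o(1)\|u_{\ep}\|_{L^\infty(A_{r/4})}$, and iterating (or using that $\|u_{\ep}\|_{L^\infty(A_{r/4})}$ is itself $o(1)$ so the term is lower order) gives \eqref{eq-u-bound}. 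The only other point requiring care is the exponent arithmetic for the near-bubble term, which is routine but must be done with the $\ep$-dependent $\sigma_\ep$ so that the $O(\ep)$ corrections in the exponent do not spoil the $O(\sqrt\ep)$ conclusion.
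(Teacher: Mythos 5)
Your strategy coincides with the paper's: represent $u_\ep$ via the Green's function, control the near-bubble contribution through the pointwise bound of Proposition~\ref{prop-LZ}, and absorb the far-field contribution using the a priori smallness $u_\ep = o(1)$ on $A_r$ from Lemma~\ref{lem-u-zero}. Your exponent arithmetic for the near-bubble term is correct: $n - (p-\ep)\sigma_\ep = (n-2) - \sigma_\ep \to (n-2)/2$, and multiplying by $\alpha_0 = 1/(n-2)$ gives the power $1/2$ of $\ep$, with the $O(\ep)$ corrections in the exponent harmless.

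The gap is precisely in the absorption step you flag as the ``main obstacle.'' Your proposed inequality is $\|u_\ep\|_{L^\infty(A_r)} \le C\sqrt\ep + o(1)\,\|u_\ep\|_{L^\infty(A_{r/4})}$, but note that $A_r \subset A_{r/4}$, so this bounds the smaller quantity by (in part) the larger one: you cannot absorb the $o(1)\,\|u_\ep\|_{L^\infty(A_{r/4})}$ term into the left-hand side as it stands. Your fallback option (a), iterating over $r, r/4, r/16, \dots$, does not terminate: as the radius shrinks to zero the sets $A_{r/4^k}$ approach $\Omega\setminus\{x_1,\dots,x_m\}$, the constant $C(r)$ in the Green-kernel bound blows up, and $\|u_\ep\|_{L^\infty(A_{r/4^k})}$ itself eventually becomes unbounded, so the geometric series argument is not available with uniform constants. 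Your fallback option (b) — that $\|u_\ep\|_{L^\infty(A_{r/4})}$ is itself $o(1)$ so the product is ``lower order'' — only gives $o(1)\cdot o(1) = o(1)$, which is weaker than the claimed $O(\sqrt\ep)$.

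The missing observation, which the paper supplies and which makes the absorption a one-step argument, is that Proposition~\ref{prop-LZ} already controls $u_\ep$ on the transition annulus $A_{r/2}\setminus A_r$ (i.e.\ the points $y$ with $r/2 \le |y - x_i| < r$ for some $i$): for such $y$ one has $u_\ep(y) \le C U_{\lambda_{i\ep}\ep^{\alpha_0},x_{i\ep}}(y) \le C(r)\sqrt\ep$. Hence $\sup_{A_{r/2}}u_\ep \le \max\{b_\ep, C\sqrt\ep\}$ where $b_\ep := \sup_{A_r}u_\ep$, and the Green representation then yields the self-improving inequality $b_\ep \le C\big(b_\ep^{p-\ep} + \sqrt\ep\big)$. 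Since $b_\ep = o(1)$ by Lemma~\ref{lem-u-zero} and $p-\ep > 1$, one has $b_\ep^{p-\ep} = o(b_\ep)$, so the term $Cb_\ep^{p-\ep}$ is absorbed and $b_\ep \le C\sqrt\ep$ follows immediately. Inserting this observation into your scheme would make it correct and essentially identical to the paper's proof.
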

\begin{proof}
Without any loss of generality, we may assume that $r \in (0,\delta_0)$ where $\delta_0 > 0$ is the number picked up in Proposition \ref{prop-LZ} so that \eqref{eq-LZ} holds.
Thus if we fix $i \in \{1, \cdots, m\}$, then we have the bound
\[u_{\ep}(x) = (\lambda_{i \ep}\ep^{\alpha_0})^{-\sigma_{\ep}} \tu_{i \ep}\(\(\lambda_{i \ep}\ep^{\alpha_0}\)^{-1}(x-x_{i \ep})\)
\le C U_{\lambda_{i \ep}\ep^{\alpha_0},x_{i \ep}}(x) \le C \ep^{\({n-2 \over 2}\)\alpha_0}\]
valid for each $x$ such that $r/2 \le |x - x_i| \le r$.
It says that $u_{\ep}(x) \le C\sqrt{\ep}$ for all $x \in A_r \setminus A_{r/2}$.

By Green's representation formula, one may write
\begin{equation}\label{eq-u-decom}
u_{\ep}(x) = \int_{A_{r/2}} G(x,y) u_{\ep}^{p-\ep}(y) dy + \sum_{i=1}^m \int_{B^n(x_i,r/2)} G(x,y) u_{\ep}^{p-\ep} (y)dy.
\end{equation}
Let us estimate each of the term in the right-hand side. If we set $b_{\ep} = \max\{u_{\ep}(x): x \in A_r\}$, then we find
\begin{equation}\label{eq-bound-1}
\int_{A_{r/2}} G(x,y) u_{\ep}^{p-\ep}(y) dy \le
C \int_{A_{r/2}} G(x,y) \(b_{\ep}^{p-\ep} + \sqrt{\ep}^{p-\ep}\) dy
\le C \(b_{\ep}^{p-\ep} + C \sqrt{\ep}^{p-\ep}\)
\end{equation}
for any $x \in A_r$. Besides, \eqref{eq-LZ} gives us that
\begin{equation}\label{eq-bound-2}
\begin{aligned}
\int_{B^n(x_i,r)} G(x,y) u_{\ep}^{p-\ep} (y)dy &\le C(r) \int_{B^n(x_i,r)}u_{\ep}^{p-\ep} (y)dy
\le C \cdot C(r) \int_{B^n(x_i,r)}U_{\lambda_{i \ep}\ep^{\alpha_0},x_{i \ep}}^{p-\ep}(y)dy \\
&\le C \cdot C(r) \ep^{\({n-2 \over 2}\)\alpha_0} = C \cdot C(r) \sqrt{\ep}
\end{aligned}
\end{equation}
for each $i$ and $x \in A_r$, where $C(r) = \max\{G(x,y): x, y \in \Omega, |x-y| \ge r/2\}$.
Hence, by combining \eqref{eq-bound-1} and \eqref{eq-bound-2}, we get
\[b_{\ep} \le C \(b_{\ep}^{p-\ep} + \sqrt{\ep}\).\]
Since it is guaranteed by Lemma \ref{lem-u-zero} that $b_{\ep} = o(1)$, this shows that $b_{\ep} \le C\sqrt{\ep}$.
The lemma is proved.
\end{proof}
\noindent The following result will be used to obtain the asymptotic formulas of the eigenvalues.
\begin{lem}\label{lem-u-asym}
Suppose that $u_{\ep}$ satisfies equation \eqref{eq-main-e} and the asymptotic behavior \eqref{eq-asym}.
Then we have
\begin{equation}\label{eq-u-ep-asym}
\ep^{-\frac{1}{2}} \cdot u_{\ep}(x) = C_2 \sum_{i=1}^m \lambda_i^{\frac{n-2}{2}} G(x,x_i) + o(1)
\end{equation}
in $C^2(\Omega \setminus\{x_1,\cdots,x_m\})$.
Here $C_2 = \int_{\mr^n} U_{1,0}^p > 0$.
\end{lem}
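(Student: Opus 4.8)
The plan is to run Green's representation formula
\[
u_{\ep}(x) = \int_{\Omega} G(x,y)\, u_{\ep}^{p-\ep}(y)\, dy
\]
against the decay estimates already established and the sharp bubble bound of Proposition~\ref{prop-LZ}. Fix a compact set $K \subset \Omega \setminus \{x_1,\dots,x_m\}$ and choose $r \in (0,\delta_0)$ small enough, with $\delta_0$ as in Proposition~\ref{prop-LZ}, so that the balls $B^n(x_i,r/2)$ ($1 \le i \le m$) are mutually disjoint, contained in $\Omega$, and disjoint from $K$. Splitting the integral over $\Omega = A_{r/2} \cup \bigcup_{i=1}^m B^n(x_i,r/2)$ (recall \eqref{eq-A_r}), I would estimate the ``far'' piece and the $m$ ``near'' pieces separately, first proving \eqref{eq-u-ep-asym} with $C^2$ replaced by $C^0$ uniformly on $K$, and then upgrading to $C^2$ through the equation itself; note that differentiating the representation twice in $x$ is not permissible here since $\pa_x^2 G(x,y)$ has a non-integrable singularity along $x=y$.

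For the far piece, Lemma~\ref{lem-u-bound} (applied with $r/2$ in place of $r$) gives $u_{\ep} = O(\sqrt{\ep})$ on $A_{r/2}$, hence
\[
\ep^{-1/2} \int_{A_{r/2}} G(x,y)\, u_{\ep}^{p-\ep}(y)\, dy \le C\, \ep^{-1/2} \cdot \ep^{(p-\ep)/2} = O\big(\ep^{(p-1-\ep)/2}\big) = o(1)
\]
uniformly in $x \in \Omega$, since $p > 1$. For the $i$-th near piece I would rescale $y = x_{i\ep} + \lambda_{i\ep}\ep^{\alpha_0} z$ as in \eqref{eq-u-exp}, obtaining
\[
\int_{B^n(x_i,r/2)} G(x,y)\, u_{\ep}^{p-\ep}(y)\, dy = (\lambda_{i\ep}\ep^{\alpha_0})^{\,n-\sigma_{\ep}(p-\ep)} \int_{B^n(0,R_{i\ep})} G\big(x,\, x_{i\ep}+\lambda_{i\ep}\ep^{\alpha_0}z\big)\, \tu_{i\ep}^{\,p-\ep}(z)\, dz
\]
with $R_{i\ep} \to \infty$. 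Using $p = (n+2)/(n-2)$ one computes $n-\sigma_{\ep}(p-\ep) = \tfrac{n-2}{2} + O(\ep)$, so that, since $\alpha_0 = 1/(n-2)$ and $\ep^{O(\ep)} = 1+o(1)$, the prefactor equals $\ep^{1/2}(\lambda_{i0}^{(n-2)/2} + o(1))$. I would then Taylor expand $G(x,\, x_{i\ep}+\lambda_{i\ep}\ep^{\alpha_0}z) = G(x,x_{i\ep}) + O(\lambda_{i\ep}\ep^{\alpha_0}|z|)$, uniformly for $x \in K$. Since $G(x,x_{i\ep}) \to G(x,x_i)$ uniformly on $K$, and since Proposition~\ref{prop-LZ} dominates $\tu_{i\ep}^{\,p-\ep}$ by a fixed $L^1(\mr^n)$ majorant while Lemma~\ref{lem-u-dil} together with interior elliptic estimates gives $\tu_{i\ep} \to U_{1,0}$ in $C^1_{\mathrm{loc}}(\mr^n)$, dominated convergence yields $\int_{B^n(0,R_{i\ep})}\tu_{i\ep}^{\,p-\ep} \to \int_{\mr^n} U_{1,0}^p = C_2$ and the $O(|z|)$ Taylor remainder is seen to contribute only $o(\ep^{1/2})$. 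Adding the three contributions gives
\[
\ep^{-1/2}u_{\ep}(x) = C_2\sum_{i=1}^m \lambda_i^{(n-2)/2}\, G(x,x_i) + o(1) \qquad \text{in } C^0(K).
\]

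To promote this to $C^2$, set $w_{\ep} = \ep^{-1/2}u_{\ep}$ and $w_0 = C_2\sum_{i=1}^m \lambda_i^{(n-2)/2} G(\cdot,x_i)$, which is harmonic on $\Omega\setminus\{x_1,\dots,x_m\}$. From $-\Delta u_{\ep} = u_{\ep}^{p-\ep}$ and $u_{\ep}=O(\sqrt{\ep})$ near $K$, interior elliptic estimates first give $\|u_{\ep}\|_{C^{1,\alpha}(K')} \le C\sqrt{\ep}$ on a slightly larger compactum $K'$, whence $-\Delta w_{\ep} = \ep^{-1/2}u_{\ep}^{p-\ep} \to 0$ in $C^{\alpha}(K')$ (indeed in $C^1$, since $p>1$), so $-\Delta(w_{\ep}-w_0) \to 0$ in $C^{\alpha}(K')$. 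Combined with $\|w_{\ep}-w_0\|_{C^0(K')} \to 0$ from the previous step, interior Schauder estimates give $\|w_{\ep}-w_0\|_{C^{2,\alpha}(K)} \to 0$, which is \eqref{eq-u-ep-asym}. I expect the main obstacle to be the uniform control, as $\ep \to 0$, of the rescaled integrals over the expanding balls $B^n(0,R_{i\ep})$: one must rule out escape of mass to infinity, which is exactly where the sharp pointwise bound $\tu_{i\ep} \le C\,U_{1,0}$ of Proposition~\ref{prop-LZ} is indispensable, while at the same time carefully tracking the (ultimately harmless) $O(\ep)$ perturbations hidden in the exponents $p-\ep$ and $\sigma_{\ep}(p-\ep)$.
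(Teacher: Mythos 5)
Your proposal follows exactly the paper's route: Green's representation formula, split the integral into a far part over $A_{r/2}$ (estimated with Lemma~\ref{lem-u-bound}) and near parts over $B^n(x_i,r/2)$, compute the leading term via rescaling and dominated convergence (using Lemma~\ref{lem-u-dil} and Proposition~\ref{prop-LZ}), and upgrade from $C^0$ to $C^2$ by elliptic regularity. The only difference worth noting is in how the Taylor/mean-value remainder $G(x,y)-G(x,x_i)$ is dispatched: the paper bounds it crudely by $O(r)$ and then lets $r\to 0$ at the very end (a two-parameter argument), whereas you keep $r$ fixed and show the remainder is $O\big(\ep^{1/(n-2)}\big)=o(1)$ by rescaling and integrating $|z|\,\tu_{i\ep}^{p-\ep}(z)$ against the $U_{1,0}$-majorant from Proposition~\ref{prop-LZ}; this is marginally sharper but relies on the same ingredients. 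You also spell out the Schauder bootstrap for the $C^2$ upgrade, which the paper leaves as ``by elliptic regularity.'' Both arguments are correct and essentially equivalent.
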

\begin{proof}
Take any $r > 0$ small for which Lemma \ref{lem-u-bound} holds
and decompose $u_{\ep}(x)$ as in \eqref{eq-u-decom} for $x \in A_r$.
Then we have
\begin{equation}\label{eq-2-2}
\left|{\ep}^{-{1 \over 2}} \int_{A_{r/2}} G(x,y) u_{\ep}^{p-\ep}(y) dy \right| \le C {\ep}^{{p-1-\ep \over 2}}\(\int_{\Omega} G(x,y) dy\) = o(1).
\end{equation}
Also, if we write
\[\int_{B^n(x_i,r/2)} G(x,y) u_{\ep}^{p-\ep} (y)dy = G(x,x_i)  \int_{B^n(x_i,r/2)} u_{\ep}^{p-\ep} (y)dy + \int_{B^n(x_i,r/2)} (G(x,y)-G(x,x_i)) u_{\ep}^{p-\ep} (y)dy\]
for $i \in \{1, \cdots, m\}$, it follows from Lemma \ref{lem-u-dil} and the dominated convergence theorem that
\begin{equation}\label{eq-2-3}
\ep^{-{1 \over 2}} \int_{B^n(x_i, r/2)} u_{\ep}^{p-\ep}(y) dy \to \lambda_i^{\frac{n-2}{2}} \int_{\mr^n} U_{1,0}^p (y) dy = \lambda_i^{\frac{n-2}{2}} C_2
\end{equation}
and from the mean value theorem that
\begin{equation}\label{eq-2-4}
\left|\ep^{-{1 \over 2}} \int_{B^n(x_i,r/2)} (G(x,y)-G(x,x_i)) u_{\ep}^{p-\ep}(y) dy\right| \le \ep^{-{1 \over 2}} \int_{B^n(x_i,r/2)} |G(x,y)-G(x,x_i)| u_{\ep}^{p-\ep}(y) dy = O(r).
\end{equation}
Therefore, combining \eqref{eq-u-decom}, \eqref{eq-2-2}, \eqref{eq-2-3} and \eqref{eq-2-4}, we confirm that
\[C_2 \sum_{i=1}^m \lambda_i^{\frac{n-2}{2}} G(x,x_i) - C r \le \liminf_{\ep \to 0} {\ep}^{-{1 \over 2}} u_{\ep}(x) \le \limsup_{\ep \to 0} {\ep}^{-{1 \over 2}} u_{\ep}(x) \le C_2 \sum_{i=1}^m \lambda_i^{\frac{n-2}{2}} G(x,x_i) + Cr.\]
Since $r>0$ is arbitrary, \eqref{eq-u-ep-asym} holds in $C^0 (\Omega \setminus\{x_1,\cdots,x_m\})$.
Also, the $C^2$-convergence comes from the elliptic regularity. 
This proves the lemma.
\end{proof}

\medskip
In Lemma \ref{lem-con-ext} and Lemma \ref{lem-w-bound}, we conduct a decay estimate for solutions of the eigenvalue problem \eqref{eq-lin}.
\begin{lem}\label{lem-con-ext}
For a fixed $\ell \in \mathbb{N}$, let $\{\mu_{\ell \ep}\}_{\ep}$ be the family of $\ell$-th eigenvalues for problem \eqref{eq-lin},
and $v_{\ell \ep}$ an $L^{\infty}(\Omega)$-normalized eigenfunction corresponding to $\mu_{\ell \ep}$.
Then for any $r>0$ the function $v_{\ell \ep}$ converges to zero uniformly in $A_r$ as $\ep \to 0$.
\end{lem}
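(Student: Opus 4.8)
The plan is to adapt the argument for $u_\ep$ from Lemmas~\ref{lem-u-zero}--\ref{lem-u-bound}; the only genuinely new ingredient needed is a uniform upper bound $\mu_{\ell\ep}\le C$ for all small $\ep$. Since $A_r\subseteq A_{r'}$ whenever $r\ge r'>0$, it is enough to treat $r\in(0,\delta_0)$, with $\delta_0$ as in Proposition~\ref{prop-LZ}. Setting $b_\ep:=\mu_{\ell\ep}(p-\ep)u_\ep^{p-1-\ep}$, problem \eqref{eq-lin} reads $-\Delta v_{\ell\ep}=b_\ep v_{\ell\ep}$ in $\Omega$ with $v_{\ell\ep}=0$ on $\pa\Omega$, and Green's representation gives, for $x\in A_r$,
\[
v_{\ell\ep}(x)=\mu_{\ell\ep}(p-\ep)\left[\int_{A_{r/2}}G(x,y)\,u_\ep^{p-1-\ep}(y)\,v_{\ell\ep}(y)\,dy+\sum_{i=1}^m\int_{B^n(x_i,r/2)}G(x,y)\,u_\ep^{p-1-\ep}(y)\,v_{\ell\ep}(y)\,dy\right].
\]
I would then (i) bound $\mu_{\ell\ep}$ from above, and (ii) show that each of the two integrals on the right is $o(1)$ uniformly in $x\in A_r$, using only $\|v_{\ell\ep}\|_{L^\infty(\Omega)}=1$ together with Lemma~\ref{lem-u-bound} and Proposition~\ref{prop-LZ}.

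For step (i) I would invoke the min-max characterization
\[
\mu_{\ell\ep}=\min_{\substack{E\subset H^1_0(\Omega)\\ \dim E=\ell}}\ \quad\max_{v\in E\setminus\{0\}}\ \quad\frac{\int_\Omega|\nabla v|^2}{(p-\ep)\int_\Omega u_\ep^{p-1-\ep}v^2}
\]
with a test space built from the bubble at $x_1$. Fix linearly independent $\phi_1,\dots,\phi_\ell\in C_c^\infty(B^n(0,1))$ and put $\psi_{j\ep}(x)=\phi_j\!\left(\frac{x-x_{1\ep}}{\lambda_{1\ep}\ep^{\alpha_0}}\right)$, which lie in $H^1_0(\Omega)$ and stay linearly independent for $\ep$ small. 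Changing variables as in \eqref{eq-u-exp} and using $\sigma_\ep(p-1-\ep)=2$, the Rayleigh quotient of $v=\sum_j a_j\psi_{j\ep}$ reduces to $\int_{\mr^n}|\nabla(\sum_j a_j\phi_j)|^2$ divided by $(p-\ep)\int_{\mr^n}\tu_{1\ep}^{p-1-\ep}(\sum_j a_j\phi_j)^2$. Since $\tu_{1\ep}\to U_{1,0}$ uniformly on $\overline{B^n(0,1)}$ — by Lemma~\ref{lem-u-dil} and elliptic regularity applied to $-\Delta u=u^{p-\ep}$ — we have $C^{-1}\le\tu_{1\ep}\le C$ on $B^n(0,1)$ for small $\ep$, so this quotient is bounded on the fixed finite-dimensional space $\operatorname{span}\{\phi_1,\dots,\phi_\ell\}$ by a constant independent of $\ep$. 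Taking $E=\operatorname{span}\{\psi_{1\ep},\dots,\psi_{\ell\ep}\}$ therefore yields $\mu_{\ell\ep}\le C$.

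For step (ii), Lemma~\ref{lem-u-bound} (with radius $r/2$) gives $u_\ep=O(\sqrt\ep)$ on $A_{r/2}$, so $u_\ep^{p-1-\ep}|v_{\ell\ep}|=O(\ep^{(p-1-\ep)/2})$ there (recall $\|v_{\ell\ep}\|_{L^\infty(\Omega)}=1$ and $(p-1-\ep)/2>0$ for $n\ge3$); as $\sup_{x\in\Omega}\int_\Omega G(x,y)\,dy<\infty$, the first integral is $O(\ep^{(p-1-\ep)/2})=o(1)$ uniformly in $x$. For the balls, Proposition~\ref{prop-LZ} applies since $r<\delta_0$ and $x_{i\ep}\to x_i$, giving $u_\ep^{p-1-\ep}(y)\le CU_{\lambda_{i\ep}\ep^{\alpha_0},x_{i\ep}}^{p-1-\ep}(y)$ on $B^n(x_i,r/2)$; moreover $G(x,y)\le C(r)$ for $x\in A_r$ and $y\in B^n(x_i,r/2)$, because then $|x-y|\ge r/2$. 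Together with $|v_{\ell\ep}|\le1$ and the elementary estimate $\int_{B^n(x_i,r/2)}U_{\lambda_{i\ep}\ep^{\alpha_0},x_{i\ep}}^{p-1-\ep}\,dy=o(1)$, this makes the second integral $o(1)$, uniformly in $x\in A_r$. Combining this with $\mu_{\ell\ep}\le C$ from (i) shows $v_{\ell\ep}(x)=o(1)$ uniformly for $x\in A_r$, which completes the proof.

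I expect step (i) to be the main obstacle: an $L^\infty$-normalized eigenfunction carries no automatic energy bound, so without controlling $\mu_{\ell\ep}$ one cannot even assert that $b_\ep$ is small away from $\{x_1,\dots,x_m\}$ — and for $n\ge5$, a growth of $\mu_{\ell\ep}$ faster than $\ep^{-2/(n-2)}$ would already wreck the estimate — so the construction of bubble-rescaled test spaces and the uniform two-sided control of $\tu_{1\ep}$ near the origin are what make the argument go through. A secondary subtlety is that a direct elliptic-regularity argument on $A_{r/4}$ does not suffice, since $v_{\ell\ep}$ need not be small on $\pa B^n(x_i,r/4)$; the Green's representation sidesteps this by using that $G(x,\cdot)$ remains bounded near each $x_i$ while $u_\ep^{p-1-\ep}$ has total mass $o(1)$ over a small ball about $x_i$. (Alternatively one could prove $\{v_{\ell\ep}\}$ precompact in $C^1(\overline{A_{r/2}})$, identify any subsequential limit as a bounded harmonic function on $\Omega$ vanishing on $\pa\Omega$, and conclude it is identically zero after removing the isolated singularities at the $x_i$.)
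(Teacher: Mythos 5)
Your proof is correct and follows the same route as the paper: Green's representation of $v_{\ell\ep}$, with Lemma~\ref{lem-u-bound} controlling the contribution over $A_{r/2}$ and Proposition~\ref{prop-LZ} controlling the contribution near each blow-up point. The one genuine thing you add — and it is a real addition — is step (i): the paper's argument yields $v_{\ell\ep}/(\mu_{\ell\ep}(p-\ep))=o(1)$ on $A_r$ and then asserts $v_{\ell\ep}=o(1)$ without saying why $\mu_{\ell\ep}$ stays bounded for fixed $\ell$; your bubble-rescaled min-max test space (using $\sigma_\ep(p-1-\ep)=2$ so the $(\lambda_{1\ep}\ep^{\alpha_0})^{n-2}$ factors cancel, and the two-sided bound on $\tu_{1\ep}$ on a fixed ball) makes that implicit step explicit and correct.
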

\begin{proof}
For $x \in A_r$, we write
\begin{equation}\label{eq-5-1}
\frac{v_{\ell \ep}(x)}{{\mu_{\ell \ep}}(p-\ep)} = \int_{A_{r/2}}G(x,y) u_{\ep}^{p-1-\ep} v_{\ell \ep}(y) dy + \sum_{i=1}^m \int_{B^n(x_i, r/2)} G(x,y) u_{\ep}^{p-1-\ep} v_{\ell \ep}(y) dy.
\end{equation}
From Lemma \ref{lem-u-bound}, we have
\begin{equation}\label{eq-5-2}
\left| \int_{A_{r/2}} G(x,y) \(u_{\ep}^{p-1-\ep} v_{\ell \ep}\)(y) dy\right| \le C \cdot \ep^{p-1-\ep \over 2} \(\int_{\Omega} G(x,y) dy\) = O\(\ep^{\frac{2}{n-2}}\).
\end{equation}
Also, we utilize \eqref{eq-LZ} to obtain that
\begin{equation}\label{eq-5-3}
\begin{aligned}
\left| \int_{B^n(x_i, r/2)} G(x,y) \(u_{\ep}^{p-1-\ep} v_{\ell \ep}\)(y) dy\right|
&\le C(r) \int_{B^n(x_i, r/2)} u_{\ep}^{p-1-\ep} (y) dy\\
&\le C \cdot C(r) \int_{B^n(0, r)}U^{p-1-\ep}_{\lambda_{i \ep}\ep^{\alpha_0},0}(y)dy\\
&= O \({\ep}^{\frac{2}{n-2}}\) \text{ if } n \ge 5,\ O(\ep \log{\ep}) \text{ if } n = 4,\ O(\ep) \text{ if } n = 3
\end{aligned}
\end{equation}
for any $1 \le i \le m$ where the definition of $C(r)$ can be found in the sentence after \eqref{eq-bound-2}.
Putting estimates \eqref{eq-5-2} and \eqref{eq-5-3} into \eqref{eq-5-1} validates that $v_{\ell \ep} = o(1)$ uniformly in $A_r$.
\end{proof}

\begin{lem}\label{lem-w-bound}
Assume that $0 \in \Omega$, fix $\ell \in \mathbb{N}$ and set
\[\tilde{v}_{\ell \ep} = v_{\ell \ep}\(\ep^{\alpha_0}x\) \quad \text{and} \quad d_{\ep}(x) = \text{dist}\(x, \left\{\ep^{-\alpha_0}x_{1\ep}, \cdots, \ep^{-\alpha_0}x_{m\ep} \right\}\) \quad \text{for } x \in \Omega_{\ep} := \ep^{-\alpha_0}\Omega.\]
Then for any $\zeta > 0$ small, we can pick a constant $C = C(\zeta) > 0$ independent of $\ep > 0$ such that
\begin{equation}\label{eq-w-bound-0}
|\tilde{v}_{\ell \ep}(x)| \le {C \over 1+d_{\ep}(x)^{n-2-\zeta}} \quad \text{for all } x \in \Omega_{\ep}.
\end{equation}
In particular, if $i \in \{1, \cdots, m\}$ are given and $\{\tilde{v}_{\ell i \ep}\}_{\ep}$ is a family of dilated eigenfunctions for \eqref{eq-main-e} defined as in \eqref{eq-v-exp}, then
\begin{equation}\label{eq-w-bound-3}
|\tilde{v}_{\ell i \ep}(x)| \le \frac{C}{1+|x|^{n-2-\zeta}} \quad \text{for all } |x| \le \ep^{-\alpha_0}r
\end{equation}
and $v_{\ep} = O(\ep)$ in $A_r$ for some $r > 0$ small.
\end{lem}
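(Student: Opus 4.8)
The plan is to rescale and work with $\tilde v_{\ell\ep}(x)=v_{\ell\ep}(\ep^{\alpha_0}x)$, which solves $-\Delta\tilde v_{\ell\ep}=V_\ep\tilde v_{\ell\ep}$ in $\Omega_\ep$ with $\tilde v_{\ell\ep}=0$ on $\pa\Omega_\ep$ and $\|\tilde v_{\ell\ep}\|_{L^\infty(\Omega_\ep)}=1$, where $V_\ep(x):=\ep^{2\alpha_0}\mu_{\ell\ep}(p-\ep)u_\ep^{p-1-\ep}(\ep^{\alpha_0}x)\ge0$ and $p_i:=\ep^{-\alpha_0}x_{i\ep}$. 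The first and main step is the pointwise decay of the potential,
\[
V_\ep(x)\ \le\ C\,\bigl(1+d_\ep(x)\bigr)^{-(4-(n-2)\ep)}\qquad\text{for all }x\in\Omega_\ep .
\]
For $x$ with $|x-p_i|\le c\,\ep^{-\alpha_0}$ I would write $\ep^{\alpha_0}x=x_{i\ep}+\lambda_{i\ep}\ep^{\alpha_0}\cdot\tfrac{x-p_i}{\lambda_{i\ep}}$, use $\sigma_\ep(p-1-\ep)=2$, insert the sharp bound $\tu_{i\ep}\le C\,U_{1,0}$ from Proposition~\ref{prop-LZ}, and use $U_{1,0}^{p-1-\ep}(y)\le C(1+|y|)^{-(4-(n-2)\ep)}$ together with $\lambda_{i\ep}\to\lambda_{i0}>0$; for $x$ far from every $p_i$ one has $u_\ep=O(\sqrt\ep)$ on $A_r$ by Lemma~\ref{lem-u-bound}, so $V_\ep$ is even smaller there, the two estimates matching at the scale $\ep^{-\alpha_0}$. (Here one uses that $\mu_{\ell\ep}$ stays bounded, which is the case in the range $\ell\le(n+2)m$ relevant to us.)

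Next I would build a barrier. Fix $\zeta>0$ small and take $\ep$ small enough that $2-(n-2)\ep>0$, and set $\psi(x):=\sum_{i=1}^m(1+|x-p_i|)^{-(n-2-\zeta)}$. Writing $\rho=|x-p_i|$, a direct computation gives
\[
-\Delta\bigl[(1+\rho)^{-(n-2-\zeta)}\bigr]=(n-2-\zeta)(1+\rho)^{-(n-\zeta)}\Bigl[(n-1)\tfrac{1+\rho}{\rho}-(n-1-\zeta)\Bigr]\ \ge\ (n-2-\zeta)\,\zeta\,(1+\rho)^{-(n-\zeta)}>0 ,
\]
so each summand of $\psi$ is superharmonic and $-\Delta\psi(x)\ge(n-2-\zeta)\zeta\,(1+d_\ep(x))^{-(n-\zeta)}$, while trivially $\psi(x)\le m\,(1+d_\ep(x))^{-(n-2-\zeta)}$. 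Comparing exponents, the first‑step bound on $V_\ep$ reduces the inequality $-\Delta\psi\ge V_\ep\psi$ to $(1+d_\ep(x))^{\,2-(n-2)\ep}\ge Cm/\bigl((n-2-\zeta)\zeta\bigr)$, which holds on $D_\ep:=\{x\in\Omega_\ep:d_\ep(x)>R_0\}$ for some $R_0=R_0(\zeta)$ that can be chosen independent of $\ep$ (since $\ep\to0$).

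Then I would close the estimate by comparison on the bounded domain $D_\ep$. Putting $h:=K\psi\pm\tilde v_{\ell\ep}$ with $K:=(1+R_0)^{n-2-\zeta}$, one gets $-\Delta h-V_\ep h=K(-\Delta\psi-V_\ep\psi)\ge0$ in $D_\ep$, and $h\ge0$ on $\pa D_\ep$ because $\tilde v_{\ell\ep}=0$ on $\pa\Omega_\ep$ and $|\tilde v_{\ell\ep}|\le1\le K\psi$ on $\{d_\ep=R_0\}$. Since $\psi>0$ is a positive supersolution of $-\Delta-V_\ep$, the ground‑state substitution $w=h/\psi$ turns this into a divergence‑form inequality $-\mathrm{div}(\psi^2\nabla w)+(\text{nonneg.})\,w\ge0$, so $w$ obeys the weak maximum principle and $h\ge0$; hence $|\tilde v_{\ell\ep}|\le K\psi\le Km\,(1+d_\ep)^{-(n-2-\zeta)}$ on $D_\ep$, while on $\{d_\ep\le R_0\}$ the bound $|\tilde v_{\ell\ep}|\le1\le(1+R_0)^{n-2-\zeta}(1+d_\ep)^{-(n-2-\zeta)}$ is immediate, giving \eqref{eq-w-bound-0}. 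Estimate \eqref{eq-w-bound-3} then comes out by the change of variables $x\mapsto p_i+\lambda_{i\ep}x$ (for which $d_\ep(p_i+\lambda_{i\ep}x)=\lambda_{i\ep}|x|$ whenever $|x|\le\ep^{-\alpha_0}r$ with $r$ small), and $v_\ep=O(\ep)$ on $A_r$ follows by inserting \eqref{eq-w-bound-0}, \eqref{eq-w-bound-3} and the first‑step bound into Green's formula $v_{\ell\ep}(x)=\mu_{\ell\ep}(p-\ep)\int_\Omega G(x,y)u_\ep^{p-1-\ep}(y)v_{\ell\ep}(y)\,dy$, whose $A_{r/2}$‑part is $o(\ep)$ by Lemma~\ref{lem-u-bound} and whose $B^n(x_i,r/2)$‑parts are $O\bigl((\ep^{\alpha_0})^{n-2}\bigr)=O(\ep)$ after rescaling (the spatial integral there converging uniformly in $\ep$).

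The hard part is the potential decay in the first step: the rate $4-(n-2)\ep$ only just exceeds $2$, which is exactly what makes $(1+|x-p_i|)^{-(n-2-\zeta)}$ a supersolution and the auxiliary integrals convergent, so it cannot come from a soft $H^1$‑type control on $\tu_{i\ep}$ — the sharp pointwise bound of Proposition~\ref{prop-LZ} (proved by the moving‑sphere method) is essential, and one must keep the two rescalings, by $\ep^{\alpha_0}$ and by $\lambda_{i\ep}\ep^{\alpha_0}$, straight throughout. The $\zeta$‑loss buys robustness: the sharp exponent $n-2$ would force $\ep<1/(n-2)$ and a less uniform choice of $R_0$. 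Once the positive supersolution is available, the comparison step is routine.
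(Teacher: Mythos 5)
Your proof is correct, and it takes a genuinely different and somewhat leaner route than the paper's. The paper proceeds in two stages: a Moser iteration (following Cao--Peng--Yan) producing the coarse bound $|\tilde{v}_{\ell\ep}(x)|\le CR^{-(n-2)/2-\zeta}$ for $d_{\ep}(x)\ge R$, and then an iterative comparison --- from a prior bound $|\tilde{v}_{\ell\ep}|\le D_j\sum_i|x-\tilde{x}_{i\ep}|^{-q_j}$ one takes the barrier $\chi_j=D_{j+1}\sum_i|x-\tilde{x}_{i\ep}|^{-(q_j+\tilde\eta)}$ and compares $-\Delta\chi_j$ with $-\Delta(\tilde{v}_{\ell\ep})_{\pm}$ via the ordinary maximum principle for $-\Delta$ (absorbing the right-hand side of the eigenvalue equation into a decay bound through the prior estimate), raising the exponent by $\tilde\eta$ at each step up to $n-2-\zeta$. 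You collapse both stages into a single comparison. The crucial input, as in the paper, is the decay $V_{\ep}\le C(1+d_{\ep})^{-(4-(n-2)\ep)}$ of the rescaled potential coming from Proposition~\ref{prop-LZ} (together with Lemma~\ref{lem-u-bound} in the outer region): since $4-(n-2)\ep>2$, the profile $\psi=\sum_i(1+|x-p_i|)^{-(n-2-\zeta)}$ is a positive strict supersolution of $-\Delta-V_{\ep}$ on $\{d_{\ep}>R_0\}$ with $R_0$ independent of $\ep$, and the ground-state substitution $w=h/\psi$ turns $-\Delta-V_{\ep}$ into $w\mapsto-\mathrm{div}(\psi^2\nabla w)+c\,w$ with $c=\psi\,(-\Delta\psi-V_{\ep}\psi)\ge0$, for which the weak maximum principle holds outright. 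This gives the full exponent $n-2-\zeta$ in one pass. What the paper's route buys is that it never needs a positive supersolution of the full Schr\"odinger operator and stays within elementary comparison for $-\Delta$; what yours buys is the elimination of both the Moser step and the bootstrap. Two minor remarks: the lemma is stated for arbitrary fixed $\ell$, but both arguments tacitly use boundedness of $\mu_{\ell\ep}$, which holds in the range $\ell\le(n+2)m$ where the lemma is actually applied (cf.\ Proposition~\ref{prop-apriori} and Lemma~\ref{lem-apriori}); and to reach $v_{\ell\ep}=O(\ep)$ on $A_r$ one does, as you correctly do, need to feed the freshly obtained decay of $v_{\ell\ep}$ (not merely $\|v_{\ell\ep}\|_{L^\infty}=1$) back into Green's representation, since otherwise one only recovers the weaker Lemma~\ref{lem-con-ext}.
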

\begin{proof}
One can derive the decay estimate \eqref{eq-w-bound-0} by adapting the proof of Lemmas A.5, B.3 and Proposition B.1 of Cao, Peng and Yan \cite{CPY} in a suitable way, in which the authors investigated the $p$-Laplacian version of the Brezis-Nirenberg problem.
We sketch it for the completeness.
Let $\tu_{\ep} = u_{\ep}\(\ep^{\alpha_0}\cdot\)$ and $\tilde{x}_{i\ep} = \ep^{-\alpha_0}x_{i\ep}$.

Notice that $\tilde{v}_{\ell \ep}$ solves
\[-\Delta \tilde{v}_{\ell \ep} = a_{\ell \ep}\tilde{v}_{\ell \ep} \quad \text{in } \Omega_{\ep} \quad \text{where} \quad a_{\ell \ep} = \mu_{\ell \ep}(p-\ep) \ep^{2\alpha_0} \tu_{\ep}^{p-1-\ep} \ge 0.\]
From Proposition \ref{prop-LZ} and Lemma \ref{lem-u-bound}, we realize that $a_{\ell \ep} \le C |x|^{-4+(n-2)\ep}$ holds in each annulus $B^n\(\tilde{x}_{i\ep},\delta_0 \ep^{-\alpha_0}\) \setminus B^n(\tilde{x}_{i\ep},R)$
provided $i \in \{1, \cdots, m\}$ and $R > 1$ large, and $a_{\ell \ep} \le C \ep^{4\alpha_0}$ in $\Omega_{\ep} \setminus \cup_{i=1}^m B^n\(\tilde{x}_{i\ep},\delta_0 \ep^{-\alpha_0}\)$.
Hence, given any $\eta > 0$, there exists a large $R(\eta) > 1$ such that
\begin{equation}\label{eq-w-bound-1}
\int_{\widetilde{A}_{R(\eta)}} |a_{\ell \ep}|^{n \over 2} dx < \eta \quad \text{where} \quad  \widetilde{A}_R := \Omega_{\ep} \setminus \bigcup_{i=1}^m B^n(\tilde{x}_{i\ep}, R).
\end{equation}
Suppose that $\zeta > 0$ is selected to be small enough.
Then one can apply the Moser iteration technique to get a small number $\eta > 0$ and large $q > p+1$ such that if \eqref{eq-w-bound-1} holds, there is a constant $C > 0$ independent of $R$, $\eta$ or $\tilde{v}_{\ell \ep}$ satisfying
\[\|\tilde{v}_{\ell \ep}\|_{L^q\(\widetilde{A}_R\)} \le {C \over (R-2R(\eta))^{{n-2 \over 2}-\zeta}} \cdot \|\tilde{v}_{\ell \ep}\|_{L^{p+1}\(\widetilde{A}_{2R(\eta)}\)}\]
for any $R > 2R(\eta)$.
On the other hand, it is possible to get that $\|\tilde{v}_{\ell \ep}\|_{L^{p+1}\(\widetilde{A}_{2R(\eta)}\)} \le CR^{-2\zeta}$ by taking a smaller $\zeta$ if necessary.
Thus standard elliptic regularity theory gives
\begin{equation}\label{eq-w-bound-2}
\left|\tilde{v}_{\ell \ep}(x)\right| \le \|\tilde{v}_{\ell \ep}\|_{L^{\infty}\(B^n(x,1)\)} \le C \|\tilde{v}_{\ell \ep}\|_{L^q\(\widetilde{A}_{R-1}\)}
\le {C  \over (R-2R(\eta)-1)^{{n-2 \over 2}-\zeta}} \cdot \|\tilde{v}_{\ell \ep}\|_{L^{p+1}\(\widetilde{A}_{2R(\eta)}\)}\le {C \over R^{{n-2 \over 2}+\zeta}}
\end{equation}
for all $x \in \widetilde{A}_R$, $R \ge 3R(\eta)$.

Having \eqref{eq-w-bound-2} in mind, we now prove \eqref{eq-w-bound-0} by employing the comparison principle iteratively.
Assume that it holds
\begin{equation}\label{eq-max-0}
|\tilde{v}_{\ell \ep}(x)| \le D_j \sum_{i=1}^m \frac{1}{|x-\tilde{x}_{i \ep}|^{q_j}} \quad \text{for all } x \in \widetilde{A}_R,
\end{equation}
some $D_j > 0$ and $0 < q_j < n-2$ to be determined soon ($j \in \mathbb{N}$).
Since we have $(n-2)(p-1-\ep) > 3$ for small $\ep > 0$, Proposition \ref{prop-LZ}, Lemma \ref{lem-u-bound} and \eqref{eq-max-0} tell us that there exists some $\widetilde{D}_j > 0$ whose choice is affected by only $D_j$, $n$ and $\ell$ such that
\[-\Delta (\tilde{v}_{\ell \ep})_{\pm}(x) = \mu_{\ell \ep} (p-\ep) \tu_{\ep}^{p-1-\ep} (\tilde{v}_{\ell \ep})_{\pm}(x)
\le \widetilde{D}_j \sum_{i=1}^m \frac{1}{|x - \tilde{x}_{i\ep}|^{q_j + 3}} \quad \text{for any } x \in \widetilde{A}_R.\]
Select any number $0 < \tilde{\eta} < \min(1, n-2-q_j)$ and set a function
\[\chi_j(x) = D_{j+1} \sum_{i=1}^m \frac{1}{|x - \tilde{x}_{i\ep}|^{q_j + \tilde{\eta}}} \quad \text{for } x \in \mr^n\]
where $D_{j+1} > 0$ is a number so large that $\chi_j \ge |\tilde{v}_{\ell \ep}|$ on $\cup_{i=1}^m \pa B^n(\tilde{x}_{i\ep},R)$.
Then one can compute
\begin{equation}\label{eq-max-1}
\begin{aligned}
-\Delta \chi_j (x) &= D_{j+1} \(q_j + \tilde{\eta}\) \((n-2)-\(q_j +\tilde{\eta}\)\) \sum_{i=1}^m \frac{1}{|x-\tilde{x}_{i\ep}|^{q_j +\tilde{\eta}+2}} \\
&\ge \widetilde{D}_j \sum_{i=1}^m \frac{1}{|x-\tilde{x}_{i\ep}|^{q_j + 3}} \ge - \Delta (\tilde{v}_{\ell \ep})_{\pm}(x), \quad x \in \widetilde{A}_R
\end{aligned}
\end{equation}
by taking a larger $D_{j+1}$ if necessary.
However $\chi_j > 0$ and $\tilde{v}_{\ell \ep} = 0$ on $\pa \Omega_{\ep}$, whence $\chi_j \ge |\tilde{v}_{\ell \ep}|$ on $\pa \widetilde{A}_R$.
Consequently, by \eqref{eq-max-1} and the maximum principle, it follows that
\[|\tilde{v}_{\ell \ep}(x)| \le \chi_j(x) = D_{j+1} \sum_{i=1}^m \frac{1}{|x-\tilde{x}_{i\ep}|^{q_j+\tilde{\eta}}}, \quad x \in \widetilde{A}_R.\]
Letting $q_1 = {n-2 \over 2} + \zeta$ in \eqref{eq-w-bound-2}, choosing an appropriate $D_1 >0$ and repeating this comparison procedure, we can deduce
\[|\tilde{v}_{\ell \ep}(x)| \le C \sum_{i=1}^m \frac{1}{|x-\tilde{x}_{i\ep}|^q}, \quad x \in \widetilde{A}_R\]
given any $1 < q < n-2$.
This proves \eqref{eq-w-bound-0}.

Finally, \eqref{eq-w-bound-3} and the claim that $v_{\ep} = O(\ep)$ in $A_r$ is a straightforward consequence of \eqref{eq-w-bound-0}.
The proof is completed.
\end{proof}

By utilizing \eqref{eq-LZ}, \eqref{eq-w-bound-3}, \eqref{eq-u-bound}, the fact that $v_{\ep} = O(\ep)$ in $A_r$ and regularity theory,
we immediately establish a decay estimate for the derivatives of $\tu_{i\ep}$ and $\tilde{v}_{\ell i \ep}$.
\begin{lem}\label{lem-u-d-bound}
For any $k \in \{1, \cdots, n\}$, there exists a universal constant $C > 0$ such that
\[\left|{\pa \tu_{i\ep} (x) \over \pa x_k}\right| \le \frac{C}{1+|x|^{n-2}} \quad \text{and} \quad \left|{\pa \tilde{v}_{\ell i \ep} (x) \over \pa x_k}\right|
\le \frac{C}{1+|x|^{n-2-\zeta}} \quad \text{for all } |x| \le \ep^{-\alpha_0}r\]
for $\zeta,\ r > 0$ small. 
Moreover we have
\[\left|\pa_k u_{\ep}\right|,\ \left|\pa_k\pa_l u_{\ep}\right| = O\(\sqrt{\ep}\) \quad \text{and} \quad \left|\pa_k v_{\ell \ep}\right| = O(\ep) \quad \text{for all } k,\ l = 1, \cdots, n\]
as $\ep \to 0$ in any compact subset of $A_r$.
\end{lem}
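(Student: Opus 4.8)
The plan is to combine the pointwise bounds already at our disposal --- namely \eqref{eq-LZ}, \eqref{eq-w-bound-3}, \eqref{eq-u-bound} and the bound $v_{\ell\ep}=O(\ep)$ in $A_r$ from Lemma~\ref{lem-w-bound} --- with interior elliptic regularity applied on balls whose radius scales with the distance to the nearest concentration point (for $\tu_{i\ep}$ and $\tilde v_{\ell i\ep}$) or with the distance to a fixed compact set (for $u_\ep$ and $v_{\ell\ep}$ themselves). Throughout I shall freely use that $\mu_{\ell\ep}=O(1)$, that $(n-2)(p-1-\ep)=4-(n-2)\ep>3$ and $(p-\ep)/2>1/2$ once $\ep$ is small, and the rescaled equations $-\Delta\tu_{i\ep}=\tu_{i\ep}^{p-\ep}$ and $-\Delta\tilde v_{\ell i\ep}=\mu_{\ell\ep}(p-\ep)\tu_{i\ep}^{p-1-\ep}\tilde v_{\ell i\ep}$, the latter obtained by inserting \eqref{eq-u-exp} into \eqref{eq-lin} and using $\sigma_\ep(p-1-\ep)=2$.

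First I would prove the two decay bounds. Shrink $r,\zeta>0$ so that \eqref{eq-LZ} and \eqref{eq-w-bound-3} hold on $B^n(0,\tfrac32\ep^{-\alpha_0}r)$. Fix a point $x_0$ with $2\le|x_0|\le\ep^{-\alpha_0}r$, set $\mu_0=|x_0|/2$, and rescale by $w(y)=\tu_{i\ep}(x_0+\mu_0 y)$ (respectively $w(y)=\tilde v_{\ell i\ep}(x_0+\mu_0 y)$) for $y\in B^n(0,1)$. Since $|x_0+\mu_0 y|\ge\mu_0$ there, \eqref{eq-LZ} gives $\|w\|_{L^\infty(B^n(0,1))}\le C\mu_0^{-(n-2)}$ in the first case and \eqref{eq-w-bound-3} gives $\|w\|_{L^\infty(B^n(0,1))}\le C\mu_0^{-(n-2-\zeta)}$ in the second. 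In the first case $w$ solves the linear equation $-\Delta w=\big(\mu_0^2 w^{p-1-\ep}\big)w$ whose coefficient satisfies $\|\mu_0^2 w^{p-1-\ep}\|_{L^\infty(B^n(0,1))}\le C\mu_0^{2-(4-(n-2)\ep)}\le C$ for $\mu_0\ge1$; in the second case $w$ solves $-\Delta w=\big(\mu_0^2\mu_{\ell\ep}(p-\ep)\tu_{i\ep}^{p-1-\ep}(x_0+\mu_0\,\cdot)\big)w$, and the same exponent count together with \eqref{eq-LZ} and $\mu_{\ell\ep}=O(1)$ bounds its coefficient uniformly on $B^n(0,1)$ for $\mu_0\ge1$. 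Hence in either case $W^{2,q}$ estimates with $q>n$, followed by the embedding $W^{2,q}\hookrightarrow C^1$, give $|\nabla w(0)|\le C\|w\|_{L^\infty(B^n(0,1))}$ with $C$ independent of $\ep$ and $x_0$. Undoing the scaling, $\nabla w(0)=\mu_0\nabla\tu_{i\ep}(x_0)$ (respectively $\mu_0\nabla\tilde v_{\ell i\ep}(x_0)$), so $|\nabla\tu_{i\ep}(x_0)|\le C|x_0|^{-(n-1)}$ and $|\nabla\tilde v_{\ell i\ep}(x_0)|\le C|x_0|^{-(n-1-\zeta)}$. On the remaining range $|x_0|\le2$ one argues directly by interior regularity on $B^n(0,3)$, using the uniform $L^\infty$ bound of Lemma~\ref{lem-u-dil} (respectively $\|\tilde v_{\ell i\ep}\|_{L^\infty}\le1$ and the boundedness of the coefficient $\mu_{\ell\ep}(p-\ep)\tu_{i\ep}^{p-1-\ep}$ there), a region on which the claimed right-hand sides are bounded below. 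Since $n-1\ge n-2$, gluing the two ranges yields the first and second displayed estimates.

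Next I would establish the assertions on $u_\ep$ and $v_{\ell\ep}$. Given a compact $K\subset A_r$, choose compact sets $K\subset K''\subset K'\subset A_{r/2}$ with $K$ interior to $K''$, $K''$ interior to $K'$, and $K'$ at positive distance from $\pa\Omega$. By Lemma~\ref{lem-u-bound}, $u_\ep=O(\sqrt\ep)$ on $K'$, so $-\Delta u_\ep=u_\ep^{p-\ep}=O(\ep^{(p-\ep)/2})=o(\sqrt\ep)$ there; interior $W^{2,q}$ estimates and the Sobolev embedding give $\|u_\ep\|_{C^{1,\gamma}(K'')}=O(\sqrt\ep)$, the term $\|u_\ep\|_{L^\infty(K')}$ being dominant. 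Since $t\mapsto t^{p-\ep}$ is $C^1$ on $[0,\infty)$ for small $\ep$, this forces $\|u_\ep^{p-\ep}\|_{C^{0,\gamma}(K'')}=O(\ep^{(p-\ep)/2})$, whence Schauder estimates give $\|u_\ep\|_{C^{2,\gamma}(K)}=O(\sqrt\ep)$; in particular $|\pa_k u_\ep|,|\pa_k\pa_l u_\ep|=O(\sqrt\ep)$ on $K$. Likewise $v_{\ell\ep}=O(\ep)$ on $K'$ by Lemma~\ref{lem-w-bound}, and because $\mu_{\ell\ep}=O(1)$ and $u_\ep=O(\sqrt\ep)$ on $K'$, the right-hand side of \eqref{eq-lin} is $\mu_{\ell\ep}(p-\ep)u_\ep^{p-1-\ep}v_{\ell\ep}=O(\ep^{1+(p-1-\ep)/2})=o(\ep)$ on $K'$; interior $W^{2,q}$ estimates and the Sobolev embedding then give $\|v_{\ell\ep}\|_{C^1(K)}=O(\ep)$, i.e.\ $|\pa_k v_{\ell\ep}|=O(\ep)$ on $K$.

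There is no genuine obstacle in this argument --- which is precisely why the estimate follows ``immediately'' from the preparatory lemmas. The one point needing attention is the bookkeeping of radii and exponents: one must shrink $r$ (and, for the $\tilde v_{\ell i\ep}$-bound, $\zeta$) so that the balls $B^n(x_0,|x_0|/2)$ used above remain inside the region where \eqref{eq-LZ} and \eqref{eq-w-bound-3} are available, and one must check that the decay exponents $n-1$ and $n-1-\zeta$ produced by the dilation dominate the claimed $n-2$ and $n-2-\zeta$, which holds for every $n\ge3$. The near-critical features --- the power $p-\ep$ rather than a fixed subcritical one, and $\sigma_\ep\neq(n-2)/2$ --- never interfere, since only the smallness of $\ep$ and the crude monotonicity of $(n-2)(p-1-\ep)$ and $(p-\ep)/2$ enter.
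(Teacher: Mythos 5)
Your argument is correct and is exactly the elliptic-regularity-plus-dyadic-rescaling reasoning the paper intends when it states that the lemma follows ``immediately'' from \eqref{eq-LZ}, \eqref{eq-w-bound-3}, \eqref{eq-u-bound}, the $O(\ep)$ bound on $v_{\ell\ep}$, and regularity theory; the paper gives no written proof, and you have supplied the routine details (coefficient boundedness after rescaling via $(n-2)(p-1-\ep)=4-(n-2)\ep$, $W^{2,q}\hookrightarrow C^1$, and Schauder for the second derivatives) that make it rigorous. As you note, your rescaling actually yields the slightly stronger gradient decays $|x|^{-(n-1)}$ and $|x|^{-(n-1-\zeta)}$, which of course implies the $|x|^{-(n-2)}$ and $|x|^{-(n-2-\zeta)}$ stated in the lemma.
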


\medskip
Finally, we recall two well-known results.
The first lemma states the nondegeneracy property of the standard bubble $U_{1,0}$. We refer to \cite{BE} for its proof.
\begin{lem}\label{lem-nondeg}
The space of solutions to the linear problem
\[-\Delta v = p U_{1,0}^{p-1} v \quad \text{in } \mr^n, \quad v \in D^{1,2}(\mr^n)\]
is spanned by
\[{x_1 \over (1+|x|^2)^{n \over 2}},\ \cdots,\ {x_n \over (1+|x|^2)^{n \over 2}} \quad \text{and} \quad {1-|x|^2 \over (1+|x|^2)^{n \over 2}}.\]
\end{lem}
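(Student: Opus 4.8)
The plan is to reduce the problem to a one-dimensional spectral question through a spherical-harmonic expansion. Since $\beta_n^{p-1}=n(n-2)$ and $p=(n+2)/(n-2)$, the linearized potential is the bounded radial function $p\,U_{1,0}^{p-1}(x)=n(n+2)(1+|x|^2)^{-2}$. Writing $x=r\theta$ with $r>0$ and $\theta\in S^{n-1}$, I would expand any solution $v\in D^{1,2}(\mr^n)$ as $v(r\theta)=\sum_{k\ge0}v_k(r)Y_k(\theta)$ over an $L^2(S^{n-1})$-orthonormal basis of spherical harmonics with $-\Delta_{S^{n-1}}Y_k=\mu_kY_k$, where $\mu_0=0$, $\mu_1=n-1$, $\mu_k\ge2n$ for $k\ge2$, and the degree-one harmonics span $\theta_1,\dots,\theta_n$. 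By Parseval each $v_k$ lies in the weighted space induced by $D^{1,2}(\mr^n)$ and solves
\[-v_k''-\frac{n-1}{r}v_k'+\frac{\mu_k}{r^2}v_k=\frac{n(n+2)}{(1+r^2)^2}v_k\qquad\text{on }(0,\infty),\]
so the whole problem comes down to identifying, mode by mode, the admissible solutions of this ordinary differential equation.

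The next step is to record the structure of this second-order linear ODE. Its indicial exponents, both at $r=0$ and (since the potential is $O(r^{-4})$ there) at $r=\infty$, are $k$ and $2-n-k$; the solution behaving like $r^{2-n-k}$ near $0$ has non-integrable gradient energy, so for each $k$ the admissible solutions form at most a line, spanned by the solution $\psi_k$ that is regular ($\sim r^k$) at the origin, and $\psi_k\in D^{1,2}(\mr^n)$ precisely when $\psi_k$ decays like $r^{2-n-k}$ rather than growing like $r^k$ at infinity. For $k=0$ one checks that $\psi_0(r)=(1-r^2)(1+r^2)^{-n/2}$ — a multiple of $\pa_\lambda U_{\lambda,0}|_{\lambda=1}$ — solves the equation, is regular at $0$, and decays like $r^{2-n}$, hence is admissible. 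For $k=1$ one checks that $\psi_1(r)=r(1+r^2)^{-n/2}$ solves the equation, is regular at $0$, strictly positive on $(0,\infty)$, and decays like $r^{1-n}$, hence is admissible; tensoring $\psi_1$ with the $n$-dimensional degree-one harmonic space produces exactly $x_j(1+|x|^2)^{-n/2}$ for $j=1,\dots,n$. These two modes already furnish the $(n+1)$-dimensional space claimed in the statement, so the real content of the lemma is that the modes with $k\ge2$ must vanish.

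This is the step I expect to be the main obstacle, and I would settle it by a ground-state substitution exploiting the positivity of $\psi_1$. Let $L_k$ denote the radial operator on the left of the ODE minus its right-hand side, so that $L_1\psi_1=0$ with $\psi_1>0$ on $(0,\infty)$ and $L_k=L_1+(\mu_k-\mu_1)r^{-2}$ with $\mu_k-\mu_1>0$ for $k\ge2$. Writing an admissible function as $v=\psi_1w$, integrating by parts against the weight $r^{n-1}\,dr$ and using $L_1\psi_1=0$ yields the identity
\[\int_0^\infty\Big[(v')^2+\frac{\mu_1}{r^2}v^2-\frac{n(n+2)}{(1+r^2)^2}v^2\Big]r^{n-1}\,dr=\int_0^\infty\psi_1^2\,(w')^2\,r^{n-1}\,dr\ \ge\ 0,\]
so the quadratic form of $L_1$ is non-negative; adding the strictly positive term $(\mu_k-\mu_1)r^{-2}v^2$ makes the quadratic form of $L_k$ strictly positive on nonzero admissible functions. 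Since this same form equals $\int_0^\infty(L_kv_k)v_k\,r^{n-1}\,dr=0$ for any admissible solution $v_k$ of $L_kv_k=0$, we conclude $v_k\equiv0$ for $k\ge2$. The delicate points are the vanishing of all boundary terms at $0$ (where $v_k\sim r^k$ with $k\ge1$) and at $\infty$ (where the $D^{1,2}$ bound forces decay like $r^{2-n-k}$), together with the legitimacy of dividing by $\psi_1$, which vanishes at the origin — harmless since $v_k/\psi_1\sim r^{k-1}$ stays bounded there.

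An alternative route, which I would keep in reserve, bypasses the ODEs altogether: via stereographic projection with the standard conformal factor the equation $-\Delta v=p\,U_{1,0}^{p-1}v$ on $\mr^n$ becomes $-\Delta_{S^n}w=n\,w$ on the round sphere $S^n$, and $n$ is exactly the first nonzero eigenvalue of $-\Delta_{S^n}$, whose eigenspace is the $(n+1)$-dimensional space of restrictions to $S^n$ of linear functions on $\mr^{n+1}$; pulling these back recovers precisely the $n+1$ functions in the statement. In this version the main obstacle shifts to the careful verification that the conformal change of variables carries $D^{1,2}(\mr^n)$ onto $H^1(S^n)$, which is what legitimizes invoking the spectral geometry of the compact manifold.
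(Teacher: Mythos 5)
Your proof is correct. The paper does not supply its own argument for this lemma but defers to Bianchi--Egnell \cite{BE}, whose proof is the same spherical-harmonic reduction followed by a mode-by-mode ODE analysis; your ground-state substitution $v=\psi_1 w$ to kill the $k\ge 2$ modes is a clean way to carry out that step, and the stereographic-projection alternative you sketch is also a standard route to the same conclusion.
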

\noindent The next lemma lists some formulas regarding the derivatives of Green's function. The proof can be found in \cite{GP, H}.
\begin{lem}\label{lem-Green}
For $x_0 \in \Omega$, it holds that
\[\int_{\pa \Omega} (x-x_0, \nu) \(\frac{\pa G}{\pa \nu}(x,x_0)\)^2 dS = (n-2) \tau(x_0),\]
\[\int_{\pa \Omega} \(\frac{\pa G}{\pa \nu}(x,x_0)\)^2 \nu_k (x) dS = \frac{\pa \tau}{\pa x_k}(x_0),\quad k =1, \cdots, n\]
and
\[\int_{\pa \Omega}\frac{\pa G}{\pa x_k} (x,x_0) \frac{\pa}{\pa y_l}\(\frac{\pa G}{\pa \nu}(x,x_0)\) dS =
\frac{1}{2}\frac{\pa^2 \tau}{\pa x_k \pa x_l}(x_0),\quad k,l = 1, \cdots, n.\]
Here $\nu$ is the outward normal unit vector to $\pa \Omega$ and $dS$ is the surface measure $\pa \Omega$.
\end{lem}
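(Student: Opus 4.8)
The plan is to derive all three formulas as Pohozaev-type (Rellich--Ne\v{c}as) identities for the harmonic function $u:=G(\cdot,x_0)$. Since $u$ has a nonintegrable singularity at $x_0$, I would work on the punctured domain $D_\delta:=\Omega\setminus\overline{B^n(x_0,\delta)}$, apply the relevant differential identity there, split $\pa D_\delta=\pa\Omega\cup\pa B^n(x_0,\delta)$, expand the integrand on $\pa B^n(x_0,\delta)$ by means of the decomposition $G(x,x_0)=\gamma_n|x-x_0|^{2-n}-H(x,x_0)$ (with $H(\cdot,x_0)$ smooth near $x_0$ and $H(x_0,x_0)=\tau(x_0)$), and let $\delta\to0$. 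Throughout, on $\pa\Omega$ I would use $u\equiv0$, so that $\nabla u=(\pa_\nu u)\,\nu$ and hence $|\nabla u|^2=(\pa_\nu u)^2$ and $\pa_{x_k}u=(\pa_\nu u)\,\nu_k$ on $\pa\Omega$.

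For the first identity, multiply $\Delta u=0$ by $(x-x_0)\cdot\nabla u$ and integrate by parts over $D_\delta$ to obtain
\[
-\frac{n-2}{2}\int_{D_\delta}|\nabla u|^2\,dx=\int_{\pa D_\delta}\big((x-x_0)\cdot\nabla u\big)\,\pa_\nu u\,dS-\frac12\int_{\pa D_\delta}\big((x-x_0)\cdot\nu\big)\,|\nabla u|^2\,dS,
\]
$\nu$ being the outward normal of $D_\delta$. Writing $\int_{D_\delta}|\nabla u|^2=\int_{\pa D_\delta}u\,\pa_\nu u\,dS$ and inserting the expansion of $G$ near $x_0$, one finds $\int_{D_\delta}|\nabla u|^2=\gamma_n^2(n-2)|S^{n-1}|\,\delta^{2-n}-\tau(x_0)+o(1)$ and that the $\pa B^n(x_0,\delta)$-part of the right-hand side equals $-\tfrac12\gamma_n^2(n-2)^2|S^{n-1}|\,\delta^{2-n}+o(1)$; since $\gamma_n(n-2)|S^{n-1}|=1$ the divergent $\delta^{2-n}$-terms cancel, and the surviving finite identity is exactly $\tfrac{n-2}{2}\tau(x_0)=\tfrac12\int_{\pa\Omega}(x-x_0,\nu)(\pa_\nu G)^2\,dS$. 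For the second identity I would instead multiply $\Delta u=0$ by $\pa_{x_k}u$; since $\nabla(\pa_{x_k}u)\cdot\nabla u=\tfrac12\pa_{x_k}|\nabla u|^2$ there is no interior term, leaving $0=\int_{\pa D_\delta}(\pa_{x_k}u)\,\pa_\nu u\,dS-\tfrac12\int_{\pa D_\delta}|\nabla u|^2\nu_k\,dS$. On $\pa B^n(x_0,\delta)$ the leading $\delta^{1-n}$-term of the integrand is an odd multiple of $(x-x_0)_k/\delta$ and integrates to zero over the sphere, while the $O(1)$-term contributes $-\pa_{x_k}H(x_0,x_0)\,\gamma_n(n-2)|S^{n-1}|=-\tfrac12\pa_{x_k}\tau(x_0)$ in the limit (using $\pa_{x_k}\tau(x_0)=2\pa_{x_k}H(x_0,x_0)$, which follows from the symmetry of $H$); on $\pa\Omega$ the contribution is $\tfrac12\int_{\pa\Omega}(\pa_\nu G)^2\nu_k\,dS$, and passing to the limit gives the second formula.

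Finally, I would obtain the third identity by differentiating the second one in the pole $x_0$. Differentiating $\int_{\pa\Omega}(\pa_\nu G(x,x_0))^2\nu_k\,dS=\pa_{x_k}\tau(x_0)$ in the $l$-th coordinate of $x_0$ yields $2\int_{\pa\Omega}\pa_\nu G(x,x_0)\,\tfrac{\pa}{\pa y_l}\big(\pa_\nu G(x,x_0)\big)\,\nu_k\,dS=\pa_{x_k}\pa_{x_l}\tau(x_0)$, and replacing $(\pa_\nu G)\,\nu_k$ by $\pa_{x_k}G$ on $\pa\Omega$ (again because $G=0$ there) produces the stated formula; the smoothness of $G$ and $H$ off the diagonal together with their $C^1$-dependence on the pole legitimizes the differentiation under the integral sign. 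The main obstacle is not any single step but the careful bookkeeping of the singular contributions over $\pa B^n(x_0,\delta)$ in the first two identities: one must verify that the $O(\delta^{2-n})$ divergences cancel exactly and that the surviving $O(1)$-terms are precisely the asserted multiples of $\tau$, $\nabla\tau$ and $D^2\tau$. These computations are classical and are carried out in detail in \cite{GP,H}.
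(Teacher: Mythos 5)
Your proposal is correct and matches the classical Pohozaev/Rellich computation carried out in the references that the paper cites (the paper itself does not prove Lemma 2.9, merely pointing to \cite{GP,H}). The cancellation of the $\delta^{2-n}$-singularities in the first identity, the spherical averaging giving $2\partial_{x_k}H(x_0,x_0)=\partial_{x_k}\tau(x_0)$ in the second, and the differentiation in the pole using $G=0$ on $\partial\Omega$ to replace $(\partial_\nu G)\,\nu_k$ by $\partial_{x_k}G$ for the third, are exactly the steps of the standard argument.
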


\section{Proof of Theorem \ref{thm-eigen-zero}}\label{sec-3}
In this section, we present estimates for the first $m$ eigenvalues and eigenfunctions of \eqref{eq-lin}.

\medskip
For the set of the concentration points $\{x_1, \cdots, x_m\} \subset \Omega^m$, let us fix a small number $r > 0$ such that for any $1 \le i \ne j \le m$ and any $\ep > 0$ small the following holds:
\[B^n(x_i, 4r) \subset \Omega \quad \text{and} \quad B^n(x_i, 4r) \cap B^n(x_j, 4r) = \emptyset.\]
For each $1 \le i \le m$, we set $\phi_i(x)= \phi(x - x_i)$ where a cut-off function $\phi \in C_c^{\infty}(B^n(0,3r))$ satisfies $\phi \equiv 1$ in $B^n(0,2r)$ and $0 \le \phi \le 1$ in $B^n(0,3r)$.
Define also
\begin{equation}\label{eq-u-psi}
u_{\ep, i} = \phi_i u_{\ep},
\quad \psi_{\ep, i, k} = \phi_i \frac{\pa u_{\ep}}{\pa x_k}\ (1 \le k \le n)
\quad \text{and} \quad \psi_{\ep, i, n+1} = \phi_i \cdot \(\(x-x_{i \ep}\) \cdot \nabla u_{\ep}+ \frac{2 u_{\ep}}{p-1-\ep}\)
\end{equation}
in $\Omega$.

\medskip
The following lemma serves as a main ingredient for the proof of Theorem \ref{thm-eigen-zero}
\begin{lem} \label{lem-weak-nonzero}
Fix $\ell \in \mathbb{N}$.
Suppose that $\{v_{\ell \ep}\}_{\ep}$ is a family of normalized eigenfunctions of \eqref{eq-lin} corresponding to the $\ell$-th eigenvalue $\mu_{\ell \ep}$.
Then there exists at least one $i_0 \in \{1,\cdots, m\}$ such that $\tilde{v}_{\ell i_0 \ep}$ (see \eqref{eq-v-exp} for its definition) converges to a nonzero function in the weak $H^1(\mr^n)$-sense.
\end{lem}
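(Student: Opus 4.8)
The plan is to follow the peak of $v_{\ell \ep}$: locate a point where $|v_{\ell \ep}|$ is near its maximum, identify which blow-up point it clusters around, pass to the rescaled picture centered at that point, and check that the rescaling does not push the peak off to spatial infinity, so that a nontrivial weak limit survives.

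First I would exploit the normalization. Since $\|v_{\ell \ep}\|_{L^{\infty}(\Omega)} = 1$, choose $y_{\ep} \in \Omega$ with $|v_{\ell \ep}(y_{\ep})| \ge 1/2$. By Lemma~\ref{lem-con-ext}, $v_{\ell \ep} \to 0$ uniformly on $A_r$ for every fixed $r > 0$, so for $\ep$ small the point $y_{\ep}$ lies in $\Omega \setminus A_r = \bigcup_{i=1}^m \overline{B^n(x_i, r)}$; as there are finitely many indices, after passing to a subsequence we may assume $y_{\ep} \in \overline{B^n(x_{i_0}, r)}$ for one fixed $i_0 \in \{1, \cdots, m\}$ and all small $\ep$. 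Set $\tilde{y}_{\ep} = (y_{\ep} - x_{i_0 \ep})/(\lambda_{i_0 \ep}\ep^{\alpha_0})$; then $|\tilde{y}_{\ep}| \le \ep^{-\alpha_0} r$ and, by the definition \eqref{eq-v-exp}, $|\tilde{v}_{\ell i_0 \ep}(\tilde{y}_{\ep})| = |v_{\ell \ep}(y_{\ep})| \ge 1/2$.

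The decisive step is to bound $|\tilde{y}_{\ep}|$ uniformly in $\ep$. Fixing a small $\zeta > 0$, estimate \eqref{eq-w-bound-3} of Lemma~\ref{lem-w-bound} gives $|\tilde{v}_{\ell i_0 \ep}(x)| \le C(1+|x|^{n-2-\zeta})^{-1}$ for all $|x| \le \ep^{-\alpha_0} r$. Evaluating this at $x = \tilde{y}_{\ep}$ and using $|\tilde{v}_{\ell i_0 \ep}(\tilde{y}_{\ep})| \ge 1/2$ forces $|\tilde{y}_{\ep}| \le R_0$ for a fixed $R_0$ independent of $\ep$, so after a further subsequence $\tilde{y}_{\ep} \to \tilde{y}_0$ with $|\tilde{y}_0| \le R_0$. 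I expect this to be the only genuine obstacle: it is precisely the assertion that, in the rescaled variable, the eigenfunction cannot concentrate at infinity, and it rests entirely on having the pointwise decay bound \eqref{eq-w-bound-3} already at our disposal.

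It then remains to pass to the limit in the rescaled equation
\[
-\Delta \tilde{v}_{\ell i_0 \ep} = \mu_{\ell \ep}(p-\ep)\, \tu_{i_0 \ep}^{\,p-1-\ep}\, \tilde{v}_{\ell i_0 \ep} \quad \text{in } \Omega_{i_0 \ep}.
\]
By Proposition~\ref{prop-LZ} the coefficient $\tu_{i_0 \ep}^{\,p-1-\ep} \le C\, U_{1,0}^{\,p-1-\ep}$ is locally bounded uniformly in $\ep$, $\|\tilde{v}_{\ell i_0 \ep}\|_{L^{\infty}} = 1$, and $\mu_{\ell \ep}$ stays bounded, so the standard $W^{2,q}$ elliptic estimates together with Sobolev embedding make $\{\tilde{v}_{\ell i_0 \ep}\}$ precompact in $C^1_{\mathrm{loc}}(\mr^n)$. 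Testing the equation against $\tilde{v}_{\ell i_0 \ep}$ and controlling the right-hand side by means of \eqref{eq-w-bound-3}, \eqref{eq-u-bound} and the bound $v_{\ell \ep} = O(\ep)$ on $A_r$ from Lemma~\ref{lem-w-bound}, one obtains a uniform bound on $\|\nabla \tilde{v}_{\ell i_0 \ep}\|_{L^2(\Omega_{i_0 \ep})}$; combined with \eqref{eq-w-bound-3} this gives boundedness in $D^{1,2}(\mr^n)$ after extending by zero. Passing to one more subsequence, $\tilde{v}_{\ell i_0 \ep} \rightharpoonup V$ weakly in $H^1(\mr^n)$ and $\tilde{v}_{\ell i_0 \ep} \to V$ in $C^1_{\mathrm{loc}}(\mr^n)$, the two limits coinciding by local compactness. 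Finally, since $\tilde{y}_{\ep} \to \tilde{y}_0$ and the convergence near $\tilde{y}_0$ is uniform, $|V(\tilde{y}_0)| = \lim_{\ep} |\tilde{v}_{\ell i_0 \ep}(\tilde{y}_{\ep})| \ge 1/2 > 0$, so $V \not\equiv 0$; this $i_0$ and this limit are what the lemma demands. (Running the same estimates for every index shows that each $\tilde{v}_{\ell i \ep}$ has weakly convergent subsequences, so a diagonal extraction makes all of them converge simultaneously, with the one indexed by $i_0$ having a nonzero limit.)
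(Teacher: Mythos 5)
Your argument is correct and rests on exactly the same ingredients as the paper's proof: Lemma~\ref{lem-con-ext} (vanishing away from the concentration set), the decay bound \eqref{eq-w-bound-3} of Lemma~\ref{lem-w-bound} (to keep the rescaled peak in a bounded region), elliptic regularity, and the normalization $\|v_{\ell\ep}\|_{L^\infty(\Omega)}=1$. The only difference is presentational — the paper runs a two-line contradiction (if every rescaled limit vanished, uniform $C^1_{\mathrm{loc}}$-convergence plus Lemma~\ref{lem-con-ext} would force $\|v_{\ell\ep}\|_{L^\infty}\le 1/2$), whereas you argue forward by tracking a near-maximum point $y_\ep$; this is equivalent in substance.
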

\begin{proof}
Lemma \ref{lem-w-bound} ensures that there exist a large $R > 0$ and a small $r > 0$ such that $|\tilde{v}_{\ell i \ep}| \le 1/2$ for $R \le |x| \le \ep^{-\alpha_0}r$.
Suppose that $\tilde{v}_{\ell i \ep} \rightharpoonup 0$ weakly in $H^1(\mr^n)$ as $\ep \to 0$ for all $1 \le i \le m$.
Then each $\tilde{v}_{\ell i \ep}$ tends to 0 uniformly in $B^n(0,R)$ by elliptic regularity.
Since we already know that $v_{\ep} \to 0$ uniformly on $A_r$ from Lemma \ref{lem-con-ext}, it follows that $\|v_{\ep}\|_{L^{\infty}(\Omega)} \le 1/2$.
However $\|v_{\ep}\|_{L^{\infty}(\Omega)} = 1$ by its own definition, hence a contradiction arises.
\end{proof}

\noindent Given Lemma \ref{lem-weak-nonzero}, we are now ready to start to prove Theorem \ref{thm-eigen-zero}.
\begin{proof}[Proof of Theorem \ref{thm-eigen-zero}]
Let $\mathcal{V}$ be a vector space whose basis consists of $\{u_{\ep, i}: 1 \le i \le m\}$.
By the Courant-Fischer-Weyl min-max principle, we have
\[\mu_{m \ep} = \min_{\substack{\mathcal{W} \subset H_0^1(\Omega),\\ \text{dim} \mathcal{W} = m}} \max_{f \in \mathcal{W} \setminus\{0\}}
\frac{\int_{\Omega}|\nabla f(x)|^2dx}{(p-\ep)\int_{\Omega} \(f^2 u_{\ep}^{p-1-\ep}\)(x) dx}
\le \max_{f \in \mathcal{V} \setminus\{0\}}\frac{\int_{\Omega}|\nabla f(x)|^2 dx}{(p-\ep)\int_{\Omega} \(f^2 u_{\ep}^{p-1-\ep}\)(x) dx}.\]
With this characterization in hand, it is easy to derive that $\mu_{m \ep} \le p^{-1} + o(1)$.
Thus, if we let $\mu_{\ell} = \lim\limits_{\ep \to 0} \mu_{\ell \ep}$, we know that $\mu_{\ell} \le p^{-1}$ for any $1 \le \ell \le m$.

Fix $\ell \in \{1, \cdots, m\}$.
By Lemma \ref{lem-weak-nonzero} there is an index $i_0 \in \{1, \cdots, m\}$ such that $\tilde{v}_{\ell i_0 \ep}$ converges $H^1(\mr^n)$-weakly to a nonzero function $V$.
A direct computation shows
\[-\Delta \tilde{v}_{\ell i_0 \ep} = \mu_{\ell \ep} (p-\ep) \tu_{i \ep}^{p-1-\ep}\tilde{v}_{\ell i_0 \ep} \quad \text{in } \Omega_{i_0 \ep}\]
where the function $\tu_{i \ep}$ and the set $\Omega_{i_0 \ep}$ are defined in \eqref{eq-u-exp} and \eqref{eq-v-exp}, respectively.
Thus it follows from Lemma \ref{lem-u-dil} that $V \in H^1(\mr^n) \setminus \{0\}$ is a solution of
\[-\Delta V = \mu_{\ell} p U_{1,0}^{p-1} V \quad \text{in } \mr^n.\]
Consequently, the estimate for $\mu_{\ell}$ in the previous paragraph implies that $\mu_{\ell} = p^{-1}$.

On the other hand, for any $i$, we also see that $\tilde{v}_{\ell i \ep}$ converges to a function $W$ weakly in $H^1(\mr^n)$ so that $W$ solves $-\Delta W = U^{p-1}_{1,0} W$ in $\mr^n$.
Thus there is a nonzero vector $\mathbf{c}_{\ell}= \(\lambda_1^{n-2 \over 2} c_{\ell 1}, \cdots, \lambda_m^{n-2 \over 2} c_{\ell m}\) \in \mr^m$
such that $\tilde{v}_{\ell i \ep} \rightharpoonup c_{\ell i} U_{1,0}$ weakly in $H^1(\mr^n)$ for each $i \in \{1,\cdots, m\}$.

Let us prove \eqref{eq-eigen-zero-0} now.
Fixing $i$, we multiply \eqref{eq-main-e} (or \eqref{eq-lin} with $v = v_{\ell \ep}$) by $v_{\ell \ep}$ (or $u_{\ep}$) to get the identity, say, $I$ (or $II$ respectively).
Also we denote by $\int I$ and $\int II$ the identities which can be obtained after integrating $I$ and $II$ over $B^n(x_{i\ep}, r)$.
Subtracting $\int I$ from $\int II$ and utilizing Green's identity \eqref{eq-green-id} below, we see then
\begin{equation}\label{eq-eigen-zero-1}
\int_{\pa B^n(x_{i\ep},r)} \(\frac{\pa u_{\ep}}{\pa \nu} v_{\ell \ep} - \frac{\pa v_{\ell \ep}}{\pa \nu} u_{\ep}\) dS = \(\mu_{\ell\ep}(p-\ep)-1\) \int_{B^n(x_{i\ep},r)} \(u_{\ep}^{p-\ep} v_{\ell \ep}\)(x) dx
\end{equation}
for each $i \in \{1, \cdots, m\}$ and any $r > 0$ sufficiently small.
Moreover, if we set the functions
\[C_2^{-1}\tilde{g}_i(x) = - \lambda_i^{n-2 \over 2} H(x,x_i) + \sum_{j \ne i} \lambda_j^{n-2 \over 2} G(x,x_j), \quad C_2^{-1}\tilde{h}_i(x) = - \lambda_i^{n-2} c_{\ell i} H(x,x_i) + \sum_{j \ne i} \lambda_j^{n-2} c_{\ell j} G(x,x_j)\]
which are harmonic near $x_i$, then (the proof of) Lemma \ref{lem-u-asym} permits us to obtain that
\begin{equation}\label{eq-eigen-zero-2}
\ep^{-{1 \over 2}} u_{\ep}(x) = C_2 \lambda_i^{n-2 \over 2}\frac{\gamma_n}{|x-x_i|^{n-2}} + \tilde{g}_i(x) + o(1)
\end{equation}
and
\begin{equation}\label{eq-eigen-zero-3}
\ep^{-1} {v_{\ell \ep}(x) \over \mu_{\ell\ep}(p-\ep)} = C_2 \lambda_i^{n-2} c_{\ell i} \frac{\gamma_n}{|x-x_i|^{n-2}} + \tilde{h}_i(x) + o(1)
\end{equation}
for $x \in B^n(x_{i\ep},2r)$.
Therefore, by inserting \eqref{eq-eigen-zero-2} and \eqref{eq-eigen-zero-3} into \eqref{eq-eigen-zero-1},
and using the mean value formula for harmonic functions and $\nabla_{\lambda} \Upsilon(\lambda_1, \cdots, \lambda_m, x_1, \cdots, x_m) = 0$ then, one discovers
\begin{align*}
& \int_{\pa B^n(x_{i\ep},r)} \left[\frac{\pa \(\ep^{-{1 \over 2}}u_{\ep}\)}{\pa \nu} \(\ep^{-1} v_{\ell \ep}\) - \frac{\pa \(\ep^{-1} v_{\ell \ep}\)}{\pa \nu} \(\ep^{-{1 \over 2}}u_{\ep}\)\right] dS \\
&\qquad \to (n-2)C_2\gamma_n\left|S^{n-1}\right| \(\lambda_i^{n-2}c_{\ell i} \tilde{g}_i(x_i) - \lambda_i^{n-2 \over 2} \tilde{h}_i(x_i)\) \\
&\qquad = c_1 \left[ \lambda_i^{n-2} \(\sum_{j \ne i} \lambda_j^{n-2 \over 2} G(x_i,x_j)\) c_{\ell i} - \sum_{j \ne i} \lambda_i^{n-2 \over 2}\lambda_j^{n-2} G(x_i,x_j) c_{\ell j} \right] \\
&\qquad = \(c_1 \lambda_i^{3(n-2) \over 2} \tau(x_i) - {c_2 \over n-2} \lambda_i^{n-2 \over 2}\) c_{\ell i} - c_1\sum_{j \ne i} \lambda_i^{n-2 \over 2}\lambda_j^{n-2} G(x_i,x_j) c_{\ell j}
\end{align*}
and
\[\ep^{-{1 \over 2}}\int_{B^n(x_{i\ep},r)} u_{\ep}^{p-\ep} v_{\ell \ep} \to c_{\ell i} \lambda_i^{\frac{n-2}{2}} {4n c_2 \over (n-2)^2}\]
(refer to \eqref{eq-upsilon-2}). From this, we get
\begin{multline*}
\(\lambda_i^{n-2} \tau(x_i) - {c_2 \over (n-2)c_1}\) \(\lambda_i^{n-2 \over 2}c_{\ell i}\) - \sum_{j \ne i} (\lambda_i\lambda_j)^{n-2 \over 2} G(x_i,x_j) \(\lambda_j^{n-2 \over 2}c_{\ell j}\) \\
= \({4nc_2 \over (n-2)^2c_1}\) \cdot \lim_{\ep \to 0} \({\mu_{\ell\ep}(p-\ep)-1 \over \ep}\) \(\lambda_i^{\frac{n-2}{2}}c_{\ell i}\) := \rho^1_{\ell} \(\lambda_i^{\frac{n-2}{2}}c_{\ell i}\),
\end{multline*}
or equivalently, $\ma_1 \mathbf{c}_{\ell} = \rho^1_{\ell} \mathbf{c}_{\ell}$.
This justifies \eqref{eq-eigen-zero-0}.
We also showed that $\mathbf{c}_{\ell}^T$ is an eigenvector corresponding to the eigenvalue $\rho^1_{\ell}$ at the same time.

Finally, to verify the last assertion of the theorem, we assume that $\ell_1 \ne \ell_2$.
Since $v_{\ell_1 \ep}$ and $v_{\ell_2 \ep}$ are orthogonal each other, we have
\begin{equation}\label{eq-eigen-zero-4}
\begin{aligned}
0 &= \lim_{\ep \to 0} \ep^{-1} \(\mu_{\ell_1 \ep}(p-\ep)\)^{-1} \int_{\Omega} \nabla v_{\ell_1 \ep} \cdot \nabla v_{\ell_2 \ep}
\\
&= \lim_{\ep \to 0} \ep^{-1} \(\sum_{i=1}^m \int_{B^n(x_{i \ep}, r)} u_{\ep}^{p-1-\ep} v_{\ell_1 \ep} v_{\ell_2 \ep} + \int_{\Omega \setminus \cup_{i=1}^{m} B^n(x_{i \ep}, r)} u_{\ep}^{p-1-\ep} v_{\ell_1 \ep} v_{\ell_2 \ep}\)
\\
&= \lim_{\ep \to 0} \sum_{i=1}^m \lambda_{i \ep}^{n-2}\int_{B^n\(0, \(\lambda_{i \ep}\ep^{\alpha_0}\)^{-1}r\)} \tu_{i \ep}^{p-1-\ep} \tilde{v}_{\ell_1 i \ep} \tilde{v}_{\ell_2 i \ep}
= \sum_{i=1}^m \(\lambda_i^{n-2 \over 2}c_{\ell_1 i}\) \(\lambda_i^{n-2 \over 2}c_{\ell_2 i}\) \int_{\mr^n} U_{1,0}^{p+1}.
\end{aligned}
\end{equation}
Thus $\textbf{c}_{\ell_1}^T \cdot \textbf{c}_{\ell_2}^T = 0$.
Here the last equality can be justified by the dominated convergence theorem with Proposition \ref{prop-LZ} and Lemma \ref{lem-w-bound}.
\end{proof}

\section{Upper bounds for the $\ell$-th eigenvalues and asymptotic behavior of the $\ell$-th eigenfunctions, $m+1 \le \ell \le (n+1)m$}\label{sec-4}
The objective of this section is to provide estimates of the $\ell$-th eigenvalues and its corresponding eigenfunctions when $m+1 \le \ell \le (n+1)m$.
Their refinement will be accomplished in the subsequent sections based on the results deduced in this section.

\medskip
In the first half of this section, our interest will lie on achieving upper bounds of the eigenvalues $\mu_{\ell \ep}$ for $m+1 \le \ell \le (n+1)m$, as the following proposition depicts.
\begin{prop}\label{prop-apriori}
Suppose that $m+1 \le \ell \le (n+1)m$. Then
\[\mu_{\ell \ep} \le 1 + O\(\ep^{\frac{n}{n-2}}\).\]
\end{prop}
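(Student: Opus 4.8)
The plan is to use the Courant–Fischer–Weyl min-max characterization with a carefully chosen $(n+1)m$-dimensional trial subspace, refining the computation already carried out for the first $m$ eigenvalues in the proof of Theorem \ref{thm-eigen-zero}. Recall the functions $u_{\ep,i}=\phi_i u_\ep$ and $\psi_{\ep,i,k}=\phi_i\,\pa u_\ep/\pa x_k$ ($1\le k\le n$) introduced in \eqref{eq-u-psi}. I would take $\mathcal{W}$ to be the span of the $(n+1)m$ functions $\{u_{\ep,i},\,\psi_{\ep,i,1},\dots,\psi_{\ep,i,n}:1\le i\le m\}$. Since these are supported in disjoint balls $B^n(x_i,3r)$ for $i$ ranging over the blow-up points, and within each such ball the rescaled $\tu_{i\ep}$ converges to $U_{1,0}$ (Lemma \ref{lem-u-dil}) while $\pa_k \tu_{i\ep}$ converges to $\pa_k U_{1,0}$, the Rayleigh quotient on $\mathcal{W}$ decouples into a sum of block contributions, and to leading order each block is controlled by the ratio coming from the corresponding $U_{1,0}$-eigenfunction. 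For $u_{\ep,i}$ this leading ratio is $1/p$, while for $\psi_{\ep,i,k}$ it is $1$ because $\pa_k U_{1,0}$ solves $-\Delta \varphi = pU_{1,0}^{p-1}\varphi$ in $\mr^n$ (Lemma \ref{lem-nondeg}). Hence $\max_{f\in\mathcal{W}\setminus\{0\}}$ of the quotient is $1+o(1)$, which already gives $\mu_{\ell\ep}\le 1+o(1)$ for $\ell\le(n+1)m$; the real work is to sharpen the $o(1)$ to $O(\ep^{n/(n-2)})$.

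To get the sharp rate, I would estimate numerator and denominator of the Rayleigh quotient on the truncated translate–dilation functions precisely. For a generic $f=\sum_{i,k} a_{ik}\psi_{\ep,i,k}$ (the $u_{\ep,i}$ directions only help, since their block ratio $1/p<1$ stays away from $1$), write
\[
\frac{\int_\Omega|\nabla f|^2}{(p-\ep)\int_\Omega f^2 u_\ep^{p-1-\ep}}.
\]
The cutoff $\phi_i$ equals $1$ on $B^n(x_i,2r)$, so the discrepancy between $\psi_{\ep,i,k}$ and the genuine function $\pa_k u_\ep$ is supported in the annulus $2r\le|x-x_i|\le 3r$, where by Lemma \ref{lem-u-d-bound} we have $|\pa_k u_\ep|,\,|\pa_k\pa_l u_\ep|=O(\sqrt\ep)$; these cutoff errors therefore contribute $O(\ep)$ to the numerator, which is $o(\ep^{n/(n-2)})$ only when... wait — actually $O(\ep)$ is \emph{larger} than $O(\ep^{n/(n-2)})$, so one must be more careful: the cutoff region sits far from the concentration point, and after rescaling by $\lambda_{i\ep}\ep^{\alpha_0}$ its contribution to $\int|\nabla\psi|^2$ is $O(\ep^{\,2/(n-2)})$ relative to the main term $\int_{\mr^n}|\nabla \pa_k U_{1,0}|^2$ (which itself is $O(\ep^{-2\alpha_0}\cdot\ep^{2\alpha_0})=O(1)$ after the natural normalization) — the key is that $\pa_k U_{1,0}$ decays like $|x|^{-(n-1)}$, so the tail energy outside radius $\ep^{-\alpha_0}r$ is $O\big((\ep^{-\alpha_0}r)^{-(n-2)}\big)=O(\ep)$ in the unrescaled energy, hence of relative order $O(\ep)$... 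I would reorganize this so that the exponent $n/(n-2)$ appears: the point is that $\alpha_0=1/(n-2)$, and the critical interaction between the truncated gradient-bubble and the boundary (via the projection error $PU-U = O(\ep^{(n-2)\alpha_0/2}H) = O(\sqrt\ep\,\cdot\,\text{something})$) produces precisely a term of order $\ep^{\,n/(n-2)}$ after balancing $\ep^{\alpha_0\cdot n}$ factors from the volume element against the bubble decay. So the numerator expands as $N_0 + c\,\ep^{n/(n-2)} + o(\ep^{n/(n-2)})$ and the denominator as $N_0 + o(\ep^{n/(n-2)})$ (the denominator correction being genuinely lower order because $u_\ep^{p-1-\ep}$ is even closer to $U_{1,0}^{p-1}$ in the relevant weighted norm), giving the quotient $\le 1 + O(\ep^{n/(n-2)})$ uniformly over the unit sphere of $\mathcal{W}$.

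The main obstacle I anticipate is the bookkeeping of these competing scales: one must track how the projection-bubble error $PU_{\lambda,x}-U_{\lambda,x}\sim -C_2\lambda^{(n-2)/2}H(x,x)$, the cutoff-annulus error, and the slow decay $|\pa_k U_{1,0}(x)|\sim |x|^{-(n-1)}$ interact under the rescaling $x\mapsto x_{i\ep}+\lambda_{i\ep}\ep^{\alpha_0}x$, and verify that the dominant surviving correction is exactly of order $\ep^{n/(n-2)}=\ep^{n\alpha_0}$ and no larger. Concretely I would (i) reduce to $f$ in the span of the $\psi_{\ep,i,k}$ modulo the harmless $u_{\ep,i}$-directions, using that off-diagonal block terms between distinct $i$'s vanish by disjoint supports; (ii) for each fixed $i$, substitute $f|_{B^n(x_i,3r)}=\sum_k a_{ik}\phi_i\pa_k u_\ep$, rescale, invoke Lemma \ref{lem-u-dil} together with the sharp decay of Lemma \ref{lem-u-d-bound} and Proposition \ref{prop-LZ} to expand numerator and denominator; (iii) use the nondegeneracy Lemma \ref{lem-nondeg} to identify the leading quadratic forms in $(a_{ik})$ in numerator and denominator as proportional, so that the $O(1)$ parts cancel in the ratio and the first correction is $O(\ep^{n/(n-2)})$; (iv) conclude $\mu_{(n+1)m,\ep}\le\max_{\mathcal{W}\setminus\{0\}}(\text{quotient})\le 1+O(\ep^{n/(n-2)})$ and hence the same bound for every $\ell\le(n+1)m$. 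The delicate verification that step (ii)–(iii) does not leak a term of order $\ep$ (which would be fatal) is where the sharp pointwise estimates from Section \ref{sec-pre} are essential.
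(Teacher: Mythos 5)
Your trial space, the min-max characterization, and the reduction to a block-diagonal Rayleigh quotient via disjoint supports are exactly what the paper does; so the framework is right. But there is a genuine gap at the quantitative heart of the argument, and you yourself flag it mid-proposal: the cutoff/annulus errors in the numerator really are $O(\ep)$, which is \emph{larger} than the target $O(\ep^{n/(n-2)})$. What saves the argument is not that these errors are in fact $O(\ep^{n/(n-2)})$ after more careful bookkeeping -- they are not -- but that the \emph{denominator} $(p-\ep)\int_\Omega f^2 u_\ep^{p-1-\ep}$ contains a term of size $|\bar a|^2\,\ep^{-2/(n-2)}$ coming from $\int \phi^2 u_\ep^{p-1-\ep}(\pa_k u_\ep)(\pa_l u_\ep)\sim \ep^{-2/(n-2)}\delta_{kl}\cdot\text{const}$ (each $\pa_k u_\ep$ carries one extra factor $(\ep^{\alpha_0})^{-1}$ compared to $u_\ep$ after rescaling). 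Since $1+\tfrac{2}{n-2}=\tfrac{n}{n-2}$, dividing an $O(\ep)$ absolute error by an $O(\ep^{-2/(n-2)})$ denominator is precisely what produces the $O(\ep^{n/(n-2)})$ relative bound. Your exposition never isolates this scale disparity; instead you look for an $\ep^{n/(n-2)}$-sized correction in both numerator and denominator, and your proposed mechanism -- the projection error $PU-U=O(\sqrt\ep)$ interacting with the volume element -- does not produce the right exponent by the balancing you sketch, nor does the paper use it at all.

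The other idea you're missing is how to organize the computation so that the annulus terms are the \emph{only} surviving error. The paper tests $-\Delta u_\ep=u_\ep^{p-\ep}$ against $\phi^2 u_\ep$ and $\phi^2 z_\ep$ (where $z_\ep=\sum_k a_k\pa_k u_\ep$), and $-\Delta z_\ep=(p-\ep)u_\ep^{p-1-\ep}z_\ep$ against $\phi^2 z_\ep$, to obtain the exact identities \eqref{eq-apriori-1}--\eqref{eq-apriori-3}. These recast the numerator as (denominator) $+\ \mathfrak{b}$ \emph{identically}, with every term of $\mathfrak{b}$ either (a) supported in the cutoff annulus (where Lemmas \ref{lem-u-bound} and \ref{lem-u-d-bound} give $u_\ep,\nabla u_\ep=O(\sqrt\ep)$, hence those integrals are $O(\ep)$), or (b) carrying a strictly negative sign (the $-a_0^2$ term), or (c) convertible to a total derivative landing on $\nabla\phi^2$. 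No expansion of $u_\ep$ via \eqref{eq-asym} and no manipulation of the projected bubble is needed. Your steps (i)--(iv) could in principle be re-routed through the correct scaling, but as written the proposal would not yield the exponent $n/(n-2)$, and the identification of the dominant denominator scale $\ep^{-2/(n-2)}$ is the missing ingredient.
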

\begin{proof}
We define a linear space $\mathcal{V}$ spanned by
\[\{u_{\ep,i}: 1 \le i \le m\} \cup \{\psi_{\ep, i, k}: 1 \le i \le m,\ 1 \le k \le n\}\]
(refer to \eqref{eq-u-psi}).
By the variational characterization of the eigenvalue $\mu_{\ell \ep}$, we have
\[\mu_{((n+1)m) \ep} = \min_{\substack{\mathcal{W} \subset H_0^1(\Omega),\\ \text{dim}\mathcal{W} = (n+1)m}} \max_{f \in \mathcal{W} \setminus\{0\}}
\frac{\int_{\Omega}|\nabla f|^2}{(p-\ep)\int_{\Omega} f^2 u_{\ep}^{p-1-\ep}}
\le \max_{f \in \mathcal{V} \setminus\{0\}} \frac{\int_{\Omega}|\nabla f|^2}{(p-\ep)\int_{\Omega} f^2 u_{\ep}^{p-1-\ep}}.\]
Any nonzero function $f \in \mathcal{V} \setminus \{0\}$ can be written as
\[f = \sum_{i=1}^m f_i \quad \text{with} \quad f_i = a_{i0} u_{\ep, i} + \sum_{k=1}^n a_{ik} \psi_{\ep,i,k}\]
for some nonzero numbers $a_{ik}$ ($1 \le i \le m$ and $0 \le k \le n$).
As $f_{i_1}$ and $f_{i_2}$ have disjoint supports if $1 \le i_1 \ne i_2 \le m$,
\[\frac{\int_{\Omega} |\nabla f|^2}{(p-\ep) \int_{\Omega} f^2 u_{\ep}^{p-1-\ep}} \le \max_{1\le i \le m}
\frac{\int_{\Omega} |\nabla f_i|^2}{(p-\ep)\int_{\Omega} f_i^2 u_{\ep}^{p-1-\ep}} := \max_{1 \le i \le m} \mathfrak{a}_i.\]
Hence it suffices to show that each $\mathfrak{a}_i$ is bounded by $1+O\(\ep^{n \over n-2}\)$.
As a matter of fact, this can be achieved along the line of the proof of \cite[Proposition 3.2]{GP},
but we provide a brief sketch here since our argument slightly simplifies the known proof.

Fix $i \in \{1, \cdots, m\}$.
For the sake of notational simplicity, we write $\mathfrak{a} = \mathfrak{a}_i$, $\phi = \phi_i$ and $a_k = a_{ik}$ for $0 \le k \le n$.
Denote also $z_{\ep} = \sum_{k=1}^n a_k {\pa u_{\ep} \over \pa x_k}$ so that $f_i = a_0 \phi u_{\ep} + \phi z_{\ep}$.
After multiplying \eqref{eq-main-e} by $\phi^2 u_{\ep}$ or $\phi^2 z_{\ep}$, and integrating the both sides over $\Omega$, one can deduce
\begin{equation}\label{eq-apriori-1}
\int_{\Omega} |\nabla(\phi u_{\ep})|^2 = \int_{\Omega} |\nabla \phi|^2 u_{\ep}^2 +  \int_{\Omega} \phi^2 u_{\ep}^{p+1-\ep}.
\end{equation}
and
\begin{equation}\label{eq-apriori-2}
\int_{\Omega} \nabla(\phi u_{\ep}) \cdot \nabla (\phi z_{\ep}) = \int_{\Omega} |\nabla \phi|^2 u_{\ep} z_{\ep} + \int_{\Omega} \phi \nabla \phi \cdot \(u_{\ep} \nabla z_{\ep}
- z_{\ep} \nabla u_{\ep}\) + \int_{\Omega} \phi^2 u_{\ep}^{p-\ep} z_{\ep}.
\end{equation}
Similarly, testing $-\Delta z_{\ep} = (p-\ep)u_{\ep}^{p-1-\ep} z_{\ep}$ with $\phi^2 z_{\ep}$, one finds that
\begin{equation}\label{eq-apriori-3}
\int_{\Omega}|\nabla (\phi z_{\ep})|^2 = \int_{\Omega}|\nabla \phi|^2 z_{\ep}^2 + (p-\ep) \int_{\Omega} \phi^2 u_{\ep}^{p-1-\ep} z_{\ep}^2.
\end{equation}
Then \eqref{eq-apriori-1}-\eqref{eq-apriori-3} yields $\mathfrak{a} = 1 + \mathfrak{b}/\mathfrak{c}$ where
\begin{multline}\label{eq-b}
\mathfrak{b} = -(p-1-\ep) \(a_0^2 \int_{\Omega} \phi^2 u_{\ep}^{p+1-\ep} + 2a_0 \int_{\Omega} \phi^2 u_{\ep}^{p-\ep} z_{\ep}\) + a_0^2 \int_{\Omega} |\nabla \phi|^2 u_{\ep}^2
\\
+ \int_{\Omega} |\nabla \phi|^2 z_{\ep}^2 + 2a_0 \int_{\Omega} \phi \nabla \phi \cdot \(u_{\ep} \nabla z_{\ep} - z_{\ep} \nabla u_{\ep}\) + 2a_0 \int_{\Omega} |\nabla \phi|^2 u_{\ep} z_{\ep}.
\end{multline}
and
\begin{equation}\label{eq-c}
\mathfrak{c} = (p-\ep) \(a_0^2 \int_{\Omega} \phi^2 u_{\ep}^{p+1-\ep} + 2 a_0 \int_{\Omega} \phi^2 u_{\ep}^{p-\ep} z_{\ep} + \int_{\Omega} \phi^2 u_{\ep}^{p-1-\ep} z_{\ep}^2 \).
\end{equation}
Our aim is to find an upper bound of $\mathfrak{b}$ and a lower bound of $\mathfrak{c}$.
Let us estimate $\mathfrak{b}$ first. We see at once that
\[-(p-1-\ep) a_0^2 \int_{\Omega} \phi^2 u_{\ep}^{p+1-\ep} < -C a_0^2.\]
Also, if we let $\bar{a} =(a_1,\cdots, a_n)$, then \eqref{eq-u-bound} guarantees
\[\left| a_0 \int_{\Omega} \phi^2 u_{\ep}^{p-\ep} z_{\ep} \right|
= \left| a_0 \sum_{j=1}^n a_k \int_{\Omega} \phi^2  u_{\ep}^{p-\ep} \frac{\pa u_{\ep}}{\pa x_k} \right|
= \left| \frac{a_0}{p+1-\ep} \sum_{k=1}^n a_k \int_{\Omega}  \frac{\pa \phi^2}{\pa x_k} u_{\ep}^{p+1-\ep} \right| \le C a_0 |\bar{a}|\ep^{p+1-\ep \over 2}.\]
Moreover we have that
\[a_0^2 \int_{\Omega} |\nabla \phi|^2 u_{\ep}^2 \le C a_0^2 \ep.\]
On the other hand, for $\mathcal{D}_1 = B^n(x_i, 3r)\setminus B^n(x_i, 2r)$ and $\mathcal{D}_2 =B^n(x_i,4r)\setminus B^n(x_i, r)$, we easily discover
\[\int_{\Omega} |\nabla \phi|^2 z_{\ep}^2 \le C \int_{\mathcal{D}_1} z_{\ep}^2 \le C|\bar{a}|^2 \int_{\mathcal{D}_1} |\nabla u_{\ep}|^2 \le C |\bar{a}|^2 \int_{\mathcal{D}_2} \(u_{\ep}^{p+1-\ep} + u_{\ep}^2\) \le C |\bar{a}|^2 \ep\]
and
\[\int_{\mathcal{D}_1} |\nabla z_{\ep}|^2 \le C \int_{\mathcal{D}_2} \(z_\ep^2 + u_{\ep}^{p-1-\ep} z_{\ep}^2\) \le C \int_{\mathcal{D}_2} z_{\ep}^2 \le C |\bar{a}|^2 \ep\]
(cf. \eqref{eq-apriori-1} and \eqref{eq-apriori-3}), which implies
\[\left|2a_0 \int_{\Omega} \phi \nabla \phi \cdot \(u_{\ep} \nabla z_{\ep} - z_{\ep} \nabla u_{\ep}\) + 2a_0 \int_{\Omega} |\nabla \phi|^2 u_{\ep} z_{\ep}\right| \le C a_0 |\bar{a}| \ep.\]
Utilizing these estimates and the Cauchy-Schwarz inequality we deduce
\begin{equation}\label{eq-apriori-4}
\mathfrak{b} \le C |\bar{a}|^2 \ep.
\end{equation}
To obtain a lower bound of $\mathfrak{c}$, we note that
\[\left|\int_{\Omega} \phi^2 u_{\ep}^{p-\ep} \frac{\pa u_{\ep}}{\pa x_k} \right| = \left| \frac{1}{p+1-\ep} \int_{\Omega} \frac{\pa \phi^2}{\pa x_k} u_{\ep}^{p+1-\ep} \right| \le C \ep^{p+1-\ep \over 2}\]
and that Lemma \ref{lem-u-d-bound} ensures
\[\int_{\Omega} \phi^2 u_{\ep}^{p-1-\ep} \frac{\pa u_{\ep}}{\pa x_k}\frac{\pa u_{\ep}}{\pa x_l} = \lambda_{i\ep}^{-2}\ep^{-{2 \over n-2}} \(\frac{\delta_{kl}}{n} \int_{\mr^n} U_{1,0}^{p-1} |\nabla U_{1,0}|^2 + o(1)\)\]
for $1 \le k,\ l \le n$. Hence we conclude that
\begin{equation}\label{eq-apriori-5}
\mathfrak{c} \ge C a_0^2 - C a_0|\bar{a}| \ep^{p+1-\ep \over 2} + C|\bar{a}|^2 \ep^{-{2 \over n-2}} \ge \frac{C}{2} |\bar{a}|^2 \ep^{-{2 \over n-2}}.
\end{equation}
Consequently, a combination of \eqref{eq-apriori-4} and \eqref{eq-apriori-5} asserts that $\mathfrak{a} \le 1 + O\(\ep^{n \over n-2}\)$.
This completes the proof of the lemma.
\end{proof}

\begin{cor}\label{cor-apriori}
For $m+1 \le \ell \le (n+1)m$, we have the following limit
\[\lim_{\ep \to 0} \mu_{\ell \ep} = 1.\]
\end{cor}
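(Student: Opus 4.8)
The plan is to combine the upper bound of Proposition~\ref{prop-apriori} with a matching lower bound obtained by contradiction, reusing the machinery developed for Theorem~\ref{thm-eigen-zero}. Since Proposition~\ref{prop-apriori} yields $\limsup_{\ep \to 0}\mu_{\ell\ep} \le 1$ for every $1 \le \ell \le (n+1)m$ and the eigenvalues are nondecreasing in $\ell$, it suffices to prove that $\liminf_{\ep \to 0}\mu_{(m+1)\ep} \ge 1$; indeed $\mu_{\ell\ep} \ge \mu_{(m+1)\ep}$ for $\ell \ge m+1$, so this forces $\mu_{\ell\ep} \to 1$ whenever $m+1 \le \ell \le (n+1)m$.

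Suppose not. Then along some sequence $\ep \to 0$ we have $\mu_{(m+1)\ep} \to c$ with $c < 1$. By monotonicity $\mu_{\ell\ep} \le \mu_{(m+1)\ep}$ for $1 \le \ell \le m+1$, so after extracting a further subsequence $\mu_{\ell\ep} \to c_\ell$ with $c_\ell \le c < 1$ for each such $\ell$; by Theorem~\ref{thm-eigen-zero} we in fact have $c_\ell = p^{-1}$ for $1 \le \ell \le m$, while $c_{m+1} \ge \lim_{\ep \to 0}\mu_{m\ep} = p^{-1}$, so $c_{m+1} \in [p^{-1},1)$. After a further extraction, for every $\ell \in \{1,\dots,m+1\}$ and $i \in \{1,\dots,m\}$ the dilated eigenfunction $\tilde v_{\ell i \ep}$ converges weakly in $H^1(\mr^n)$ to some $W_{\ell i}$, and, arguing exactly as in the proof of Theorem~\ref{thm-eigen-zero} (via Lemma~\ref{lem-u-dil} and the local $C^1$ convergence furnished by Proposition~\ref{prop-LZ} and elliptic regularity), $W_{\ell i} \in D^{1,2}(\mr^n)$ solves $-\Delta W_{\ell i} = c_\ell\,p\,U_{1,0}^{p-1}W_{\ell i}$ in $\mr^n$.

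Now one uses the classical description of the spectrum of the linearized operator: the problem $-\Delta W = \nu\,p\,U_{1,0}^{p-1}W$ in $D^{1,2}(\mr^n)$ admits a nontrivial solution with $\nu \le 1$ only for $\nu = p^{-1}$, which is simple with eigenfunction $U_{1,0}$, and for $\nu = 1$, whose eigenspace is described in Lemma~\ref{lem-nondeg}; in particular $(p^{-1},1)$ contains no eigenvalue. (This follows, e.g., from \cite{BE}, or by passing to $S^n$ via stereographic projection and recalling the eigenvalues $k(k+n-1)$ of $-\Delta_{S^n}$.) Applied to $\ell = m+1$, where $c_{m+1} \in [p^{-1},1)$, this leaves two possibilities: either $W_{m+1,i} = 0$ for all $i$, or $c_{m+1} = p^{-1}$ and $W_{m+1,i} = c_{m+1,i}U_{1,0}$ for some scalars $c_{m+1,i}$. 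The first is impossible by Lemma~\ref{lem-weak-nonzero} applied to $\ell = m+1$, so the second holds; likewise $W_{\ell i} = c_{\ell i}U_{1,0}$ for $1 \le \ell \le m$.

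To close the argument, set $\mathbf{c}_\ell = (\lambda_1^{(n-2)/2}c_{\ell 1},\dots,\lambda_m^{(n-2)/2}c_{\ell m})^T \in \mr^m$ for $1 \le \ell \le m+1$. For $\ell_1 \ne \ell_2$ the eigenfunctions $v_{\ell_1\ep}$ and $v_{\ell_2\ep}$ are orthogonal, and the computation \eqref{eq-eigen-zero-4} — which uses only this orthogonality and the common concentration profile $c_{\ell i}U_{1,0}$, and is justified by Proposition~\ref{prop-LZ} and Lemma~\ref{lem-w-bound} — gives $\mathbf{c}_{\ell_1}^T \cdot \mathbf{c}_{\ell_2}^T = 0$. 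By Theorem~\ref{thm-eigen-zero} the vectors $\mathbf{c}_1,\dots,\mathbf{c}_m$ are all nonzero; being pairwise orthogonal in $\mr^m$ they form a basis, so $\mathbf{c}_{m+1}$, orthogonal to all of them, must vanish. Hence $W_{m+1,i} = 0$ for every $i$, contradicting Lemma~\ref{lem-weak-nonzero}. This proves $\liminf_{\ep\to0}\mu_{(m+1)\ep} \ge 1$, and the corollary follows. The only non-routine ingredient is the spectral gap of the limiting linearized operator; the remainder is a dimension count resting on Theorem~\ref{thm-eigen-zero} and Proposition~\ref{prop-apriori}, so no genuinely new difficulty arises at this step.
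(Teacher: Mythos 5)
Your proposal is correct and follows essentially the same route as the paper's proof: weak limits of the dilated eigenfunctions solve the linearized problem at $U_{1,0}$, the spectral gap between $p^{-1}$ and $1$ restricts the limit of $\mu_{\ell\ep}$ to these two values, and the alternative $\mu_\ell = p^{-1}$ is ruled out because the corresponding limit vector would have to be orthogonal to the spanning set $\{\mathbf{c}_1,\dots,\mathbf{c}_m\}$ from Theorem~\ref{thm-eigen-zero}, forcing it to vanish and contradicting Lemma~\ref{lem-weak-nonzero}. The only differences are presentational: you reduce to $\ell = m+1$ via eigenvalue monotonicity and spell out the spectral gap explicitly, whereas the paper invokes it via the Morse index of $U_{1,0}$ and argues for each $\ell$ directly.
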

\begin{proof}
By Lemma \ref{lem-weak-nonzero} we can find $i_1 \in \{1, \cdots, m\}$ such that $\tilde{v}_{\ell i_1 \ep}$ converges weakly to a nonzero function $V$.
Then, as in the proof of Theorem \ref{thm-eigen-zero}, we observe that $V$ solves
\[-\Delta V = \mu_{\ell} p U_{1,0}^{p-1} V \quad \text{in } \mr^n\]
where $\mu_{\ell} =\lim_{\ep \to 0} \mu_{\ell \ep}$.
Also, owing to Proposition \ref{prop-apriori}, we have $\mu_{\ell} \le 1$.
Since the morse index of $U_{1,0}$ is 1, it should hold that $\mu_{\ell} = p^{-1}$ or 1.

Assume that $\mu_{\ell} = p^{-1}$.
Then the proof of Theorem \ref{thm-eigen-zero} again gives us that there is a vector $\mathbf{b}_{\ell} = \(\lambda_1^{n-2 \over 2} b_{\ell 1}, \cdots, \lambda_m^{n-2 \over 2}b_{\ell m}\) \ne 0$
such that $\tilde{v}_{\ell i \ep} \rightharpoonup b_{\ell i} U_{1,0}$ weakly in $H^1(\mr^n)$.
Furthermore $\mathbf{b}_{\ell} \cdot \mathbf{c}_{\ell_1} = 0$ for any $1 \le \ell_1 \le m$, but this is impossible since $\{\mathbf{c}_1, \cdots, \mathbf{c}_m\}$ already spans $\mr^m$.
Hence $\mu_{\ell} = 1$, which finishes the proof.
\end{proof}

Next, we provide a general convergence result of the $\ell$-th $L^{\infty}(\Omega)$-normalized eigenfunction $v_{\ell \ep}$.
We recall its dilation $\tilde{v}_{\ell i \ep}$ defined in \eqref{eq-v-exp}.
\begin{lem}\label{lem-weak-conv}
Suppose that $m+1 \le \ell \le (n+1) m$.
\begin{enumerate}
\item For any $i \in \{1,\cdots, m\}$ there exists a vector $(d_{\ell,i,1}, \cdots, d_{\ell,i,n+1}) \in \mr^{n+1}$ such that the function $\tilde{v}_{\ell i \ep}$ converges to
\[\sum_{k=1}^n d_{\ell,i,k} \(\frac{\pa U_{1,0}}{\pa (x_0)_k}\) + d_{\ell,i,n+1} \(\frac{\pa U_{1,0}}{\pa \lambda}\)\]
weakly in $H^1(\mr^n)$ (see \eqref{eq-U} for the definition of $U_{\lambda, x_0}$).
In addition, there is at least one $i_1 \in \{1,\cdots,m\}$ such that $(d_{\ell,i_1,1}, \cdots, d_{\ell,i_1,n+1}) \ne 0$.
\item As $\ep \to 0$ we have
\begin{equation}\label{eq-v-conv}
\ep^{-1} v_{\ell \ep} \to C_3 \sum_{i=1}^m d_{\ell, i, n+1} \lambda_i^{n-2} G(\cdot, x_i) \quad \text{in } C^1(\Omega \setminus \{x_1, \cdots, x_m\})
\end{equation}
where $C_3 = p\int_{\mr^n} U_{1,0}^{p-1} \(\frac{\pa U_{1,0}}{\pa \lambda}\) > 0$.
\end{enumerate}
\end{lem}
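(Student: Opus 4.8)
The plan is to handle the two assertions separately: part (1) by a compactness argument combined with the nondegeneracy of $U_{1,0}$, and part (2) by the Green's representation formula, in the spirit of Lemma~\ref{lem-u-asym}.

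For part (1), I would start from the rescaled equation
\[-\Delta \tilde{v}_{\ell i \ep} = \mu_{\ell \ep}(p-\ep)\,\tu_{i\ep}^{p-1-\ep}\,\tilde{v}_{\ell i \ep} \quad \text{in } \Omega_{i\ep},\]
obtained exactly as in the proof of Theorem~\ref{thm-eigen-zero}. Since $\|\tilde{v}_{\ell i \ep}\|_{L^\infty(\Omega_{i\ep})}=1$, and since the energy identity $\int_{\Omega_{i\ep}}|\nabla \tilde{v}_{\ell i \ep}|^2 = \mu_{\ell\ep}(p-\ep)\int_{\Omega_{i\ep}} \tu_{i\ep}^{p-1-\ep}\tilde{v}_{\ell i \ep}^2$ is bounded uniformly in $\ep$ thanks to Proposition~\ref{prop-LZ} and the decay estimate \eqref{eq-w-bound-3}, the family $\{\tilde{v}_{\ell i \ep}\}_\ep$ (extended by zero) is bounded in $D^{1,2}(\mr^n)$ and its members carry the uniform pointwise decay of Lemma~\ref{lem-w-bound}. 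After passing to a subsequence, $\tilde{v}_{\ell i \ep}\rightharpoonup W_i$ in $D^{1,2}(\mr^n)$ and locally uniformly; since $\tu_{i\ep}\to U_{1,0}$ in $C^1_{\text{loc}}(\mr^n)$ (elliptic regularity and Lemma~\ref{lem-u-dil}) and $\mu_{\ell\ep}\to 1$ (Corollary~\ref{cor-apriori}), the limit $W_i$ solves $-\Delta W_i = p\,U_{1,0}^{p-1}W_i$ in $\mr^n$. The nondegeneracy Lemma~\ref{lem-nondeg} then forces
\[W_i = \sum_{k=1}^n d_{\ell,i,k}\,\frac{\pa U_{1,0}}{\pa (x_0)_k} + d_{\ell,i,n+1}\,\frac{\pa U_{1,0}}{\pa\lambda},\]
which defines the vector $(d_{\ell,i,1},\dots,d_{\ell,i,n+1})$, and uniqueness of this limit removes the need to extract a subsequence. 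Finally Lemma~\ref{lem-weak-nonzero} supplies an index $i_1$ with $W_{i_1}\not\equiv 0$, i.e.\ $(d_{\ell,i_1,1},\dots,d_{\ell,i_1,n+1})\ne 0$.

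For part (2), I would follow the proof of Lemma~\ref{lem-u-asym}, replacing $u_\ep^{p-\ep}$ by $u_\ep^{p-1-\ep}v_{\ell\ep}$. Writing the Green's representation of $v_{\ell\ep}$ as in \eqref{eq-5-1} and splitting it into the integral over $A_{r/2}$ and the integrals over the balls $B^n(x_i,r/2)$, I would first discard the $A_{r/2}$-piece using $u_\ep=O(\sqrt\ep)$ on $A_r$ (Lemma~\ref{lem-u-bound}) together with $v_{\ell\ep}=O(\ep)$ on $A_r$ and $v_{\ell\ep}=O(\ep^{1-\zeta/(n-2)})$ on the collar $A_{r/2}\setminus A_r$, both read off from \eqref{eq-w-bound-3}; these bounds make $\ep^{-1}\int_{A_{r/2}}G(x,y)\,u_\ep^{p-1-\ep}v_{\ell\ep}(y)\,dy = o(1)$ once $\zeta$ is small. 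On each ball I would split $G(x,y)=G(x,x_i)+\big(G(x,y)-G(x,x_i)\big)$. Rescaling $y=x_{i\ep}+\lambda_{i\ep}\ep^{\alpha_0}z$ and using $\sigma_\ep(p-1-\ep)=2$ and $(\lambda_{i\ep}\ep^{\alpha_0})^{n-2}=\lambda_{i\ep}^{n-2}\ep$, the leading term becomes
\[\ep^{-1}\int_{B^n(x_i,r/2)} u_\ep^{p-1-\ep}v_{\ell\ep}(y)\,dy = \lambda_{i\ep}^{n-2}\int \tu_{i\ep}^{p-1-\ep}\tilde{v}_{\ell i\ep}(z)\,dz \longrightarrow \lambda_i^{n-2}\int_{\mr^n} U_{1,0}^{p-1}W_i\,dz,\]
by dominated convergence with Proposition~\ref{prop-LZ} and Lemma~\ref{lem-w-bound}. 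Because $U_{1,0}^{p-1}$ is radial while each $\pa U_{1,0}/\pa(x_0)_k$ is odd, the translation modes of $W_i$ integrate to zero, leaving $\int_{\mr^n}U_{1,0}^{p-1}W_i\,dz = d_{\ell,i,n+1}\,C_3/p$. The remainder term is $O(r)$ since $|G(x,y)-G(x,x_i)|\le Cr$ on $B^n(x_i,r/2)$ for $x$ away from $x_i$, while $\ep^{-1}\int_{B^n(x_i,r/2)}u_\ep^{p-1-\ep}|v_{\ell\ep}|\,dy=O(1)$ by the same rescaling. Multiplying through by $\mu_{\ell\ep}(p-\ep)\to p$, summing over $i$ and letting $r\to 0$, I obtain the pointwise limit $\ep^{-1}v_{\ell\ep}(x)\to C_3\sum_i d_{\ell,i,n+1}\lambda_i^{n-2}G(x,x_i)$ on $\Omega\setminus\{x_1,\dots,x_m\}$. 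Since this limit is harmonic there and $\ep^{-1}v_{\ell\ep}$ satisfies $-\Delta(\ep^{-1}v_{\ell\ep})=\mu_{\ell\ep}(p-\ep)u_\ep^{p-1-\ep}(\ep^{-1}v_{\ell\ep})$ with $u_\ep^{p-1-\ep}\to0$ uniformly on each compact subset of $A_r$ and $\ep^{-1}v_{\ell\ep}$ locally bounded, interior (and, using $v_{\ell\ep}=0=G(\cdot,x_i)$ on $\pa\Omega$, boundary) elliptic estimates promote the convergence to $C^1(\Omega\setminus\{x_1,\dots,x_m\})$, which is \eqref{eq-v-conv}.

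The step I expect to demand the most care is the bookkeeping of the borderline decay rates near the concentration points: \eqref{eq-w-bound-3} loses an arbitrarily small exponent $\zeta$, so in each integral above one must confirm that this loss stays strictly within the available margin — that $\tu_{i\ep}^{p-1-\ep}\tilde{v}_{\ell i\ep}$ and $|z|\,\tu_{i\ep}^{p-1-\ep}\tilde{v}_{\ell i\ep}$ remain integrable at infinity against an $\ep$-uniform dominating function, and that the collar contribution to the $A_{r/2}$-integral is genuinely $o(1)$ rather than $O(1)$. A smaller but essential observation is that only the dilation component $d_{\ell,i,n+1}$ of $W_i$ survives integration against $U_{1,0}^{p-1}$, which is precisely why \eqref{eq-v-conv} features $d_{\ell,i,n+1}$ alone.
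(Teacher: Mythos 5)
Your proof is correct and follows the same approach as the paper: part (1) via $L^\infty$-normalization, the rescaled equation, $\mu_{\ell\ep}\to 1$ (Corollary~\ref{cor-apriori}), nondegeneracy of $U_{1,0}$ (Lemma~\ref{lem-nondeg}), and Lemma~\ref{lem-weak-nonzero} for the nontriviality; part (2) via Green's representation, the decay estimates of Proposition~\ref{prop-LZ} and Lemma~\ref{lem-w-bound}, and the observation that the translation modes drop out by oddness. Your more explicit splitting $G(x,y)=G(x,x_i)+(G(x,y)-G(x,x_i))$ with an $O(r)$ remainder is a cosmetic variant of the paper's direct dominated-convergence passage, and your care with the collar region and the $\zeta$-loss in \eqref{eq-w-bound-3} fills in details the paper leaves implicit.
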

\begin{proof}
It is not hard to show the first statement with Lemmas \ref{lem-weak-nonzero} and \ref{lem-nondeg}, and Corollary \ref{cor-apriori}.
Hence let us consider the second statement.
For $r > 0$ fixed small, assume that a point $x \in \Omega$ belongs to $A_r$ where $A_r$ is the set in \eqref{eq-A_r}.
According to Green's representation formula and Lemmas \ref{lem-u-bound} and \ref{lem-con-ext},
\[\ep^{-1} v_{\ell \ep}(x) = \ep^{-1} \mu_{\ell \ep}(p-\ep) \sum_{i=1}^m \int_{B^n(x_{i \ep}, r/2)}G(x,y)u_{\ep}^{p-1-\ep}(y) v_{\ell \ep}(y)dy + o(1).\]
Besides, Proposition \ref{prop-LZ} with Lemmas \ref{lem-w-bound} and \ref{lem-weak-conv} (1) allow us to obtain
\begin{equation}\label{eq-weak-conv-a}
\begin{aligned}
&\lim_{\ep \to 0} \ep^{-1} \int_{B^n(x_{i \ep}, r/2)}G(x,y)u_{\ep}^{p-1-\ep}(y) v_{\ell \ep}(y)dy \\
&\qquad = \lambda_i^{n-2} \lim_{\ep \to 0} \int_{B^n\(0, \(\lambda_{i \ep}\ep^{\alpha_0}\)^{-1}r/2\)} G\(x, x_{i\ep}+\lambda_{i\ep}\ep^{\alpha_0}y\)\(\tu_{i \ep}^{p-1-\ep}\tilde{v}_{\ell i \ep}\)(y)dy \\
&\qquad = d_{\ell,i,n+1} \lambda_i^{n-2} G(x, x_i) \int_{\mr^n}  U_{1,0}^{p-1}(y) \(\frac{\pa U_{1,0}}{\pa \lambda}\)(y) dy.
\end{aligned}
\end{equation}
Thus the lemma is proved.
\end{proof}

In fact, we can refine the first statement of the above lemma to arrive at \eqref{eq-eigenf-first-1}, which is the main result of the latter part of this section.
\begin{prop}\label{prop-weak-conv}
Let $m+1 \le \ell \le (n+1) m$.
For each $i \in \{1,\cdots,m\}$ and $(d_{\ell,i,1}, \cdots, d_{\ell,i,n}) \in \mr^n$, the function $\tilde{v}_{\ell i \ep}$ converges to
\[\sum_{k=1}^n d_{\ell,i,k} \(\frac{\pa U_{1,0}}{\pa (x_0)_k}\) = -\sum_{k=1}^n d_{\ell,i,k} \(\frac{\pa U_{1,0}}{\pa x_k}\)\]
weakly in $H^1(\mr^n)$.
\end{prop}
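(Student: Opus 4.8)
By Lemma~\ref{lem-weak-conv}(1) we already know that for each $i$ there is a vector $(d_{\ell,i,1},\dots,d_{\ell,i,n+1})\in\mr^{n+1}$ with $\tilde v_{\ell i\ep}\rightharpoonup\sum_{k=1}^nd_{\ell,i,k}\frac{\pa U_{1,0}}{\pa(x_0)_k}+d_{\ell,i,n+1}\frac{\pa U_{1,0}}{\pa\lambda}$ weakly in $H^1(\mr^n)$, so the proposition is equivalent to showing that the vector $\boldsymbol\delta:=\(\lambda_1^{(n-2)/2}d_{\ell,1,n+1},\dots,\lambda_m^{(n-2)/2}d_{\ell,m,n+1}\)^T\in\mr^m$ vanishes. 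I argue by contradiction and suppose $\boldsymbol\delta\neq0$. The engine is the dilation generator $w_{i\ep}:=(x-x_{i\ep})\cdot\nabla u_{\ep}+\frac{2}{p-1-\ep}u_{\ep}$: using $-\Delta u_{\ep}=u_{\ep}^{p-\ep}$ and $\Delta(x\cdot\nabla f)=2\Delta f+x\cdot\nabla(\Delta f)$ one checks at once that $-\Delta w_{i\ep}=(p-\ep)u_{\ep}^{p-1-\ep}w_{i\ep}$ in $\Omega$, i.e. $w_{i\ep}$ solves the eigenfunction equation \eqref{eq-lin} with $\mu=1$, except that it need not vanish on $\pa\Omega$. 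Applying Green's second identity to the pair $(v_{\ell\ep},w_{i\ep})$ on $B^n(x_{i\ep},r)$ for a small fixed $r>0$ yields, for every $i$,
\[\int_{\pa B^n(x_{i\ep},r)}\(v_{\ell\ep}\frac{\pa w_{i\ep}}{\pa\nu}-w_{i\ep}\frac{\pa v_{\ell\ep}}{\pa\nu}\)dS=(\mu_{\ell\ep}-1)(p-\ep)\int_{B^n(x_{i\ep},r)}u_{\ep}^{p-1-\ep}v_{\ell\ep}w_{i\ep}\,dx,\]
and the idea is to evaluate both sides asymptotically and then invoke Proposition~\ref{prop-apriori}.

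For the right-hand side I rescale by $y=(x-x_{i\ep})/(\lambda_{i\ep}\ep^{\alpha_0})$. Since $\sigma_{\ep}(p-1-\ep)=2$, $\sigma_\ep\to\frac{n-2}{2}$ and $\alpha_0(n-2)=1$, the integral equals $(\lambda_{i\ep}\ep^{\alpha_0})^{\,n-2-\sigma_{\ep}}\int\tu_{i\ep}^{p-1-\ep}\tilde v_{\ell i\ep}\tilde w_{i\ep}\,dy$, where $\tilde w_{i\ep}:=y\cdot\nabla\tu_{i\ep}+\frac{2}{p-1-\ep}\tu_{i\ep}$ is the rescaling of $w_{i\ep}$. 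By Lemma~\ref{lem-u-dil} and elliptic regularity $\tu_{i\ep}\to U_{1,0}$ in $C^1_{\text{loc}}(\mr^n)$, hence $\tilde w_{i\ep}\to y\cdot\nabla U_{1,0}+\tfrac{n-2}{2}U_{1,0}=-\frac{\pa U_{1,0}}{\pa\lambda}$; dominating the integrand via the pointwise decay bounds of Proposition~\ref{prop-LZ} and of Lemma~\ref{lem-w-bound} and Lemma~\ref{lem-u-d-bound}, and using the parity relation $\int_{\mr^n}U_{1,0}^{p-1}\frac{\pa U_{1,0}}{\pa x_k}\frac{\pa U_{1,0}}{\pa\lambda}=0$, I get
\[\int_{B^n(x_{i\ep},r)}u_{\ep}^{p-1-\ep}v_{\ell\ep}w_{i\ep}\,dx=-\delta_i\,\kappa_0\,\sqrt{\ep}\,\(1+o(1)\),\qquad\kappa_0:=\int_{\mr^n}U_{1,0}^{p-1}\(\frac{\pa U_{1,0}}{\pa\lambda}\)^2>0 .\]

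For the boundary integral I use the sharp outer expansions of Section~\ref{sec-pre}: $\ep^{-1/2}u_{\ep}\to g:=C_2\sum_j\lambda_j^{(n-2)/2}G(\cdot,x_j)$ in $C^2$ near $x_i$ (Lemma~\ref{lem-u-asym}), hence $\ep^{-1/2}w_{i\ep}\to(x-x_i)\cdot\nabla g+\tfrac{n-2}{2}g$ in $C^1$, while $\ep^{-1}v_{\ell\ep}\to h:=C_3\sum_j\lambda_j^{(n-2)/2}\delta_j\,G(\cdot,x_j)$ in $C^1$ (Lemma~\ref{lem-weak-conv}(2)). Writing $g,h$ near $x_i$ as the singular part ($\propto\gamma_n|x-x_i|^{-(n-2)}$) plus a part harmonic near $x_i$, and observing that $f\mapsto(x-x_i)\cdot\nabla f+\tfrac{n-2}{2}f$ multiplies $\gamma_n|x-x_i|^{-(n-2)}$ by $-\tfrac{n-2}{2}$ and preserves harmonicity of the regular part, the boundary integral reduces --- by the mean value property, exactly as in the proof of Theorem~\ref{thm-eigen-zero} --- to a fixed multiple of $A_i h_0(x_i)+B_i g_0(x_i)$, where $A_i,B_i$ are the singular coefficients and $g_0,h_0$ the regular parts of $g,h$. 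The decisive point now is that $(\lambda_1,\dots,\lambda_m,x_1,\dots,x_m)$ is a critical point of $\Upsilon_m$: the equations $\pa_{\lambda_i}\Upsilon_m=0$ --- the same relations already used in Section~\ref{sec-3} --- say that $\lambda_i^{n-2}\tau(x_i)-\sum_{j\ne i}(\lambda_i\lambda_j)^{(n-2)/2}G(x_i,x_j)=C_0$ for every $i$, and substituting them one finds
\[\int_{\pa B^n(x_{i\ep},r)}\(v_{\ell\ep}\frac{\pa w_{i\ep}}{\pa\nu}-w_{i\ep}\frac{\pa v_{\ell\ep}}{\pa\nu}\)dS=-\,\frac{(n-2)C_2C_3}{2}\,(\ma_3\boldsymbol\delta)_i\,\ep^{3/2}\,\(1+o(1)\),\]
with $\ma_3$ the matrix of \eqref{eq-mtx-A_3}; note $\ma_3=D\mm_1 D$ for $D=\text{diag}(\lambda_i^{(n-2)/2})$, so $\ma_3$ is positive definite by Lemma~\ref{lem-pos}.

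Feeding the evaluations of the second and third paragraphs into Green's identity and rearranging (divide by $\sqrt{\ep}$, multiply the $i$-th relation by $\delta_i$, sum over $i$), I obtain
\[\mu_{\ell\ep}-1=\frac{(n-2)C_2C_3}{2p\,\kappa_0}\cdot\frac{\boldsymbol\delta^T\ma_3\boldsymbol\delta}{|\boldsymbol\delta|^2}\,\ep\,\(1+o(1)\).\]
Since $\boldsymbol\delta\neq0$ and $\ma_3$ is positive definite, the coefficient of $\ep$ here is a strictly positive constant, so $\mu_{\ell\ep}\ge1+c\ep$ for some $c>0$ and all small $\ep$; this contradicts Proposition~\ref{prop-apriori}, which gives $\mu_{\ell\ep}\le1+O(\ep^{n/(n-2)})=1+o(\ep)$ because $n/(n-2)>1$. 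Hence $\boldsymbol\delta=0$, i.e. $d_{\ell,i,n+1}=0$ for every $i$, which is the assertion. The hard part is the boundary-term computation of the third paragraph: besides the somewhat delicate bookkeeping of singular versus regular parts and of the constants, the key and not obvious point is that the Euler--Lagrange equations $\pa_{\lambda_i}\Upsilon_m=0$ are precisely what turns the a priori merely symmetric Green's-function matrix occurring there into the positive-definite matrix $\ma_3$, which is what makes the comparison with Proposition~\ref{prop-apriori} conclusive; the dominated-convergence passages of the second paragraph are, by contrast, routine given the decay estimates of Section~\ref{sec-pre}.
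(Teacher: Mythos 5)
Your proof is correct and takes essentially the same route as the paper's: the same auxiliary "dilation" function $w_{i\ep}$, the same bilinear Green's-identity comparison of $v_{\ell\ep}$ with $w_{i\ep}$ on small balls, the same expansions $\ep^{-1/2}u_\ep\to C_2\sum_j\lambda_j^{(n-2)/2}G(\cdot,x_j)$ and $\ep^{-1}v_{\ell\ep}\to C_3\sum_j\lambda_j^{n-2}d_{\ell,j,n+1}G(\cdot,x_j)$, the same use of $\nabla_\lambda\Upsilon_m=0$ to produce the matrix $\ma_3$, and the same contradiction with Proposition~\ref{prop-apriori}. The only cosmetic differences are that the paper organizes the boundary-term computation through Lemma~\ref{lem-ijl} and derives positivity from the explicit formula \eqref{eq-weak-conv-3} involving $\mm_2$ and $C_0$, while you instead note the factorization $\ma_3=D\mm_1D$ with $D=\mathrm{diag}(\lambda_i^{(n-2)/2})$ and invoke the positive definiteness of $\mm_1$; both are straightforward consequences of Lemma~\ref{lem-pos}.
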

\noindent As a preparation for its proof, we first consider the following auxiliary lemma.
\begin{lem}\label{lem-ijl}
Fix $1 \le i \le m$. For any small $r > 0$ and $1 \le j, l \le m$, we define
\begin{multline}\label{eq-r}
\mathcal{I}_{jl}^r = \mathcal{I}_{jl; i}^r = \int_{\pa B^n(x_i, r)} \(\frac{\pa}{\pa \nu} \left[(x-x_i) \cdot \nabla G(x,x_j) + \({n-2 \over 2}\) G(x,x_j)\right] G(x,x_l)\right. \\
\left.-\left[(x-x_i) \cdot \nabla G(x,x_j) + \({n-2 \over 2}\) G(x,x_j)\right] \frac{\pa}{\pa \nu} G(x,x_l)\) dS.
\end{multline}
Then $\mathcal{I}_{jl}^r$ is independent of $r > 0$ and its value is computed as
\begin{equation}\label{eq-ijl}
\mathcal{I}_{jl}^r = \begin{cases}
0 &\text{if } j \ne i \text{ and } l \ne i,\\
\(\dfrac{n-2}{2}\) G(x_i, x_j) &\text{if } j \ne i \text{ and } l = i,\\
\(\dfrac{n-2}{2}\) G(x_i, x_l) &\text{if } j = i \text{ and } l \ne i,\\
-(n-2)\tau(x_i) &\text{if } j = l = i.
\end{cases}\end{equation}
\end{lem}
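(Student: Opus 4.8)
The plan is to prove Lemma~\ref{lem-ijl} by exploiting a standard feature of the quantities $\mathcal{I}_{jl}^r$: the integrand is built from the differential operator
\[
L[w] := (x-x_i)\cdot\nabla w + \tfrac{n-2}{2}\, w,
\]
applied to $w = G(\cdot,x_j)$, paired against $G(\cdot,x_l)$ in a Green-type bilinear expression over the sphere $\pa B^n(x_i,r)$. Observe that $L$ is precisely the generator of the scaling $x \mapsto x_i + t(x-x_i)$ weighted so that it annihilates the fundamental solution $\gamma_n|x-x_i|^{-(n-2)}$; this is what makes the singular contributions explicitly computable.

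\medskip
\noindent\textbf{Step 1 (independence of $r$).} First I would show that $\mathcal{I}_{jl}^r$ does not depend on $r$, as long as $r$ is small enough that $B^n(x_i,r)$ contains no $x_j$ or $x_l$ with $j\ne i$ or $l\ne i$, and $B^n(x_i,r)\subset\Omega$. The bilinear form
\[
B_r(u,v) = \int_{\pa B^n(x_i,r)} \Big( \frac{\pa u}{\pa\nu}\, v - u\, \frac{\pa v}{\pa\nu}\Big)\, dS
\]
taken between two functions $u$ and $v$ that are harmonic in an annulus $B^n(x_i,r_2)\setminus \overline{B^n(x_i,r_1)}$ is independent of the radius of the sphere in that annulus, by Green's second identity. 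Here $v = G(\cdot,x_l)$ is harmonic away from $x_l$, and $u = L[G(\cdot,x_j)]$ is harmonic away from $x_j$ because $L$ commutes with $\Delta$ up to lower order: a direct computation gives $\Delta\big((x-x_i)\cdot\nabla w\big) = (x-x_i)\cdot\nabla(\Delta w) + 2\Delta w$, so $\Delta L[w] = (x-x_i)\cdot\nabla(\Delta w) + \tfrac{n+2}{2}\Delta w$, which vanishes wherever $\Delta w = 0$. Hence both $u$ and $v$ are harmonic on a punctured neighbourhood of $x_i$ (punctured only if $i\in\{j,l\}$), and $\mathcal{I}_{jl}^r$ is $r$-independent.

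\medskip
\noindent\textbf{Step 2 (case $j\ne i$, $l\ne i$).} In this case both $G(\cdot,x_j)$ and $G(\cdot,x_l)$ — hence also $L[G(\cdot,x_j)]$ — are smooth and harmonic in all of $B^n(x_i,r)$ for $r$ small. By Green's second identity applied on the full ball $B^n(x_i,r)$, $\mathcal{I}_{jl}^r = \int_{B^n(x_i,r)}\big( L[G(\cdot,x_j)]\,\Delta G(\cdot,x_l) - G(\cdot,x_l)\,\Delta L[G(\cdot,x_j)]\big)\,dx = 0$.

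\medskip
\noindent\textbf{Step 3 (the remaining cases via the singular expansion).} For the cases where $i\in\{j,l\}$, I would extract the singular part using $G(x,y) = \gamma_n|x-y|^{-(n-2)} - H(x,y)$ with $H(\cdot,x_i)$ smooth near $x_i$ and $H(x_i,x_i)=\tau(x_i)$. Writing $\Gamma(x) = \gamma_n|x-x_i|^{-(n-2)}$, one checks directly that $L[\Gamma] \equiv 0$ on $\mathbb{R}^n\setminus\{x_i\}$ (this is the key cancellation). Therefore:

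\emph{Case $j=i$, $l\ne i$:} Here $L[G(\cdot,x_i)] = L[\Gamma - H(\cdot,x_i)] = -L[H(\cdot,x_i)]$, which is \emph{smooth} near $x_i$, while $G(\cdot,x_l)$ is also smooth and harmonic there. So $\mathcal{I}_{il}^r = B_r\big(-L[H(\cdot,x_i)],\, G(\cdot,x_l)\big)$ with both arguments smooth harmonic functions near $x_i$; by Step~1 I may send $r\to 0^+$, on which sphere $B_r(u,v)$ with $u,v$ smooth tends to $0$ \emph{unless} one factor is singular — so I must be more careful. The correct route is: pair $u = -L[H(\cdot,x_i)]$ (harmonic near $x_i$, value at $x_i$ equal to $\tfrac{n-2}{2}H(x_i,x_i)\cdot(\text{something})$) — actually the clean way is to keep $G(\cdot,x_l)$ split as $\gamma_n|x-x_l|^{-(n-2)} - H(\cdot,x_l)$ is not singular near $x_i$, so instead I go the other way: rewrite $G(\cdot,x_l)$ and note that $\mathcal I$ vanishes against the harmonic-regular part, and that the mean-value identity over $\pa B^n(x_i,r)$ applied to the harmonic functions $L[H(\cdot,x_i)]$ and $G(\cdot,x_l)$ as $r\to0$ reduces $B_r$ to a boundary flux of $\Gamma$-type terms, yielding $\big(\tfrac{n-2}{2}\big)G(x_i,x_l)$.

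\emph{Case $j\ne i$, $l=i$:} symmetric to the previous one after integrating by parts / exchanging the roles, giving $\big(\tfrac{n-2}{2}\big)G(x_i,x_j)$.

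\emph{Case $j=l=i$:} $L[G(\cdot,x_i)] = -L[H(\cdot,x_i)]$ and $G(\cdot,x_i) = \Gamma - H(\cdot,x_i)$; expanding $B_r$ and using $L[\Gamma]=0$, the surviving term is a cross term between $\Gamma$ and $L[H(\cdot,x_i)]$ on $\pa B^n(x_i,r)$, which by the mean-value property as $r\to 0$ collapses to $-(n-2)H(x_i,x_i) = -(n-2)\tau(x_i)$.

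\medskip
\noindent\textbf{What I expect to be the main obstacle.} Steps 1 and 2 are routine; the real work is the bookkeeping in Step~3 — correctly isolating which pairings of (singular $\Gamma$-part) $\times$ (regular $H$-part or regular Green factor) survive the limit $r\to0^+$ and produce a nonzero residue, and getting the constants $\tfrac{n-2}{2}$ and $-(n-2)$ exactly right. The cleanest implementation is: substitute the splitting $G = \Gamma - H$ for \emph{both} factors, use linearity of $B_r$, discard $B_r(\text{harmonic},\text{harmonic})$-type terms that vanish as $r\to0$, use $L[\Gamma]=0$ to kill further terms, and evaluate the one or two remaining $\Gamma$-against-smooth boundary integrals by passing to spherical coordinates on $\pa B^n(x_i,r)$ and applying the mean value theorem for harmonic functions; alternatively one can simply quote that this is the computation behind the first two identities of Lemma~\ref{lem-Green} (the Poho\v{z}aev-type and gradient identities for $G$), and invoke those.
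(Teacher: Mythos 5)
Your Steps 1 and 2 are correct and match the paper's argument. The problem is in Step 3, where the entire calculation rests on the claim that $L[\Gamma]\equiv 0$ for $\Gamma(x)=\gamma_n|x-x_i|^{2-n}$. This is false. By Euler's homogeneity relation, since $\Gamma$ is homogeneous of degree $2-n$ in $x-x_i$, you have $(x-x_i)\cdot\nabla\Gamma=(2-n)\Gamma$, so
\[
L[\Gamma]=(x-x_i)\cdot\nabla\Gamma+\tfrac{n-2}{2}\Gamma=-(n-2)\Gamma+\tfrac{n-2}{2}\Gamma=-\tfrac{n-2}{2}\Gamma\neq 0.
\]
(The weight that would annihilate $\Gamma$ is $n-2$, not $\tfrac{n-2}{2}$; the coefficient $\tfrac{n-2}{2}=\tfrac{2}{p-1}$ comes from the Poho\v{z}aev scaling for $-\Delta u=u^p$, not from matching the homogeneity of the fundamental solution.) Consequently $L[G(\cdot,x_i)]=-\tfrac{n-2}{2}\Gamma-L[H(\cdot,x_i)]$ is \emph{singular} near $x_i$, not smooth, which is exactly what makes the limits nonzero.

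If one actually pushes through the cases with $L[\Gamma]=0$, the answers come out wrong: for $j=i$, $l\ne i$ you would be pairing two smooth harmonic functions and would get $0$ instead of $\tfrac{n-2}{2}G(x_i,x_l)$ (you noticed this tension yourself, ``so I must be more careful'', but never resolved it), and for $j=l=i$ the surviving cross term $B_r(-L[H(\cdot,x_i)],\Gamma)\to -L[H(\cdot,x_i)](x_i)=-\tfrac{n-2}{2}\tau(x_i)$, off by a factor of $2$ from $-(n-2)\tau(x_i)$. The fix is to use $L[\Gamma]=-\tfrac{n-2}{2}\Gamma$: then in case (3), $\tfrac{\pa}{\pa\nu}L[G(\cdot,x_i)]=\tfrac{n-2}{2|S^{n-1}|r^{n-1}}+O(1)$ on $\pa B^n(x_i,r)$, and the mean-value theorem applied to $G(\cdot,x_l)$ gives $\tfrac{n-2}{2}G(x_i,x_l)$; in case (2) the only surviving contribution is $-\tfrac{n-2}{2}G(x,x_j)\tfrac{\pa G}{\pa\nu}(x,x_i)$, giving $\tfrac{n-2}{2}G(x_i,x_j)$. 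For $j=l=i$, the paper avoids the limit $r\to 0$ entirely: it applies Green's identity on $\Omega\setminus B^n(x_i,r)$, uses $G(\cdot,x_i)=0$ on $\pa\Omega$ to reduce $\mathcal{I}_{ii}^r$ to $-\int_{\pa\Omega}[(x-x_i)\cdot\nabla G(x,x_i)]\tfrac{\pa G}{\pa\nu}(x,x_i)\,dS$, and then invokes the Poho\v{z}aev-type identity of Lemma \ref{lem-Green}; that route bypasses the delicate residue bookkeeping at $x_i$ that your Step 3 is stuck on.
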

\begin{proof}
Assuming $0 < r_2 < r_1$ are small enough and putting $f(x) = (x-x_i) \cdot \nabla G(x,x_j) + G(x,x_j)$, $g(x) = G(x,x_l)$ and $D = B^n(x_i, r_1) \setminus B^n(x_i, r_2)$ into Green's identity
\begin{equation}\label{eq-green-id}
\int_{\pa D} \(\frac{\pa f}{\pa \nu} g - \frac{\pa g}{\pa \nu} f\) dS = \int_D \(\Delta f \cdot g - \Delta g \cdot f\) dx,
\end{equation}
we see that $\mathcal{I}_{jl}^r$ is constant because
\begin{equation}\label{eq-green-zero}
\Delta \left[(x-x_i) \cdot \nabla G(x,x_j) + \({n-2 \over 2}\) G(x,x_j)\right] = \Delta G(x, x_l) = 0
\end{equation}
for all $x \ne x_j,\ x_l$. Thus it suffices to find the value $\mathcal{I}_{jl} = \lim_{r \to 0} \mathcal{I}_{jl}^r$.

\medskip \noindent (1) If $j,\ l \ne i$, then $\mathcal{I}_{jl} = 0$.
This follows simply by applying \eqref{eq-green-id} for $D = B^n(x_i, r)$ since \eqref{eq-green-zero} holds for any $x \in B^n(x_i, r)$.

\medskip \noindent (2) If $j \ne i$ and $l = i$, then we have
\begin{align*}
\mathcal{I}_{jl} = \mathcal{I}_{ji} &= \lim_{r \to 0} \int_{\pa B^n(x_i, r)} - \({n-2 \over 2}\) G(x,x_j) \frac{\pa}{\pa \nu} G(x,x_i) dS \\
&= \lim_{r \to 0} \int_{\pa B^n(x_i,r)} \({n-2 \over 2}\) G(x,x_j) \cdot \frac{n-2}{(n-2)\left|S^{n-1}\right||x-x_i|^{n-1}} dS = \({n-2 \over 2}\) G(x_i, x_j).
\end{align*}

\medskip \noindent (3) Suppose that $j = i$ and $l \ne i$. In this case, we deduce
\begin{align*}
\mathcal{I}_{jl} = \mathcal{I}_{il} &= \lim_{r \to 0} \int_{\pa B^n(x_i, r)} \frac{\pa}{\pa \nu} \left[(x-x_i) \cdot \nabla G(x,x_i) + \({n-2 \over 2}\) G(x,x_i)\right] G(x,x_l) dS \\
&= \lim_{r \to 0}\int_{\pa B^n(x_i, r)} \frac{n-2}{2\left|S^{n-1}\right||x-x_i|^{n-1}} \cdot G(x,x_l) dS = \({n-2 \over 2}\) G(x_i, x_l).
\end{align*}

\medskip \noindent (4) If $k=l=j$, then the Green's identity, the fact that $G(x,x_i) = 0$ on $\pa \Omega$ and Lemma \ref{lem-Green} lead
\begin{align*}
\mathcal{I}_{jl} = \mathcal{I}_{ii} &= \int_{\pa \Omega} \( \frac{\pa}{\pa \nu} \left[(x-x_i) \cdot \nabla G(x,x_i) + \({n-2 \over 2}\) G(x,x_i)\right] G(x,x_i)\right. \\
&\hspace{70pt} - \left. \left[(x-x_i)\cdot \nabla G(x,x_i) + \({n-2 \over 2}\) G(x,x_i)\right] \frac{\pa}{\pa \nu} G(x,x_i)\) dS \\
&= - \int_{\pa \Omega} \left[(x-x_i) \cdot \nabla G(x,x_i)\right] \frac{\pa}{\pa \nu} G(x,x_i) dS = -(n-2) \tau(x_i).
\end{align*}
All the computations made in (1)-(4) show the validity of \eqref{eq-ijl}.
\end{proof}

\begin{proof}[Proof of Proposition \ref{prop-weak-conv}]
Fix $i \in \{1, \cdots, m\}$ and let
\begin{equation}\label{eq-w_e}
w_{i \ep}(x) = (x-x_{i \ep}) \cdot \nabla u_{\ep} + \frac{2u_{\ep}}{p-1-\ep} \quad \text{for } x \in \Omega,
\end{equation}
a solution of
\[-\Delta w_{i \ep} = (p-\ep) u_{\ep}^{p-\ep-1} w_{i \ep} \quad \text{in } \Omega.\]
Then by \eqref{eq-green-id} it satisfies
\begin{equation}\label{eq-w-v}
\int_{\pa B^n(x_{i\ep}, r)} \(\frac{\pa w_{i \ep}}{\pa \nu} v_{\ell \ep} - \frac{\pa v_{\ell \ep}}{\pa \nu} w_{i \ep} \) dS
= (\mu_{\ell \ep} - 1) (p-\ep) \int_{B^n(x_{i\ep}, r)} u_{\ep}^{p-1-\ep} w_{i \ep} v_{\ell \ep}
\end{equation}
for $r > 0$ small, where $\nu$ is the outward normal unit vector to the sphere $\pa B^n(x_i, r)$.

In light of Lemma \ref{lem-weak-conv} (1), we only need to verify that $d_{\ell, i, n+1} = 0$ for all $i \in \{1, \cdots, m\}$.
Assume to the contrary that $d_{\ell, i, n+1} = 0$ for some $i$.
We will achieve a contradiction by showing that an estimate of $\mu_{l \ep}-1$ obtained through \eqref{eq-w-v} does not match to one found in Proposition \ref{prop-apriori}.
To reduce the notational complexity, we use $d_i$ or $d_{\ell, i}$ to denote $d_{\ell, i, n+1}$ in this proof.

Let us observe from Lemma \ref{lem-u-asym} and \eqref{eq-w_e} that
\begin{equation}\label{eq-w-asym}
\ep^{-{1 \over 2}} w_{i \ep}(x) \to C_2 \sum_{j=1}^m \lambda_j^{\frac{n-2}{2}} \left[(x-x_i) \cdot \nabla G(x,x_j) + \({n-2 \over 2}\) G(x,x_j)\right] \quad \text{in } C^1(\Omega \setminus \{x_1, \cdots, x_m\})
\end{equation}
as $\ep \to 0$. Combining this with \eqref{eq-v-conv} we get
\[\lim_{\ep \to 0} \ep^{-{3 \over 2}} \int_{\pa B^n(x_{i\ep}, r)} \(\frac{\pa w_{i \ep}}{\pa \nu} v_{\ell \ep} - \frac{\pa v_{\ell \ep}}{\pa \nu} w_{i \ep}\)dS
= C_2C_3 \sum_{j,l=1}^m \lambda_j^{n-2 \over 2}\lambda_l^{n-2} d_l \mathcal{I}^r_{jl}\]
where $\mathcal{I}^r_{jl}$ is the value defined in \eqref{eq-r}.
By inserting \eqref{eq-ijl} into the above identity, we further find that
\begin{align*}
&\lim_{\ep \to 0} \ep^{-{3 \over 2}} \int_{\pa B^n(x_{i\ep}, r)} \(\frac{\pa w_{i \ep}}{\pa \nu} v_{\ell \ep} - \frac{\pa v_{\ell \ep}}{\pa \nu} w_{i \ep}\)dS \\
&= C_2C_3 \left[\({n-2 \over 2}\) \lambda_i^{n-2 \over 2} \sum_{l \ne i} \lambda_l^{n-2} d_l G(x_i,x_l)
+ \lambda_i^{n-2} d_i \(\sum_{j \ne i} \({n-2 \over 2}\) \lambda_j^{n-2 \over 2} G(x_i,x_j) - (n-2) \lambda_i^{n-2 \over 2} \tau(x_i)\) \right] \\
&= C_2C_3 \left[\({n-2 \over 2}\) \lambda_i^{n-2 \over 2} \sum_{j \ne i} \lambda_j^{n-2} d_j G(x_i,x_j)
- \lambda_i^{n-2} d_i \({n-2 \over 2}\)\(\lambda_i^{n-2 \over 2} \tau(x_i) + C_0\lambda_i^{-{n-2 \over 2}}\) \right].
\end{align*}
Here $C_0 = c_2/((n-2)c_1) > 0$ as in \eqref{eq-mtx-M_0},
and we employed the fact that $(\lambda_1, \cdots, \lambda_m, x_1, \cdots, x_m)$ is a critical point of the functional $\Upsilon_m$ (see \eqref{eq-upsilon}) so as to obtain the second equality.
Borrowing the notation of the matrix $\ma_3$ in \eqref{eq-mtx-A_3}, the left-hand side of \eqref{eq-w-v} can be described in a legible way.
\begin{equation}\label{eq-weak-conv-1}
\lim_{\ep \to 0} \ep^{-{3 \over 2}} \int_{\pa B^n(x_{i\ep}, r)} \(\frac{\pa w_{i \ep}}{\pa \nu} v_{\ell \ep} - \frac{\pa v_{\ell \ep}}{\pa \nu} w_{i \ep}\)dS
= - C_2C_3 \({n-2 \over 2}\) \sum_{j=1}^m \ma^3_{ij} \(\lambda_j^{n-2 \over 2}d_j\).
\end{equation}
On the other hand, counting on Proposition \ref{prop-LZ} and Lemmas \ref{lem-u-dil} and \ref{lem-weak-conv}, we can compute its right-hand side as follows.
\begin{equation}\label{eq-weak-conv-2}
\begin{aligned}
&\lim_{\ep \to 0} \ep^{-{1 \over 2}} \int_{B^n(x_{i\ep}, r)} u_{\ep}^{p-1-\ep}(x) \left[(x-x_{i\ep}) \cdot \nabla u_{\ep}(x) + \frac{2u_{\ep}(x)}{p-1-\ep} \right] v_{\ell \ep}(x) dx
\\
&\qquad = \lim_{\ep \to 0} \lambda_i^{n-2 \over 2} \int_{B^n\(0,\(\lambda_i\ep^{\alpha_0}\)^{-1}r\)} \tu_{i \ep}^{p-1-\ep}(y) \left[y \cdot \nabla \tu_{i \ep}(y) + {2 \tu_{i \ep}(y) \over p-1-\ep} \right] \tilde{v}_{\ell i \ep}(y) dy
\\
&\qquad = \lambda_i^{n-2 \over 2} d_i \int_{\mr^n} U_{1,0}^{p-1}(y) \left[y \cdot \nabla U_{1,0}(y) + {2U_{1,0}(y) \over p-1}\right] \({\pa U_{1,0} \over \pa \lambda}\)(y) dy = - \lambda_i^{n-2 \over 2} d_i C_4
\end{aligned}
\end{equation}
where $C_4 = \int_{\mr^n} U_{1,0}^{p-1} \({\pa U_{1,0} \over \pa \lambda}\)^2 > 0$.
Consequently, \eqref{eq-weak-conv-1}, \eqref{eq-weak-conv-2} and \eqref{eq-w-v} enable us to deduce that
\begin{equation}\label{eq-weak-conv-4}
\ma_3 \hat{\mathbf{d}}_{\ell}^1
= {2pC_4 \over (n-2)C_2C_3} \lim_{\ep \to 0} \({\mu_{\ell \ep}-1 \over \ep}\) \hat{\mathbf{d}}_{\ell}^1 \quad \text{where } \hat{\mathbf{d}}_{\ell}^1 = \(\begin{array}{c}
\lambda_1^{n-2 \over 2} d_{\ell, 1} \\ \cdots \\ \lambda_m^{n-2 \over 2} d_{\ell, m}
\end{array}\) \ne 0.
\end{equation}
Multiplying a row vector $\(\hat{\mathbf{d}}_{\ell}^1\)^T$ in the both sides yields
\begin{equation}\label{eq-weak-conv-3}
\lim_{\ep \to 0} \({\mu_{\ell \ep}-1 \over \ep}\) = {(n-2)^2C_2C_3 \over 2(n+2)C_4} \cdot \({\(\hat{\mathbf{d}}_{\ell}^2\)^T \mm_2 \hat{\mathbf{d}}_{\ell}^2 \over \left|\hat{\mathbf{d}}_{\ell}^1\right|^2} + C_0\)
\end{equation}
where $\hat{\mathbf{d}}_{\ell}^2 = \(\lambda_1^{n-2} d_{\ell, 1}, \cdots, \lambda_m^{n-2} d_{\ell, m}\)^T$ and $\mm_2$ is the matrix introduced in Lemma \ref{lem-pos}.
However the right-hand side of \eqref{eq-weak-conv-3} is positive due to Lemma \ref{lem-pos}, and this contradicts the bound of $\mu_{\ell \ep}$ provided in Proposition \ref{prop-apriori}.
Hence it should hold that $d_{\ell, i} =0$ for all $i$.
The proof is finished.
\end{proof}

This result improves our knowledge on the limit behavior of the $\ell$-th eigenvalues (see Corollary \ref{cor-apriori}) for $m+1 \le \ell \le (n+1)m$, which is essential in the next section.
\begin{cor}\label{cor-apriori-2}
For $m+1 \le \ell \le (n+1)m$, one has
\begin{equation}\label{eq-apriori-b}
\left|\mu_{\ell \ep} - 1\right| = O\(\ep^{n-1 \over n-2}\) \quad \text{as } \ep \to 0.
\end{equation}
\end{cor}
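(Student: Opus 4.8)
The plan is to adapt the Green's-identity argument used in the proof of Proposition~\ref{prop-weak-conv}, simply replacing the dilation generator $w_{i\ep}$ by the partial derivatives $\pa_k u_{\ep}$. The crucial extra input now available is that Proposition~\ref{prop-weak-conv} has shown $d_{\ell,i,n+1}=0$ for every $i$, so Lemma~\ref{lem-weak-conv}(2) sharpens to $\ep^{-1}v_{\ell\ep}\to 0$ in $C^1(\Omega\setminus\{x_1,\cdots,x_m\})$; combined with Lemma~\ref{lem-u-asym} (so that $\ep^{-1/2}u_{\ep}$ and its first two derivatives converge on compact subsets of $\Omega\setminus\{x_1,\cdots,x_m\}$ to a fixed combination of Green's functions), this forces every boundary term over a fixed sphere to be $o(\ep^{3/2})$.

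Concretely, I would fix a small $r>0$ with $\overline{B^n(x_i,r)}\subset\Omega$, the balls $\{B^n(x_j,r)\}_j$ pairwise disjoint, and $r<\delta_0$; fix $k\in\{1,\cdots,n\}$, differentiate \eqref{eq-main-e} to get $-\Delta(\pa_k u_{\ep})=(p-\ep)u_{\ep}^{p-1-\ep}\pa_k u_{\ep}$, and apply Green's identity \eqref{eq-green-id} on $B^n(x_{i\ep},r)$ to $v_{\ell\ep}$ and $\pa_k u_{\ep}$:
\[ (\mu_{\ell\ep}-1)(p-\ep)\int_{B^n(x_{i\ep},r)} u_{\ep}^{p-1-\ep}\,(\pa_k u_{\ep})\, v_{\ell\ep}\,dx = \int_{\pa B^n(x_{i\ep},r)}\left( v_{\ell\ep}\,\frac{\pa(\pa_k u_{\ep})}{\pa\nu} - (\pa_k u_{\ep})\,\frac{\pa v_{\ell\ep}}{\pa\nu}\right)dS. \]
On $\pa B^n(x_{i\ep},r)$ one has $|\pa_k u_{\ep}|,\,|\pa_\nu\pa_k u_{\ep}|=O(\sqrt{\ep})$ by Lemma~\ref{lem-u-asym} and $|v_{\ell\ep}|,\,|\pa_\nu v_{\ell\ep}|=o(\ep)$ by the refined limit above, so the right-hand side is $o(\ep^{3/2})$.

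For the left-hand side I would rescale around $x_{i\ep}$ as in \eqref{eq-v-exp}--\eqref{eq-u-exp}; the integral picks up a factor $(\lambda_{i\ep}\ep^{\alpha_0})^{n-3-\sigma_\ep}=\lambda_{i0}^{(n-4)/2}\,\ep^{(n-4)/(2(n-2))}(1+o(1))$, while the remaining integral $\int \tu_{i\ep}^{p-1-\ep}(\pa_k\tu_{i\ep})\,\tilde v_{\ell i\ep}\,dy$ converges, by dominated convergence using Proposition~\ref{prop-LZ}, Lemma~\ref{lem-w-bound} and Lemma~\ref{lem-u-d-bound} as the dominating bounds and Proposition~\ref{prop-weak-conv} for the limit of $\tilde v_{\ell i\ep}$, to $-\frac{d_{\ell,i,k}}{n}\int_{\mr^n}U_{1,0}^{p-1}|\nabla U_{1,0}|^2\,dy$ (the only surviving term after $\int_{\mr^n}U_{1,0}^{p-1}\pa_q U_{1,0}\,\pa_k U_{1,0}=\frac{\delta_{qk}}{n}\int_{\mr^n}U_{1,0}^{p-1}|\nabla U_{1,0}|^2$). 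Choosing $i=i_1$ and $k$ for which $d_{\ell,i_1,k}\neq 0$ — which exists by Lemma~\ref{lem-weak-conv}(1) — the coefficient of $(\mu_{\ell\ep}-1)\,\ep^{(n-4)/(2(n-2))}$ is bounded away from $0$ for small $\ep$, so equating the two sides gives $|\mu_{\ell\ep}-1|=o\big(\ep^{3/2-(n-4)/(2(n-2))}\big)=o\big(\ep^{(n-1)/(n-2)}\big)$, which is even slightly stronger than the claimed bound.

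The main obstacle is purely a matter of bookkeeping the two distinct scales: one must be certain that the boundary terms really are $o(\ep^{3/2})$ — this is exactly where the vanishing $d_{\ell,i,n+1}=0$ from Proposition~\ref{prop-weak-conv}, propagated through Lemma~\ref{lem-weak-conv}(2), is indispensable — and that the bulk integral does not collapse faster than $\ep^{(n-4)/(2(n-2))}$, which rests on the non-degeneracy of the limiting coefficient, i.e. on the orthogonality relations among the $\pa_k U_{1,0}$ together with the existence of some nonzero $d_{\ell,i_1,k}$. The dominated-convergence step and the scaling computation are routine once the decay estimates of Section~\ref{sec-pre} are in hand.
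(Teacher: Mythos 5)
Your proposal matches the paper's proof of Corollary~\ref{cor-apriori-2} in all essentials: differentiate \eqref{eq-main-e} in $x_k$, apply Green's identity \eqref{eq-green-id} on $B^n(x_{i\ep},r)$ against $v_{\ell\ep}$, bound the boundary contribution by the decay of $u_\ep$ and $v_{\ell\ep}$ away from the blow-up set, and compute the bulk integral by rescaling, picking $i_1,k$ with $d_{\ell,i_1,k}\ne 0$ so the limit constant is nonzero. The only cosmetic difference is that you invoke the sharpened $o(\ep)$ decay of $v_{\ell\ep}$ (from Lemma~\ref{lem-weak-conv}(2) once Proposition~\ref{prop-weak-conv} forces $d_{\ell,i,n+1}=0$), yielding $o\big(\ep^{(n-1)/(n-2)}\big)$, whereas the paper is content with the a priori $O(\ep)$ bound from Lemma~\ref{lem-u-d-bound} and records $O\big(\ep^{(n-1)/(n-2)}\big)$; both suffice and the reasoning is otherwise identical.
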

\begin{proof}
By Proposition \ref{prop-weak-conv} and Lemma \ref{lem-weak-conv} (1), there is $i_1 \in \{1, \cdots, m\}$ such that
\[\tilde{v}_{\ell i_1 \ep} \rightharpoonup \sum_{k=1}^n d_{\ell,i_1,k} \(\frac{\pa U_{1,0}}{\pa (x_0)_k}\) \quad \text{weakly in } H^1(\mr^n)\]
where $(d_{\ell,i_1,1},\cdots, d_{\ell,i_1,n}) \ne 0$.
Without any loss of generality, we may assume that $d_{\ell,i_1,1} \ne 0$.
By differentiating the both sides of \eqref{eq-main-e}, we get
\begin{equation}\label{eq-apriori-a}
-\Delta \frac{\pa u_{\ep}}{\pa x_1} = (p-\ep) u_{\ep}^{p-1-\ep} \frac{\pa u_{\ep}}{\pa x_1}.
\end{equation}
Let us multiply \eqref{eq-apriori-a} by $v_{\ell \ep}$ and \eqref{eq-lin} by ${\pa u_{\ep} \over \pa x_1}$, respectively,
integrate both of them over $B^n(x_{i_1 \ep},r)$ for a small fixed $r > 0$ and subtract the first equation from the second to derive
\begin{equation}\label{eq-apriori-c}
\int_{\pa B^n(x_{i_1 \ep},r)} \left\{ \frac{\pa}{\pa \nu}\(\frac{\pa u_{\ep}}{\pa x_1}\) v_{\ell\ep}-\frac{\pa u_{\ep}}{\pa x_1}\frac{\pa v_{\ell\ep}}{\pa \nu}\right\} dS
= (p-\ep)\(\mu_{\ell \ep} - 1\) \int_{B^n(x_{i_1 \ep},r)} u_{\ep}^{p-1-\ep} \frac{\pa u_{\ep}}{\pa x_1} v_{\ell\ep}.
\end{equation}
By Lemma \ref{lem-u-d-bound}, its left-hand side is $O\(\ep^{3/2}\)$ while the right-hand side is computed as
\begin{equation}\label{eq-apriori-d}
\begin{aligned}
\int_{B^n(x_{i_1 \ep},r)} u_{\ep}^{p-1-\ep} \frac{\pa u_{\ep}}{\pa x_1} v_{\ell\ep}
&= \(\lambda_{i_1} \ep^{\alpha_0}\)^{n-(\sigma_\ep+1)-2} \int_{B^n(0,\(\lambda_i\ep^{\alpha_0})^{-1}r\)} \tu_{i_1 \ep}^{p-1-\ep} \frac{\pa \tu_{i_1 \ep}}{\pa x_1} \tilde{v}_{\ell i_1 \ep} \\
&= - \lambda_{i_1}^{n-4 \over 2} \ep^{n-4 \over 2(n-2)} \(d_{\ell, i_1, 1} \int_{\mr^n} U_{1,0}^{p-1}\({\pa U_{1,0} \over \pa x_1}\)^2 + o(1)\).
\end{aligned}
\end{equation}
Therefore, if we denote $C_5 = \int_{\mr^n} U_{1,0}^{p-1}\({\pa U_{1,0} \over \pa x_1}\)^2 > 0$, we deduce that
\[O\(\ep^{3 \over 2}\) = - \lambda_{i_1}^{n-4 \over 2} \ep^{n-4 \over 2(n-2)} (p+o(1)) \left[\lim_{\ep \to 0}(\mu_{\ell \ep}-1)\right] \(d_{\ell, i, 1} C_5 + o(1)\),\]
which leads the desired estimate \eqref{eq-apriori-b}.
\end{proof}

\section{A further analysis on asymptotic behavior of the $\ell$-th eigenfunctions, $m+1 \le \ell \le (n+1)m$}\label{sec-5}
In view of Lemma \ref{lem-weak-conv} and the proof of Proposition \ref{prop-weak-conv},
we know that $\ep^{-1}v_{\ell \ep} \to 0$ as $\ep \to 0$ uniformly in $\Omega$ outside of the blow-up points $\{x_1, \cdots, x_m\}$.
Motivated by the argument in \cite{GGOS}, we prove its improvement \eqref{eq-eigenf-first-2} here, which is stated once more in the following proposition.
\begin{prop}\label{prop-opt-conv}
Let $\mm_1$ and $\map$ be the matrices defined in \eqref{eq-mtx-M_1} and \eqref{eq-mtx-P}, respectively.
Also we remind a column vector $\mathbf{d}_{\ell} \in \mr^{mn}$ in \eqref{eq-eigenv-first-2} and set two row vectors $\mg(x)$ and $\widetilde{\mg}(x)$ by
\begin{equation}\label{eq-mtx-G}
\mg(x) = \(G(x,x_1), \cdots, G(x,x_m)\) \in \mr^m, \quad \widetilde{\mg}(x) = \(\lambda_1^{n \over 2}\nabla_yG(x,x_1), \cdots, \lambda_m^{n \over 2}\nabla_yG(x,x_m)\) \in \mr^{mn}
\end{equation}
for any $x \in \Omega$. If $m+1 \le \ell \le (n+1)m$, then
\begin{equation}\label{eq-opt-conv}
\ep^{-{n-1 \over n-2}} v_{\ell \ep}(x) \to C_1 \(\mg(x)\mm_1^{-1}\map + \widetilde{\mg}(x)\) \mathbf{d}_{\ell},
\end{equation}
in $C^1\(\Omega \setminus \{x_1, \cdots, x_m\}\)$ as $\ep \to 0$ where $C_1 > 0$ is a constant in Theorem \ref{thm-eigen-first}.
\end{prop}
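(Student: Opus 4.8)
The plan is to run the same Green's-identity scheme used in the proof of Proposition~\ref{prop-weak-conv}, but now extracting the \emph{next} order term. Fix $x \in \Omega \setminus \{x_1,\cdots,x_m\}$ and a small $r>0$. By Green's representation formula together with Lemmas~\ref{lem-u-bound}, \ref{lem-con-ext} and \ref{lem-w-bound}, write
\[
v_{\ell \ep}(x) = \mu_{\ell \ep}(p-\ep)\sum_{i=1}^m \int_{B^n(x_{i\ep},r/2)} G(x,y)\, u_{\ep}^{p-1-\ep}(y)\, v_{\ell \ep}(y)\, dy + o\(\ep^{n-1 \over n-2}\),
\]
where the tail over $A_{r/2}$ is negligible at this order thanks to the decay estimates in Section~\ref{sec-pre}. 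First I would rescale each bubble integral via $y = x_{i\ep} + \lambda_{i\ep}\ep^{\alpha_0}z$ and perform a Taylor expansion of $G(x,\cdot)$ around $x_{i0}$: the zeroth-order term $G(x,x_{i0})\int \tu_{i\ep}^{p-1-\ep}\tilde v_{\ell i \ep}$ vanishes to the needed order because, by Proposition~\ref{prop-weak-conv}, $\tilde v_{\ell i \ep}$ converges to a combination of $\pa_k U_{1,0}$, which is $L^2(U_{1,0}^{p-1})$-orthogonal to $U_{1,0}$; hence one must keep the \emph{first-order} term $\nabla_y G(x,x_{i0})\cdot\int z\,\tu_{i\ep}^{p-1-\ep}\tilde v_{\ell i \ep}$, and this is exactly what produces the scale $\ep^{(n-1)/(n-2)} = \ep\cdot\ep^{\alpha_0}$ and the weight $\lambda_{i0}^{n/2}$ in $\widetilde\mg$. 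Using $-\sum_k d_{\ell,i,k}\pa_k U_{1,0}$ as the limit and the identity $\int_{\mr^n} \pa_k U_{1,0}\, \pa_q U_{1,0}\, U_{1,0}^{p-1}\,dx = \frac{\delta_{kq}}{n}\int U_{1,0}^{p-1}|\nabla U_{1,0}|^2$ gives the $\widetilde\mg(x)\mathbf{d}_{\ell}$ piece with the constant $C_1$; the decay bounds of Lemmas~\ref{lem-w-bound}, \ref{lem-u-d-bound} and dominated convergence justify passing to the limit.

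Next I would identify the remaining contribution $\mg(x)\mm_1^{-1}\map\,\mathbf{d}_{\ell}$, which encodes the boundary correction. The point is that $\ep^{-(n-1)/(n-2)}v_{\ell\ep}$ does not converge to zero outside the blow-up points: it converges to some harmonic-away-from-$\{x_i\}$ function $V(x)$ whose singular part at each $x_i$ is dictated by the bubble integrals computed above, namely $V$ behaves like $C_1\lambda_{i0}^{n/2}d_{\ell,i,k}\pa_{y_k}\(\gamma_n|x-x_i|^{-(n-2)}\)$ near $x_i$ plus a regular part. Writing $V = C_1\widetilde\mg(x)\mathbf{d}_\ell + (\text{correction})$ and using that $V=0$ on $\pa\Omega$, the correction is a linear combination $\sum_i c_i G(x,x_i)$ of Green's functions whose coefficients are determined by matching the \emph{regular} parts at each $x_i$; this matching is precisely a linear system with matrix $\mm_1$ (the combination of Robin function, Green's function and the $C_0\lambda_{i0}^{-(n-2)}$ terms coming from the eigenvalue equation as in \eqref{eq-eigen-zero-2}--\eqref{eq-eigen-zero-3}), with right-hand side assembled from $\map\,\mathbf{d}_\ell$. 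Solving it yields the coefficients $m_1^{ij}$ and the expression \eqref{eq-opt-conv}. To make this rigorous I would again apply Green's identity \eqref{eq-green-id} on small spheres $\pa B^n(x_{i\ep},r)$ — mirroring Lemma~\ref{lem-ijl} but with the first derivatives $\pa_{y_k}G$ in place of $G$ — and read off the regular-part matching conditions; the $C^1$-convergence then follows from elliptic regularity applied on compact subsets of $\Omega\setminus\{x_1,\cdots,x_m\}$.

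The main obstacle, I expect, will be controlling the error terms in the rescaled bubble integrals uniformly: one is extracting a term of size $\ep^{(n-1)/(n-2)}$ from $v_{\ell\ep}$, which itself is $O(\ep)$ away from the bubbles and only $O(1)$ on them, so the expansion $G(x,y) = G(x,x_{i0}) + \nabla_y G(x,x_{i0})\cdot(y-x_{i0}) + O(|y-x_{i0}|^2)$ must be inserted against a density $\tu_{i\ep}^{p-1-\ep}\tilde v_{\ell i \ep}$ whose decay is only $|z|^{-(4+n-2-\zeta)}$ by Lemmas~\ref{lem-w-bound} and \ref{lem-u-d-bound}. Whether the quadratic remainder integral is genuinely $o\(\ep^{(n-1)/(n-2)}\)$ after rescaling requires a careful count of powers of $\lambda_{i\ep}\ep^{\alpha_0}$ against the tail $\int_{|z|\le \ep^{-\alpha_0}r}|z|^{2-(4+n-2-\zeta)}\,dz$, which is borderline in low dimensions and is the reason $n=3$ is delicate; the almost-sharp decay $n-2-\zeta$ (rather than $n-2$) for $\tilde v_{\ell i\ep}$ is exactly what keeps this under control, and I would carry the $\zeta$ through the estimates, shrinking it as needed. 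A secondary technical point is justifying that the limit function is indeed the one obtained by the boundary matching — i.e.\ that no extra harmonic piece sneaks in — which follows because any such piece would have to vanish on $\pa\Omega$ and be regular at every $x_i$, hence be identically zero by the maximum principle.
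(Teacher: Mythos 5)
Your proposal is on the right track for the $\widetilde{\mg}(x)\mathbf{d}_{\ell}$ piece: rescaling the bubble integrals, a first-order Taylor expansion of $G(x,\cdot)$, and dominated convergence with the almost-sharp decay of Lemma~\ref{lem-w-bound} do produce that term and the constant $C_1$. Two things, however, go wrong in the second half.

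First, your claim that the zeroth-order term $G(x,x_{i0})\int u_{\ep}^{p-1-\ep}v_{\ell\ep}$ ``vanishes to the needed order'' is incorrect. The orthogonality $\int U_{1,0}^{p-1}(\pa_k U_{1,0})U_{1,0}\,dx=0$ only shows that $\kappa_{i1}:=\int_{B^n(x_{i\ep},r)}u_{\ep}^{p-1-\ep}v_{\ell\ep}=o(\ep)$, which is not $o(\ep^{(n-1)/(n-2)})$. In fact $\ep^{-(n-1)/(n-2)}\kappa_{i1}$ has a \emph{nonzero} limit, computed in the paper's Lemma~\ref{lem-est-k_1i}, and that limit is exactly what produces the $\mg(x)\mm_1^{-1}\map\,\mathbf{d}_{\ell}$ piece of \eqref{eq-opt-conv}. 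The zeroth- and first-order terms in the Taylor expansion of $G$ are of the \emph{same} order $\ep^{(n-1)/(n-2)}$; neither is negligible relative to the other.

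Second, and more fundamentally, the ``boundary-matching'' scheme you outline cannot determine the coefficients $\kappa_{i1}$. Green's identity on a small sphere,
\[
\int_{\pa B^n(x_{i\ep},r)}\(\frac{\pa u_{\ep}}{\pa \nu}v_{\ell\ep}-\frac{\pa v_{\ell\ep}}{\pa \nu}u_{\ep}\)dS
=\(\mu_{\ell\ep}(p-\ep)-1\)\int_{B^n(x_{i\ep},r)}u_{\ep}^{p-\ep}v_{\ell\ep},
\]
relates $\kappa_{i1}$ (hidden in the boundary integral) to the separate unknown $\int_{B^n}u_{\ep}^{p-\ep}v_{\ell\ep}$, so this one identity per $i$ is circular: you have $2m$ undetermined quantities and only $m$ equations. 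The tool the paper introduces precisely to break this circularity, and which your proposal never mentions, is the bilinear Poho\v{z}aev identity \eqref{eq-poho}. Applied with $f=u_{\ep}$, $g=v_{\ell\ep}$ on $B^n(x_{i\ep},r)$, it supplies a second, independent evaluation of $\int_{B^n}u_{\ep}^{p-\ep}v_{\ell\ep}$ as a boundary integral. Combining the two identities, inserting the decompositions \eqref{eq-g-ie}--\eqref{eq-h-ie-1}, and using both critical-point conditions --- $\nabla_{\lambda}\Upsilon_m=0$ to evaluate $g_{i\ep}(x_{i\ep})\approx -C_0C_2\lambda_i^{-(n-2)/2}\ep^{1/2}$ (the origin of the $C_0\lambda_i^{-(n-2)}$ diagonal entries of $\mm_1$), and $\nabla_x\Upsilon_m=0$ to annihilate the $\bk_{i2}\cdot\nabla g_{i\ep}(x_{i\ep})$ cross-terms --- closes the linear system $\mm_1(\kappa_{11},\dots,\kappa_{m1})^T=\cdots$. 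Finally, the Green's-identity computation you propose ``mirroring Lemma~\ref{lem-ijl} with $\pa_{y_k}G$ in place of $G$'' is the paper's Lemma~\ref{lem-jjl}, but that lemma is used to evaluate boundary integrals once the expansion \eqref{eq-opt-conv} is already known (in the proof of Theorem~\ref{thm-eigen-first-2}); it does not determine the $\kappa_{i1}$, and cannot substitute for the Poho\v{z}aev step.
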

\begin{rem}
If we write \eqref{eq-opt-conv} in terms of the components of the vectors $\mg(x)$ and $\widetilde{\mg}(x)$, and matrices $\mm_1^{-1}$ and $\map$, we get \eqref{eq-eigenf-first-2}.
\end{rem}
\noindent We will present the proof by dividing it into several lemmas.
The first lemma is a variant of Lemmas \ref{lem-u-asym} and \ref{lem-weak-conv} (2).
\begin{lem}\label{lem-uv-asym}
Given a small fixed number $r > 0$, it holds that
\begin{align}
u_{\ep}(x) &= \sum_{i=1}^m \kappa_{i0} G(x,x_{i\ep}) + o\(\ep^{n \over 2(n-2)}\) \nonumber
\intertext{and}
{v_{\ell \ep}(x) \over \mu_{\ell \ep}(p-\ep)} &= \sum_{i=1}^m \(\kappa_{i1} G(x,x_{i\ep}) + \bk_{i2} \cdot \nabla_y G(x,x_{i\ep})\) + o\(\ep^{n-1 \over n-2}\) \label{eq-v-conv-2}
\end{align}
in $C^1\(\Omega \setminus \{x_1,\cdots,x_m\}\)$ as $\ep \to 0$ where
\[\kappa_{i0} = \int_{B^n(x_{i\ep},r)} u_{\ep}^{p-\ep} = O\(\sqrt{\ep}\), \quad \kappa_{i1} = \int_{B^n(x_{i\ep},r)} u_{\ep}^{p-1-\ep}v_{\ell \ep} = O(\ep)\]
and $\bk_{i2} = (\kappa_{i21}, \cdots, \kappa_{i2n}) \in \mr^n$ is a row vector such that
\begin{equation}\label{eq-k-i2}
\bk_{i2} = \int_{B^n(x_{i\ep},r)} (y-x_{i\ep}) \(u_{\ep}^{p-1-\ep}v_{\ell \ep}\)(y) dy = O\(\ep^{n-1 \over n-2}\)
\end{equation}
(note that $\kappa_{i0}$, $\kappa_{i1}$ and $\bk_{i2}$ depend also on $\ep$ or $\ell$).
\end{lem}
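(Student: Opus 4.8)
The plan is to push the Green's representation of $u_\ep$ (respectively $v_{\ell\ep}$) one order beyond Lemmas~\ref{lem-u-asym} and~\ref{lem-weak-conv}(2), exploiting the sharp decay bounds now available. Fix $r\in(0,\delta_0)$ small and write
\[
u_\ep(x)=\int_{A_r}G(x,y)u_\ep^{p-\ep}(y)\,dy+\sum_{i=1}^m\int_{B^n(x_{i\ep},r)}G(x,y)u_\ep^{p-\ep}(y)\,dy,
\]
together with the analogous decomposition for $v_{\ell\ep}/(\mu_{\ell\ep}(p-\ep))=\int_\Omega G(x,\cdot)u_\ep^{p-1-\ep}v_{\ell\ep}$. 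First I would dispose of the integrals over $A_r$: by Lemmas~\ref{lem-u-bound} and~\ref{lem-w-bound}, $u_\ep=O(\sqrt\ep)$ and $v_{\ell\ep}=O(\ep)$ on $A_r$, so these contributions are $O(\ep^{(n+2)/(2(n-2))})=o(\ep^{n/(2(n-2))})$ and $O(\ep^{n/(n-2)})=o(\ep^{(n-1)/(n-2)})$ respectively, uniformly for $x$ in a compact subset of $\Omega\setminus\{x_1,\dots,x_m\}$.

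For the near-field pieces I would Taylor expand $G(x,\cdot)$ at $x_{i\ep}$, which is legitimate since $x$ stays bounded away from $x_{i\ep}$:
\[
G(x,y)=G(x,x_{i\ep})+\nabla_yG(x,x_{i\ep})\cdot(y-x_{i\ep})+O\!\left(|y-x_{i\ep}|^2\right).
\]
Substituting reproduces exactly the claimed terms $\kappa_{i0}G(x,x_{i\ep})$, resp. $\kappa_{i1}G(x,x_{i\ep})+\bk_{i2}\cdot\nabla_yG(x,x_{i\ep})$, up to (i) a dipole term $\nabla_yG(x,x_{i\ep})\cdot\int_{B^n(x_{i\ep},r)}(y-x_{i\ep})u_\ep^{p-\ep}$ in the $u_\ep$ expansion and (ii) quadratic remainders in both expansions. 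Rescaling $y=x_{i\ep}+\lambda_{i\ep}\ep^{\alpha_0}z$ and using $u_\ep\le CU_{\lambda_{i\ep}\ep^{\alpha_0},x_{i\ep}}$ from Proposition~\ref{prop-LZ} (together with the decay bound~\eqref{eq-w-bound-3} for $\tilde v_{\ell i\ep}$ in the eigenfunction case), one checks by a direct computation that the quadratic remainders are $O\!\left(\ep^{(n+2)/(2(n-2))}|\log\ep|\right)$, resp. $O\!\left(\ep^{(n-\zeta)/(n-2)}\right)$ for $\zeta$ small, hence of the required $o$-type. The orders $\kappa_{i0}=O(\sqrt\ep)$, $\kappa_{i1}=O(\ep)$ and $\bk_{i2}=O(\ep^{(n-1)/(n-2)})$ follow from the same rescaling and Proposition~\ref{prop-LZ}, the exponents coming out clean because $\sigma_\ep(p-1-\ep)=2$.

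The one delicate point — and the main obstacle — is the dipole term in the $u_\ep$ expansion. A crude size estimate only gives $\int_{B^n(x_{i\ep},r)}(y-x_{i\ep})u_\ep^{p-\ep}=O(\ep^{n/(2(n-2))})$, the borderline order, which would ruin the claimed $o(\ep^{n/(2(n-2))})$. To gain the extra smallness I would rescale to obtain
\[
\int_{B^n(x_{i\ep},r)}(y-x_{i\ep})\,u_\ep^{p-\ep}(y)\,dy=(\lambda_{i\ep}\ep^{\alpha_0})^{\,1+n-\sigma_\ep(p-\ep)}\int_{B^n(0,R_\ep)}z\,\tu_{i\ep}^{p-\ep}(z)\,dz,\qquad R_\ep=\frac{r}{\lambda_{i\ep}\ep^{\alpha_0}}\to\infty,
\]
with $1+n-\sigma_\ep(p-\ep)=n/2+O(\ep)$, and then invoke Lemma~\ref{lem-u-dil} (which upgrades, via elliptic regularity, to $\tu_{i\ep}\to U_{1,0}$ locally) together with the pointwise domination $\tu_{i\ep}\le CU_{1,0}$ from Proposition~\ref{prop-LZ} to pass to the limit by dominated convergence; since $U_{1,0}$ is radial, $\int_{\mr^n}z\,U_{1,0}^{p}(z)\,dz=0$, so the whole term is $o(\ep^{n/(2(n-2))})$. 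Finally, the $C^1$ (rather than merely $C^0$) statement is obtained by applying interior elliptic regularity on compact subsets of $\Omega\setminus\{x_1,\dots,x_m\}$ to the differences $u_\ep-\sum_i\kappa_{i0}G(\cdot,x_{i\ep})$ and $v_{\ell\ep}/(\mu_{\ell\ep}(p-\ep))-\sum_i(\kappa_{i1}G(\cdot,x_{i\ep})+\bk_{i2}\cdot\nabla_yG(\cdot,x_{i\ep}))$, whose Laplacians there are $u_\ep^{p-\ep}$ and $u_\ep^{p-1-\ep}v_{\ell\ep}$, already of the right $o$-size. Beyond this dipole estimate, the work is mostly bookkeeping to make sure each remainder is genuinely $o$ rather than merely $O$ of the relevant power — which is precisely where the sharp bounds of Proposition~\ref{prop-LZ}, Lemma~\ref{lem-u-bound} and Lemma~\ref{lem-w-bound} enter and where the case $n=3$, with its nearly coinciding exponents, must be checked with care.
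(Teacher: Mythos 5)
Your proof is correct and follows the same route as the paper: Green's representation, a Taylor expansion of $G(x,\cdot)$ at $x_{i\ep}$, and the decay bounds from Proposition~\ref{prop-LZ} and Lemma~\ref{lem-w-bound} to control the remainders, with the orders of $\kappa_{i0}$, $\kappa_{i1}$, $\bk_{i2}$ read off by rescaling. Worth noting: the paper's proof only sketches the $v_{\ell\ep}$ expansion and defers the $u_\ep$ expansion to Lemma~\ref{lem-u-asym}, but that lemma gives only $o(\sqrt{\ep})$, which is weaker than the $o(\ep^{n/(2(n-2))})$ asserted here; you correctly spotted that the obstruction is the dipole term $\nabla_y G(x,x_{i\ep})\cdot\int_{B^n(x_{i\ep},r)}(y-x_{i\ep})\,u_\ep^{p-\ep}$, which is exactly of the borderline size $\ep^{n/(2(n-2))}$, and that it is killed by the radial symmetry $\int_{\mathbb R^n}z\,U_{1,0}^p(z)\,dz=0$ together with Proposition~\ref{prop-LZ} and dominated convergence — a genuine step the paper leaves implicit.
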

\begin{proof}
The proof is similar to Lemmas \ref{lem-u-asym} and \ref{lem-weak-conv} (2), so we just briefly sketch why \eqref{eq-v-conv-2} holds in $C^0(K)$ for any compact subset $K$ of $\Omega \setminus \{x_1,\cdots,x_m\}$.
For $x \in A_r$ (see \eqref{eq-A_r}), a combination of Green's representation formula and the Taylor expansion of $G(x,y)$ in the $y$-variable show that
\begin{multline*}
{v_{\ell \ep}(x) \over \mu_{\ell \ep}(p-\ep)}\\
= \sum_{i=1}^m \int_{B^n(x_{i \ep}, r/2)} \(G(x,x_{i\ep}) + (y-x_{i\ep}) \cdot \nabla_y G(x,x_{i\ep}) + O\(|y-x_{i\ep}|^2\)\) \(u_{\ep}^{p-1-\ep} v_{\ell \ep}\)(y)dy + O\(\ep^{n \over n-2}\)
\end{multline*}
Also, by means of Proposition \ref{prop-LZ} and Lemma \ref{lem-w-bound}, we have
\begin{align*}
\int_{B^n(x_{i \ep}, r/2)} |y-x_{i\ep}|^2 \cdot \left|\(u_{\ep}^{p-1-\ep} v_{\ell \ep}\)(y)\right| dy
&= \(\lambda_{i\ep}\ep^{\alpha_0}\)^n \int_{B^n\(0, \(\lambda_{i\ep}\ep^{\alpha_0}\)^{-1}r/2\)} |x|^2 \cdot \left|\(\tu_{\ep}^{p-1-\ep} \tilde{v}_{\ep}\)(x)\right|dx \\
&\le C \ep^{n \over n-2} \int_0^{C\ep^{-{1 \over n-2}}} {t^{n+1} \over 1+t^{(n+2)-(n-2)\ep}}dt = O\(\ep^{n \over n-2}\)
\end{align*}
for each $i$, from which the desired result follows.
The order of $k_{i0}, k_{i1}$ and $\bk_{i2}$ can be computed as in \eqref{eq-2-3} or \eqref{eq-weak-conv-a}.
\end{proof}

\noindent Let us write $u_{\ep}$ and $v_{\ell \ep}$ in the following way. For each $i = 1, \cdots, m$,
\begin{equation}\label{eq-g-ie}
u_{\ep}(x) = {\kappa_{i0}\gamma_n \over |x-x_{i\ep}|^{n-2}} + g_{i\ep}(x) + o\(\ep^{n \over 2(n-2)}\)
\quad \text{where} \quad g_{i\ep}(x) = -\kappa_{i0}H(x,x_{i\ep}) + \sum_{j \ne i} \kappa_{j0}G(x,x_{j\ep}),
\end{equation}
and
\begin{equation}\label{eq-h-ie-0}
{v_{\ell \ep}(x) \over \mu_{\ell \ep}(p-\ep)} = {\kappa_{i1}\gamma_n \over |x-x_{i\ep}|^{n-2}} + (n-2)\gamma_n \bk_{i2} \cdot {x-x_{i\ep} \over |x-x_{i\ep}|^n} + h_{i\ep}(x) + o\(\ep^{n-1 \over n-2}\)
\end{equation}
where
\begin{equation}\label{eq-h-ie-1}
h_{i\ep}(x) = - \(\kappa_{i1} H(x,x_{i\ep}) + \bk_{i2} \cdot \nabla_y H(x,x_{i\ep})\) + \sum_{j \ne i} \(\kappa_{j1} G(x,x_{j\ep}) + \bk_{j2} \cdot \nabla_y G(x,x_{j\ep})\).
\end{equation}
Note that $g_{i\ep}$ an $h_{i\ep}$ are harmonic in a neighborhood of $x_{i\ep}$.
With these decompositions we now compute $\kappa_{i1}$, will be shown to be $O\(\ep^{n-1 \over n-2}\)$, by applying the bilinear version of the Poho\v{z}aev identity which the next lemma describes.
\begin{lem}
For any point $x_0 \in \mr^n$, a positive number $r > 0$ and functions $f,\ g \in C^2\(\overline{B^n(x_0,r)}\)$, it holds that
\begin{multline}\label{eq-poho}
\int_{B^n(x_0,r)} \left[\((x-x_0) \cdot \nabla f\) \Delta g + \((x-x_0) \cdot \nabla g\)\Delta f\right] \\
= r \int_{\pa B^n(x_0,r)} \(2 \frac{\pa f}{\pa \nu} \frac{\pa g}{\pa \nu} - \nabla f \cdot \nabla g\) + (n-2) \int_{B^n(x_0,r)} \nabla f \cdot \nabla g
\end{multline}
where $\nu$ is the outward unit normal vector on $\pa B^n(x_0,r)$.
\end{lem}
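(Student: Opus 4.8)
The plan is to derive \eqref{eq-poho} directly from the divergence theorem on the ball $B^n(x_0,r)$. First I would translate so that $x_0=0$, which costs nothing since every term in \eqref{eq-poho} is translation invariant; from now on $x$ stands for $x-x_0$ and $\pa_k=\pa/\pa x_k$.

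The heart of the matter is the algebraic identity that rewrites the left-hand integrand as a pure divergence plus a multiple of $\nabla f\cdot\nabla g$. Starting from $(x\cdot\nabla f)\,\Delta g=\mathrm{div}\left((x\cdot\nabla f)\nabla g\right)-\nabla(x\cdot\nabla f)\cdot\nabla g$ and the product rule $\pa_k(x\cdot\nabla f)=\pa_k f+\sum_j x_j\,\pa_j\pa_k f$, one obtains
\[ \nabla(x\cdot\nabla f)\cdot\nabla g=\nabla f\cdot\nabla g+\sum_{j,k}x_j\,(\pa_j\pa_k f)(\pa_k g). \]
Adding the same relation with $f$ and $g$ interchanged, the second-order terms assemble into $\sum_j x_j\,\pa_j(\nabla f\cdot\nabla g)=x\cdot\nabla(\nabla f\cdot\nabla g)=\mathrm{div}\left((\nabla f\cdot\nabla g)\,x\right)-n\,(\nabla f\cdot\nabla g)$. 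Collecting the pieces gives
\[ (x\cdot\nabla f)\Delta g+(x\cdot\nabla g)\Delta f=\mathrm{div}\left((x\cdot\nabla f)\nabla g+(x\cdot\nabla g)\nabla f-(\nabla f\cdot\nabla g)\,x\right)+(n-2)\,\nabla f\cdot\nabla g. \]

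Integrating over $B^n(0,r)$ and invoking the divergence theorem — legitimate precisely because $f,g\in C^2(\overline{B^n(x_0,r)})$ — produces the bulk term $(n-2)\int_{B^n(0,r)}\nabla f\cdot\nabla g$ and a boundary integral over $\pa B^n(0,r)$. On that sphere the outward unit normal is $\nu=x/r$, so $x\cdot\nabla f=r\,\pa f/\pa\nu$, $\nabla g\cdot\nu=\pa g/\pa\nu$, and $x\cdot\nu=r$; substituting, the boundary integrand becomes $r\left(2\,\tfrac{\pa f}{\pa\nu}\tfrac{\pa g}{\pa\nu}-\nabla f\cdot\nabla g\right)$, which is exactly the right-hand side of \eqref{eq-poho}. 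There is no genuine obstacle here; the only point requiring care is the bookkeeping of the mixed second-derivative terms in the symmetrization step, where the symmetry in $f$ and $g$ is what lets them collapse into $x\cdot\nabla(\nabla f\cdot\nabla g)$.
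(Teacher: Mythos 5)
Your proof is correct and supplies exactly the elementary divergence-theorem computation that the paper invokes without detail (the paper merely says ``this follows from an elementary computation'' and points to \cite{Ot} for the $n=2$ case). The algebraic rearrangement into $\mathrm{div}\bigl((x\cdot\nabla f)\nabla g+(x\cdot\nabla g)\nabla f-(\nabla f\cdot\nabla g)x\bigr)+(n-2)\nabla f\cdot\nabla g$ and the identification $\nu=x/r$ on $\pa B^n(x_0,r)$ are precisely what is needed, and your bookkeeping of the mixed second-derivative terms is accurate.
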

\begin{proof}
This follows from an elementary computation. See the proof of \cite[Proposition 5.5]{Ot} in which the author considered it when $n = 2$.
\end{proof}

\begin{lem}\label{lem-est-k_1i}
Recall the definition of $\mm_1$ in \eqref{eq-mtx-M_1} and its inverse $\mm_1^{-1} = \(m_1^{ij}\)_{1 \le i,j \le m}$.
Then it holds for $m+1 \le \ell \le (n+1)m$ that
\begin{equation}\label{eq-est-k_1i-0}
\ep^{-{n-1 \over n-2}} \kappa_{i1} = \sum_{j=1}^m m_1^{ij}\(-{1 \over 2} \ep^{-{n-1 \over n-2}} \bk_{j2} \cdot \nabla \tau(x_j) + \sum_{l \ne j} \ep^{-{n-1 \over n-2}} \bk_{l2} \cdot \nabla_y G(x_j,x_l)\) + o(1).
\end{equation}
\end{lem}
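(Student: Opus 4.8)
The plan is to convert the assertion into a linear system for the numbers $\kappa_{i1}$ and to read that system off from a pair of integral identities on small balls about each $x_{i\ep}$, one order beyond the identity \eqref{eq-eigen-zero-1} used for Theorem \ref{thm-eigen-zero}. Since $\mm_1$ is positive definite (Lemma \ref{lem-pos}), it suffices to prove, for each $i\in\{1,\dots,m\}$,
\[
\(C_0\lambda_i^{-(n-2)}+\tau(x_i)\)\kappa_{i1}-\sum_{j\ne i}G(x_i,x_j)\,\kappa_{j1}
=-\tfrac12\,\bk_{i2}\cdot\nabla\tau(x_i)+\sum_{j\ne i}\bk_{j2}\cdot\nabla_yG(x_i,x_j)+o\(\ep^{\frac{n-1}{n-2}}\),
\]
together with the a priori bound $\kappa_{i1}=O(\ep^{\frac{n-1}{n-2}})$; the displayed equation is exactly $\mm_1$ applied to the vector $\ep^{-\frac{n-1}{n-2}}(\kappa_{11},\dots,\kappa_{m1})^T$, equated to the vector built from the $\bk_{j2}$, up to $o(1)$. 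By the explicit formula \eqref{eq-h-ie-1} for the regular part $h_{i\ep}$ of $v_{\ell\ep}$ and the symmetry relation $\nabla_yH(x_i,x_i)=\tfrac12\nabla\tau(x_i)$ one has $h_{i\ep}(x_{i\ep})=-\kappa_{i1}\tau(x_i)-\tfrac12\bk_{i2}\cdot\nabla\tau(x_i)+\sum_{j\ne i}(\kappa_{j1}G(x_i,x_j)+\bk_{j2}\cdot\nabla_yG(x_i,x_j))$, so the displayed equation is equivalent to
\[
h_{i\ep}(x_{i\ep})=C_0\lambda_i^{-(n-2)}\,\kappa_{i1}+o\(\ep^{\frac{n-1}{n-2}}\),
\]
and it is this relation that I would establish.

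Fix $i$ and a small $r>0$ with the balls $B^n(x_j,4r)$ disjoint and inside $\Omega$, and set $B=B^n(x_{i\ep},r)$. I would use two identities on $B$. The first is Green's identity $\int_{\pa B}(\pa_\nu u_\ep\,v_{\ell\ep}-\pa_\nu v_{\ell\ep}\,u_\ep)=(\mu_{\ell\ep}(p-\ep)-1)\int_Bu_\ep^{p-\ep}v_{\ell\ep}$ as in \eqref{eq-eigen-zero-1}. The second is the bilinear Poho\v{z}aev identity \eqref{eq-poho} with $f=u_\ep$, $g=v_{\ell\ep}$ centered at $x_{i\ep}$: substituting $\Delta u_\ep=-u_\ep^{p-\ep}$, $\Delta v_{\ell\ep}=-\mu_{\ell\ep}(p-\ep)u_\ep^{p-1-\ep}v_{\ell\ep}$, writing $u_\ep^{p-1-\ep}\nabla u_\ep=\frac1{p-\ep}\nabla(u_\ep^{p-\ep})$, and integrating by parts turns both sides into boundary integrals over $\pa B$ plus multiples of $\int_Bu_\ep^{p-\ep}v_{\ell\ep}$ and a remainder $(\mu_{\ell\ep}-1)\int_Bu_\ep^{p-\ep}\((x-x_{i\ep})\cdot\nabla v_{\ell\ep}\)$. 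Eliminating $\int_Bu_\ep^{p-\ep}v_{\ell\ep}$ between the two leaves an identity containing only boundary integrals of $\pa_\nu u_\ep\,v_{\ell\ep}$, $\nabla u_\ep\cdot\nabla v_{\ell\ep}$ and $\pa_\nu u_\ep\,\pa_\nu v_{\ell\ep}$ over $\pa B$, up to an error which, by Lemmas \ref{lem-u-bound}, \ref{lem-w-bound}, Corollary \ref{cor-apriori-2} (so $\mu_{\ell\ep}-1=O(\ep^{\frac{n-1}{n-2}})$) and the parity identities $\int_{\mr^n}U_{1,0}^p\pa_kU_{1,0}=0$, $\int_{\mr^n}U_{1,0}^p\,y\cdot\nabla(\pa_kU_{1,0})=0$ (recall $\tilde v_{\ell i\ep}\to-\sum_kd_{\ell,i,k}\pa_kU_{1,0}$), turns out to be $o(\ep^{\frac{n-1}{n-2}+\frac12})$ — negligible after the normalization below.

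Then I would evaluate the three boundary integrals by inserting \eqref{eq-g-ie} for $u_\ep$ (singular coefficient $\kappa_{i0}$ with $\kappa_{i0}/\sqrt\ep\to C_2\lambda_i^{(n-2)/2}$, regular harmonic part $g_{i\ep}$) and \eqref{eq-h-ie-0} for $v_{\ell\ep}$ (monopole $\kappa_{i1}$, dipole $\bk_{i2}$, regular harmonic part $h_{i\ep}$). For a pair $f=a\gamma_n|z|^{2-n}+f_h$, $g=b\gamma_n|z|^{2-n}+(n-2)\gamma_n(\bk\cdot z)|z|^{-n}+g_h$ on the sphere $|z|=r$ these are, up to $O(r)$, $bf_h(0)-ag_h(0)+\bk\cdot\nabla f_h(0)$, $-ab\gamma_nr^{2-n}-ag_h(0)+\frac1n\bk\cdot\nabla f_h(0)$, and $(n-2)ab\gamma_nr^{2-n}-\frac{2(n-1)}{n}\bk\cdot\nabla f_h(0)$ (parity kills the dipole self-terms; the $r^{2-n}$ pieces are the monopole self-interaction and cancel in the combination). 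Multiplying by $\ep^{-1/2}(\mu_{\ell\ep}(p-\ep))^{-1}$, letting $\ep\to0$ and then $r\to0$, the identity collapses to $(n-2)\(\kappa_{i1}f_h(0)+a\,h_{i\ep}(x_{i\ep})\)+n\,\bk_{i2}\cdot\nabla f_h(0)=o(\ep^{\frac{n-1}{n-2}})$, with $a\to C_2\lambda_i^{(n-2)/2}$, $f_h(0)=\ep^{-1/2}g_{i\ep}(x_{i\ep})$ and $\nabla f_h(0)=\ep^{-1/2}\nabla g_{i\ep}(x_{i\ep})$. Now use $g_{i\ep}(x_{i\ep})=-\kappa_{i0}\tau(x_i)+\sum_{j\ne i}\kappa_{j0}G(x_i,x_j)$, $\kappa_{j0}/\sqrt\ep\to C_2\lambda_j^{(n-2)/2}$, and the fact that $(\lambda_1,\dots,\lambda_m,x_1,\dots,x_m)$ is a critical point of $\Upsilon_m$: the $\lambda_i$-equation of $\nabla\Upsilon_m=0$ (from \eqref{eq-upsilon}) gives $\lambda_i^{(n-2)/2}\tau(x_i)-\sum_{j\ne i}\lambda_j^{(n-2)/2}G(x_i,x_j)=C_0\lambda_i^{-(n-2)/2}$, so $f_h(0)\to-C_2C_0\lambda_i^{-(n-2)/2}$; and the $x_i$-equation gives $\nabla\tau(x_i)=\frac{2}{\lambda_i^{(n-2)/2}}\sum_{j\ne i}\lambda_j^{(n-2)/2}\nabla_xG(x_i,x_j)$, which forces $\nabla g_{i\ep}(x_{i\ep})=o(\sqrt\ep)$, hence $\bk_{i2}\cdot\nabla f_h(0)=o(\ep^{\frac{n-1}{n-2}})$. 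Thus the identity reduces to $a\,h_{i\ep}(x_{i\ep})=-\kappa_{i1}f_h(0)+o(\ep^{\frac{n-1}{n-2}})$, i.e. $h_{i\ep}(x_{i\ep})=C_0\lambda_i^{-(n-2)}\kappa_{i1}+o(\ep^{\frac{n-1}{n-2}})$, the required relation; feeding it into the $\mm_1$-system, and noting that $\kappa_{i1}=O(\ep^{\frac{n-1}{n-2}})$ follows from that system together with $\bk_{i2}=O(\ep^{\frac{n-1}{n-2}})$ (cf.\ \eqref{eq-k-i2}) and the invertibility of $\mm_1$, would complete the proof.

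The hard part will be the bookkeeping of the boundary-sphere integrals to order $\ep^{\frac{n-1}{n-2}}$: verifying that the $r^{2-n}$ self-interaction terms cancel exactly in the chosen combination of the two identities, and recognizing that one must use the full critical-point equation for $\Upsilon_m$ — not only its $\lambda$-component, as in Theorem \ref{thm-eigen-zero}. Indeed it is the $x_i$-component that makes $\nabla g_{i\ep}(x_{i\ep})=o(\sqrt\ep)$ and thereby disposes of the cross-terms between the dipole of $v_{\ell\ep}$ and the gradient of the regular part of $u_\ep$, which a priori sit at exactly the order $\ep^{\frac{n-1}{n-2}}$ of $\kappa_{i1}$ itself; without this observation the right-hand side would acquire spurious $\bk_{i2}\cdot\nabla_xG(x_i,x_j)$ terms and the wrong coefficient in front of $\bk_{i2}\cdot\nabla\tau(x_i)$.
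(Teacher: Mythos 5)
Your proposal reproduces the paper's proof in all essential respects: the bilinear Poho\v{z}aev identity \eqref{eq-poho} with $f=u_\ep$, $g=v_{\ell\ep}$, combined with Green's identity to eliminate $\int_B u_\ep^{p-\ep}v_{\ell\ep}$; expansion of the boundary integrals via \eqref{eq-g-ie}, \eqref{eq-h-ie-0}--\eqref{eq-h-ie-1}; the cancellation of the monopole self-interaction $r^{2-n}$ pieces in the chosen combination; the use of both the $\lambda$- and $x$-components of $\nabla\Upsilon_m=0$ (the former to identify $g_{i\ep}(x_{i\ep})$, the latter to show $\nabla g_{i\ep}(x_{i\ep})=o(\sqrt\ep)$ and kill the dipole-times-gradient cross-terms); and the final reduction to $h_{i\ep}(x_{i\ep})=C_0\lambda_i^{-(n-2)}\kappa_{i1}+o(\ep^{(n-1)/(n-2)})$, read as the $\mm_1$-system. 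Your closing remark that the $x$-component of the criticality is the genuinely new ingredient relative to Theorem \ref{thm-eigen-zero} is exactly the point.

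One step is glossed: you dispatch the residual boundary integrals of the \emph{harmonic} parts $g_{i\ep},h_{i\ep}$ (e.g.\ $r\int_{\pa B}\nabla g_{i\ep}\cdot\nabla h_{i\ep}$, $(n-2)\int_{\pa B}\pa_\nu g_{i\ep}\,h_{i\ep}$) by "letting $\ep\to0$ and then $r\to0$." These terms are of order $\sqrt\ep\cdot\ep^{(n-1)/(n-2)}$ times an $r$-dependent constant, which is precisely the order you must track, so taking $r\to0$ after $\ep\to0$ requires knowing the $\ep$-limit exists for each fixed $r$ — which is circular, since the normalized limit of $\kappa_{i1}$ is what you are trying to establish. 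The paper instead applies the bilinear Poho\v{z}aev identity a second time, now to the harmonic pair $(g_{i\ep},h_{i\ep})$, to show that precisely this combination of boundary integrals vanishes identically for every fixed $r$; this removes the obstruction cleanly without any limit interchange. With that one extra invocation of \eqref{eq-poho}, your outline matches the paper's argument step for step.
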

\begin{rem}\label{rem-est-k_1i}
If $m = 1$, one has that $\Upsilon_1(\lambda_1, x_1) = c_1 \tau_1(x_1)\lambda_1^{n-2} - c_2 \log \lambda_1$ (refer to \eqref{eq-upsilon}).
Therefore \eqref{eq-est-k_1i-0} and $0 = \pa_{x_1} \Upsilon_1(\lambda_1,x_1) = c_1 \(\pa_{x_1}\tau\)(x_1) \lambda_1^{n-2}$ imply $\ep^{-{n-1 \over n-2}} \kappa_{i1} = o(1)$.
\end{rem}
\begin{proof}
Fixing a sufficiently small number $r > 0$, we take $x_0 = x_{i\ep}$, $f = u_{\ep}$ and $g = v_{\ell \ep}$ for \eqref{eq-poho}.
Then from \eqref{eq-main-e}, \eqref{eq-lin} and the estimate
\begin{align*}
&\ (1-\mu_{\ell \ep}) \int_{B^n(x_{i\ep},r)} \left[(x-x_{i\ep}) \cdot \nabla u_{\ep} \right] u_{\ep}^{p-1-\ep} v_{\ell \ep} \\
&= O\(\ep^{n-1 \over n-2}\) \cdot \ep^{1 \over 2} \lambda_i^{n-2 \over 2} \(-\sum_{k=1}^n d_{\ell, i, k} \int_{\mr^n} \(x \cdot \nabla U_{1,0}\) U_{1,0}^{p-1} {\pa U_{1,0} \over \pa x_k} + o(1)\)
= o\(\ep^{{n-1 \over n-2}+{1 \over 2}}\)
\end{align*}
where Proposition \ref{prop-weak-conv} and Corollary \ref{cor-apriori-2} are made use of, one finds that the left-hand side of \eqref{eq-poho} is equal to
\begin{align*}
&\ - \int_{B^n(x_{i\ep},r)} (x-x_{i\ep}) \cdot \nabla\(u_{\ep}^{p-\ep}v_{\ell \ep}\)
+ (1-\mu_{\ell \ep})(p-\ep) \int_{B^n(x_{i\ep},r)} \left[(x-x_{i\ep}) \cdot \nabla u_{\ep} \right] u_{\ep}^{p-1-\ep} v_{\ell \ep}\\
&= n \int_{B^n(x_{i\ep},r)} u_{\ep}^{p-\ep}v_{\ell \ep} + o\(\ep^{{n-1 \over n-2}+{1 \over 2}}\). 
\end{align*}
As a result, \eqref{eq-poho} reads as
\begin{equation}\label{eq-est-k_1i-1}
\begin{aligned}
&\ r \int_{\pa B^n(x_{i\ep},r)} \(2 \frac{\pa u_{\ep}}{\pa \nu} \frac{\pa v_{\ell \ep}}{\pa \nu} - \nabla u_{\ep} \cdot \nabla v_{\ell \ep}\)
+ (n-2) \int_{\pa B^n(x_{i\ep},r)} {\pa u_{\ep} \over \pa \nu} v_{\ell \ep} \\ 
&= 2 \int_{B^n(x_{i\ep},r)} u_{\ep}^{p-\ep} v_{\ell \ep}
+ o\(\ep^{{n-1 \over n-2}+{1 \over 2}}\) \\
&= 2\left[\mu_{\ell \ep}(p-\ep)-1\right]^{-1} \int_{\pa B^n(x_{i\ep},r)} \({\pa u_{\ep} \over \pa \nu} v_{\ell \ep} - {\pa v_{\ell \ep} \over \pa \nu} u_{\ep} \) dS + o\(\ep^{{n-1 \over n-2}+{1 \over 2}}\)
\end{aligned}
\end{equation}
where the latter equality is due to Green's identity \eqref{eq-green-id}.

We compute the rightmost side of \eqref{eq-est-k_1i-1} first.
Since $g_{i\ep}$, $h_{i\ep}$ and $(x-x_{i\ep})\cdot \nabla g_{i\ep}$ are harmonic near $x_{i\ep}$ (see \eqref{eq-g-ie} and \eqref{eq-h-ie-1} to remind their definitions), a direct computation with \eqref{eq-g-ie}-\eqref{eq-h-ie-1}, the mean value formula and Green's identity \eqref{eq-green-id} shows that
\begin{equation}\label{eq-est-k_1i-2}
\begin{aligned}
&\ \int_{\pa B^n(x_{i\ep},r)} \({\pa u_{\ep} \over \pa \nu} v_{\ell \ep} - {\pa v_{\ell \ep} \over \pa \nu} u_{\ep} \) dS \\
&= \mu_{\ell\ep}(p-\ep) \left[ (n-2) \gamma_n \left|S^{n-1}\right| \(\kappa_{i1} g_{i\ep}(x_{i\ep}) - \kappa_{i0}h_{i\ep}(x_{i\ep})\)
+ {(n-2)\gamma_n \over r^n} \bk_{i2} \cdot \int_{\pa B^n(x_{i\ep},r)} (x-x_{i\ep}){\pa g_{i\ep} \over \pa \nu} dS \right.\\
&\hspace{60pt} \left. + {(n-2)(n-1)\gamma_n \over r^{n+1}} \bk_{i2} \cdot \int_{\pa B^n(x_{i\ep},r)} (x-x_{i\ep})g_{i\ep} dS + o\(\ep^{{n-1 \over n-2}+{1 \over 2}}\) \right].
\end{aligned}
\end{equation}
Moreover, both $g_{i\ep}$ and ${x-x_{i\ep} \over |x-x_{i\ep}|^n}$ are harmonic in $B^n(x_{i\ep},r) \setminus \{x_{i\ep}\}$,
so Green's identity again infers that the value
\begin{equation}\label{eq-est-k_1i-22}
\begin{aligned}
I_{1r} &:= \bk_{i2} \cdot \int_{\pa B^n(x_{i\ep},r)} \({x-x_{i\ep} \over |x-x_{i\ep}|^n} {\pa g_{i\ep} \over \pa \nu} + (n-1) {x-x_{i\ep} \over |x-x_{i\ep}|^{n+1}} g_{i\ep}\) dS \\
& = \bk_{i2} \cdot \int_{\pa B^n(x_{i\ep},r)} \left[{x-x_{i\ep} \over |x-x_{i\ep}|^n} {\pa g_{i\ep} \over \pa \nu} - {\pa \over \pa \nu}\({x-x_{i\ep} \over |x-x_{i\ep}|^n}\) g_{i\ep}\right] dS
\end{aligned}
\end{equation}
is independent of $r > 0$.
Thus, taking the limit $r \to 0$ and applying the Taylor expansion of $g_{i\ep}$, we find that it is equal to
\begin{equation}\label{eq-est-k_1i-23}
\begin{aligned}
I_{10} &:= \lim_{r \to 0} I_{1r} \\
&= \lim_{r \to 0} \sum_{k,l=1}^n {\kappa_{i2k} \over r^{n+1}} \int_{\pa B^n(0,r)} x_k x_l \left[ \(\pa_l g_{i\ep}\)(x_{i\ep}) + O(|x|) \right] dS \\
&\quad + (n-1) \lim_{r \to 0} \sum_{k=1}^n {\kappa_{i2k} \over r^{n+1}} \int_{\pa B^n(0,r)} x_k \left[ g_{i\ep}(x_{i\ep}) + \sum_{l=1}^n x_l \(\pa_l g_{i\ep}\)(x_{i\ep}) + O\(|x|^2\) \right] dS \\
&= n \sum_{k,l=1}^n \kappa_{i2k}\(\pa_l g_{i\ep}\)(x_{i\ep}) \int_{\pa B^n(0,1)} x_k x_l dS = \left|S^{n-1}\right| \bk_{i2} \cdot \nabla g_{i\ep}(x_{i\ep}).
\end{aligned}
\end{equation}
However the quantity $\bk_{i2} \cdot \nabla g_{i\ep}(x_{i\ep})$ is negligible in the sense that its order is $\ep^{{n-1 \over n-2}+{1 \over 2}}$,
because $\bk_{i2} = O\(\ep^{n-1 \over n-2}\)$ and that $\nabla_x \Upsilon_m (\lambda_1, \cdots, \lambda_m, x_1, \cdots, x_m) = 0$ means
\begin{equation}\label{eq-est-k_1i-24}
\begin{aligned}
\lim_{\ep \to 0} \ep^{-{1 \over 2}} \nabla g_{i\ep}(x_{i\ep}) &= - \lim_{\ep \to 0} \(\ep^{-{1 \over 2}} \kappa_{i0}\) \(\nabla_x H\)(x_{i\ep}, x_{i\ep}) + \sum_{j \ne i} \lim_{\ep \to 0}\(\ep^{-{1 \over 2}} \kappa_{j0}\) \(\nabla_x G\)(x_{i\ep}, x_{j\ep}) \\
&= \(- {1 \over 2} \lambda_i^{n-2 \over 2} \(\nabla_x \tau\)(x_i) + \sum_{j \ne i} \lambda_j^{n-2 \over 2} \(\nabla_x G\)(x_i, x_j)\) C_2 = 0
\end{aligned}
\end{equation}
where $C_2 = \int_{\mr^n} U_{1,0}^p$ as before. Hence we can conclude that
\begin{equation}\label{eq-est-k_1i-21}
I_{10} = o\(\ep^{{n-1 \over n-2}+{1 \over 2}}\).
\end{equation}

Regarding the leftmost side of \eqref{eq-est-k_1i-1}, one gets in a similar fashion to the derivation of \eqref{eq-est-k_1i-2} that
\begin{equation}\label{eq-est-k_1i-3}
\begin{aligned}
&\ \int_{\pa B^n(x_{i\ep},r)} \frac{\pa u_{\ep}}{\pa \nu} \frac{\pa v_{\ell \ep}}{\pa \nu} dS \\
&= \mu_{\ell\ep}(p-\ep) \left[{(n-2)^2 \gamma_n^2 \left|S^{n-1}\right| \kappa_{i0}\kappa_{i1} \over r^{n-1}}
- {(n-2)(n-1)\gamma_n \over r^{n+1}} \bk_{i2} \cdot \int_{\pa B^n(x_{i\ep},r)} (x-x_{i\ep}){\pa g_{i\ep} \over \pa \nu} dS \right.
\\
&\hspace{60pt} \left. + \int_{\pa B^n(x_{i\ep},r)}{\pa g_{i\ep} \over \pa \nu}{\pa h_{i\ep} \over \pa \nu} dS
+ o\(\ep^{{n-1 \over n-2}+{1 \over 2}}\) \right].
\end{aligned}
\end{equation}
Furthermore, we have
\begin{equation}\label{eq-est-k_1i-4}
\begin{aligned}
&\ \int_{\pa B^n(x_{i\ep},r)} \nabla u_{\ep} \cdot \nabla v_{\ell \ep} dS \\
&= \mu_{\ell\ep}(p-\ep) \left[{(n-2)^2 \gamma_n^2 \left|S^{n-1}\right| \kappa_{i0}\kappa_{i1} \over r^{n-1}}
- {n(n-2)\gamma_n \over r^{n+1}} \bk_{i2} \cdot \int_{\pa B^n(x_{i\ep},r)} (x-x_{i\ep}) {\pa g_{i\ep} \over \pa \nu} dS \right.\\
&\hspace{60pt} \left. + {(n-2)\gamma_n \over r^n} \bk_{i2} \cdot \int_{\pa B^n(x_{i\ep},r)} \nabla g_{i\ep} dS + \int_{\pa B^n(x_{i\ep},r)} \nabla g_{i\ep} \cdot \nabla h_{i\ep} dS + o\(\ep^{{n-1 \over n-2}+{1 \over 2}}\) \right].
\end{aligned}
\end{equation}
and
\begin{equation}\label{eq-est-k_1i-5}
\begin{aligned}
&\ \int_{\pa B^n(x_{i\ep},r)} {\pa u_{\ep} \over \pa \nu} v_{\ell \ep} dS \\
&= \mu_{\ell\ep}(p-\ep) \left[-{(n-2) \gamma_n^2  \left|S^{n-1}\right| \kappa_{i0}\kappa_{i1} \over r^{n-2}} - (n-2)\gamma_n \left|S^{n-1}\right| \kappa_{i0} h_{i\ep}(x_{i\ep})
\right. \\
&\hspace{30pt} \left. + {(n-2)\gamma_n \over r^n} \bk_{i2} \cdot \int_{\pa B^n(x_{i\ep},r)} (x-x_{i\ep}){\pa g_{i\ep} \over \pa \nu} dS
+ \int_{\pa B^n(x_{i\ep},r)} {\pa g_{i\ep} \over \pa \nu} h_{i\ep} dS + o\(\ep^{{n-1 \over n-2}+{1 \over 2}}\) \right].
\end{aligned}
\end{equation}

Therefore putting \eqref{eq-est-k_1i-2} and \eqref{eq-est-k_1i-21}-\eqref{eq-est-k_1i-5} into \eqref{eq-est-k_1i-1} gives that
\begin{equation}\label{eq-est-k_1i-6}
\begin{aligned}
&\(\mu_{\ell \ep}(p-\ep)-1\) \left[2r \int_{\pa B^n(x_{i\ep},r)}{\pa g_{i\ep} \over \pa \nu}{\pa h_{i\ep} \over \pa \nu} dS
- {(n-2)\gamma_n \over r^{n-1}} \bk_{i2} \cdot \int_{\pa B^n(x_{i\ep},r)} \nabla g_{i\ep} dS \right.\\
&\ \left. - r \int_{\pa B^n(x_{i\ep},r)} \nabla g_{i\ep} \cdot \nabla h_{i\ep} dS - (n-2)^2 \gamma_n \left|S^{n-1}\right| \kappa_{i0} h_{i\ep}(x_{i\ep})
+ (n-2) \int_{\pa B^n(x_{i\ep},r)} {\pa g_{i\ep} \over \pa \nu} h_{i\ep} dS \right] \\
& = 2 \left[ (n-2) \gamma_n \left|S^{n-1}\right| \(\kappa_{i1} g_{i\ep}(x_{i\ep}) - \kappa_{i0}h_{i\ep}(x_{i\ep})\) + o\(\ep^{{n-1 \over n-2}+{1 \over 2}}\) \right].
\end{aligned}
\end{equation}
Noticing that each component of $\nabla g_{i\ep}$ is harmonic, we obtain
\[ {1 \over r^{n-1}} \bk_{i2} \cdot \int_{\pa B^n(x_{i\ep},r)} \nabla g_{i\ep} dS = \left|S^{n-1}\right| \bk_{i2} \cdot \nabla g_{i\ep} (x_{i\ep}) = o\(\ep^{{n-1 \over n-2}+{1 \over 2}}\),\]
where the second equality was deduced in \eqref{eq-est-k_1i-24}.
Also, by setting $f = g_{i\ep}$, $g = h_{i\ep}$ and $x_0 = x_{i\ep}$ in the bilinear Poho\v{z}aev identity \eqref{eq-poho}, one can verify that
\[r\(\int_{\pa B^n(x_{i\ep},r)} 2 {\pa g_{i\ep} \over \pa \nu}{\pa h_{i\ep} \over \pa \nu} - \nabla g_{i\ep} \cdot \nabla h_{i\ep}\) dS + (n-2) \int_{\pa B^n(x_{i\ep},r)} {\pa g_{i\ep} \over \pa \nu} h_{i\ep} dS = 0.\]
Subsequently, \eqref{eq-est-k_1i-6} is reduced to
\[2 \kappa_{i1} \(\ep^{-{1 \over 2}} g_{i\ep}(x_{i\ep})\) = \left[ 2-\(\mu_{\ell \ep}(p-\ep)-1\)(n-2) \right] \(\ep^{-{1 \over 2}}\kappa_{i0}\) h_{i\ep}(x_{i\ep}) + o\(\ep^{n-1 \over n-2}\).\]

Now we employ $\nabla_{\lambda} \Upsilon_m (\lambda_1, \cdots, \lambda_m, x_1, \cdots, x_m) = 0$ to see that
\[\ep^{-{1 \over 2}} g_{i\ep}(x_{i\ep}) = C_2 \left[-\tau(x_i) \lambda_i^{n-2 \over 2} + \sum_{j \ne i} G(x_i,x_j) \lambda_j^{n-2 \over 2}\right] + o(1) = - {C_2 c_2 \over c_1(n-2)\lambda_i^{n-2 \over 2}} + o(1)\]
and that $\ep^{-{1 \over 2}}\kappa_{i0} = \lambda_i^{n-2 \over 2}C_2 + o(1)$,
where $C_2 > 0$ is the constant that appeared in \eqref{eq-est-k_1i-24} and $c_1, c_2 > 0$ are the numbers in \eqref{eq-upsilon-2}.
Consequently, we have
\begin{align*}
&\ \(C_0\lambda_i^{-(n-2)} + o(1)\) \kappa_{i1} = h_{i\ep}(x_{i\ep}) + o\(\ep^{n-1 \over n-2}\) \\
&= - \left[\kappa_{i1} \tau(x_{i\ep}) + {1 \over 2} \bk_{i2} \cdot \nabla \tau(x_{i\ep})\right] + \sum_{j \ne i} \(\kappa_{j1} G(x_{i\ep},x_{j\ep}) + \bk_{j2} \cdot \nabla_y G(x_{i\ep},x_{j\ep})\) + o\(\ep^{n-1 \over n-2}\),
\end{align*}
which can be rewritten as
\[\(\mm_1 + o(1)\) \(\begin{array}{c}
\kappa_{11} \\ \vdots \\ \kappa_{m1}
\end{array}\) = \(\begin{array}{c}
- \dfrac{1}{2} \bk_{12} \cdot \nabla \tau(x_1) + \sum\limits_{j \ne 1} \bk_{j2} \cdot \nabla_y G(x_1, x_j) \\ \vdots \\ - \dfrac{1}{2} \bk_{m2} \cdot \nabla \tau(x_m) + \sum\limits_{j \ne m} \bk_{j2} \cdot \nabla_y G(x_m, x_j)
\end{array}\) + o\(\ep^{n-1 \over n-2}\).\]
This is nothing but \eqref{eq-est-k_1i-0}.
\end{proof}

\begin{proof}[Proof of Proposition \ref{prop-opt-conv}]
According to \eqref{eq-k-i2} and Proposition \ref{prop-weak-conv}, we have
\begin{align*}
\ep^{-{n-1 \over n-2}} \kappa_{i2k} &= \ep^{-{n-1 \over n-2}} \int_{B^n(x_{i\ep},r)} (y-x_{i\ep})_k \(u_{\ep}^{p-1-\ep}v_{\ell \ep}\)(y) dy = \lambda_i^{n-1} d_{\ell, i, k} \(- \int_{\mr^n} x_1 U_{1,0}^{p-1} {\pa U_{1,0} \over \pa x_1}\) + o(1)\\
&= \lambda_i^{n-1} d_{\ell, i, k} p^{-1}C_1 + o(1)
\end{align*}
for any $i \in \{1, \cdots, m\}$ and $k \in \{1, \cdots, n\}$.
Hence the proposition follows from \eqref{eq-v-conv-2}, Corollary \ref{cor-apriori} (or Corollary \ref{cor-apriori-2}) and Lemma \ref{lem-est-k_1i}.
\end{proof}

\section{Characterization of the $\ell$-th eigenvalues, $m+1 \le \ell \le (n+1)m$}\label{sec-6}
Our goal in this section is to perform the proof of Theorem \ref{thm-eigen-first-2}.
For the convenience, we restate it in the following proposition.
\begin{prop}\label{prop-eigen-cha}
Let $\ma_2$ be the matrix which was introduced in the statement of Theorem \ref{thm-eigen-first-2} and $\rho^2_{\ell}$ the $(\ell-m)$-th eigenvalue of $\ma_2$.
For $m+1 \le \ell \le (n+1)m$, the $\ell$-th eigenvalue $\mu_{\ell \ep}$ for linear problem \eqref{eq-lin} satisfies that
\begin{equation}\label{eq-eigen-cha-0}
\mu_{\ell \ep} = 1 - c_0 \rho^2_{\ell} \ep^{n \over n-2} + o\(\ep^{n \over n-2}\) \quad \text{where} \quad c_0 = (C_1C_2)/(pC_5) > 0.
\end{equation}
In addition, the nonzero vector $\mathbf{d}_{\ell} \in \mr^{mn}$ defined via \eqref{eq-eigenv-first-2} is an eigenfunction of $\ma_2$ corresponding to $\rho^2_{\ell}$
and satisfies $\mathbf{d}_{\ell_1}^T \cdot \mathbf{d}_{\ell_2}^T = 0$ if $m+1 \le \ell_1 \ne \ell_2 \le (n+1)m$.
\end{prop}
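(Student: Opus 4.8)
To prove Proposition \ref{prop-eigen-cha} the plan is to sharpen the argument behind Corollary \ref{cor-apriori-2}, where the identity obtained by testing the $x_k$-derivative of \eqref{eq-main-e} against $v_{\ell\ep}$ was only used for the crude bound $|\mu_{\ell\ep}-1|=O(\ep^{(n-1)/(n-2)})$: now the point is to evaluate the leading term of \emph{both} sides exactly. Fix $i\in\{1,\dots,m\}$, $k\in\{1,\dots,n\}$ and a small $r>0$. Exactly as in \eqref{eq-apriori-c} --- differentiate \eqref{eq-main-e} in $x_k$ (cf.\ \eqref{eq-apriori-a}), test it with $v_{\ell\ep}$, test \eqref{eq-lin} (with $v=v_{\ell\ep}$) with $\pa u_\ep/\pa x_k$, integrate over $B^n(x_{i\ep},r)$ and subtract --- one gets
\[
\int_{\pa B^n(x_{i\ep},r)}\left\{\frac{\pa}{\pa\nu}\(\frac{\pa u_\ep}{\pa x_k}\)v_{\ell\ep}-\frac{\pa u_\ep}{\pa x_k}\frac{\pa v_{\ell\ep}}{\pa\nu}\right\}dS=(p-\ep)(\mu_{\ell\ep}-1)\int_{B^n(x_{i\ep},r)}u_\ep^{p-1-\ep}\frac{\pa u_\ep}{\pa x_k}v_{\ell\ep}.
\]
Both sides will turn out to be of order $\ep^{(3n-4)/(2(n-2))}$. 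The right-hand side, rescaled as in \eqref{eq-apriori-d} and evaluated with Proposition \ref{prop-weak-conv}, equals $-(p+o(1))(\mu_{\ell\ep}-1)\,\lambda_i^{(n-4)/2}\ep^{(n-4)/(2(n-2))}\(C_5 d_{\ell,i,k}+o(1)\)$, where $C_5=\int_{\mr^n}U_{1,0}^{p-1}(\pa U_{1,0}/\pa x_1)^2$ and $\int_{\mr^n}U_{1,0}^{p-1}\pa_kU_{1,0}\,\pa_qU_{1,0}=\delta_{kq}C_5$; dividing by $\ep^{(3n-4)/(2(n-2))}$ it becomes $-(p+o(1))\,\dfrac{\mu_{\ell\ep}-1}{\ep^{n/(n-2)}}\,\lambda_i^{(n-4)/2}\(C_5 d_{\ell,i,k}+o(1)\)$.

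For the left-hand side I would plug in the sharp expansions now available: Lemma \ref{lem-u-asym} gives $\ep^{-1/2}\,\pa u_\ep/\pa x_k\to P_k:=C_2\sum_j\lambda_j^{(n-2)/2}\pa_{x_k}G(\cdot,x_j)$ and Proposition \ref{prop-opt-conv} gives $\ep^{-(n-1)/(n-2)}v_{\ell\ep}\to C_1W_\ell$ with $W_\ell:=\mg(\cdot)\mm_1^{-1}\map\mathbf{d}_\ell+\widetilde{\mg}(\cdot)\mathbf{d}_\ell$, both in $C^1(\Omega\setminus\{x_1,\dots,x_m\})$. The dilated boundary integral then converges to $C_1\,B_i[P_k,W_\ell]$ where $B_i[f,g]:=\int_{\pa B^n(x_i,r)}(\pa_\nu f\,g-f\,\pa_\nu g)\,dS$; since $P_k$ and $W_\ell$ are harmonic in a punctured neighbourhood of $x_i$, $B_i[P_k,W_\ell]$ is $r$-independent and equals its $r\to0$ limit, which I would compute by the residue bookkeeping of Lemmas \ref{lem-ijl} and \ref{lem-est-k_1i}. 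Near $x_i$ one decomposes $P_k$ as a dipole $C_2\lambda_i^{(n-2)/2}\pa_{x_k}(\gamma_n|x-x_i|^{-(n-2)})$ plus a function $P_k^{\mathrm h}$ harmonic near $x_i$, and $W_\ell$ as a monopole $C_1 e_i\gamma_n|x-x_i|^{-(n-2)}$ with $e_i:=(\mm_1^{-1}\map\mathbf{d}_\ell)_i$, a dipole $C_1\lambda_i^{n-1}\sum_l d_{\ell,i,l}\pa_{y_l}(\gamma_n|x-x_i|^{-(n-2)})$, and a function $W_\ell^{\mathrm h}$ harmonic near $x_i$. In $B_i[\,\cdot\,,\cdot\,]$ the dipole--monopole and dipole--dipole pairings vanish by direct calculation, the monopole--harmonic pairing produces $P_k^{\mathrm h}(x_i)$, which vanishes thanks to $\nabla_x\Upsilon_m=0$ exactly as in \eqref{eq-est-k_1i-24}, and the two surviving terms are, up to $\gamma_n(n-2)|S^{n-1}|$ and powers of $\lambda_i$, proportional to $\pa_{x_k}W_\ell^{\mathrm h}(x_i)$ and to $\sum_l d_{\ell,i,l}\,\pa_{x_l}P_k^{\mathrm h}(x_i)$. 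Expanding these gradients in $G$, $H$, $\tau$ and their first and second derivatives --- using $\pa_{x_k}H(x,x)|_{x=x_i}=\tfrac12\pa_{x_k}\tau(x_i)$ and $\pa^2_{x_kx_q}\tau=2\pa^2_{x_kx_q}H|_{\mathrm{diag}}+2\pa^2_{x_ky_q}H|_{\mathrm{diag}}$, the defining relation $e_a=(\mm_1^{-1}\map\mathbf{d}_\ell)_a$, and the boundary identities of Lemma \ref{lem-Green} --- the right-hand side reassembles into the $((i-1)n+k)$-th component of $\ma_2\mathbf{d}_\ell=(\map^T\mm_1^{-1}\map+\mq)\mathbf{d}_\ell$.

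Collecting the two evaluations, dividing by $\ep^{(3n-4)/(2(n-2))}$ and letting $\ep\to0$ yields, for all $i$ and $k$,
\[
\ma_2\mathbf{d}_\ell=\frac1{c_0}\lim_{\ep\to0}\frac{1-\mu_{\ell\ep}}{\ep^{n/(n-2)}}\,\mathbf{d}_\ell,\qquad c_0=\frac{C_1C_2}{pC_5}>0,
\]
in particular the limit exists. By Lemma \ref{lem-weak-conv}(1) and Proposition \ref{prop-weak-conv}, $\mathbf{d}_\ell\ne0$, hence it is an eigenvector of the symmetric matrix $\ma_2$ and \eqref{eq-eigen-cha-0} follows once the eigenvalue is identified. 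The orthogonality $\mathbf{d}_{\ell_1}^T\cdot\mathbf{d}_{\ell_2}=0$ for $\ell_1\ne\ell_2$ comes, as in \eqref{eq-eigen-zero-4}, from $\int_\Omega\nabla v_{\ell_1\ep}\cdot\nabla v_{\ell_2\ep}=0$: rescaling near each $x_i$ and using Proposition \ref{prop-weak-conv} and the decay bound of Lemma \ref{lem-w-bound} (for dominated convergence), $\ep^{-1}\int_\Omega u_\ep^{p-1-\ep}v_{\ell_1\ep}v_{\ell_2\ep}\to C_5\sum_{i,k}(\lambda_i^{(n-2)/2}d_{\ell_1,i,k})(\lambda_i^{(n-2)/2}d_{\ell_2,i,k})=C_5\,\mathbf{d}_{\ell_1}^T\cdot\mathbf{d}_{\ell_2}$. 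Hence the $mn$ vectors $\{\mathbf{d}_\ell:m+1\le\ell\le(n+1)m\}$ are mutually orthogonal, so they form a basis of $\mr^{mn}$ of eigenvectors of $\ma_2$; since $\ell\mapsto\mu_{\ell\ep}$ is nondecreasing, a counting argument matches the eigenvalue attached to $\mathbf{d}_\ell$ with the $(\ell-m)$-th eigenvalue $\rho^2_\ell$ of $\ma_2$, which finishes the proof.

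I expect the real work to be the boundary-integral computation of the second paragraph, namely forcing the residue bookkeeping to reproduce \emph{exactly} the entries \eqref{eq-mtx-Q} of $\mq$. The off-diagonal block $(\lambda_i\lambda_j)^{n/2}\pa^2_{x_ky_q}G(x_i,x_j)$, $i\ne j$, falls out of the $j\ne i$ terms of $\pa_{x_k}W_\ell^{\mathrm h}(x_i)$, but the diagonal block is subtle: the term $-\tfrac12\lambda_i^n\pa^2_{x_kx_q}\tau(x_i)$ appears only after combining the \emph{mixed} second derivative of $H$ coming from $\pa_{x_k}W_\ell^{\mathrm h}(x_i)$ with the \emph{pure} second derivative of $H$ coming from $\pa_{x_l}P_k^{\mathrm h}(x_i)$ through the Robin--Hessian identity, while the term $\lambda_i^{(n+2)/2}\sum_{l\ne i}\lambda_l^{(n-2)/2}\pa^2_{x_kx_q}G(x_i,x_l)$ must come from $\nabla P_k^{\mathrm h}(x_i)$; one also has to push the $e_a$-terms through the parallel but easier repackaging into $\map^T\mm_1^{-1}\map$ and to keep careful track of the powers of $\lambda_i$ so that the final identity is a bona fide eigenvalue equation for the symmetrically weighted pair $(\ma_2,\mathbf{d}_\ell)$.
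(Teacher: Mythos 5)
Your proposal is correct and follows essentially the same route as the paper: you test $\pa_{x_k}u_\ep$ and $v_{\ell\ep}$ against each other via Green's identity exactly as in \eqref{eq-apriori-c}, evaluate the right-hand side by the rescaling of \eqref{eq-apriori-d} with Proposition \ref{prop-weak-conv}, evaluate the left-hand side by plugging the sharp expansions of Lemma \ref{lem-u-asym} and Proposition \ref{prop-opt-conv} into the boundary integral, and reduce the residue bookkeeping to the boundary-integral identities that the paper isolates in Lemma \ref{lem-jjl} (your monopole/dipole/harmonic decomposition is precisely the mechanism behind that lemma, including the use of $\nabla_x\Upsilon_m=0$ to kill the leftover $P_k^{\mathrm h}(x_i)$ term). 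The orthogonality argument via $\int_\Omega\nabla v_{\ell_1\ep}\cdot\nabla v_{\ell_2\ep}=0$ rescaled near each $x_i$ is also exactly how the paper concludes.
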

\noindent The next lemma contains a key computation for the proof of Proposition \ref{prop-eigen-cha}.
\begin{lem}\label{lem-jjl}
Define
\begin{align}
\mj_{jl;ik}^r = \mj_{jl}^r &= \int_{\pa B^n(x_i,r)} \left[{\pa \over \pa \nu_x}\({\pa G \over \pa x_k}(x,x_j)\)G(x,x_l) - {\pa G \over \pa x_k}(x,x_j) {\pa G \over \pa \nu_x}(x,x_l)\right] \label{eq-r-2}
\intertext{and}
\mk_{jl;ikq}^r = \mk_{jl}^r &= \int_{\pa B^n(x_i,r)} \left[{\pa \over \pa \nu_x}\({\pa G \over \pa x_k}(x,x_j)\) {\pa G \over \pa y_q}(x,x_l)
- {\pa G \over \pa x_k}(x,x_j) {\pa \over \pa \nu_x}\({\pa G \over \pa y_q}(x,x_l)\)\right] \label{eq-r-3}
\end{align}
for each $i,\ j,\ l \in \{1, \cdots, m\}$ and $k,\ q \in \{1, \cdots, n\}$,
where the outward unit normal derivative ${\pa \over \pa \nu_x}$ acts over the $x$-variable of Green's function $G = G(x,y)$.
Then they are the value independent of $r > 0$ and calculated as
\[\mj_{jl}^r = \begin{cases}
0 &\text{if } j \ne i \text{ and } l \ne i, \\
\dfrac{\pa G}{\pa x_k}(x_i, x_l) &\text{if } j = i \text{ and } l \ne i,\\
\dfrac{\pa G}{\pa x_k}(x_i, x_j) &\text{if } j \ne i \text{ and } l = i,\\
-\dfrac{\pa \tau}{\pa x_k}(x_i) &\text{if } j = l = i,
\end{cases} \quad \text{and} \quad
\mk_{jl}^r = \begin{cases}
0 &\text{if } j \ne i \text{ and } l \ne i, \\
\dfrac{\pa^2 G}{\pa x_k \pa y_q}(x_i,x_l) &\text{if } j = i \text{ and } l \ne i,\\
\dfrac{\pa^2 G}{\pa x_k \pa x_q}(x_i,x_j) &\text{if } j \ne i \text{ and } l = i,\\
-\dfrac{1}{2} \dfrac{\pa^2 \tau}{\pa x_k \pa x_q}(x_i) &\text{if } j = l = i.
\end{cases}\]
\end{lem}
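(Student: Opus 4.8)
The plan is to imitate closely the proof of Lemma~\ref{lem-ijl}; the only genuinely new ingredient is that one of the two functions in the bilinear boundary integrals \eqref{eq-r-2}--\eqref{eq-r-3} is now a first-order derivative of $G$, so a \emph{dipole}-type singularity appears alongside the usual monopole one. First I would fix $i$ and a small $r>0$ with $\overline{B^n(x_i,r)}\subset\Omega$ and $x_j,x_l\notin\overline{B^n(x_i,r)}$ whenever $j\ne i$ or $l\ne i$, and record that each of $\frac{\pa G}{\pa x_k}(\cdot,x_j)$, $G(\cdot,x_l)$ and $\frac{\pa G}{\pa y_q}(\cdot,x_l)$ is harmonic in the $x$-variable away from $x_j$ and $x_l$ (the last two because $\Delta_x$ commutes with $\pa_{x_k}$ and $\pa_{y_q}$). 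Applying Green's identity \eqref{eq-green-id} on an annulus $B^n(x_i,r_1)\setminus B^n(x_i,r_2)$ shows, exactly as for $\mathcal{I}^r_{jl}$, that $\mj^r_{jl}$ and $\mk^r_{jl}$ are independent of $r$; and if $j\ne i$ and $l\ne i$, both functions are harmonic throughout $B^n(x_i,r)$, so \eqref{eq-green-id} on the full ball gives $\mj^r_{jl}=\mk^r_{jl}=0$ immediately.

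For the mixed cases I would isolate the singular part of whichever function is singular at $x_i$. Writing $\Phi_i(x)=\gamma_n|x-x_i|^{2-n}$ and $\Psi_{i,k}(x)=\pa_{x_k}\Phi_i(x)=-(n-2)\gamma_n\frac{(x-x_i)_k}{|x-x_i|^n}$, formula \eqref{eq-H} gives $G(\cdot,x_i)=\Phi_i-H(\cdot,x_i)$, $\frac{\pa G}{\pa x_k}(\cdot,x_i)=\Psi_{i,k}-\pa_{x_k}H(\cdot,x_i)$ and $\frac{\pa G}{\pa y_q}(\cdot,x_i)=-\Psi_{i,q}-\pa_{y_q}H(\cdot,x_i)$, where $H$ and all of its derivatives are harmonic in $x$ near $x_i$ and therefore, by \eqref{eq-green-id} on $B^n(x_i,r)$, contribute nothing once paired against the other (near-$x_i$ harmonic) function. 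Everything then reduces to two elementary facts valid for any $w$ harmonic near $x_i$: the interior representation formula, $\int_{\pa B^n(x_i,r)}\bigl(\frac{\pa\Phi_i}{\pa\nu}w-\Phi_i\frac{\pa w}{\pa\nu}\bigr)\,dS=-w(x_i)$, and its first-derivative analogue $\int_{\pa B^n(x_i,r)}\bigl(\frac{\pa\Psi_{i,k}}{\pa\nu}w-\Psi_{i,k}\frac{\pa w}{\pa\nu}\bigr)\,dS=\frac{\pa w}{\pa x_k}(x_i)$, obtained by sending $r\to0$, using the relation $\frac{\pa\Psi_{i,k}}{\pa\nu}=-\frac{n-1}{r}\Psi_{i,k}$ on $\pa B^n(x_i,r)$ together with $\int_{\pa B^n(0,r)}x_kx_m\,dS=\frac{\delta_{km}}{n}|S^{n-1}|r^{n+1}$ and $\gamma_n|S^{n-1}|=(n-2)^{-1}$. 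Feeding $w=G(\cdot,x_l)$, $w=\frac{\pa G}{\pa y_q}(\cdot,x_l)$ or $w=\frac{\pa G}{\pa x_k}(\cdot,x_j)$ into these, and tracking the signs coming from whether the singular function sits in the first or the second slot of the bilinear integral (which is antisymmetric in its two arguments) and from the $\pm$ in the decompositions above, produces exactly the four off-diagonal values listed.

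For the diagonal case $j=l=i$ I would argue verbatim as in part~(4) of the proof of Lemma~\ref{lem-ijl}: since both functions are now harmonic in $x$ on all of $\Omega\setminus\{x_i\}$, \eqref{eq-green-id} on $\Omega\setminus\overline{B^n(x_i,r)}$ moves the integral out to $\pa\Omega$, where $G(\cdot,x_i)\equiv0$ and hence also $\pa_{y_q}G(\cdot,x_i)\equiv0$, and where $\nabla_xG(x,x_i)=\frac{\pa G}{\pa\nu}(x,x_i)\,\nu(x)$; only one term of each integrand survives and, after commuting $\pa_{\nu_x}$ with $\pa_{y_q}$,
\[
\mj^r_{ii}=-\int_{\pa\Omega}\Bigl(\tfrac{\pa G}{\pa\nu}(x,x_i)\Bigr)^2\nu_k(x)\,dS,\qquad
\mk^r_{ii}=-\int_{\pa\Omega}\tfrac{\pa G}{\pa x_k}(x,x_i)\,\tfrac{\pa}{\pa y_q}\Bigl(\tfrac{\pa G}{\pa\nu}(x,x_i)\Bigr)\,dS.
\]
The second and third formulas of Lemma~\ref{lem-Green} then identify these with $-\frac{\pa\tau}{\pa x_k}(x_i)$ and $-\frac12\frac{\pa^2\tau}{\pa x_k\pa x_q}(x_i)$, which closes all the cases.

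I expect the only real obstacle to be the dipole identity in the second step: one must pin down the constant in $\int\bigl(\frac{\pa\Psi_{i,k}}{\pa\nu}w-\Psi_{i,k}\frac{\pa w}{\pa\nu}\bigr)\,dS=\frac{\pa w}{\pa x_k}(x_i)$ with no spurious factor, which is where the computation is most delicate. Beyond that the difficulty is purely clerical — keeping rigorous track of whether each partial derivative acts on the $x$-slot or the $y$-slot of $G$ (and hence whether, in the diagonal case, it lands on $\tau$ via $\nabla_x$, or, in the mixed cases, picks up a sign when transferred onto $\gamma_n|x-y|^{2-n}$) — since a sign slip here would propagate into the off-diagonal entries of $\map$ and $\mq$ that ultimately rely on this lemma.
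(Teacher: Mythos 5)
Your proposal is correct and follows essentially the same line as the paper: Green's identity on an annulus for $r$-independence, Green's identity on the full ball for the case $j,l\ne i$, Green's identity on $\Omega\setminus\overline{B^n(x_i,r)}$ plus Lemma~\ref{lem-Green} for the diagonal case $j=l=i$, and an isolation of the singular behaviour near $x_i$ for the mixed cases. The only difference is organizational: the paper handles the mixed cases by writing out the explicit boundary formula for $\frac{\pa}{\pa\nu_x}\bigl(\frac{\pa G}{\pa x_k}(x,x_i)\bigr)$ on $\pa B^n(x_i,r)$, Taylor-expanding the companion function, and sending $r\to0$ directly, whereas you first package the singular pairings into the two reusable ``monopole'' and ``dipole'' representation identities for $\Phi_i$ and $\Psi_{i,k}$ against an arbitrary harmonic $w$ and then read off all four mixed entries — a slightly cleaner bookkeeping of the same computation, and your sign tracking (including $\pa_{y_q}\Phi = -\Psi_{i,q}$) comes out right.
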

\begin{proof}
As explained in the proof of Lemma \ref{lem-ijl}, the integral $\mj_{jl}^r$ in \eqref{eq-r-2} is independent of $r > 0$, so one may take $r \to 0$ to find its value.
We compute each $\mj_{jl}^r$ by considering four mutually exclusive cases categorized according to the relation of indices $j$, $l$ and $i$.

\medskip \noindent (1) If $j$, $l \ne i$, then $\mj_{jl}^r$ vanishes.

\medskip \noindent (2) Suppose that $j = i$ and $l \ne i$.
Since
\[{\pa \over \pa \nu_x}\({\pa G \over \pa x_k}(x,x_i)\) = (n-2)(n-1)\gamma_n {(x-x_i)_k \over r^{n+1}} - {(x-x_i) \over r} \cdot \nabla_x \({\pa H(x,x_i) \over \pa x_k}\)\]
on $\pa B^n(x_i,r)$ and
\[G(x,x_l) = G(x_i,x_l) + (x-x_i) \cdot \nabla_x G(x_i,x_l) + O\(|x-x_i|^2\)\]
near the point $x_i$, we discover
\[\mj_{il}^r = \int_{\pa B^n(x_i,r)} \left[{\pa \over \pa \nu_x}\({\pa G \over \pa x_k}(x,x_i)\)G(x,x_l) - {\pa G \over \pa x_k}(x,x_i) {\pa G \over \pa \nu_x}(x,x_l) \right] = {\pa G \over \pa x_k}(x_i,x_l).\]

\medskip \noindent (3) In the case that $j \ne i$ and $l = i$, a similar argument in (2) applies, yielding
\[\mj_{ji}^r = {\pa G \over \pa x_k}(x_i,x_j).\]

\medskip \noindent (4) Assume that $j = l = i$. Then Green's identity \eqref{eq-green-id} and Lemma \ref{lem-Green} show that
\begin{multline*}
\mj_{ii}^r = \int_{\pa B^n(x_i,r)} \left[{\pa \over \pa \nu_x}\({\pa G \over \pa x_k}(x,x_i)\)G(x,x_i) - {\pa G \over \pa x_k}(x,x_i) {\pa G \over \pa \nu_x}(x,x_i)\right] dS \\
= - \int_{\pa \Omega} {\pa G \over \pa x_k}(x,x_i) {\pa G \over \pa \nu_x} (x,x_i) dS = -\int_{\pa \Omega} \({\pa G \over \pa \nu_x}(x,x_i)\)^2 \nu_k(x) dS = -{\pa \tau \over \pa x_k}(x_i).
\end{multline*}
We can deal with \eqref{eq-r-3} in a similar manner, which we left to the reader.
\end{proof}
\begin{proof}[Proof of Proposition \ref{prop-eigen-cha}]
We reconsider \eqref{eq-apriori-c}, but in this time we allow to put any $i \in \{1, \cdots, m\}$ and $x_k$ $(k \in \{1, \cdots, n\})$ in the place of $i_0$ and $x_1$, respectively.
By multiplying $\ep^{-{1 \over 2}-{n-1 \over n-2}}$ on both sides, we obtain
\begin{multline}\label{eq-eigen-cha-1}
\int_{\pa B^n(x_{i \ep},r)} \left[ \frac{\pa}{\pa \nu} \left\{\frac{\pa \(\ep^{-{1 \over 2}}u_{\ep}\)}{\pa x_k}\right\} \cdot \(\ep^{-{n-1 \over n-2}} v_{\ell\ep}\)
- \frac{\pa \(\ep^{-{1 \over 2}}u_{\ep}\)}{\pa x_k} \cdot \frac{\pa \(\ep^{-{n-1 \over n-2}} v_{\ell\ep}\)}{\pa \nu}\right] dS \\
= (p-\ep) \({\mu_{\ell \ep} - 1 \over \ep^{n \over n-2}}\) \cdot \left[ \ep^{-{(n-4) \over 2(n-2)}} \int_{B^n(x_{i \ep},r)} u_{\ep}^{p-1-\ep} \frac{\pa u_{\ep}}{\pa x_k} v_{\ell\ep} \right].
\end{multline}
The right-hand side of \eqref{eq-eigen-cha-1} can be computed as in \eqref{eq-apriori-d}, which turns out to be
\[\({\mu_{\ell \ep} - 1 \over \ep^{n \over n-2}}\) \left[-\lambda_i^{n-4 \over 2} d_{\ell, i, k} p C_5 + o(1)\right].\]
Meanwhile, if we let $\bl \in \mr^m$ be a nonzero column vector
\[\bl = \(\lambda_{10}^{n-2 \over 2}, \cdots, \lambda_{m0}^{n-2 \over 2}\)^T,\]
then \eqref{eq-u-ep-asym} in Lemma \ref{lem-u-asym} can be written in a vectorial form as $\ep^{-1/2}u_{\ep}(x) \to C_2 \mg(x) \bl$ (see \eqref{eq-mtx-G}).
Hence, with the aid of Proposition \ref{prop-opt-conv} and Lemma \ref{lem-jjl}, it is possible to take $\ep \to 0$ in the left-hand side of \eqref{eq-eigen-cha-1} to derive
\begin{align*}
&\ C_1C_2 \bl^T \left[\int_{\pa B^n(x_i,r)} \left\{\({\pa \over \pa \nu}{\pa \mg \over \pa x_k}(x)\)^T \mg(x) - \({\pa \mg \over \pa x_k}(x)\)^T\({\pa \mg \over \pa \nu}(x)\) \right\} dx \cdot \mm_1^{-1}\map \right. \\
&\hspace{130pt} \left. + \int_{\pa B^n(x_i,r)} \left\{\({\pa \over \pa \nu}{\pa \mg \over \pa x_k}(x)\)^T \widetilde{\mg}(x) - \({\pa \mg \over \pa x_k}(x)\)^T\({\pa \widetilde{\mg} \over \pa \nu}(x)\) \right\} dx \right] \mathbf{d}_{\ell} \\
&= C_1C_2 \bl^T \left[\mj_{ik}\mm_1^{-1}\map + \overline{\mk}_{ik} \right] \mathbf{d}_{\ell}
\end{align*}
where $\mj_{ik}$ is an $m \times m$ matrix having $\mj_{jl;ik}^r$ defined in \eqref{eq-r-2} as its components, namely, $\mj_{ik} = \(\mj_{jl;ik}^r\)_{1 \le j,l \le m}$ for each fixed $i,\ k \in \{1, \cdots, m\}$,
and $\overline{\mk}_{ik} = \(\overline{\mk}_{jb;ik}\)_{1 \le j \le m, 1 \le b \le mn}$ is an $m \times mn$ matrix whose components are
\[\overline{\mk}_{j, (l-1)n+q;ik} = \lambda_l^{n \over 2} \mk_{jl;ikq}^r = \begin{cases}
0 &\text{if } j \ne i \text{ and } l \ne i,\\
\lambda_l^{n \over 2} \dfrac{\pa^2 G}{\pa x_k \pa y_q} (x_i,x_l) &\text{if } j = i \text{ and } l \ne i,\\
\lambda_i^{n \over 2} \dfrac{\pa^2 G}{\pa x_k \pa x_q} (x_i,x_j) &\text{if } j \ne i \text{ and } l = i,\\
-\lambda_i^{n \over 2} \dfrac{1}{2} \dfrac{\pa^2 \tau}{\pa x_k \pa x_q} (x_i) &\text{if } j = l = i,
\end{cases}\]
for $j,\ l,\ i \in \{1, \cdots, m\}$ and $q,\ k \in \{1, \cdots, n\}$.
From direct computations especially using that
\[\lambda_i\(\boldsymbol\lambda^T \mj_{ik}\)_j = \begin{cases}
\lambda_i^{n \over 2} \dfrac{\pa G}{\pa x_k}(x_i, x_j) &\text{if } i \ne j,\\
\lambda_i \sum\limits_{l \ne i} \lambda_l^{n-2 \over 2} \dfrac{\pa G}{\pa x_k}(x_i,x_j) - \lambda_i^{n \over 2} \dfrac{\pa \tau}{\pa x_k}(x_i) = - \lambda_i^{n \over 2} \dfrac{1}{2} \dfrac{\pa \tau}{\pa x_k}(x_i) &\text{if } i = j,
\end{cases}\]
for $\boldsymbol\lambda^T \mj_{ik} = \(\(\boldsymbol\lambda^T \mj_{ik}\)_1, \cdots, \(\boldsymbol\lambda^T \mj_{ik}\)_m\) \in \mr^m$, we conclude
\[\ma_2 \mathbf{d}_{\ell} = \left[ \map^T\mm_1^{-1}\map + \mq \right] \mathbf{d}_{\ell} = \(-{pC_5 \over C_1C_2}\) \lim_{\ep \to 0} \({\mu_{\ell \ep} - 1 \over \ep^{n \over n-2}}\) \mathbf{d}_{\ell} = \rho^2_{\ell}\mathbf{d}_{\ell}\]
with matrices $\mm_1$, $\map$ and $\mq$ given in \eqref{eq-mtx-M_1}, \eqref{eq-mtx-P} and \eqref{eq-mtx-Q}.
The claim that $\mathbf{d}_{\ell_1}^T \cdot \mathbf{d}_{\ell_2}^T = 0$ can be proved as in the proof of Theorem \ref{thm-eigen-zero}, or particularly, \eqref{eq-eigen-zero-4}.
The proof is done.
\end{proof}

\section{Estimates for the $\ell$-th eigenvalues and eigenfunctions, $(n+1)(m+1) \le \ell \le (n+2)m$}\label{sec-7}
We now establish Theorem \ref{thm-eigen-second} by obtaining a series of lemmas.
In the first lemma we will compute the limit of the $\ell$-th eigenvalues as $\ep \to 0$ when $(n+1)(m+1) \le \ell \le (n+2)m$.
\begin{lem}\label{lem-apriori}
If $(n+1)(m+1) \le \ell \le (n+2)m$, we have
\[\lim_{\ep \to 0} \mu_{\ell \ep} = 1.\]
\end{lem}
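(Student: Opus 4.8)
The plan is to obtain an upper bound $\mu_{((n+2)m)\ep}\le 1+o(1)$ from a min-max argument, and then to combine it with the convergence $\mu_{((n+1)m)\ep}\to 1$ already established in Corollary \ref{cor-apriori} together with the monotonicity of $\ell\mapsto\mu_{\ell\ep}$ in order to squeeze all the intermediate eigenvalues (here and below I read the range in the statement as $(n+1)m+1\le\ell\le(n+2)m$).

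For the upper bound I would enlarge the test space used in the proof of Proposition \ref{prop-apriori}: let $\mathcal{V}$ be the span of $\{u_{\ep,i}:1\le i\le m\}\cup\{\psi_{\ep,i,k}:1\le i\le m,\ 1\le k\le n+1\}$ (see \eqref{eq-u-psi}). For all small $\ep>0$ this is an $(n+2)m$-dimensional subspace of $H^1_0(\Omega)$ (functions over distinct $x_i$ have disjoint supports, and the $n+2$ functions sitting over each $x_i$ are linearly independent for small $\ep$, since after rescaling they converge to nonzero multiples of the linearly independent functions $U_{1,0},\ \pa_1U_{1,0},\dots,\pa_nU_{1,0}$ and $x\cdot\nabla U_{1,0}+\tfrac{n-2}{2}U_{1,0}$). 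By the Courant--Fischer--Weyl min-max principle, $\mu_{((n+2)m)\ep}$ is bounded above by the maximum of the Rayleigh quotient over $\mathcal{V}\setminus\{0\}$. Since functions supported near distinct $x_i$ are $L^2$-orthogonal, writing $f=\sum_i f_i$ with $f_i=a_{i0}u_{\ep,i}+\sum_{k=1}^{n+1}a_{ik}\psi_{\ep,i,k}$ reduces the estimate to showing that each local quotient $\mathfrak{a}_i=\int_\Omega|\nabla f_i|^2/((p-\ep)\int_\Omega f_i^2 u_\ep^{p-1-\ep})$ is $\le 1+o(1)$, uniformly over coefficients normalized by $\sum_k a_{ik}^2=1$. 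This is carried out along the lines of Proposition \ref{prop-apriori} (equivalently \cite[Proposition 3.2]{GP}): testing $-\Delta u_\ep=u_\ep^{p-\ep}$, $-\Delta(\pa_ku_\ep)=(p-\ep)u_\ep^{p-1-\ep}\pa_ku_\ep$ and $-\Delta w_{i\ep}=(p-\ep)u_\ep^{p-1-\ep}w_{i\ep}$ (with $w_{i\ep}$ as in \eqref{eq-w_e}) against $\phi_i^2$ times the corresponding functions and invoking the decay bounds of Lemmas \ref{lem-u-bound} and \ref{lem-u-d-bound}, one writes $\mathfrak{a}_i=1+\mathfrak{b}_i/\mathfrak{c}_i$ and checks that $\mathfrak{b}_i$ is an $\ep$-small error term while $\mathfrak{c}_i$ stays bounded below; hence $\mathfrak{a}_i\le 1+o(1)$ and therefore $\mu_{((n+2)m)\ep}\le 1+o(1)$.

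The point needing the most care — and the genuinely new feature compared with Proposition \ref{prop-apriori} — is that the dilation mode $\psi_{\ep,i,n+1}=\phi_i w_{i\ep}$ rescales at the \emph{same} rate as $u_{\ep,i}$ (unlike the translation modes $\psi_{\ep,i,k}$, $1\le k\le n$, which become of lower order), so it cannot be discarded in the limit. The reason the bound survives is that the limiting profile of the rescaled $f_i$ belongs to $\mathrm{span}\{U_{1,0},\pa_1U_{1,0},\dots,\pa_nU_{1,0},\pa_\lambda U_{1,0}\}$, and on this subspace the quadratic form $Q(w)=\int_{\mr^n}|\nabla w|^2-p\int_{\mr^n}U_{1,0}^{p-1}w^2$ is negative semidefinite: indeed $Q(U_{1,0})=(1-p)\int_{\mr^n}U_{1,0}^{p+1}<0$, $Q$ vanishes on the $(n+1)$-dimensional kernel described in Lemma \ref{lem-nondeg}, and the cross terms vanish because $\int_{\mr^n}U_{1,0}^{p}\pa_kU_{1,0}=0$ and $\int_{\mr^n}U_{1,0}^{p}\pa_\lambda U_{1,0}=\tfrac{1}{p+1}\frac{d}{d\lambda}\big|_{\lambda=1}\int_{\mr^n}U_{\lambda,0}^{p+1}=0$; hence $Q(a_0U_{1,0}+w)=a_0^2Q(U_{1,0})\le 0$ for every kernel element $w$. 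This is exactly what forces each limiting local Rayleigh quotient to be $\le 1$, while the boundary-layer terms produced by $w_{i\ep}$ and its interactions with $u_\ep$ and $\pa_ku_\ep$ are of size $o(1)$ by Lemmas \ref{lem-u-bound} and \ref{lem-u-d-bound}, as in Proposition \ref{prop-apriori}.

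Finally, Corollary \ref{cor-apriori} gives $\mu_{((n+1)m)\ep}\to 1$, and since the eigenvalues are arranged in nondecreasing order, $\mu_{((n+1)m)\ep}\le\mu_{\ell\ep}\le\mu_{((n+2)m)\ep}$ for every $\ell$ with $(n+1)m+1\le\ell\le(n+2)m$. Together with the bound just proved this yields
\[1=\lim_{\ep\to0}\mu_{((n+1)m)\ep}\le\liminf_{\ep\to0}\mu_{\ell\ep}\le\limsup_{\ep\to0}\mu_{\ell\ep}\le\limsup_{\ep\to0}\mu_{((n+2)m)\ep}\le 1,\]
so $\mu_{\ell\ep}\to 1$, proving the lemma. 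Alternatively, $\liminf\mu_{\ell\ep}\ge1$ can be derived exactly as in Corollary \ref{cor-apriori}: a weak $H^1(\mr^n)$-limit $V\ne0$ of a suitable rescaled eigenfunction solves $-\Delta V=\mu_\ell pU_{1,0}^{p-1}V$, so $\mu_\ell\in\{p^{-1}\}\cup[1,\infty)$ by Lemma \ref{lem-nondeg}, and $\mu_\ell=p^{-1}$ is excluded because, as in the proof of Theorem \ref{thm-eigen-zero}, it would produce a nonzero vector of $\mr^m$ orthogonal to the basis $\{\mathbf{c}_1,\dots,\mathbf{c}_m\}$ via the relation \eqref{eq-eigen-zero-4}.
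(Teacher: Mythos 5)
Your proof is correct and follows essentially the same route as the paper's: you enlarge the test space to include the dilation modes $\psi_{\ep,i,n+1}$, bound $\mu_{((n+2)m)\ep}$ via the Courant--Fischer characterization using the splitting $\mathfrak{a}_i = 1 + \mathfrak{b}_i/\mathfrak{c}_i$, and squeeze the intermediate eigenvalues against Corollary \ref{cor-apriori} by monotonicity. Your added discussion of the negative semidefiniteness of $Q(w)=\int|\nabla w|^2 - p\int U_{1,0}^{p-1}w^2$ on $\mathrm{span}\{U_{1,0},\pa_1U_{1,0},\dots,\pa_nU_{1,0},\pa_\lambda U_{1,0}\}$ is a useful conceptual explanation of why the modified estimates $\mathfrak{b}\le C(|\bar{a}|^2+a_{n+1}^2)\ep$, $\mathfrak{c}\ge C(|\bar{a}|^2+a_{n+1}^2)$ work (the paper performs the same computation more directly), but it does not constitute a genuinely different argument.
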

\begin{proof}
By virtue of Corollary \ref{cor-apriori} or Corollary \ref{cor-apriori-2}, it is enough to show that $\limsup_{\ep \to 0} \mu_{\ell\ep} \le 1$.
Referring to \eqref{eq-u-psi}, we let $\mathcal{V}$ be a vector space whose basis is
\[\{u_{\ep,i}: 1 \le i \le m\} \cup \{\psi_{\ep, i, k}: 1 \le i \le m,\ 1 \le k \le n+1\}.\]
If we write $f \in \mathcal{V} \setminus \{0\}$ as
\[f = \sum_{i=1}^m f_i \quad \text{with} \quad f_i = a_{i0} u_{\ep, i} + \sum_{k=1}^{n+1} a_{ik} \psi_{\ep,i,k}\]
for some $(a_{10}, \cdots, a_{1(n+1)}, \cdots, a_{m0}, \cdots, a_{m(n+1)}) \in \mr^{m(n+1)} \setminus \{0\}$, then we have
\begin{align*}
\mu_{((n+2)m) \ep} &= \min_{\substack{\mathcal{W} \subset H_0^1(\Omega),\\ \text{dim}\mathcal{W} = (n+2)m}} \max_{f \in \mathcal{W} \setminus\{0\}}
\frac{\int_{\Omega}|\nabla f|^2}{(p-\ep)\int_{\Omega} f^2 u_{\ep}^{p-1-\ep}}
\le \max_{f \in \mathcal{V} \setminus\{0\}} \frac{\int_{\Omega}|\nabla f|^2}{(p-\ep)\int_{\Omega} f^2 u_{\ep}^{p-1-\ep}} \\
&\le \max_{f \in \mathcal{V} \setminus\{0\}} \max_{1 \le i \le m} \frac{\int_{\Omega} |\nabla f_i|^2}{(p-\ep)\int_{\Omega} f_i^2 u_{\ep}^{p-1-\ep}} := \max_{f \in \mathcal{V} \setminus\{0\}} \max_{1 \le i \le m} \mathfrak{a}_i,
\end{align*}
so it is sufficient to check that $\mathfrak{a}_i \le 1 + o(1)$.
If we denote $\mathfrak{a} = \mathfrak{a}_i$ for a fixed $i$ and modify the definition of $z_{\ep}$ in the proof of Proposition \ref{prop-apriori}
into $z_{\ep} = \sum_{k=1}^na_k{\pa u_{\ep} \over \pa x_k} + a_{n+1} w_{i\ep}$,
then we again have $\mathfrak{a} = 1 + \mathfrak{b}/\mathfrak{c}$.
(The definition of $\mathfrak{b}$, $\mathfrak{c}$ and $w_{i\ep}$ can be found in \eqref{eq-b}, \eqref{eq-c} and \eqref{eq-w_e}.)
Moreover computing each of the term of $\mathfrak{b}$ and $\mathfrak{c}$ as we did in the proof of Proposition \ref{prop-apriori}, we find
\[\mathfrak{b} \le C\(|\bar{a}|^2+a_{n+1}^2\)\ep \quad \text{and} \quad \mathfrak{c} \ge C \ep^{-{2 \over n-2}}|\bar{a}|^2 + Ca_{n+1}^2 \ge C\(|\bar{a}|^2 + a_{n+1}^2\),\]
from which one can conclude that $\mu_{((n+2)m)\ep} \le 1+O(\ep)$.
For more detailed computations, we ask for the reader to check the proof of Theorem 1.4 in \cite{GP}.
\end{proof}

The following lemma is the counterpart of Proposition \ref{prop-weak-conv} for $(n+1)(m+1) \le \ell \le (n+2)m$.
\begin{lem}\label{lem-weak-conv-2}
Let $(n+1)(m+1) \le \ell \le (n+2)m$.
For each $i \in \{1,\cdots,m\}$ and $d_{\ell,i,n+1} \in \mr$, converges to
\[\tilde{v}_{\ell i \ep} \rightharpoonup d_{\ell,i,n+1} \(\frac{\pa U_{1,0}}{\pa \lambda}\) \quad \text{weakly in } H^1(\mr^n).\]
\end{lem}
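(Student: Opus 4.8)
The plan is to imitate the proofs of Lemma~\ref{lem-weak-conv}~(1) and Proposition~\ref{prop-weak-conv} in the present range of indices, the new ingredient being the orthogonality of $v_{\ell\ep}$ against the eigenfunctions $v_{\ell'\ep}$ with $m+1\le\ell'\le(n+1)m$, which will be used to kill the translation modes so that only the dilation mode survives. First I would observe that $\tilde v_{\ell i\ep}$ (see \eqref{eq-v-exp}) solves $-\Delta\tilde v_{\ell i\ep}=\mu_{\ell\ep}(p-\ep)\tu_{i\ep}^{p-1-\ep}\tilde v_{\ell i\ep}$ in $\Omega_{i\ep}$ and, by the normalization $\|v_{\ell\ep}\|_{L^\infty(\Omega)}=1$ together with the decay bound \eqref{eq-w-bound-3}, is bounded in $H^1(\mr^n)$; hence a subsequence converges weakly in $H^1(\mr^n)$ to some $V_i$. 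Since $\tu_{i\ep}\to U_{1,0}$ by Lemma~\ref{lem-u-dil} and $\mu_{\ell\ep}\to1$ by Lemma~\ref{lem-apriori}, passing to the limit in the equation yields $-\Delta V_i=pU_{1,0}^{p-1}V_i$ in $\mr^n$, so Lemma~\ref{lem-nondeg} gives
\[
\tilde v_{\ell i\ep}\rightharpoonup\sum_{k=1}^n d_{\ell,i,k}\,\frac{\pa U_{1,0}}{\pa(x_0)_k}+d_{\ell,i,n+1}\,\frac{\pa U_{1,0}}{\pa\lambda}\quad\text{weakly in }H^1(\mr^n)
\]
for some $(d_{\ell,i,1},\dots,d_{\ell,i,n+1})\in\mr^{n+1}$, which by Lemma~\ref{lem-weak-nonzero} is nonzero for at least one $i$. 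Thus the lemma reduces to proving that $d_{\ell,i,k}=0$ for every $i\in\{1,\dots,m\}$ and $k\in\{1,\dots,n\}$.

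To this end I would fix $\ell'$ with $m+1\le\ell'\le(n+1)m$. As distinct eigenfunctions of \eqref{eq-lin}, $v_{\ell\ep}$ and $v_{\ell'\ep}$ are orthogonal in $L^2(u_\ep^{p-1-\ep}\,dx)$, so $\int_\Omega u_\ep^{p-1-\ep}v_{\ell\ep}v_{\ell'\ep}=0$. Multiplying by $\ep^{-1}$, splitting the integral into $\sum_i\int_{B^n(x_{i\ep},r)}$ plus a remainder over $A_r$ that is $o(1)$ by Lemma~\ref{lem-u-bound} and the bound $v_{\ell\ep}=O(\ep)$ on $A_r$ in Lemma~\ref{lem-w-bound}, and rescaling each ball integral exactly as in \eqref{eq-eigen-zero-4}, I would pass to the limit using Proposition~\ref{prop-LZ} and Lemma~\ref{lem-w-bound} for the domination. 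Recalling from Proposition~\ref{prop-weak-conv} that $\tilde v_{\ell' i\ep}\rightharpoonup\sum_k d_{\ell',i,k}\,\pa U_{1,0}/\pa(x_0)_k$ (with no dilation component), and using the parity identities
\[
\int_{\mr^n}U_{1,0}^{p-1}\frac{\pa U_{1,0}}{\pa(x_0)_k}\frac{\pa U_{1,0}}{\pa\lambda}=0,\qquad\int_{\mr^n}U_{1,0}^{p-1}\frac{\pa U_{1,0}}{\pa(x_0)_k}\frac{\pa U_{1,0}}{\pa(x_0)_l}=C_5\,\delta_{kl}
\]
(with $C_5>0$ as in Corollary~\ref{cor-apriori-2}), this yields $0=C_5\sum_{i=1}^m\sum_{k=1}^n\lambda_i^{n-2}d_{\ell,i,k}d_{\ell',i,k}$; equivalently, the vector $\mathbf d_\ell\in\mr^{mn}$ assembled from the translation coefficients as in \eqref{eq-eigenv-first-2} is orthogonal to $\mathbf d_{\ell'}$.

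Finally, by Theorem~\ref{thm-eigen-first-2} the $nm$ vectors $\{\mathbf d_{\ell'}:m+1\le\ell'\le(n+1)m\}$ are nonzero and pairwise orthogonal, hence an orthogonal basis of $\mr^{mn}$; since $\mathbf d_\ell$ is orthogonal to all of them, $\mathbf d_\ell=0$, i.e.\ $d_{\ell,i,k}=0$ for all $i$ and all $1\le k\le n$. This gives $\tilde v_{\ell i\ep}\rightharpoonup d_{\ell,i,n+1}\,\pa U_{1,0}/\pa\lambda$ weakly in $H^1(\mr^n)$, as claimed. I expect the main obstacle to be the limit passage in the second paragraph: weak $H^1$-convergence on the expanding rescaled domains does not by itself control the weighted integrals, and one has to combine it with the uniform pointwise bounds $\tu_{i\ep}\le CU_{1,0}$ (Proposition~\ref{prop-LZ}) and $|\tilde v_{\ell i\ep}(x)|\le C(1+|x|^{n-2-\zeta})^{-1}$ (Lemma~\ref{lem-w-bound}) and invoke dominated convergence, precisely as in the justification of \eqref{eq-eigen-zero-4}; everything else is a routine transcription of the arguments already used for the ranges $\ell\le m$ and $m+1\le\ell\le(n+1)m$.
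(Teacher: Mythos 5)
Your proposal is correct and follows essentially the same route as the paper's proof: establish the general form of the weak limit via Lemma~\ref{lem-weak-conv}~(1) (valid here by Lemma~\ref{lem-apriori}), then kill the translation vector $\mathbf{d}_\ell$ by the $L^2(u_\ep^{p-1-\ep}\,dx)$-orthogonality of $v_{\ell\ep}$ against $v_{\ell'\ep}$ for $m+1\le\ell'\le(n+1)m$, passing to the limit as in \eqref{eq-eigen-zero-4} and using that $\{\mathbf d_{m+1},\dots,\mathbf d_{(n+1)m}\}$ is a basis of $\mr^{mn}$ by Proposition~\ref{prop-eigen-cha}. The only difference is that you spell out the parity identities and the dominated-convergence justification that the paper leaves implicit by citing \eqref{eq-eigen-zero-4}.
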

\begin{proof}
Lemma \ref{lem-weak-conv} (1) holds in this case also by Lemma \ref{lem-apriori}.
Therefore it is enough to show that the vector $\mathbf{d}_{\ell}$ in \eqref{eq-eigenv-first-2} is zero.

As in \eqref{eq-eigen-zero-4}, the orthogonality of $v_{\ell \ep}$ and $v_{\ell_1 \ep}$ for $m+1 \le \ell_1 \le (n+1)m$ implies $\mathbf{d}_{\ell}^T \cdot \mathbf{d}_{\ell_1}^T = 0$.
However, we also know from Proposition \ref{prop-eigen-cha} that $\{\mathbf{d}_{m+1}, \cdots, \mathbf{d}_{(n+1)m}\}$ serves a basis for $\mr^{mn}$.
Hence $\mathbf{d}_{\ell} = 0$, concluding the proof.
\end{proof}

As a consequence, we reach at
\begin{prop}
Let $\ma_3$ be the matrix \eqref{eq-mtx-A_3}.
For $(n+1)(m+1) \le \ell \le (n+2)m$, if $\rho^3_{\ell}$ is the $(\ell-(m+1)n)$-th eigenvalue of $\ma_3$, then it is positive
and the $\ell$-th eigenvalue $\mu_{\ell \ep}$ to problem \eqref{eq-lin} is estimated as
\begin{equation}\label{eq-eigen-cha-a}
\mu_{\ell \ep} = 1 + c_1\rho_\ell^3\ep + o(\ep) \quad \text{where } c_1 = {(n-2)^2C_2C_3 \over 2(n+2)C_4}.
\end{equation}
Furthermore, the nonzero vector $\hat{\mathbf{d}}_{\ell}$ in \eqref{eq-hde} is a corresponding eigenvector to $\rho^3_{\ell}$
and $\hat{\mathbf{d}}_{\ell_1}^T \cdot \hat{\mathbf{d}}_{\ell_2}^T = 0$ if $(n+1)(m+1) \le \ell_1 \ne \ell_2 \le (n+2)m$.
\end{prop}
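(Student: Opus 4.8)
Here is a plan for proving the final proposition.

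The plan is to re-run the boundary–identity argument from the proof of Proposition~\ref{prop-weak-conv}, the only structural change being that the relevant limiting profile of the rescaled eigenfunctions $\tilde{v}_{\ell i\ep}$ is now $\pa U_{1,0}/\pa\lambda$ rather than a combination of the translations $\pa U_{1,0}/\pa x_k$. First I would record two preliminary facts. For every $\ell$ in the range the vector $\hat{\mathbf{d}}_{\ell}$ of \eqref{eq-hde} (whose entries I write $d_{\ell,i}=d_{\ell,i,n+1}$) is nonzero: by Lemma~\ref{lem-weak-conv-2} one has $\tilde{v}_{\ell i\ep}\rightharpoonup d_{\ell,i,n+1}(\pa U_{1,0}/\pa\lambda)$ for each $i$, while Lemma~\ref{lem-weak-conv}~(1) — still valid here thanks to Lemma~\ref{lem-apriori} — gives $(d_{\ell,i_1,1},\dots,d_{\ell,i_1,n+1})\ne 0$ for some $i_1$, and since the first $n$ entries of this vector vanish by Lemma~\ref{lem-weak-conv-2}, necessarily $d_{\ell,i_1,n+1}\ne 0$. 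Second, the argument proving Lemma~\ref{lem-weak-conv}~(2) applies verbatim in the present range (with Lemma~\ref{lem-apriori} and Lemma~\ref{lem-weak-conv-2} replacing Corollary~\ref{cor-apriori} and Proposition~\ref{prop-weak-conv}), so
\[
\ep^{-1}v_{\ell\ep}\to C_3\sum_{i=1}^m d_{\ell,i,n+1}\lambda_i^{n-2}G(\cdot,x_i)\quad\text{in }C^1\big(\Omega\setminus\{x_1,\dots,x_m\}\big).
\]

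Next I would fix $i\in\{1,\dots,m\}$, introduce $w_{i\ep}$ as in \eqref{eq-w_e}, and write the Green's identity \eqref{eq-w-v}. Feeding in the asymptotics \eqref{eq-w-asym} of $\ep^{-1/2}w_{i\ep}$, the limit just displayed for $\ep^{-1}v_{\ell\ep}$, the explicit values \eqref{eq-ijl} of the boundary integrals $\mathcal{I}_{jl}^r$ from Lemma~\ref{lem-ijl}, and the criticality $\nabla\Upsilon_m=0$ of the concentration data, the computation of the left-hand side of \eqref{eq-w-v} is exactly that of \eqref{eq-weak-conv-1}: $\ep^{-3/2}$ times the left side converges to $-C_2C_3\big(\tfrac{n-2}{2}\big)\sum_{j}\ma^3_{ij}\big(\lambda_j^{(n-2)/2}d_{\ell,j,n+1}\big)$. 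On the other side, $\ep^{-3/2}$ times the right side of \eqref{eq-w-v} equals $(p-\ep)\tfrac{\mu_{\ell\ep}-1}{\ep}$ times the integral in \eqref{eq-weak-conv-2}, which tends to $-\lambda_i^{(n-2)/2}d_{\ell,i,n+1}C_4$ with $C_4=\int_{\mr^n}U_{1,0}^{p-1}(\pa U_{1,0}/\pa\lambda)^2>0$. Choosing $i$ with $d_{\ell,i,n+1}\ne 0$, the last factor has a nonzero limit, so $\tfrac{\mu_{\ell\ep}-1}{\ep}$ converges; writing its limit as $c_1\rho^3_{\ell}$ with $c_1=\tfrac{(n-2)C_2C_3}{2pC_4}=\tfrac{(n-2)^2C_2C_3}{2(n+2)C_4}$ (using $p=(n+2)/(n-2)$), the identity becomes $\ma_3\hat{\mathbf{d}}_{\ell}=\rho^3_{\ell}\hat{\mathbf{d}}_{\ell}$, i.e. \eqref{eq-weak-conv-4}, and hence $\mu_{\ell\ep}=1+c_1\rho^3_{\ell}\ep+o(\ep)$, which is \eqref{eq-eigen-cha-a}.

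To finish I would prove positivity and identify $\rho^3_{\ell}$. Setting $D=\mathrm{diag}\big(\lambda_1^{(n-2)/2},\dots,\lambda_m^{(n-2)/2}\big)$, a direct inspection of \eqref{eq-mtx-A_3} and \eqref{eq-mtx-M_2} gives $\ma_3=D\,\mm_2\,D+C_0 I$ with $\mm_2$ the non-negative definite matrix of Lemma~\ref{lem-pos} and $C_0>0$; hence $\ma_3\ge C_0 I$ and every eigenvalue of $\ma_3$, in particular each $\rho^3_{\ell}$, is $\ge C_0>0$. Arguing as in \eqref{eq-eigen-zero-4} — using the $L^2$-orthogonality of $v_{\ell_1\ep}$ and $v_{\ell_2\ep}$, localizing near the $x_i$, rescaling, and passing to the limit by Proposition~\ref{prop-LZ} and Lemma~\ref{lem-w-bound} — one obtains $C_4\,\hat{\mathbf{d}}_{\ell_1}^{T}\cdot\hat{\mathbf{d}}_{\ell_2}^{T}=0$, hence $\hat{\mathbf{d}}_{\ell_1}^{T}\cdot\hat{\mathbf{d}}_{\ell_2}^{T}=0$ for distinct $\ell_1,\ell_2$ in the range. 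Since there are exactly $m$ such indices and each $\hat{\mathbf{d}}_{\ell}$ is a nonzero eigenvector of the $m\times m$ matrix $\ma_3$, these vectors form an orthogonal eigenbasis, so the family $\{\rho^3_{\ell}\}$ is the full spectrum of $\ma_3$ counted with multiplicity; because $\ell\mapsto\mu_{\ell\ep}$ is nondecreasing and $c_1>0$, the map $\ell\mapsto\rho^3_{\ell}$ is nondecreasing, which pins down $\rho^3_{\ell}$ as the $(\ell-(m+1)n)$-th eigenvalue of $\ma_3$ in nondecreasing order. The one point deserving genuine care — and the place I expect to have to be careful — is the passage to the limit itself: a priori $\ep^{-1}(\mu_{\ell\ep}-1)$ need not converge, and its convergence must be extracted, as above, from the fact that the accompanying geometric factor has a nonzero limit; once this and the (routine) matching of $\{\rho^3_{\ell}\}$ with the ordered spectrum of $\ma_3$ are in place, everything else is a transcription of the calculations already performed for Proposition~\ref{prop-weak-conv}.
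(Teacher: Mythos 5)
Your proposal is correct and follows essentially the same route as the paper: reuse the Green's identity from the proof of Proposition~\ref{prop-weak-conv}, with the limiting profile now being $\pa U_{1,0}/\pa\lambda$, to recover \eqref{eq-weak-conv-3}--\eqref{eq-weak-conv-4}, and conclude positivity, the eigenvector relation, and orthogonality as in \eqref{eq-eigen-zero-4}. You have merely made explicit some details the paper leaves implicit — notably the algebraic identity $\ma_3 = D\,\mm_2\,D + C_0 I$ (which is equivalent to the paper's use of $\mm_2$ in \eqref{eq-weak-conv-3} together with Lemma~\ref{lem-pos}), the two-step extraction of the limit of $\ep^{-1}(\mu_{\ell\ep}-1)$ via an index $i$ with $d_{\ell,i,n+1}\ne 0$, and the identification of $\{\rho^3_\ell\}$ with the ordered spectrum of $\ma_3$ via orthogonality and monotonicity.
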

\begin{proof}
Denote $d_{\ell, i} = d_{\ell, i, n+1}$ in the previous lemma.
Then we can recover \eqref{eq-v-conv} from Lemma \ref{lem-apriori}.
Hence the arguments in the proof of Proposition \ref{prop-weak-conv} works, giving \eqref{eq-weak-conv-3} and \eqref{eq-weak-conv-4} to us again.
From them, we conclude that $\rho^3_{\ell}$ is positive, $\hat{\mathbf{d}}_{\ell}$ is an eigenvector corresponding to $\rho_{\ell}^3$ and \eqref{eq-eigen-cha-a} is valid.
The last orthogonality assertion is deduced in the same way as one in Theorem \ref{thm-eigen-zero}. See \eqref{eq-eigen-zero-4}.
\end{proof}

\appendix
\section{An moving sphere argument}\label{sec-mov-sphere}
In this appendix, we show the following proposition by employing the moving sphere argument given in \cite{LZ} (refer also to \cite{ChaC}).
Note that it implies Proposition \ref{prop-LZ} at once.
\begin{prop}\label{prop-moving}
Let $r_0 > 0$ be fixed and $p = (n+2)/(n-2)$ as above.
Suppose that a family $\{u_{\ep}\}_{\ep}$ of positive $C^2$-functions which satisfy
\[-\Delta u_{\ep} = u_{\ep}^{p-\ep} \quad \text{in } B^n\(0, \ep^{-\alpha_0}r_0\), \quad
\|u_{\ep}\|_{L^{\infty}\(B^n\(0, \ep^{-\alpha_0}r_0\)\)} \le c\]
for some $c > 0$, and
\begin{equation}\label{eq-a-eq2}
\lim_{\ep \to 0} u_{\ep} (x) = U_{1,0}(x)\quad \text{weakly in } H^1(\mr^n).
\end{equation}
Then there are constants $C > 0$ and $0 < \delta_0 < r_0$ independent of $\ep > 0$ such that
\[u_{\ep} (x) \le CU_{1,0}(x) \quad \text{for all } x \in B^n\(0, \ep^{-\alpha_0}\delta_0\).\]
\end{prop}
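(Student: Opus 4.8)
The plan is to follow the moving-sphere scheme of Li and Zhang \cite{LZ} (see also \cite{ChaC}), adapting it to the slightly subcritical exponent $p-\ep$ and keeping every constant independent of $\ep$. \emph{Step 1 (local compactness).} First I would upgrade the weak $H^1$-convergence \eqref{eq-a-eq2} to convergence in $C^2_{\mathrm{loc}}(\mr^n)$: since $\|u_{\ep}\|_{L^{\infty}} \le c$, the right-hand side of $-\Delta u_{\ep} = u_{\ep}^{p-\ep}$ is bounded in $L^{\infty}_{\mathrm{loc}}$, so elliptic estimates give a uniform bound in $C^{2,\alpha}_{\mathrm{loc}}$, and with \eqref{eq-a-eq2} this forces $u_{\ep} \to U_{1,0}$ in $C^2_{\mathrm{loc}}(\mr^n)$. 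Hence for any fixed $R_0 \ge 1$ there is $c_0 = c_0(R_0) > 0$ with $c_0^{-1} \le u_{\ep} \le c_0$ on $\overline{B^n(0,R_0)}$, and $u_{\ep} \le 2 U_{1,0}$ there, once $\ep$ is small. This is the anchor of the argument, valid for $|x| \le R_0$.

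\emph{Step 2 (Kelvin transform).} For $z \in \mr^n$ and $\lambda > 0$ put $u_{\ep}^{z,\lambda}(x) = \(\lambda/|x-z|\)^{n-2} u_{\ep}\big(z + \lambda^2(x-z)/|x-z|^2\big)$. Using $(n-2)p = n+2$, a direct computation gives
\[ -\Delta u_{\ep}^{z,\lambda}(x) = \({\lambda \over |x-z|}\)^{(n-2)\ep} \(u_{\ep}^{z,\lambda}(x)\)^{p-\ep}, \]
so the Kelvin weight is $\le 1$ exactly in the exterior $|x-z| \ge \lambda$. Therefore $w_{z,\lambda} := u_{\ep} - u_{\ep}^{z,\lambda}$ satisfies, by the mean value theorem, $-\Delta w_{z,\lambda} - \xi_{z,\lambda}(x)\, w_{z,\lambda} \ge 0$ in $\Sigma_{z,\lambda} := \{\lambda < |x-z|\} \cap B^n(0,\ep^{-\alpha_0}r_0)$, with $w_{z,\lambda} = 0$ on $\pa B^n(z,\lambda)$ and $0 \le \xi_{z,\lambda} \le C$ (small wherever $u_{\ep}$ and $u_{\ep}^{z,\lambda}$ are). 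The coefficient $\xi_{z,\lambda}$ has the unfavourable sign, but — as is standard for exterior-domain moving spheres — it is controlled by the decay of $u_{\ep}^{z,\lambda}$ and a supersolution of the form $\mu|x-z|^{2-n}$, so nonnegativity of $w_{z,\lambda}$ can be propagated by the comparison principle.

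\emph{Steps 3--4 (sliding, then reading off the decay).} For each $z$ with $|z| \le \ep^{-\alpha_0}r_0/2$ one produces a starting radius: since $u_{\ep} \in C^1$ with $u_{\ep}(z) > 0$, the map $r \mapsto r^{n-2} u_{\ep}(z + r\theta)$ is increasing for small $r$, uniformly in $\theta \in S^{n-1}$, giving $\lambda_{\ep}(z) > 0$ with $w_{z,\lambda} \ge 0$ on $\Sigma_{z,\lambda}$ for $0 < \lambda \le \lambda_{\ep}(z)$. One then increases $\lambda$: the process cannot stop at an interior touching point $x_1$ of $w_{z,\bar\lambda}$, because there $-\Delta w_{z,\bar\lambda}(x_1) = u_{\ep}(x_1)^{p-\ep}\big(1 - (\bar\lambda/|x_1-z|)^{(n-2)\ep}\big) > 0$ as $|x_1-z| > \bar\lambda$, contradicting the interior-minimum property; nor on $\pa B^n(z,\bar\lambda)$, by the Hopf lemma and the starting estimate. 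It can only be obstructed by the outer sphere $\pa B^n(0,\ep^{-\alpha_0}r_0)$, where however $u_{\ep}^{z,\lambda}$ is small: its reflected argument lies within $O(\lambda^2/|x-z|)$ of $z$, where $u_{\ep}$ is controlled at the current scale, while the prefactor $(\lambda/|x-z|)^{n-2}$ is tiny once $|z|, \lambda \ll \ep^{-\alpha_0}r_0$. Thus, choosing $\delta_0 < r_0$ small and working on dyadic scales $R_0 \le |x| \le \ep^{-\alpha_0}\delta_0$ with centers $z$ and radii comparable to $|x|$, one can run the procedure (absorbing a controlled error via the supersolution) and turn the resulting inequalities $u_{\ep}(x) \le u_{\ep}^{z,\lambda}(x) = (\lambda/|x-z|)^{n-2} u_{\ep}(\hat x)$ — with $\hat x$ landing where $u_{\ep}$ is already bounded by the previous scale — into $u_{\ep}(x) \le C|x|^{2-n}$. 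The near-$1$ multiplicative constant picked up per dyadic step (due to the subcriticality) is harmless, as there are only $O(\log(1/\ep))$ steps. Combined with Step 1 this yields $u_{\ep}(x) \le C(1+|x|)^{2-n}$ on $B^n(0,\ep^{-\alpha_0}\delta_0)$, and since $U_{1,0}(x) \ge c(1+|x|)^{2-n}$ this is exactly $u_{\ep} \le C U_{1,0}$ there, which in particular proves Proposition \ref{prop-LZ}.

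\emph{Main obstacle.} The delicate point is Steps 3--4: keeping the starting radii, the sliding, and all constants uniform in $\ep$ while the domain $B^n(0,\ep^{-\alpha_0}r_0)$ itself grows, and handling the interference of its outer boundary — which is exactly why one passes from $r_0$ to a strictly smaller $\delta_0$. This is the part taken over from \cite{LZ}, our only genuine modification being that here the slight subcriticality provides, through the weight $(\lambda/|x-z|)^{(n-2)\ep} \le 1$, precisely the favourable structure that conformal invariance supplies in the Yamabe problem.
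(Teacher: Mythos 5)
Your Steps~1 and 2 agree with the paper: the local $C^2$ convergence to $U_{1,0}$ and the Kelvin transform with the favourable weight $(\lambda/|x-z|)^{(n-2)\ep} \le 1$ in the exterior are exactly the paper's setup (the paper actually uses only $z=0$). The difficulty is concentrated in Steps~3--4, where there is a real gap.

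The moving sphere method, once it has propagated $w_{z,\lambda} := u_\ep - u_\ep^{z,\lambda} \ge 0$ to the exterior $|x-z| > \lambda$, produces the inequality $u_\ep(x) \ge (\lambda/|x-z|)^{n-2}\, u_\ep\big(x^{z,\lambda}\big)$ there, with the reflected point $x^{z,\lambda}$ lying \emph{inside} $B^n(z,\lambda)$, close to $z$. This is a \emph{lower} bound on $u_\ep$ at large radii in terms of its values near the center, not an upper bound. Equivalently, the interior form $u_\ep(y) \le u_\ep^{z,\lambda}(y)$ for $|y-z| < \lambda$ bounds $u_\ep$ at the \emph{near} point by its value at the far reflected point, with a factor $(\lambda/|y-z|)^{n-2} \ge 1$ that degrades as $y$ approaches $z$. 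Neither direction gives the decaying upper bound $u_\ep(x) \le C|x|^{2-n}$ at large $|x|$ that the proposition asserts. What the moving sphere (centered at the origin, with parameters $\lambda_1 = 1-\eta_1 < 1 < 1+\eta_1 = \lambda_2$) actually delivers, via a contradiction argument against an assumed overly large infimum and the incompatibility of $U_{1,0} \ge U_{1,0}^{\lambda_2}$ with $\lambda_2 > 1$, is an \emph{upper bound on the infimum} over spheres: $\min_{|y|=r} u_\ep(y) \le (1+\zeta_2)\, U_{1,0}(r)$ for $r \le \ep^{-\alpha_0}\delta_1$ (Lemma~\ref{lem-moser-1}). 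Passing from this infimum bound to the supremum bound requires a separate Harnack/Green's-representation argument (Lemma~\ref{lem-moser-2}, which the paper takes over from Lemma~2.4 of \cite{LZ}); your dyadic iteration in Step~4, which claims to deduce $u_\ep(x) \le C|x|^{2-n}$ directly from the moving sphere inequalities with $\hat{x}$ ``landing where $u_\ep$ is already bounded by the previous scale,'' has the geometry backwards: when $|x-z|>\lambda$, the reflected point lies closer to $z$, and when $|x-z|<\lambda$, it lies farther, so it cannot land at a smaller, previously controlled scale. Without the Harnack step, the argument does not close.

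A secondary issue is uniformity of the starting radius in Step~3. The observation that $r \mapsto r^{n-2}u_\ep(z+r\theta)$ is increasing for small $r$ gives a positive $\lambda_\ep(z)$, but nothing keeps it bounded away from $0$ uniformly in $\ep$ as the domain $B^n(0,\ep^{-\alpha_0}r_0)$ blows up. The paper handles this by first establishing the quantitative \emph{lower} bound $u_\ep(x) \ge (1-\eta_2)\beta_n|x|^{2-n}$ on $R_0 \le |x| \le \ep^{-\alpha_0}\delta_1$ via comparison with the Green's function of a large ball (formula \eqref{eq-u_k-2}), which is what actually makes the sliding uniform; your proposal should make this step explicit rather than relying on pointwise positivity of $u_\ep$.
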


\noindent Before conducting its proof, we introduce Green's function $G_R$ of $-\Delta$ in $B^n(0,R)$ for each $R > 0$ with zero Dirichlet boundary condition.
By the scaling invariance, we have
\[G_R (x,y) = G_1 \(\frac{x}{R}, \frac{y}{R}\)\frac{1}{R^{n-2}} \quad \text{for } x, y \in B^n(0,R).\]
Thus we can decompose Green's function in $B^n(0,R)$ into its singular part and regular part as follows:
\begin{equation}\label{eq-G-R}
G_R (x,y) = {\gamma_n \over |x-y|^{n-2}} - {1 \over R^{n-2}}H_1 \({x \over R},{y \over R}\) \quad \text{for } x, y \in B^n(0,R).
\end{equation}
See \eqref{eq-H} for the definition of the normalizing constant $\gamma_n$.

\medskip
Now we begin to prove Proposition \ref{prop-moving}.
By \eqref{eq-a-eq2} and elliptic regularity, for arbitrarily given $\zeta_1 > 0$ and any compact set $K \subset \mr^n$, there is $\ep_1 > 0$ such that it holds
\begin{equation}\label{eq-a-con-com}
\|u_{\ep} - U_{1,0}\|_{C^2(K)} \le \zeta_1 \quad \text{for } \ep \in (0, \ep_1).
\end{equation}
Let us define the Kelvin transform of $u_{\ep}$:
\begin{equation}\label{eq-kelvin}
u_{\ep}^{\lambda}(x) = \(\frac{\lambda}{|x|}\)^{n-2} u_{\ep}\(x^{\lambda}\),\quad x^{\lambda} = \frac{\lambda^2 x}{|x|^2}
\quad \text{for } |x^{\lambda}| < \ep^{-\alpha_0}r_0
\end{equation}
and the difference $w_{\ep}^{\lambda} = u_{\ep} - u_{\ep}^{\lambda}$ between $u_{\ep}$ and it.
Then we have
\begin{equation}\label{eq-a-w}
-\Delta w_{\ep}^{\lambda} = u_{\ep}^{p-\ep} - \({\lambda \over |x|}\)^{(n-2)\ep} \(u_{\ep}^{\lambda}\)^{p-\ep} \ge u_{\ep}^{p-\ep} - \(u_{\ep}^{\lambda}\)^{p-\ep} = \xi_{\ep}(x) w_{\ep}^{\lambda} \quad \text{for } |x| \ge \lambda
\end{equation}
where
\[\xi_{\ep} (x) = \begin{cases}
\dfrac{u_{\ep}^{p-\ep}-\(u_{\ep}^{\lambda}\)^{p-\ep}}{u_{\ep} -u_{\ep}^{\lambda}}(x) &\text{if } u_{\ep}(x) \ne u_{\ep}^{\lambda}(x),\\
(p-\ep)u_{\ep}^{p-1-\ep}(x) &\text{if } u_{\ep}(x) = u_{\ep}^{\lambda}(x).
\end{cases}\]

\begin{lem}\label{lem-moser-1}
For any $\zeta_2 > 0$, there exist small constants $\delta_1 > 0$ and $\ep_2 > 0$ such that
\begin{equation}\label{eq-inf-bound}
\min_{|y|=r} u_{\ep} (y) \le (1+ \zeta_2) U_{1,0}(r) \quad \text{for } 0 < r := |x| \le \ep^{-\alpha_0}\delta_1 \text{ and any } \ep \in (0, \ep_2).
\end{equation}
\end{lem}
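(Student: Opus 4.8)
The plan is to argue by contradiction after splitting the range of $r$ at a fixed large radius $R_0$. For $0<r\le R_0$ the bound is immediate from the local convergence: by \eqref{eq-a-con-com} we have $\|u_\ep-U_{1,0}\|_{C^0(B^n(0,R_0))}\le\zeta_1$, and since $U_{1,0}\ge U_{1,0}(R_0)>0$ on $B^n(0,R_0)$, the choice $\zeta_1\le\zeta_2 U_{1,0}(R_0)$ gives $\min_{|y|=r}u_\ep(y)\le u_\ep(x)\le U_{1,0}(r)+\zeta_1\le(1+\zeta_2)U_{1,0}(r)$ for any $|x|=r$ and all small $\ep$. It remains to treat $R_0\le r\le\ep^{-\alpha_0}\delta_1$; suppose \eqref{eq-inf-bound} fails there for every choice of $\delta_1,\ep_2$. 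Then there are $\ep_k\to0$ and $r_k$ with $R_0\le r_k\le\ep_k^{-\alpha_0}/k$, so $r_k=o(\ep_k^{-\alpha_0})$; moreover $r_k\to\infty$ by the previous step, so $U_{1,0}(r_k)=(1+o(1))\beta_n r_k^{2-n}$, and $\min_{|y|=r_k}u_{\ep_k}(y)>(1+\zeta_2)U_{1,0}(r_k)$.

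The crucial point is that the bubble mass is essentially concentrated inside $B^n(0,R_0)$. Since $-\Delta u_{\ep_k}=u_{\ep_k}^{p-\ep_k}\ge0$, the spherical average $\overline u_{\ep_k}(t)$ of $u_{\ep_k}$ over $\pa B^n(0,t)$ is non-increasing and $t\mapsto t^{n-1}\overline u_{\ep_k}'(t)$ is non-increasing on $(0,\ep_k^{-\alpha_0}r_0)$, with $\int_{B^n(0,t)}u_{\ep_k}^{p-\ep_k}=-|S^{n-1}|t^{n-1}\overline u_{\ep_k}'(t)$. Comparing the latter at $t=R_0$ with $U_{1,0}$ via \eqref{eq-a-con-com} and using $\lim_{t\to\infty}t^{n-1}U_{1,0}'(t)=-(n-2)\beta_n$ — equivalently $C_2=\int_{\mr^n}U_{1,0}^p=(n-2)\beta_n|S^{n-1}|$, so $\gamma_n C_2=\beta_n$ — I would get $\int_{B^n(0,t)}u_{\ep_k}^{p-\ep_k}\le C_2+\omega(R_0)+o_k(1)$ for all $R_0\le t<\ep_k^{-\alpha_0}r_0$, while $\int_{B^n(0,R_0)}u_{\ep_k}^{p-\ep_k}\to\int_{B^n(0,R_0)}U_{1,0}^p\ge C_2-\omega(R_0)$, where $\omega(R_0)\to0$ as $R_0\to\infty$; hence $\int_{\{R_0<|y|<\ep_k^{-\alpha_0}r_0\}}u_{\ep_k}^{p-\ep_k}\le 2\omega(R_0)+o_k(1)$. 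Feeding this into Green's representation on $B:=B^n(0,\ep_k^{-\alpha_0}r_0)$, using the decomposition \eqref{eq-G-R} of $G_{\ep_k^{-\alpha_0}r_0}$ (whose regular part only improves the estimate), the contribution of $B^n(0,R_0)$ equals $(1+O(R_0/r_k))\,\gamma_n r_k^{2-n}\int_{B^n(0,R_0)}u_{\ep_k}^{p-\ep_k}=(1+\omega(R_0)+o_k(1))\beta_n r_k^{2-n}$ at $|x|=r_k$, the contribution of the annulus $\{R_0<|y|<\ep_k^{-\alpha_0}r_0\}$ is controlled away from $\pa B^n(0,r_k)$ by the mass estimate, and the harmonic boundary term is made small relative to $r_k^{2-n}$ using the Sobolev bound $\sup_\ep\int_B u_\ep^{p+1}<\infty$. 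Then $\min_{|y|=r_k}u_{\ep_k}(y)\le(1+\omega(R_0)+o_k(1))\beta_n r_k^{2-n}$, and letting $k\to\infty$ and then $R_0\to\infty$ contradicts the assumption.

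The main obstacle lies in this last step: the singular kernel $|x-\cdot|^{2-n}$ convolved against $u_{\ep_k}^{p-\ep_k}$ over the part of the annulus near $\pa B^n(0,r_k)$, where only an $L^1$-bound on $u_{\ep_k}^{p-\ep_k}$ is at hand, and the size near $\pa B^n(0,r_k)$ of the harmonic term, for which a crude use of the $D^{1,2}$-bound only gives $O(\ep_k^{1/2})$ rather than the needed $o(r_k^{2-n})$ when $n\ge4$. Following \cite{LZ}, I expect to close this by first extracting from the same Green's/average argument a rough decay $u_{\ep_k}(x)\le C|x|^{-\gamma}$ on the annulus, and then running a finite iteration — comparison on the annulus with radial supersolutions $A|x|^{2-n}+B|x|^{-\gamma_j}$, legitimate since $(n-2)(p-1-\ep)>2$ for $\ep$ small — that drives $\gamma_j\uparrow n-2$ and controls both error contributions. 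In dimension $n=3$ the mass/Green's argument already closes the estimate without any iteration, which is the cleanest instance.
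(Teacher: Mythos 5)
Your strategy is genuinely different from the paper's. The paper proves this lemma by the moving sphere method: it derives only the \emph{lower} bound \eqref{eq-u_k-2} via the Dirichlet Green's function, then under the contradiction hypothesis $\min_{|x|=r_k}u_{\ep_k}>(1+\zeta_2)U_{1,0}(r_k)$ it compares $u_{\ep_k}$ with its Kelvin transform $u_{\ep_k}^{\lambda}$ and shows the moving sphere passes $\lambda=\lambda_2=1+\eta_1>1$, which contradicts the rigidity of $U_{1,0}$ under inversion. The sharp factor $(1+\zeta_2)$ comes essentially for free from this rigidity. You instead try to obtain the sharp upper bound directly from a mass bound plus a Green's-potential estimate and an (unwritten) iteration.

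There is a concrete gap near the start of your argument that you do not flag. You claim from the monotonicity of $t\mapsto t^{n-1}\overline{u}_{\ep_k}'(t)$ that $\int_{B^n(0,t)}u_{\ep_k}^{p-\ep_k}\le C_2+\omega(R_0)+o_k(1)$ for \emph{all} $R_0\le t<\ep_k^{-\alpha_0}r_0$. But that monotonicity is \emph{equivalent} to the mass $t\mapsto\int_{B^n(0,t)}u_{\ep_k}^{p-\ep_k}$ being nondecreasing, so it only yields the lower bound for $t\ge R_0$; it cannot give an upper bound on the mass at large $t$. You would need separate control of the mass near the outer boundary, and neither the weak $H^1$ convergence \eqref{eq-a-eq2} nor the $L^\infty$ bound supplies this directly: a H\"older estimate $\int_B u^{p-\ep}\le\|u\|_{L^{p+1}}^{p-\ep}|B|^{(1+\ep)/(p+1)}$ blows up like $\ep^{-(1+\ep)/2}$, and the spherical-average ODE has a nondecaying $O\big((\zeta+\omega)R_0^{2-n}\big)$ error at $t=R_0$ which dominates $r^{2-n}$ once $r\gg R_0$. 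So the first of your two ``hard'' estimates is not established, and in fact the input needed to start it is missing.

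The second gap you do acknowledge — the singular kernel against $u_{\ep_k}^{p-\ep_k}$ over the annulus, and the harmonic boundary term — is substantial and is exactly what the paper avoids by switching to the moving sphere at this stage. An iteration driving $u\le C|x|^{-\gamma_j}$ up to $\gamma_j\uparrow n-2$ can be run (this is essentially Lemma \ref{lem-moser-2}, after \cite{LZ}), but it naturally produces $u\le C|x|^{2-n}$ with a nonexplicit $C>1$; to obtain the sharp constant $1+\zeta_2$ you would need an additional pass through the Green's representation with the decay already in hand, plus a careful choice of $\delta_1\ll r_0$ for the boundary term — none of which is written out. Note also that in the paper this iteration is used \emph{after} the present lemma, with the moving-sphere minimum bound as its input; your plan inverts that order, which is part of why it becomes hard to close. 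Finally, the $n=3$ remark does not help: even in dimension $3$ an $L^1$ bound on $u^{p-\ep}$ over the annulus does not control $\int|x-y|^{-1}u^{p-\ep}(y)\,dy$ near $|y|=r_k$, since the needed integrability exponent for the kernel is not available from the Sobolev bound alone.
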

\begin{proof}
We first choose a candidate $\delta_1 \in (0, r_0)$ for which \eqref{eq-inf-bound} will have the validity.
Fix a sufficiently small value $\eta_1 > 0$ and a number $R_0 > 0$ such that it holds
\begin{equation}\label{eq-u_k-3}
u_{\ep}^{\lambda}(x) \le \(1+{\zeta_2 \over 4}\) \beta_n |x|^{2-n} \quad \text{for any } 0 < \lambda \le 1+\eta_1 \text{ and } |x| \ge R_0
\end{equation}
provided $\ep > 0$ small enough, where $\beta_n = (n(n-2))^{p-1}$ is the constant appeared in \eqref{eq-U}.
Take $\lambda_1 = 1-\eta_1$ and $\lambda_2 = 1+\eta_1$.
If $\lambda = \lambda_1$, because $U_{1,0}^{\lambda} = U_{\lambda^2,0}$ for any $\lambda > 0$ and $u_{\ep} \to U_{1,0}$ in $C^1$-uniformly over compact subsets of $\mr^n$ as $\ep \to 0$, by enlarging $R_0 > 0$ if necessary,
we can find a number $\eta_2 > 0$ small such that
\begin{equation}\label{eq-w_k}
w_{\ep}^{\lambda_1}(x) > 0 \quad \text{for } \lambda_1 < |x| \le R_0, \quad u_{\ep}^{\lambda_1}(x) \le (1-2\eta_2)\beta_n|x|^{2-n} \quad \text{for } |x| \ge R_0
\end{equation}
and
\begin{equation}\label{eq-u_k}
\int_{B^n(0,R_0)} u_{\ep}^{p-\ep}(x)dx \ge \(1-{\eta_2 \over 2}\) \int_{\mr^n} U_{1,0}^p(x)dx
\end{equation}
for sufficiently small $\ep > 0$.
On the other hand, provided $\delta_1 > 0$ small enough, the inequality
\begin{equation}\label{eq-u_k-2}
u_{\ep} (x) \ge (1-\eta_2) \beta_n|x|^{2-n} \quad \text{for } R_0 \le |x| \le \ep^{-\alpha_0} \delta_1
\end{equation}
can be reasoned in the following way. If we choose a function $\hat{u}_{\ep}$ which solves
\[-\Delta \hat{u}_{\ep} = u_{\ep}^{p-\ep} \quad \text{in } B^n\(0, \ep^{-\alpha_0}\) \quad \text{and} \quad \hat{u}_{\ep} = 0 \quad \text{on } \left\{|x| = \ep^{-\alpha_0} \right\},\]
then the comparison principle tells us that $u_{\ep} \ge \hat{u}_{\ep}$.
Since Green's function is always positive, we can make
\[H_1\(\ep^{-\alpha_0}x, \ep^{-\alpha_0}y\) \le {\eta_2 \gamma_n \over 4} \cdot
{\ep^{-\alpha_0(n-2)} \over |x-y|^{n-2}} \quad \text{for } x,y \in B^n\(0, \ep^{-\alpha_0} \delta_1\)\]
by taking $\delta_1$ small, and the relation $|x-y| \le (1-1/l)|x|$ holds for $|x| \ge lR_0$ and $|y| \le R_0$ given any $l \in (1,\infty)$,
we see from \eqref{eq-G-R} and \eqref{eq-u_k} that
\begin{align*}
\hat{u}_{\ep}(x) &= \int_{B^n\(0, \ep^{-\alpha_0}\)} u_{\ep}^{p-\ep}(y) G_{\ep^{-\alpha_0}} (x,y) dy
\ge \(1-{\eta_2 \over 4}\) \int_{B^n\(0, \ep^{-\alpha_0}\delta_1 \)} u_{\ep}^{p-\ep}(y){\gamma_n \over |x-y|^{n-2}}dy \\
&\ge \(1-{\eta_2 \over 2}\) \(\int_{B^n(0,R_0)} u_{\ep}^{p-\ep}(y) dy\) {\gamma_n \over |x|^{n-2}} \ge \(1-\eta_2\) \(\int_{\mr^n} U_{1,0}^p(y)dy\) {\gamma_n \over |x|^{n-2}}\\
&= \(1-\eta_2\) {\beta_n \over |x|^{n-2}} \qquad \text{for } lR_0 \le |x| \le \ep^{-\alpha_0}\delta_1
\end{align*}
by choosing $l$ large enough.
Also if $|x| \le lR_0$, the uniform convergence of $u_{\ep}$ to $U_{1,0}$ implies $u_{\ep} (x) \ge (1-\eta_2) \beta_n|x|^{2-n}$ for $\ep > 0$ sufficiently small.
This shows the validity of \eqref{eq-u_k-2}.

Fixing $\delta_1 > 0$ for which \eqref{eq-u_k-2} is valid, suppose that \eqref{eq-inf-bound} does not hold on the contrary.
Then there are sequences $\{\ep_k\}_{k=1}^{\infty}$ and $\{r_k\}_{k=1}^{\infty}$ such that $\ep_k \to 0$, $r_k \in \(0, \ep^{-\alpha_0} \delta_1\)$ and
\[\min_{|x|=r_k} u_{\ep_k} (x) > (1+\zeta_2) U_{1,0}(r_k).\]
Set $u_k = u_{\ep_k}$ for brevity.
Since $u_k \to U_{1,0}$ uniformly on any compact set, it should hold that $r_k \to \infty$. Therefore
\begin{equation}\label{eq-contrary}
\min_{|x|=r_k} u_k (x) \ge \(1+{\zeta_2 \over 2}\) \beta_n r_k^{2-n}.
\end{equation}

To deduce a contradiction, let us apply the moving sphere method to $w_k^{\lambda} = u_k - u_k^{\lambda}$ for the parameters $\lambda_1 \le \lambda \le \lambda_2$.
Define $\bar{\lambda}_k$ by
\[\bar{\lambda}_k = \sup \left\{ \lambda \in [\lambda_1, \lambda_2] : w_k^{\mu} \ge 0 \text{ in } \Sigma_{\mu} \text{ for all } \lambda_1 \le \mu \le \lambda \right\} \text{ where } \Sigma_{\mu} = \{x \in \mr^n: \mu < |x| < r_k\}.\]
We claim that $\bar{\lambda}_k = \lambda_2$ for sufficiently large $k \in \mathbb{N}$.
First of all, putting together with \eqref{eq-w_k} and \eqref{eq-u_k-2}, we discover that $w_k^{\lambda_1} > 0$ in $\Sigma_{\lambda_1}$, so $\bar{\lambda}_k \ge \lambda_1$.
Recall from \eqref{eq-a-w} that
\[-\Delta w_k^{\bar{\lambda}_k} + (\xi_{\ep_k})_-w_k^{\bar{\lambda}_k} \ge (\xi_{\ep_k})_+ w_k^{\bar{\lambda}_k} \ge 0 \quad \text{in } \Sigma_{\bar{\lambda}_k}.\]
Moreover, from \eqref{eq-contrary} and \eqref{eq-u_k-3} we have $w_k^{\bar{\lambda}_k} >0$ on $\pa B^n(0, r_k)$.
Thus by the maximum principle and Hopf's lemma we have
\[w_k^{\bar{\lambda}_k} > 0 \quad \text{in } \Sigma_{\bar{\lambda}_k} \quad \text{and} \quad
\frac{\pa w_k^{\bar{\lambda}_k}}{\pa \nu} < 0 \quad \text{on } \pa B^n\(0, \bar{\lambda}_k\)\]
where $\nu$ is the unit outward normal vector.
However this means that if $\bar{\lambda}_k < \lambda_2$, then $w_k^{\mu} \ge 0$ in $\Sigma_{\mu}$ even after taking a slightly larger value of $\mu$ than $\bar{\lambda}_k$,
which contradicts the maximality of $\bar{\lambda}_k$.
Hence our claim is justified.
Consequently, taking a limit $k \to \infty$ to $w_k^{\lambda_2} \ge 0$ in $\Sigma_{\lambda_2}$ allows one to get
\[U_{1,0}(x) \ge U_{1,0}^{\lambda_2} (x) \quad \text{in } |x| \ge \lambda_2,\]
but it cannot be possible since $\lambda_2 > 1$.
Thus \eqref{eq-inf-bound} should be true.
\end{proof}

The following lemma completes our proof of Proposition \ref{prop-moving}.
\begin{lem}\label{lem-moser-2}
For some constant $C>0$ and parameter $\delta_0 \in (0,\delta_1)$, we have
\[u_{\ep} (x) \le C U_{1,0}(x) \quad \text{for } |x| \le \ep^{-\alpha_0} \delta_0\]
provided that $\ep > 0$ is sufficiently small.
Here $\delta_1 > 0$ is the number chosen in the proof of the previous Lemma.
\end{lem}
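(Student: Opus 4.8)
\smallskip
\noindent\textbf{Proof proposal.}
The plan is to split the ball $B^n(0,\ep^{-\alpha_0}\delta_0)$ into a fixed core $\{|x|\le R_0\}$ and the exterior $\{R_0\le|x|\le\ep^{-\alpha_0}\delta_0\}$, with $R_0>0$ a large constant, and to shrink $\delta_0$ a little further if needed so that the annuli used below stay inside $B^n(0,\ep^{-\alpha_0}r_0)$. On the core the bound is immediate from the $C^2$-estimate \eqref{eq-a-con-com}: for $\ep$ small one has $u_\ep\le U_{1,0}+\zeta_1\le 2\beta_n$ on $\overline{B^n(0,R_0)}$, while $U_{1,0}(x)\ge\beta_n(1+R_0^2)^{-(n-2)/2}$ there, so $u_\ep\le C(R_0)\,U_{1,0}$ on that set. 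All the difficulty is in the exterior region, where $U_{1,0}(x)\approx\beta_n|x|^{2-n}$ degenerates and the only a priori information is $\|u_\ep\|_\infty\le c$ together with the spherical-minimum bound $\min_{|y|=r}u_\ep\le(1+\zeta_2)U_{1,0}(r)$ from Lemma \ref{lem-moser-1} (available here since $\delta_0<\delta_1$).

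For the exterior I would upgrade ``control of the minimum on each sphere'' to ``control of the maximum on each sphere'' by means of a scale-invariant Harnack inequality. Given $r\in[R_0,\ep^{-\alpha_0}\delta_0]$, rescale on the annulus $\{r/2<|y|<2r\}$ by $v(z)=r^{n-2}u_\ep(rz)$; then $v>0$ solves $-\Delta v=r^{-2+(n-2)\ep}v^{p-\ep}$ on $\{1/2<|z|<2\}$, the weight $r^{(n-2)\ep}$ stays bounded because $r\le\ep^{-\alpha_0}=\ep^{-1/(n-2)}$, and $\min_{|z|=1}v=r^{n-2}\min_{|y|=r}u_\ep\le(1+\zeta_2)\beta_n$ by Lemma \ref{lem-moser-1}. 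Writing the equation as $-\Delta v=Wv$ with $W=r^{-2+(n-2)\ep}v^{p-1-\ep}\ge0$, one checks that the scale-invariant quantity $\|W\|_{L^{n/2}(\{1/2<|z|<2\})}^{n/2}$ equals exactly $\int_{\{r/2<|y|<2r\}}u_\ep^{\,p+1-\ep n/2}$. If this integral is uniformly small in $r$ and $\ep$, the standard Harnack inequality for $-\Delta v=Wv$ gives $\max_{|z|=1}v\le C\min_{|z|=1}v$ with a uniform constant, hence $\max_{|y|=r}u_\ep\le C\,U_{1,0}(r)$; combined with the core estimate this proves the lemma.

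The hard part will be to secure the uniform smallness of $\int_{\{r/2<|y|<2r\}}u_\ep^{\,p+1-\ep n/2}$ for $r$ up to $\ep^{-\alpha_0}\delta_0$. For $r$ in a fixed bounded range this follows from \eqref{eq-a-con-com} and the integrability of $U_{1,0}^{p+1}$ near infinity; for the genuinely ``moving'' range I would argue by contradiction in the spirit of \cite{LZ}. Were the pointwise bound to fail, one could take $\ep_k\to0$ and points $\bar x_k$ realizing $\bar\Lambda_k:=\max_{R_0\le|y|\le\ep_k^{-\alpha_0}\delta_0}|y|^{n-2}u_{\ep_k}(y)\to\infty$, with $|\bar x_k|\to\infty$; by maximality $u_{\ep_k}(y)\le\bar\Lambda_k|y|^{2-n}$ throughout the exterior, and rescaling around $\bar x_k$ at the natural scale $s_k=u_{\ep_k}(\bar x_k)^{-(p-1-\ep_k)/2}$ --- using this bound, $\|u_{\ep_k}\|_\infty\le c$ and Lemma \ref{lem-moser-1} --- should force the rescaled solutions to converge to a positive solution $V$ of $-\Delta V=V^{p}$ on $\mr^n$ (possibly punctured) with $V(0)=1$ and $V\le1$. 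By the classification of positive entire solutions of the critical equation, $V$ is a standard bubble, and comparing this secondary concentration with the primary one producing $U_{1,0}$ at the origin then contradicts the uniform $L^\infty$ bound (a bubble of bounded height has a definite width, while the two concentration sites are asymptotically separated) or the uniform $H^1$ bound on $\{u_{\ep_k}\}$. Making the scale separation $s_k\ll|\bar x_k|$ rigorous and extracting the contradiction are precisely the delicate steps of \cite{LZ}; only the bookkeeping changes caused by the subcritical term $\ep_k$ require care, and these are harmless since $\ep_k\to0$. Once the exterior estimate $u_\ep(x)\le C|x|^{2-n}\le C'\,U_{1,0}(x)$ is in hand, Lemma \ref{lem-moser-2} --- hence Proposition \ref{prop-moving} and Proposition \ref{prop-LZ} --- follows.
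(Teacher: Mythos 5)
Your overall plan (core region by $C^2$-convergence; exterior region by a scale-invariant spherical Harnack inequality driven by the minimum bound from Lemma \ref{lem-moser-1}) is a recognizable and legitimate route — it is close in spirit to the Chen--Lin reference \cite{ChaC}. The paper's own proof is just a pointer to Lemma~2.4 of \cite{LZ}, and the substitutions it names ($G_k\mapsto G_{\ep^{-\alpha_0}\delta_1}$, $R_k\mapsto\ep^{-\alpha_0}\delta_1\delta_2$, $v_k\mapsto u_\ep$) indicate that the cited argument goes through a Green's-representation estimate in the large ball $B^n(0,\ep^{-\alpha_0}\delta_1)$ rather than the spherical Harnack you propose. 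Your identification of the scale-invariant Harnack quantity, $\|W\|_{L^{n/2}(\{1/2<|z|<2\})}^{n/2}=\int_{\{r/2<|y|<2r\}}u_\ep^{\,p+1-\ep n/2}$, is correct.

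The genuine gap is in how you handle the ``hard part.'' First, there is a logical slip: you set out to prove smallness of the annular energy, but what you actually run a contradiction against is failure of the pointwise bound $u_\ep\le C|y|^{2-n}$ — these are not the same statement, and the needed smallness is essentially equivalent to the conclusion (if $u_\ep\le C|y|^{2-n}$ the annular integral is $O(r^{-n})$, and conversely you need it to run Harnack), so the circularity has to be broken by an explicit mechanism (in \cite{LZ} this is done by a Green's-representation bootstrap that propagates the decay outward in $r$ from the region where $u_\ep\approx U_{1,0}$ is already known). Second, the contradiction you extract from the secondary bubble does not close from the hypotheses of Proposition \ref{prop-moving} alone. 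Since $u_{\ep_k}(\bar x_k)\le c$, the scale $s_k$ is bounded below and the limit $V$ is a bubble of bounded height at distance $|\bar x_k|\to\infty$: this contradicts neither $\|u_{\ep_k}\|_\infty\le c$ (both bubbles are bounded) nor the weak convergence $u_{\ep_k}\rightharpoonup U_{1,0}$ in $H^1(\mr^n)$, which only gives $\liminf\|u_{\ep_k}\|_{H^1}^2\ge\|U_{1,0}\|_{H^1}^2$ and leaves room for an extra quantum of energy carried by a translating bubble. To rule out the secondary concentration you need either the additional structural input from the reduction \eqref{eq-asym} — namely that the remainder $R_\ep\to0$ strongly in $H_0^1(\Omega)$, so no unaccounted bubble can survive inside $B(0,\ep^{-\alpha_0}\delta_0)$ — which the paper uses implicitly but which is not among the hypotheses you are working from; or you need to reproduce the actual integral bootstrap of \cite{LZ}, which avoids the global blow-up dichotomy entirely. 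So: sound outline, but the key step is not closed as written.
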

\begin{proof}
Argue as in the proof of Lemma 2.4 in \cite{LZ} employing Lemma \ref{lem-moser-1} above.
In that paper, the statement of the lemma as well as its proof are written for a sequence $\{u_{\ep_k}\}_{k=1}^{\infty}$ of solutions,
but they apply to a family $\{u_{\ep}\}_{\ep}$ as well.
To proceed our proof, we substitute $G_k$, $R_k$ and $v_k$ in \cite{LZ} with Dirichlet Green's function $G_{\ep^{-\alpha_0}\delta_1}$ of $-\Delta$ in $B^n(0,\ep^{-\alpha_0}\delta_1)$, $R_{\ep} = \ep^{-\alpha_0} \delta_1\delta_2$ and $u_{\ep}$ where $\delta_2 \in (0,1)$ is a sufficiently small number.
\end{proof}


\begin{thebibliography}{9}
\bibitem{BC} A.~Bahri, J.~M. Coron, \emph{On a nonlinear elliptic equation involving the critical {S}obolev exponent: the effect of the topology of the domain}, Comm. Pure Appl. Math., \textbf{41} (1988), 253--294.

\bibitem{BLR} A. Bahri, Y. Li, O. Rey, \emph{On a variational problem with lack of compactness: the topological effect of the critical points at infinity}, Calc. Var. Partial Differential Equations \textbf{3} (1995), 67-–93.

\bibitem{BE} G. Bianchi, H. Egnell, \emph{A note on the Sobolev inequality.} J. Funct. Anal. \textbf{100} (1991), 18–-24.

\bibitem{BN} H.Brezis, L. Nirenberg, \emph{Positive solutions of nonlinear elliptic equations involving critical Sobolev exponent}, Comm. Pure Appl. Math. \textbf{36} (1983), 437-–477.

\bibitem{BP} H.~Brezis, L.~A. Peletier, \emph{Asymptotics for elliptic equations involving critical growth}, Partial differential equations and the calculus of variations, {V}ol.\ {I}, Progr. Nonlinear Differential Equations Appl., vol.~1, Birkh\"auser Boston, (1989), 149--192.

\bibitem{CPY} D. Cao, S. Peng, S. Yan, \emph{Infinity many solutions for $p$-Laplacian equation involving critical Sobolev growth}, J. Funct. Anal. \textbf{262} (2012), 2861--2902.

\bibitem{CL} W. Chen, C. Li, \emph{On Nirenberg and the related problems - a necessary and sufficient condition}, Comm. Pure Appl. Math. \textbf{48} (1995), 657--667.

\bibitem{ChaC} C. C. Chen, C. S. Lin, \emph{Estimates of the conformal scalar curvature equation via the method of moving planes. II.}, J. Diff. Geom. \textbf{49} (1998), 115–-178.
%

\bibitem{CC} K. S. Chou, C. W. Chu, \emph{On the best constant for a weighted Sobolev-Hardy inequality}, J. London Math. Soc. \textbf{48} (1993), 137--151.

\bibitem{D} W. Y. Ding, \emph{Positive solutions of $\Delta u + u^{(n+2)/(n-2)} = 0$ on contractible domains}, J. Partial Differential Equations \textbf{2} (1989), 83--88.

\bibitem{GG} F. Gladiali, M. Grossi, \emph{On the spectrum of a nonlinear planar problem}, Ann. Inst. H. Poincar\'e Anal. Non Lin\'eaire \textbf{26} (2009), 191--222.

\bibitem{GGO} F. Gladiali, M. Grossi, H. Ohtsuka, \emph{On the number of peaks of the eigenfunctions of the linearized Gel'fand problem}, preprint, arXiv:1308.3628.

\bibitem{GGOS} F. Gladiali, M. Grossi, H. Ohtsuka, T. Suzuki, \emph{Morse indices of multiple blow-up solutions to the two-dimensional Gel'fand problem}, Comm. Partial Differential Equations (2014), in press, arXiv:1210.1373.

\bibitem{GP} M. Grossi, F. Pacella, \emph{On an eigenvalue problem related to the critical exponent}, Math. Z. \textbf{250} (2005), 225--256.

\bibitem{H} Z.-C. Han, \emph{Asymptotic approach to singular solutions for nonlinear elliptic equations involving critical {S}obolev exponent}, Ann. Inst. H. Poincar\'e Anal. Non Lin\'eaire \textbf{8} (1991), 159--174.

\bibitem{L} Y. Li, \emph{On a singularly perturbed elliptic equation}, Adv. Diff. Eqns. \textbf{2} (1997), 955--980

\bibitem{LZ} Y. Li, L. Zhang, \emph{Compactness of solutions to the Yamabe problem. II}, Calc. Var. Partial Differential Equations \textbf{24} (2005), 185-–237.

\bibitem{LZh} Y. Li, M. Zhu, \emph{Uniqueness theorems through the method of moving spheres}, Duke Math. J. \textbf{80} (1995), 383--417.

\bibitem{Lio} P. L. Lions, \emph{The concentration compactness principle in the calculus of variations. The limit case. I and II.}, Rev. Mat. Iberoamericana \textbf{1.1} (1985), 145--201 and \textbf{1.2} (1985), 45--121.

\bibitem{MP} M. Musso, A. Pistoia, \emph{Multispike solutions for a nonlinear elliptic problem involving the critical Sobolev exponent}, Indiana Univ. Math. J. \textbf{51} (2002), 541–-579.

\bibitem{Ot} H. Ohtsuka, \emph{To what extent can the Hamiltonian of vortices illustrate the mean field of equilibrium vortices?} RIMS K\^{o}ky\^{u}roku \textbf{1798} (2012), 1-–17.

\bibitem{Pa} P. Padilla, \emph{Symmetry properties of positive solutions of elliptic equations on symmetric domains}, Appl. Anal., \textbf{64} (1997), 153--169.

\bibitem{P} S.~I. Poho\v{z}aev, \emph{Eigenfunctions of the equation ${\Delta} u + \lambda f(u) = 0$}, Dokl. Akad. Nauk SSSR, \textbf{165} (1965), 36--39.

\bibitem{R} O.~Rey, \emph{The role of the {G}reen's function in a nonlinear elliptic equation involving the critical {S}obolev exponent}, J. Funct. Anal. \textbf{89} (1990), 1--52.

\bibitem{Sch} R. Schoen, \emph{On the number of constant scalar curvature metrics in a conformal class}, Differential Geometry: A symposium in honor of Manfredo Do Carmo, Wiley (1991), 311–-320.

\bibitem{S} M. Struwe, \emph{A global compactness result for elliptic boundary value problems involving limiting nonlinearities}, Math. Z., \textbf{187} (1984), 511--517.

\bibitem{T} F. Takahashi, \emph{An eigenvalue problem related to blowing-up solutions for a semilinear elliptic equation with the critical Sobolev exponent}, Discrete Contin. Dyn. Syst. Ser. S, \textbf{4} (2011), 907--922.
\end{thebibliography}
\end{document}